\def\@tocline#1#2#3#4#5#6#7{\relax
  \ifnum #1>\c@tocdepth 
  \else
    \par \addpenalty\@secpenalty\addvspace{#2}%
    \begingroup \hyphenpenalty\@M
    \@ifempty{#4}{%
      \@tempdima\csname r@tocindent\number#1\endcsname\relax
    }{%
      \@tempdima#4\relax
    }%
    \parindent\z@ \leftskip#3\relax \advance\leftskip\@tempdima\relax
    \rightskip\@pnumwidth plus4em \parfillskip-\@pnumwidth
    #5\leavevmode\hskip-\@tempdima
      \ifcase #1
       \or\or \hskip 1em \or \hskip 2em \else \hskip 3em \fi%
      #6\nobreak\relax
    \dotfill\hbox to\@pnumwidth{\@tocpagenum{#7}}\par
    \nobreak
    \endgroup
  \fi}
\newtheorem{theorem}{Theorem}[section]
\newtheorem{lemma}[theorem]{Lemma}
\newtheorem{corollary}[theorem]{Corollary}
\newtheorem{proposition}[theorem]{Proposition}
\newtheorem{question}[theorem]{Question}
\theoremstyle{definition}
\newtheorem{defn}[theorem]{Definition}
\newtheorem{remark}[theorem]{Remark}
\newcommand*{\sbr}[1]{\scalebox{0.8}{$(#1)$}}
\newcommand*{\db}[1]{\llbracket #1\rrbracket}
\newcommand{\mk}{\mathfrak}
\newcommand{\mc}{\mathcal}
\newcommand{\mf}{\mathbf}
\newcommand{\mb}{\mathbb}
\newcommand{\wh}{\widehat}
\newcommand{\wt}{\widetilde}
\newcommand{\ud}{\,\mathrm{d}}
\newcommand{\id}{\mathrm{id}}
\newcommand{\stab}{\mathrm{Stab}}
\newcommand{\ab}{\mathrm{Z}}
\newcommand{\tran}{\mathrm{\Theta}}
\newcommand{\lcm}{\mathrm{lcm}}
\newcommand{\poly}{\mathrm{poly}}
\newcommand{\Lip}{\mathrm{Lip}}
\newcommand{\q}{\mathrm{c}}
\newcommand{\ns}{\mathrm{X}}
\newcommand{\nss}{\mathrm{Y}}
\newcommand{\co}{\circ\hspace{-0.02cm}}
\newcommand{\cu}{\mathrm{C}}
\newcommand{\inv}{\mathrm{inv}}
\newcommand{\orb}{\mathrm{orb}}
\providecommand{\comm}[1]{[#1]}
\begin{document}
\singlespacing


\title[An inverse theorem for all finite abelian groups via nilmanifolds]{An inverse theorem for all finite abelian groups\\ via nilmanifolds}

\author{Pablo Candela}
\address{Instituto de Ciencias Matem\'aticas, Calle Nicol\'as Cabrera 13-15, Madrid 28049, Spain}
\email{pablo.candela@icmat.es}

\author{Diego Gonz\'alez-S\'anchez}
\address{Universit\'e Paris Cit\'e, Sorbonne Universit\'e, CNRS, IMJ-PRG, F-75013 Paris, France}
\email{gonzalezsanchez@imj-prg.fr}

\author{Bal\'azs Szegedy}
\address{HUN-REN Alfr\'ed R\'enyi Institute of Mathematics\\ 
Re\'altanoda utca 13-15\\
Budapest, Hungary, H-1053}
\email{szegedyb@gmail.com}

\vspace*{-1cm}
\begin{abstract} 
We prove a first inverse theorem for Gowers norms on all finite abelian groups that uses only nilmanifolds (rather than possibly more general nilspaces). This makes progress toward confirming the Jamneshan--Tao conjecture. 
The correlating function in our theorem is a \emph{projected nilsequence}, obtained as the fiber-wise average of a nilsequence defined on a boundedly-larger abelian group extending the original abelian group. This result is tight in the following sense: we prove also that $k$-step projected nilsequences of bounded complexity are genuine obstructions to having small Gowers $U^{k+1}$-norm. This inverse theorem relies on a new result concerning compact finite-rank (\textsc{cfr}) nilspaces, which is the main  contribution in this paper: every $k$-step \textsc{cfr} nilspace is a factor of a $k$-step nilmanifold. This new connection between the classical theory of nilmanifolds and the more recent theory of nilspaces has applications beyond arithmetic combinatorics. We illustrate this with an application in topological dynamics, by proving the following result making progress on a question of Jamneshan, Shalom and Tao: every minimal $\mb{Z}^\omega$-system of order $k$ is a factor of an inverse limit of $\mb{Z}^\omega$-polynomial orbit systems of order $k$, these being natural generalizations of nilsystems alternative to translational systems.
\end{abstract}

\maketitle


\section{Introduction}\label{sec:intro}
\noindent Nilmanifolds play a fundamental role in higher-order Fourier analysis. Important early evidence of this was given by work of Host and Kra in ergodic theory describing the characteristic factors for uniformity seminorms  using nilmanifolds \cite{HK-non-conv}, and then by the inverse theorem for Gowers norms on finite cyclic groups using nilsequences, proved by Green, Tao and Ziegler \cite{GTZ}. 

A deeper understanding of these developments soon became necessary, motivated in particular by the goal of extending the inverse theorem to other abelian groups and by further applications in ergodic theory and topological dynamics. This led to a search for axiomatic structures that could clarify and adequately generalize this role of nilmanifolds. Initiated by Host and Kra in \cite{HK-par}, this approach led to the definition of nilspaces by Antol\'in Camarena and the third named author in \cite{CamSzeg}. Nilspace theory has grown rapidly since then (see \cite{Cand:Notes1, Cand:Notes2, CGSS, CGSS-p-hom, CGSS-abramov, CGSS-doucos, CGSS-bndtor, CGSS-spec, CScouplings, CSinverse, GGY, GMV1, GMV2, GMV3,SzegHigh, SzegFin} and references therein), with applications including the following: in ergodic theory, a description of general Host--Kra factors (for any countable discrete nilpotent group action) as compact nilspaces \cite[Theorem 5.11]{CScouplings}; in topological dynamics, a characterization of factors of dynamical systems corresponding to higher-order regionally proximal relations  \cite[Theorem 7.15]{GGY}; in arithmetic combinatorics, an inverse theorem for Gowers norms on any finite (or compact) abelian group, in terms of balanced nilspace polynomials \cite[Theorem 5.2]{CSinverse}.\footnote{This theorem implies the Green--Tao--Ziegler inverse theorem from \cite{GTZ} in the integer setting (see \cite{CSinverse}) and the Tao--Ziegler inverse theorem from \cite{T&Z-Low} in the finite-field setting (see \cite{CGSS-p-hom}).}

Nilmanifolds are the most important examples of compact nilspaces, and already in the initial paper \cite{CamSzeg} it was proved that large classes of compact nilspaces can be described in terms of nilmanifolds. An interesting aspect of this theory, though, is that there exist compact nilspaces that are \emph{not} nilmanifolds. This led to the question of whether, to carry out higher-order Fourier analysis on arbitrary finite abelian groups, nilmanifolds are sufficient, or whether, on the contrary, some families of finite abelian groups need more general nilspaces for this purpose. In this direction, a central conjecture of Jamneshan and Tao proposes an inverse theorem for arbitrary finite abelian groups which involves polynomial maps taking values in nilmanifolds; see \cite[Conjecture 1.11]{J&T}.

In this paper, we make progress towards confirming the Jamneshan--Tao conjecture, by proving a first inverse theorem for all finite abelian groups that uses only nilmanifolds (rather than possibly more general nilspaces). In our result, the correlating function based on a nilmanifold is what we call a \emph{projected nilsequence}. Roughly speaking, the value of this function at a given point in the abelian group is obtained by averaging the values of a nilsequence defined on a boundedly larger abelian group extending the original one (a similar concept of a \emph{projected polynomial phase function} appeared in the bounded-torsion setting, in \cite[Definition 1.10]{CGSS-bndtor}).

Recall that the \emph{exponent} of a group $\ab$, denoted $\exp_{\ab}$, is the least common multiple of the orders of all elements of $\ab$.

\begin{defn}[Projected nilsequence]\label{def:pronilseq}
Let $\ab$ be a finite abelian group and let $k\in \mb{N}$. A \emph{projected $k$-step nilsequence} on $\ab$ is a function $\phi_{*\tau}:\ab\to\mb{C}$ of the following form. There is a finite abelian group $\ab'$, a surjective homomorphism $\tau:\ab'\to \ab$, a filtered nilmanifold $(G/\Gamma,G_\bullet)$ of degree $k$, a polynomial map $g:\ab'\to G/\Gamma$ relative to $G_\bullet$, and a 1-bounded continuous function $F:G/\Gamma\to\mb{C}$ such that $\phi_{*\tau}(x) = \mb{E}_{y\in \tau^{-1}(x)}F(g(y))$ for every $x\in\ab$. We say that $\phi_{*\tau}$ has \emph{complexity} at most $C$ if the nilsequence $F(g(y))$ has complexity at most $C$ as per \cite[Definition 1.3]{CSinverse}. We say that $\phi_{*\tau}$ is \emph{rank-preserving}\footnote{The \emph{rank} $\textrm{rk}(\ab)$ is the minimum cardinality of a set of generators of $\ab$.} if $\textrm{rk}(\ab')=\textrm{rk}(\ab)$, and that $\phi_{*\tau}$ has \emph{torsion} $m'$ if $\exp_{\ab'}\le m'$.
\end{defn}
We can now state the inverse theorem.
\begin{theorem}\label{thm:projinv}
For any $k\in\mb{N}$ and $\delta>0$, there exists $C=C(k,\delta)>0$ and $\varepsilon=\varepsilon(\delta,k)>0$ such that the following holds. For any finite abelian group $\ab$ and any 1-bounded function $f:\ab\to \mb{C}$ with $\|f\|_{U^{k+1}}\ge \delta$, there exists a projected $k$-step nilsequence $\phi_{*\tau}$ on $\ab$ of complexity at most $C$ such that $|\mb{E}_{x\in\ab} f(x) \overline{\phi_{*\tau}(x)}|\ge \varepsilon$. Moreover $\phi_{*\tau}$ is rank-preserving and has torsion $C\exp_{\ab}^C$.
\end{theorem}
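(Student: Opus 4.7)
The plan is to combine the nilspace-polynomial inverse theorem of \cite{CSinverse} with the paper's new main result — that every $k$-step \textsc{cfr} nilspace is a factor of a $k$-step nilmanifold — and then to lift the resulting nilspace polynomial through that factor map while controlling torsion and rank. First I would apply \cite[Theorem 5.2]{CSinverse} to $f$: this yields a $k$-step \textsc{cfr} nilspace $X$, a polynomial map $g:\ab\to X$, and a $1$-bounded continuous function $F:X\to\mb{C}$, all of complexity $O_{k,\delta}(1)$, such that $|\mb{E}_{x\in\ab}f(x)\overline{F(g(x))}|\ge \varepsilon_0(k,\delta)$ for some $\varepsilon_0(k,\delta)>0$. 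Next, I would invoke the new main theorem to produce a $k$-step filtered nilmanifold $(G/\Gamma,G_\bullet)$ of bounded complexity together with a nilspace factor map $\pi:G/\Gamma\to X$. The composition $\wt F:=F\co\pi$ is then a $1$-bounded continuous function on $G/\Gamma$ whose complexity is controlled by those of $F$ and $\pi$.

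The core step is to lift $g$ through $\pi$ over a bounded extension of $\ab$. One seeks a finite abelian group $\ab'$, a surjective homomorphism $\tau:\ab'\to\ab$ with $\mr{rk}(\ab')=\mr{rk}(\ab)$ and $\exp_{\ab'}\le C\exp_\ab^C$, and a polynomial map $\wt g:\ab'\to G/\Gamma$ relative to $G_\bullet$, such that $\pi\co\wt g = g\co\tau$. I would construct $\wt g$ by induction on the step $k$, using the tower presentation of both $G/\Gamma$ and $X$ as sequences of principal fibrations whose fibers are compact abelian groups (tori in the nilmanifold case). At each fibration, the previously constructed lower-step lift must be extended through a polynomial section of a torus-valued cocycle; the obstruction is precisely the torsion of $\ab$, and it is here that one passes to the extension $\ab'$, enlarging the existing cyclic factors of $\ab$ by bounded multiples — thereby keeping the rank fixed — in order to realise $\mb{T}^{d_i}$-valued data as finite-order values inside $\ab'$.

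This lifting step is the main obstacle: one must carry out such extensions coherently through the whole tower while simultaneously preserving the rank and keeping the torsion polynomially bounded in $\exp_\ab$. Once $\wt g$ is constructed, the conclusion is immediate. Setting $\phi_{*\tau}(x):=\mb{E}_{y\in\tau^{-1}(x)}\wt F(\wt g(y))$, the identity $\pi\co\wt g=g\co\tau$ gives $\wt F(\wt g(y))=F(g(\tau(y)))=F(g(x))$ for every $y\in\tau^{-1}(x)$, so $\phi_{*\tau}(x)=F(g(x))$ and the correlation estimate from the first step yields $|\mb{E}_{x\in\ab}f(x)\overline{\phi_{*\tau}(x)}|\ge \varepsilon$. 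The complexity bound on $\phi_{*\tau}$ follows by combining the complexity bounds on $\wt g$, $F$ and $\pi$, while rank-preservation and the torsion bound on $\ab'$ are precisely the guarantees built into the lifting construction.
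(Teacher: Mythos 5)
Your outer strategy matches the paper's proof exactly: apply the inverse theorem from \cite{CSinverse}, invoke Theorem \ref{thm:cfr-are-factor-of-nilmanifolds} to produce a fibration $\psi:\nss\to\ns$ from a nilmanifold, lift through a bounded extension $\ab'$ of $\ab$, and note that the resulting fiber-average recovers the original correlation (your computation $\phi_{*\tau}=F\co g$ is correct, and matches how the paper reduces the correlation estimate). You also correctly identify the lifting step as the crux.

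The gap is in that lifting step, which you assert but do not carry out, and the mechanism you sketch does not track what is actually needed. Two concrete points. First, you work directly over finite abelian groups and invoke ``polynomial sections of torus-valued cocycles'', but the paper's actual device (Theorem \ref{thm:CFRpermorphlift}) is to pull $\varphi$ back along a surjection $\mb{Z}^n\to\ab$, obtaining an $M$-periodic morphism $\mb{Z}^n\to\ns$ with $M=\exp_\ab$, and then to lift \emph{layer by layer through the nilmanifold tower at the level of $\mb{Z}^n$}. The engine is Proposition \ref{prop:perequivrat}: a polynomial map $\mb{Z}^n\to G/\Gamma$ is $M$-periodic if and only if all its Taylor coefficients are $q$-rational with respect to $\Gamma$ for $q=O(M^{O(1)})$. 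This equivalence is what lets one lift rational Taylor coefficients of the projected map modulo $G_k$ to rational coefficients in $G$ (Lemma \ref{lem:ratcoefflift}), reassemble, and get a new period bounded by $CM^C$ (Lemma \ref{lem:multiGTlem}). Without a characterization of this kind, the torsion bound $\exp_{\ab'}\le C\exp_\ab^C$ --- which you need for the statement --- is an unproven assertion; it is not at all automatic that ``enlarging cyclic factors by bounded multiples'' resolves the obstruction. Second, the parenthetical ``tori in the nilmanifold case'' is incorrect for the nilmanifolds supplied by Theorem \ref{thm:cfr-are-factor-of-nilmanifolds}: universal \textsc{cfr} nilspaces are generally disconnected, so the structure groups are compact abelian Lie groups $\mb{T}^a\times B$ with $B$ finite, and handling the finite part $B$ (via the exponent of $H/H^0$ in Lemma \ref{lem:ratcoefflift}) is exactly where the polynomial loss in the torsion bound originates. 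So the skeleton is right, but the quantitative lifting --- the genuine technical content of the theorem --- is missing.
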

\noindent We prove this in Section \ref{sec:projinv}. We also prove in that section that projected nilsequences are genuine obstructions to a function having small Gowers norm (see Proposition \ref{prop:directthm}). Thus Theorem \ref{thm:projinv} provides a first genuine inverse theorem for finite abelian groups solely in terms of nilmanifolds.

While nilmanifolds suffice for the statement of Theorem \ref{thm:projinv}, the proof makes crucial use of more general nilspace theory. In particular, it relies on a new result which is the main  contribution of this paper. To state this, we use the following definition (the basic terms from nilspace theory involved in this definition are recalled in Section \ref{sec:prelim}).
\begin{defn}[Nilspace extensions and factors]
A \emph{nilspace extension} of a nilspace $\ns$ is a nilspace $\nss$ such that there is a fibration $\nss\to\ns$. Accordingly, if $\nss$ is a nilspace extension of $\ns$ then $\ns$ is said to be a \emph{nilspace factor} of $\nss$. When $\ns$ and $\nss$ have topologies compatible with the cube structures\footnote{E.g.\ if $\ns$ and $\nss$ are both compact nilspaces, or locally-compact-Hausdorff nilspaces (see \cite[Definition 2.1]{CGSS-doucos}).}, then we require the fibration to be continuous.
\end{defn}
\noindent Our main result identifies a special class of compact nilspaces of finite rank which are also nilmanifolds and are similar to the \emph{universal nilmanifolds} introduced in \cite[Definition 9.1]{GTZ} (though not identical; see Definition \ref{def:univCFRns}). These \emph{universal nilspaces}, as we call them, are the central objects in our main result, which we can now state.
\begin{theorem}\label{thm:cfr-are-factor-of-nilmanifolds}
Every $k$-step compact finite-rank nilspace is a factor of a $k$-step universal nilspace.
\end{theorem}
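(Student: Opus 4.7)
I would argue by induction on the step $k$, exploiting the tower decomposition $\ns = \ns_k \twoheadrightarrow \ns_{k-1} \twoheadrightarrow \cdots \twoheadrightarrow \ns_0 = \{*\}$ of a $k$-step \textsc{cfr} nilspace, in which each projection $\ns_i \twoheadrightarrow \ns_{i-1}$ is a principal bundle with structure group a compact abelian Lie group $Z_i$ (the \textsc{cfr} assumption forces $Z_i \cong \mathbb{T}^{a_i} \times F_i$ with $F_i$ finite abelian). The base case $k=0$ is trivial. For the inductive step, I assume that $\ns_{k-1}$ is a factor of some $(k{-}1)$-step universal nilspace $\nss'$ via a continuous fibration $\pi\colon \nss' \twoheadrightarrow \ns_{k-1}$, and I form the pullback $\wt\ns := \nss' \times_{\ns_{k-1}} \ns$. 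This is naturally a $k$-step \textsc{cfr} nilspace carrying a fibration $\wt\ns \twoheadrightarrow \ns$ (as a pullback of a fibration) and a principal $Z_k$-bundle projection $\wt\ns \twoheadrightarrow \nss'$, classified by the pulled-back cocycle $\sigma := \pi^*\sigma_k$ of the extension $\ns \twoheadrightarrow \ns_{k-1}$.

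\textbf{Constructing the universal cover.} It then remains to produce a $k$-step universal nilspace $\nss$ that fibers over $\wt\ns$. The idea is to fix a surjection $\wt Z_k \twoheadrightarrow Z_k$ from a ``free'' compact abelian Lie group $\wt Z_k$ prescribed by the top-level structure group of the candidate universal nilspace (in the sense of Definition~\ref{def:univCFRns}), lift the cocycle $\sigma$ on $\nss'$ to a $\wt Z_k$-valued cocycle $\wt\sigma$, and take $\nss$ to be the resulting degree-$k$ abelian extension of $\nss'$ by $\wt\sigma$. The choice of $\wt Z_k$ together with the inductive universality of $\nss'$ should be designed so that $\nss$ satisfies Definition~\ref{def:univCFRns}; composing the bundle map $\nss \twoheadrightarrow \nss'$ determined by $\wt\sigma$ with the projection $\wt Z_k \twoheadrightarrow Z_k$ then yields a map $\nss \to \wt\ns$ whose composition with $\wt\ns \twoheadrightarrow \ns$ is the desired fibration.

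\textbf{Main obstacle.} The decisive step is the cocycle lift: showing that any $Z_k$-valued degree-$(k{+}1)$ nilspace cocycle on the $(k{-}1)$-step universal nilspace $\nss'$ lifts through $\wt Z_k \twoheadrightarrow Z_k$, i.e.\ that the obstruction class in the next higher nilspace cohomology group of $\nss'$ with coefficients in $\ker(\wt Z_k \twoheadrightarrow Z_k)$ vanishes. I expect this to split into two cases. For the torus factor of $Z_k$, the kernel is a discrete lattice and one needs a continuous-selection argument together with the vanishing of the relevant nilspace cohomology of $\nss'$ with lattice coefficients, reflecting a ``contractibility'' of universal nilspaces built from free nilpotent Lie groups. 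For the finite factor, one argues via a projectivity property of universal nilspaces (here one mimics, in the category of compact nilspaces, the fact that free modules admit lifts along surjections). Once the lift exists, the remainder of the argument is a diagram chase and a routine verification that the resulting extension meets Definition~\ref{def:univCFRns} and is a nilmanifold.
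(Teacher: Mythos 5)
Your high-level plan shares the induction-on-step skeleton with the paper, and the use of fiber products $\nss'\times_{\ns_{k-1}}\ns$ to reduce the problem to extending a universal nilspace at the top level is indeed the right move (it appears in the paper's Lemma \ref{lem:engine} and the proof of Theorem \ref{thm:cosetnilspacecovering}). However, the decisive technical step in your plan — lifting a $Z_k$-valued degree-$k$ cocycle on $\nss'$ through a surjection $\wt{Z}_k\twoheadrightarrow Z_k$, justified by vanishing of nilspace cohomology with discrete coefficients for the torus factor and a ``projectivity property'' for the finite factor — is not established and is not what the paper does. Two concrete issues. First, your choice of $\wt{Z}_k$ is never pinned down: a compact abelian Lie group surjecting onto $Z_k\cong\mb{T}^n\times F_k$ with the finite factor ``freed up'' would need kernel containing a torus or a non-cocompact lattice, and in either case the obstruction group for the lift is not obviously zero, nor is it clear that the resulting extension satisfies Definition~\ref{def:univCFRns} (which demands a specific $F/\Gamma$ presentation with $\Gamma$ free graded nilpotent — a cocycle alone does not hand you such a presentation). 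Second, your claimed projectivity of universal nilspaces with respect to finite-group surjections is a substantive assertion that would need its own proof.

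The paper sidesteps the cocycle-lifting problem entirely by a two-stage strategy. Stage one (Theorem~\ref{thm:wscover}) replaces $\ns$ by a \emph{weakly-splitting} extension, in which the finite torsion part of each structure group is pulled out as a literal direct factor $\mc{D}_i(B_i)$; this is achieved not by cohomology but by passing to a subgroup $T(G,m_1,\ldots,m_k)\le\Gamma$ built from sufficiently composite powers, using Hall--Petresco manipulations (Lemma~\ref{lem:grouppower}, Theorem~\ref{thm:power-fib-tran}). After this, the finite part never needs to be ``lifted'' — it is just carried along as a product factor, and direct products with $\mc{D}_k(B)$ preserve universality (Lemma~\ref{lem:dirproduniv}). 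Stage two handles only the toral part: it proves universal nilspaces are $k$-toral-splitting (Theorem~\ref{thm2}), so the toral extension of $\nss'$ obtained via the fiber product splits as $\nss'\times\mc{D}_k(\mb{T}^n)$. This splitting is proved not via abstract cohomology vanishing but via the concrete Lie-theoretic structure of ported/universal nilspaces (Lie parametrization $\beta$, Mal'cev rigidity, free graded nilpotent groups — Subsections~\ref{subsec:LieParam}, \ref{subsec:thm1}, \ref{subsec:thm2}). So where your plan says ``lift the cocycle,'' the paper says ``first kill the finite obstruction by passing to a power-subgroup extension, then split the torus part using rigidity of free nilpotent Lie groups.'' The weakly-splitting reduction is essential and is the main idea your proposal is missing.
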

\noindent Let us outline how this result yields Theorem \ref{thm:projinv} (we give a more detailed explanation in Section \ref{sec:projinv}). The inverse theorem \cite[Theorem 1.6]{CSinverse} yields a nilspace polynomial of the form $F\co \varphi:\ab\to \mb{C}$ correlating with the original function $f:\ab\to\mb{C}$. By Theorem \ref{thm:cfr-are-factor-of-nilmanifolds}, we have a fibration $\psi:\nss\to\ns$ where the nilspace extension $\nss$ is a nilmanifold. In general, we presently are unable to ensure the existence of a morphism from $\ab$ to $\nss$ whose composition with $\psi$ equals $\varphi$ (the existence of such a morphism would confirm the Jamneshan--Tao conjecture). However, we are able to find a finite abelian group $\ab'$ boundedly larger than $\ab$ with a surjective homomorphism $\tau:\ab'\to\ab$ and a morphism $\varphi':\ab'\to\nss$ such that $\varphi\co\tau=\psi\co\varphi'$. This yields the correlating projected nilsequence $\phi_{*\tau}$ (where the polynomial map $g$ is the nilspace morphism $\varphi'$). This plan can be visualized with the following diagram.

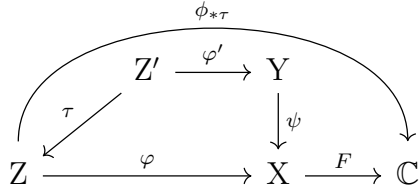
\begin{figure}[h]
\vspace{-0.6cm}
\[
\begin{tikzcd}
	&& {} \\
	& {\ab'} & \nss \\
	\ab && \ns & {\mb{C}}
	\arrow["{\varphi'}", from=2-2, to=2-3]
	\arrow["\tau"', from=2-2, to=3-1]
	\arrow["\psi", from=2-3, to=3-3]
	\arrow["\varphi", from=3-1, to=3-3]
	\arrow["{\phi_{*\tau}}", shift left=1, curve={height=-30pt}, from=3-1, to=3-4,  bend left=90]
	\arrow["F", from=3-3, to=3-4]
\end{tikzcd}
\]
\captionsetup{justification=centering,margin=2cm}
\caption{Summary of the proof of Theorem \ref{thm:projinv}.}
\end{figure}
\noindent Note that the process of extending from the original morphism $\varphi:\ab\to\ns$ to the new morphism $\varphi':\ab'\to\nss$, and using the corresponding projected nilsequence, resonates naturally with notions of generalized extensions of measure-preserving group actions in ergodic theory, such as those considered in \cite[Definition 1.15]{Shalom}.

Theorem \ref{thm:cfr-are-factor-of-nilmanifolds} has applications in areas other than arithmetic combinatorics. To illustrate this in this paper, we provide an application in topological dynamics, giving a description of topological $\mb{Z}^\omega$-systems of finite order in terms of much simpler systems analogous to nilsystems. This application is motivated by a question of Jamneshan, Shalom and Tao on the structure of ergodic $\mb{Z}^\omega$-systems of finite order \cite[Question 1.5]{JST1}. To explain this further, we shall use standard notions from ergodic theory and topological dynamics. We assume these notions in this introduction but we will recall them in more detail in Section \ref{sec:ergapp}.

Recall that a nilsystem consists of a nilmanifold $G/\Gamma$ equipped with a group action (classically, a $\mb{Z}$-action) consisting of maps $T:G/\Gamma\to G/\Gamma$ defined by $x\Gamma\mapsto tx\Gamma$ for some $t\in G$. Nilsystems and their inverse limits are crucial for the structural analysis of Host--Kra factors for various kinds of measure-preserving group actions, as well as for the related analysis in topological dynamics concerning higher-order regionally proximal relations. However,  most of these results deal with actions of groups that are either finitely generated (in ergodic theory, see the original result \cite[Theorem 10]{HK-non-conv} and extensions such as \cite[Theorem 5.12]{CScouplings}) or compactly generated (in topological dynamics, see \cite{GMV3}). For actions of larger groups, much less is known on whether such structural descriptions still hold in terms of nilsystems. Some classes of such larger group actions have been studied. For instance, in the ergodic setting, the actions of the (non-finitely generated) additive group of the countable vector space $\mb{F}_p^\omega$ have been studied in \cite{BTZ,CGSS-abramov, JST2}, and other so-called \emph{totally disconnected systems} are analyzed in \cite{JST1} (see also  \cite{JST3, Shalom,Shalom2}). 

In \cite[\S 1.4]{JST1} the authors pose several questions regarding the structure of Host--Kra factors for actions of groups that are not finitely generated. In particular, Question 1.5 in \cite{JST1} asks whether every ergodic $\mb{Z}^\omega$-system of order $k$ is isomorphic to an inverse limit of \emph{translational systems} of degree $k$, these being generalizations of nilsystems (see \cite[Definition A.2]{JST1}).\footnote{Strictly speaking, \cite[Question 1.5]{JST1} is posed in the negative sense: is there any ergodic $\mb{Z}^\omega$-system of order $k$ that is \emph{not} isomorphic to an inverse limit of translational systems of degree $k$? We use the positive version above.} Using Theorem \ref{thm:cfr-are-factor-of-nilmanifolds}, we provide a partial positive answer to this question in the topological setting. Instead of translational systems, the generalizations of nilsystems that we use are what we call \emph{$\mb{Z}^\omega$-polynomial orbit systems} (see Definition \ref{def:gen-poly-orb}). These systems are indeed natural generalizations of nilsystems, in the following sense. Given a filtered nilmanifold $(G/\Gamma,G_\bullet)$, the set of polynomial maps $\mb{Z}\to G/\Gamma$, denoted by $\poly(\mb{Z},G/\Gamma)$ and equipped with the shift $(g(n)\Gamma)_n \mapsto (g(n+1)\Gamma)_n$, can be viewed as a nilsystem called a \emph{nilsystem of polynomial orbits} (see \cite[Ch.\ 14, Proposition 13]{HKbook}). If $(G,G_\bullet)$ is of degree $k$, then the closure, inside $\poly(\mb{Z},G/\Gamma)$, of the set of shifts of a single polynomial orbit $\big(g(n)\Gamma\big)_{n\in \mb{Z}}$, can also be viewed as a degree-$k$ nilsystem (see \cite[Ch.\ 14, Proposition 14]{HKbook}). Conversely, any ergodic or minimal\footnote{For nilsystems, these properties are equivalent; see \cite[Ch.\ 11, Theorem 11]{HKbook}.} nilsystem $(G/\Gamma,T:x\Gamma\mapsto tx\Gamma)$ is isomorphic to the nilsystem of the polynomial orbit $\overline{\orb(g\Gamma)}$ generated by $g:\mb{Z}\to G$ given by $n\mapsto t^n$ (this can be proved using Lemma \ref{lem:equality-orbits}). The notion of a $\mb{Z}^\omega$-polynomial orbit system is obtained by replacing the original polynomial map $g(n)\Gamma$ in $\poly(\mb{Z},G/\Gamma)$ by a polynomial $g(x)\Gamma\in \poly(\mb{Z}^\omega,G/\Gamma)$; see Definition \ref{def:gen-poly-orb}.

With these notions, our application in dynamics can now be stated.
\begin{theorem}\label{thm:main-dynam}
Let $\ns$ be a minimal $\mb{Z}^\omega$-system of order $k$. Then $\ns$ is the factor of an inverse limit of minimal $\mb{Z}^\omega$-polynomial orbit systems of order $k$.
\end{theorem}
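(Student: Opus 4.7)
The plan is to represent $\ns$ via nilspace theory, decompose it as an inverse limit of systems on CFR nilspaces, cover each CFR level by a nilmanifold using Theorem~\ref{thm:cfr-are-factor-of-nilmanifolds}, and finally recast each nilmanifold cover as a $\mb{Z}^\omega$-polynomial orbit system. More concretely, via the correspondence between minimal topological systems of order $k$ and $k$-step compact nilspaces equipped with dynamics by nilspace translations (in the spirit of \cite{GGY,GMV3}), I would first represent $\ns$ as a $k$-step compact nilspace on which $\mb{Z}^\omega$ acts minimally by continuous nilspace translations. By the standard inverse-limit structure theorem for compact nilspaces, $\ns=\varprojlim_{i} \ns_i$ with each $\ns_i$ a $k$-step CFR nilspace; after refining the inverse system if needed, the $\mb{Z}^\omega$-action descends equivariantly to each $\ns_i$, yielding minimal $\mb{Z}^\omega$-systems $(\ns_i,T^{(i)})$ with equivariant fibrations $\ns_{i+1}\to\ns_i$.

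For each fixed $i$, I would apply Theorem~\ref{thm:cfr-are-factor-of-nilmanifolds} to obtain a fibration $\psi_i:\nss_i\to\ns_i$ where $\nss_i$ is a $k$-step universal nilspace, realized as a filtered nilmanifold $(G_i/\Gamma_i,G_{i,\bullet})$. Fixing a base point $y_i\in\nss_i$, the orbit map $x\mapsto T^{(i)}_x(\psi_i(y_i))$ is a nilspace morphism $\mb{Z}^\omega\to\ns_i$, and by a lifting argument through the fibration $\psi_i$ I would produce a nilspace morphism $g_i:\mb{Z}^\omega\to\nss_i$ satisfying $\psi_i\co g_i(x)=T^{(i)}_x(\psi_i(y_i))$. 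Since $\nss_i$ is a filtered nilmanifold, $g_i$ is polynomial with respect to $G_{i,\bullet}$, i.e.\ $g_i\in\poly(\mb{Z}^\omega,G_i/\Gamma_i)$. The closure of $\{g_i(\,\cdot\,+x):x\in\mb{Z}^\omega\}$ inside $\poly(\mb{Z}^\omega,G_i/\Gamma_i)$ under the coordinatewise shift is, by Definition~\ref{def:gen-poly-orb}, a $\mb{Z}^\omega$-polynomial orbit system; passing to a minimal subsystem $\nss_i^\sharp$ containing $g_i$ would give a minimal $\mb{Z}^\omega$-polynomial orbit system of order $k$, and evaluation at $0\in\mb{Z}^\omega$ composed with $\psi_i$ would provide an equivariant surjection $\nss_i^\sharp\to\ns_i$.

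Finally, I would arrange the base points and lifts compatibly across the inverse system, by an inductive choice in which $g_{i+1}$ projects onto $g_i$ through fibrations $\nss_{i+1}\to\nss_i$ built from the nilspace factor structure. This produces an inverse system $(\nss_i^\sharp)$ of minimal $\mb{Z}^\omega$-polynomial orbit systems of order $k$, whose inverse limit $\nss^\sharp:=\varprojlim_i \nss_i^\sharp$ maps onto $\ns=\varprojlim_i \ns_i$ through the $\psi_i$'s, exhibiting $\ns$ as a factor of $\nss^\sharp$ as required.

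The main obstacle will be the lifting step that produces $g_i$: given the $\mb{Z}^\omega$-orbit map into $\ns_i$, one must lift it through the nilspace fibration $\psi_i$ to a polynomial map into the nilmanifold $\nss_i$, exploiting the specific universal-nilspace structure furnished by Theorem~\ref{thm:cfr-are-factor-of-nilmanifolds} and the lifting properties of fibrations in nilspace theory. The subsequent coherent choice of lifts across the inverse system, while preserving minimality at each level, is a further subtle point, handled by working in suitable minimal subsystems within each $\poly(\mb{Z}^\omega,G_i/\Gamma_i)$.
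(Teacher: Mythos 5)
Your strategy is in the right ballpark — use Theorem~\ref{thm:cfr-are-factor-of-nilmanifolds} to obtain nilmanifold covers and then realize the cover as polynomial orbit systems via lifting — but your level-by-level route creates problems that the paper's proof carefully sidesteps, and you do not resolve them.

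\textbf{First gap: descent of the dynamics.} You write that ``after refining the inverse system if needed, the $\mb{Z}^\omega$-action descends equivariantly to each $\ns_i$.'' This is not automatic. The inverse-limit decomposition $\ns=\varprojlim\ns_i$ given by the compact nilspace structure theorem is a decomposition in the nilspace category; it is a separate question whether each translation of $\ns$ is consistent with the factor map $\ns\to\ns_i$. The paper avoids this entirely by never trying to put dynamics on the CFR factors: it encodes the dynamics in a single point, namely the orbit map $\phi^*(x_0)\in\hom(\mc{D}_1(\mb{Z}^\omega),\ns)$, $\phi^*(x_0)(t)=tx_0$, and shows that $\ns$ is isomorphic as a dynamical system to $\overline{\orb(\phi^*(x_0))}$ under the shift. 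All subsequent steps operate on polynomial maps, where the shift is the canonical action, so no descent statement is ever needed.

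\textbf{Second gap: coherence of the lifts.} You flag the need to ``arrange the base points and lifts compatibly across the inverse system'' as a ``further subtle point, handled by working in suitable minimal subsystems'' — but you do not explain how, and this is precisely where a level-by-level construction would stall: you would be producing the nilmanifold covers $\nss_i$ of the $\ns_i$ independently with Theorem~\ref{thm:cfr-are-factor-of-nilmanifolds}, and there is no reason for a fibration $\nss_{i+1}\to\nss_i$ to exist, let alone for the polynomial lifts $g_i$ to be compatible under it. The paper resolves this in two moves. First, Corollary~\ref{cor:cov-by-inv-lim-coset} builds the nilmanifold tower coherently from the start by alternating fiber products with applications of Theorem~\ref{thm:cfr-are-factor-of-nilmanifolds}, producing one compact nilspace $\nss=\varprojlim G_i/\Lambda_i$ and a single fibration $\varphi:\nss\to\ns$. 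Second, it lifts \emph{once}: by \cite[Proposition 4.3]{CGSS-abramov}, $\phi^*(x_0)$ lifts to $g\in\hom(\mc{D}_1(\mb{Z}^\omega),\nss)$, and then Lemma~\ref{lem:equality-orbits} gives $\varphi^*(\overline{\orb(g)})=\overline{\orb(\phi^*(x_0))}\cong\ns$. The final observation that $\hom(\mc{D}_1(\mb{Z}^\omega),\nss)\cong\varprojlim\hom(\mc{D}_1(\mb{Z}^\omega),G_i/\Lambda_i)$ then splits $\overline{\orb(g)}$ into the desired inverse limit $\varprojlim\overline{\orb(\varphi_i\co g)}$ of polynomial orbit systems for free; the compatibility you worry about is built in.

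A smaller point: you do not need to ``pass to a minimal subsystem $\nss_i^\sharp$ containing $g_i$.'' The paper's Lemma~\ref{lem:poly-orb-distal-minimal} shows that every $\mb{Z}^\omega$-polynomial orbit system is already minimal (and distal), and Proposition~\ref{prop:properties-z-omega-sys} shows it has order $k$; the full orbit closure is what you want. Introducing a ``minimal subsystem'' risks leaving the class of polynomial orbit systems as defined, since Definition~\ref{def:gen-poly-orb} requires the system to be the orbit closure of a single polynomial map.
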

\noindent We believe that with additional work in the ergodic setting, Theorem \ref{thm:cfr-are-factor-of-nilmanifolds} can yield a result closer still to a complete answer to \cite[Question 1.5]{JST1}. Specifically, this result would express every ergodic $\mb{Z}^\omega$-system as a factor of an inverse limit of translational systems.

\medskip

The paper has the following outline. In Section \ref{sec:prelim} we gather and explain some necessary background notions from nilspace theory. In Section \ref{sec:cfrmainresult} we prove Theorem \ref{thm:cfr-are-factor-of-nilmanifolds}. The inverse theorem with projected nilsequences, Theorem \ref{thm:projinv}, is proved in Section \ref{sec:projinv}. Finally, in Section \ref{sec:ergapp} we present the application in dynamics, Theorem \ref{thm:main-dynam}.

\section{Preliminaries}\label{sec:prelim}
\noindent As mentioned in the introduction, nilspace theory already has much literature devoted to it. In this section, we review only the main concepts and results that we need for this paper, providing references where appropriate. 

Let us begin by defining nilspaces, following \cite[\S 1]{Cand:Notes1}. This requires the following concepts. For each positive integer $n$ let $\db{n}$ be the set $\{0,1\}^n$, and let $\db{0}:=\{0\}$.

\begin{defn}[Discrete-cube morphisms] Let $n,m\ge 0$. A function $\phi:\db{n}\to \db{m}$ is a \emph{discrete-cube morphism} if it is the restriction of an affine homomorphism\footnote{An affine homomorphism $f:Z_1\to Z_2$ between abelian groups $Z_1$ and $Z_2$ is a function $f(z)=g(z)+t$, where $g:Z_1\to Z_2$ is a homomorphism and $t\in Z_2$.} $f:\mb{Z}^n\to \mb{Z}^m$.
\end{defn}

\begin{defn}[Faces and face maps] Let $n\ge 0$. A \emph{face} $F$ of dimension $m\le n$ is a subset of $\db{n}$ defined by fixing $n-m$ coordinates, that is, a set of the form $F=\{\underline{v}\in\db{n}: v(i)=t(i), \ i\in I\}$ for some $I\subset \{1,\ldots,n\}$ with $|I|=n-m$ and $t(i)\in \{0,1\}$ for all $i\in I$. A \emph{face map} $\phi:\db{k}\to \db{n}$ is an injective discrete-cube morphism such that $\phi(\db{k})$ is a face.
\end{defn}

\begin{defn}[Nilspaces]\label{def:nilspace} A \emph{nilspace} is a set $\ns$ equipped with a collection of sets $\cu^n(\ns)\subset \ns^{\db{n}}$ for $n\in \mb{Z}_{\geq 0}$, such that the following axioms are satisfied:
\setlength{\leftmargini}{0.8cm}
\begin{enumerate}
    \item (Composition) For every discrete-cube morphism $\phi:\db{m}\to\db{n}$ and every $\q\in\cu^n(\ns)$, we have $\q\co\phi \in\cu^m(\ns)$.
    \item (Ergodicity) $\cu^1(\ns)=\ns^{\db{1}}$.
    \item (Corner completion) Let $\q':\db{n}\setminus \{1^n\}$ be a function such that if $\phi:\db{n-1}\to\db{n}$ is a face map with $\phi(\db{n-1})\subset \db{n}\setminus\{1^n\}$, then $\q'\co\phi\in\cu^{n-1}(\ns)$. Then there exists $\q\in\cu^n(\ns)$ such that $\q(v)=\q'(v)$ for all $v\in \db{n}\setminus\{1^n\}$.
\end{enumerate}
\noindent The elements of $\cu^n(\ns)$ are called the \emph{$n$-cubes} on $\ns$. A map $\q'$ as in axiom $(iii)$ is called an \emph{$n$-corner} on $\ns$, and a cube $\q$ satisfying the conclusion of this axiom is a \emph{completion} of $\q'$. We say that $\ns$ is a \emph{$k$-step} nilspace if for $n=k+1$ every $n$-corner has a unique completion. 

Finally, if $\ns$ is endowed with a compact second-countable Hausdorff topology and for every $n\in\mb{N}$ the set $\cu^{n}(\ns)$ is closed in the product topology on $\ns^{\db{n}}$, then $\ns$ is a \emph{compact nilspace}.
\end{defn}
For any nilspace and any positive integer $k$, there is a maximal equivalence relation such that the corresponding quotient space is a $k$-step nilspace. This yields the following key notion.
\begin{defn}[Canonical $k$-step factor of a nilspace]
Let $\ns$ be a nilspace. For each integer $k\ge 1$, we define the relation $\sim_k$ on $\ns$ by $x\sim_k y \iff \exists\, \q_0,\q_1\in \cu^{k+1}(\ns)\ \text{such that}\ \q_0(0^{k+1})=x,\ \q_1(0^{k+1})=y,\ \text{and}\ \q_0(v)=\q_1(v)\ \forall\, v\neq 0^{k+1}$. Let $\pi_k:\ns\to\ns/\sim_k$ be the quotient map. Then the space $\ns/\sim_k$ together with the cubes $\cu^n(\ns/\sim_k):=\pi_k^{\db{n}}(\cu^n(\ns))$ is a $k$-step nilspace, known as the (canonical) \emph{$k$-step factor} of $\ns$.
\end{defn}
\noindent Nilspace theory has proved to be very useful for giving unified descriptions and explanations of a large variety of phenomena in higher-order Fourier analysis. Notwithstanding this, nilspace theory remains less developed than the machinery based on the more specific and classical concept of nilmanifolds (we recall their definition below). In particular, in ergodic theory and topological dynamics, nilsystems have been much more studied than their more recent nilspace-theoretic generalizations (known as \emph{nilspace systems}), see \cite[Ch.\ 11]{HKbook}; in arithmetic combinatorics, some of the most important inverse theorems use nilmanifolds \cite{GT08, GTZ-U4, GTZ}. 

The Jamneshan--Tao conjecture \cite[Conjecture 1.11]{J&T} can be viewed as motivation for developing stronger connections between compact nilspaces and nilmanifolds. The main result of this paper, Theorem \ref{thm:cfr-are-factor-of-nilmanifolds}, offers a new connection of this type which is the strongest to date in its level of generality. To explain it in more detail, let us now recall the concept of a filtered nilmanifold, which involves the following notion.

\begin{defn}[Filtration]
A \emph{filtration} on a group $G$ is a sequence $G_\bullet = (G_i)_{i=0}^\infty$ of subgroups of $G$ satisfying $G = G_0 \geq G_1 \geq G_2 \geq \cdots$and such that $[G_i, G_j] \subseteq G_{i+j}$ for all $i,j \ge 0$.  We refer to $(G, G_\bullet)$ as a \emph{filtered group}. If $G_{k+1} = \{\mathrm{id}_G\}$ we say that the filtered group $(G, G_\bullet)$ is \emph{of degree $k$}.
\end{defn}

\begin{defn}[Nilmanifold]\label{def:nilmanifold}
A \emph{nilmanifold} is a quotient space $G/\Gamma$ where $G$ is a nilpotent Lie group\footnote{In our definition we do \emph{not} require $G$ to be connected. For clarity, we occasionally emphasize this in statements of theorems. When we work with \emph{connected} nilmanifolds, we always state it explicitly.} and $\Gamma$ is a discrete, cocompact subgroup of $G$. If $G_\bullet$ is a filtration on $G$ of degree at most $k$, with each $G_i$ a closed subgroup of $G$ and with each subgroup $\Gamma \cap G_i$ cocompact in $G_i$, then we call $(G/\Gamma, G_\bullet)$ a \emph{filtered nilmanifold of degree $k$}.
\end{defn}
\noindent To begin relating compact nilspaces with nilmanifolds, let us first recall that any degree-$k$ nilmanifold $G/\Gamma$ is automatically a compact $k$-step nilspace (see \cite[Proposition 1.1.2]{Cand:Notes2})\footnote{The statement of this result assumes $G$ is connected, but the proof works exactly the same without this assumption.} when equipped with the projections of the \emph{Host-Kra cubes} on $(G,G_\bullet)$, which we recall here.
\begin{defn}[Host-Kra cubes, group nilspaces and coset nilspaces]\label{def:hk-cubes}
Let $(G, G_\bullet)$ be a filtered group. Given a face $F\subseteq\{0,1\}^n$ and $g\in G$, let $g^{F}\in G^{\{0,1\}^n}$ be defined by $g^{F}(v)=g$ if $v\in F$ and $\id_G$ otherwise. Then if we let $\cu^n(G):=\{\,g^{F}:F\text{ face in }\db{n},g\in G_{\mathrm{codim}(F)}\,\}\le G^{\{0,1\}^n}$ the set $G$ together with the cube sets $\cu^n(G)$ for $n\ge 0$ is a nilspace, called the \emph{group nilspace} associated with $(G,G_\bullet)$. If $G_\bullet$ has degree $k$ then the corresponding group nilspace has step $k$. Finally, if $\Gamma$ is any subgroup of $G$ then the quotient set $G/\Gamma$ equipped with the cubes $\cu^n(G/\Gamma):=\cu^n(G_\bullet)\Gamma^{\db{n}}$ is a $k$-step nilspace called a \emph{coset} nilspace.\footnote{This is proved in \cite[\S 2.3]{Cand:Notes1}.}
\end{defn}
\noindent An important example of group nilspace is the one associated with an abelian group $G$ equipped with the filtration $G_\bullet$ where $G_i=G$ for $i\le k$ and $G_{i}=\{\id\}$ for $i>k$. In this case, we will denote this nilspace (often called a \emph{degree-$k$ abelian group}) by $\mc{D}_k(G)$.

When viewed as coset nilspaces, filtered nilmanifolds are actually included in a specific class of compact nilspaces which play an important role in the theory, namely the \emph{compact finite-rank} (\textsc{cfr}) nilspaces. We briefly recall their definition. 

Recall first that given a $k$-step nilspace $\ns$, for $i\in [k]$ the \emph{$i$-th structure group} $\ab_i$ of $\ns$ is an abelian group such that the $i$-th nilspace factor $\ns_i$ is an \emph{abelian $\ab_i$-bundle} over $\ns_{i-1}$ whose projection map is precisely the nilspace factor map $\ns_i\to\ns_{i-1}$ (see \cite[\S 3.2.3]{Cand:Notes1}). If $\ns$ is a compact nilspace, then $\ab_i$ becomes a \emph{compact} abelian group \cite[\S 2.1.1]{Cand:Notes2} (with the relative topology when $\ab_i$ is identified with any fiber of the bundle $\ns_i$).

\begin{defn}[\textsc{cfr} nilspaces]
A compact $k$-step nilspace $\ns$ is of \emph{finite rank} if for every $i\in[k]$ the structure group $\ab_i$ of $\ns$ is a compact abelian \emph{Lie} group (equivalently, the Pontryagin dual of $\ab_i$ has finite rank, i.e.\ finitely many generators).
\end{defn}
\noindent Every filtered nilmanifold is a \textsc{cfr} nilspace. On the other hand, there are \textsc{cfr} nilspaces that are not nilmanifolds, in fact not even coset spaces \cite[Example 6]{HK-par}. The above-mentioned goal of connecting nilspaces with nilmanifolds thus focuses on expressing the former, as directly and usefully as possible, in terms of the latter. An early result of this type follows from \cite[Theorems 4 and 7]{CamSzeg} (see also \cite[Theorems 2.7.3 and 2.9.17]{Cand:Notes2}) and expresses every compact nilspace with connected structure groups as a strict inverse limit of connected nilmanifolds. This is useful for various applications, but more direct expressions have since been shown to be more useful for progress on central questions in the area. For instance, the authors proved in \cite[\S 7.4]{CGSS-doucos} that a 2-step \textsc{cfr} nilspace can always be embedded (via a nilspace morphism) into a connected nilmanifold, and that this suffices to prove the case $k=3$ of the Jamneshan--Tao conjecture. We shall now recall some of the main tools that went into these results, as they will also play a key role in this paper.

\begin{defn}[Translation groups and related homomorphisms]\label{def:trans-gr}
Let $\ns$ be a $k$-step nilspace and let $i\in[k]$. A map $\alpha:\ns\to \ns$ is a \emph{translation} of height $i$ if for every $n\ge i$ and every face $F\subseteq\db{n}$ of codimension $i$, the map $\alpha^{F}:\ns^{\db{n}}\to\ns^{\db{n}}$ given by $\alpha^F(f)(v)=\alpha(f(v))$ if $v\in F$ and $\alpha^F(f)(v)=f(v)$ otherwise is cube-preserving (i.e.\ $\alpha^{F}(\q)\in \cu^{n}(\ns)$ for all $\q\in \cu^{n}(\ns)$). These translations form a group, denoted $\tran_{i}(\ns)$, and the sequence ($\tran_{i}(\ns)\big)_{i\geq 0}$ is a filtration of degree $k$ (see \cite[\S 3.2.4]{Cand:Notes1}). The canonical nilspace factor map $\pi_i:\ns\to\ns_i$ induces, for every $j\in[k]$, a homomorphism $\eta_i:\tran_j(\ns)\to\tran_j(\ns_i)$, well-defined by the formula $\eta_i(\alpha)(\pi_i(x)):=\pi_i(\alpha(x))$.
\end{defn}

\begin{remark}
We leave it as an exercise to check that if in the previous definitions, instead of groups, subgroups, and nilspaces we consider Lie groups, discrete co-compact subgroups, and compact nilspaces, then with the natural changes the previous results adapt to the topological setting. For example, a coset nilspace $G/\Gamma$ as in Definition \ref{def:hk-cubes} would be a compact nilspace, and if the translations $\alpha$ in Definition \ref{def:trans-gr} are assumed to be continuous then $\tran_i(\ns)$ is a Lie group. For further details and proofs, see \cite{Cand:Notes2}.
\end{remark}
We now briefly recall another expression for \textsc{cfr} nilspaces that was given in \cite{CGSS-doucos}, which will be a key ingredient in this paper. Let us start by recalling some concepts from \cite[\S 1]{CGSS-doucos}.

\begin{defn}\label{def:free-nil-intro}
A \emph{free nilspace} is a direct product (in the nilspace category) of finitely many components of the form $\mc{D}_i(\mb{R})$ and $\mc{D}_i(\mb{Z})$ where $i\in \mb{N}$. We say that a free nilspace is \emph{discrete} if it is a direct product of components of the form $\mathcal{D}_i(\mathbb{Z})$, and that it is \emph{continuous} if it is a direct product of components of the form $\mc{D}_i(\mb{R})$.
\end{defn}

It was shown in \cite{CGSS-doucos} that free nilspaces are key to understanding \textsc{cfr} nilspaces. Indeed, what the main results of \cite{CGSS-doucos} convey is that free nilspaces play a role of \emph{projective objects} in this category, and that every \textsc{cfr} nilspace has a \emph{projective presentation} in terms of a free nilspace. More precisely, by Theorem 6.2 in \cite{CGSS-doucos},  every \textsc{cfr} nilspace can be expressed as a quotient of a free nilspace by the action of a certain type of group of translations, which we now recall.

\begin{defn}[Fiber-transitive group of translations]\label{def:fib-tran}
Let $F$ be a $k$-step free nilspace and let $\Gamma$ be a subgroup of the translation group $\tran(F)$. We say that $\Gamma$ is a \emph{fiber-transitive group on} $F$ if the following holds: for all $x,y\in F$, if there exists $\gamma\in \Gamma$ and $i\in[k]$ such that $\gamma(x)=y$ and $\pi_i(x)=\pi_i(y)$, then there exists $\gamma'\in \Gamma\cap \tran_{i+1}(F)$ such that $\gamma'(x)=y$.
\end{defn}
\noindent To this purely algebraic notion it is convenient to add the following properties when working with topological nilspaces.
\begin{defn}[Translation lattices on free nilspaces]\label{def:FDCA}
Let $F$ be a $k$-step free nilspace and let $\Gamma$ be a subgroup of $\tran(F)$. We say that $\Gamma$ is \emph{fiber-discrete} if for every $j\in[k]$, the group $\eta_j(\Gamma)\cap \tran_j(F_j)$ is a discrete subgroup of $\tran_j(F_j)\cong \mb{Z}^{a_j}\times \mb{R}^{b_j}$. We say that $\Gamma$ is \emph{fiber-cocompact} if for every $j\in[k]$, the group $\eta_j(\Gamma)\cap \tran_j(F_j)$ is cocompact in $\tran_j(F_j)$. Finally, we say that $\Gamma$ is a \emph{translation lattice} on $F$ if $\Gamma$ is fiber-transitive, fiber-discrete and fiber-cocompact.
\end{defn}

\noindent We can now give the statement of the above-mentioned result.

\begin{theorem}[See Theorem 6.2 in \cite{CGSS-doucos}]\label{thm:cfr-nil}
Let $\ns$ be a $k$-step \textsc{cfr} nilspace. Then there is a $k$-step free nilspace $F$, and a finitely-generated translation lattice $\Gamma\le \tran(F)$ such that, letting $\pi_{\Gamma}$ be the corresponding quotient map (sending each $x\in \ns$ to the orbit of $x$ under the action of $\Gamma$), we have that $\ns$ is isomorphic as a compact nilspace to $\pi_\Gamma(F)$. 
\end{theorem}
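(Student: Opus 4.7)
My plan is to prove the theorem by induction on the step $k$, exploiting the fact that every $k$-step \textsc{cfr} nilspace is a compact abelian bundle over its $(k-1)$-step factor, whose fiber is a compact abelian Lie group.

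For the base case $k=1$, the nilspace $\ns$ is just a compact abelian Lie group $\ab_1$, which by the Pontryagin structure theorem admits a presentation $\ab_1\cong (\mb{R}^a\times\mb{Z}^b)/\Lambda$ for some discrete cocompact subgroup $\Lambda$. Taking $F:=\mc{D}_1(\mb{R})^a\times\mc{D}_1(\mb{Z})^b$ and $\Gamma:=\Lambda$ acting by translation on $F$ yields the required presentation; fiber-transitivity is automatic at height $1$, and fiber-discreteness and fiber-cocompactness are built in.

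For the inductive step, let $\ns$ be a $k$-step \textsc{cfr} nilspace and let $\ns_{k-1}$ be its canonical $(k-1)$-step factor, which is also \textsc{cfr}. By the inductive hypothesis there is a $(k-1)$-step free nilspace $F'$, a finitely-generated translation lattice $\Gamma'\le\tran(F')$, and a fibration $\pi':F'\to\ns_{k-1}$ inducing an isomorphism $F'/\Gamma'\cong \ns_{k-1}$. The extension $\ns\to\ns_{k-1}$ is an abelian $\ab_k$-bundle encoded by a cocycle class in $Z^{k+1}(\ns_{k-1},\ab_k)$, and $\ab_k\cong (\mb{R}^a\times\mb{Z}^b)/\Lambda_k$ for some discrete cocompact $\Lambda_k$. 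Pulling back the bundle along $\pi'$ gives an $\ab_k$-bundle over $F'$; the crux of the argument is to further lift the defining cocycle along the quotient $\mb{R}^a\times\mb{Z}^b\to\ab_k$ to obtain a cocycle with values in $\mb{R}^a\times\mb{Z}^b$. This lifting is where free nilspaces play an essential role: their projective character in the nilspace category implies that cocycles into compact abelian Lie groups lift to their universal covers. The resulting extension $F$ of $F'$ by $\mc{D}_k(\mb{R})^a\times\mc{D}_k(\mb{Z})^b$ has trivial cocycle class, so its cube structure agrees with that of a direct product, and $F$ is a $k$-step free nilspace fibering onto $\ns$ via a map $\pi:F\to\ns$ compatible with $\pi'$.

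It remains to construct $\Gamma\le\tran(F)$ and check it is a finitely-generated translation lattice with $F/\Gamma\cong\ns$. We lift a finite generating set of $\Gamma'$ through the short exact sequence $1\to\tran_k(F)\to\tran(F)\to\tran(F')\to 1$ (using the trivial-cocycle structure on $F$), and then adjoin the translations coming from $\Lambda_k$ acting on the top fiber; the group $\Gamma$ they generate is finitely generated by construction. Fiber-discreteness and fiber-cocompactness at height $k$ come from $\eta_k(\Gamma)\supseteq\Lambda_k$ being a lattice in $\tran_k(F_k)\cong\mb{R}^a\times\mb{Z}^b$, and at heights $j<k$ from the corresponding properties of $\Gamma'$ transported through the projection $\tran_j(F)\to\tran_j(F')$. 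The main obstacle I anticipate is the verification of fiber-transitivity together with the cocycle lifting step: given $\gamma\in\Gamma$ with $\pi_i(\gamma(x))=\pi_i(x)$, we must adjust $\gamma$ by an element of $\Gamma\cap\tran_{i+1}(F)$, and at the critical level $i=k-1$ this reduces precisely to the fact that the lifted cocycle on $F'$ really takes values in $\mb{R}^a\times\mb{Z}^b$ rather than only in $\ab_k$; at lower levels we propagate fiber-transitivity of $\Gamma'$ upward. Once fiber-transitivity is established, the identification $F/\Gamma\cong\ns$ follows from comparing structure groups level by level and using that both sides are compact nilspaces with the same bundle data.
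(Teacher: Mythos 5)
Your plan is essentially the argument behind the cited result (the theorem is imported from \cite[Theorem 6.2]{CGSS-doucos} and not reproved in this paper): induction on the step, with the inductive engine being the splitting of extensions of free nilspaces by compact abelian Lie groups (\cite[Theorem 4.1]{CGSS-doucos}, your ``projectivity''), exactly as carried out for torus extensions in Proposition \ref{prop:ext-as-quo-of-free} here. The issues you flag---choosing lifts of the generators of $\Gamma'$ compatibly with the bundle identification, and verifying fiber-transitivity---are precisely the points handled there via the fiber product $\nss\times_{\ns_{k-1}}F'$ and the group $\Gamma^*=\{(\id,\gamma):\gamma\in\Gamma'\}$, so the proposal is correct in approach.
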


\noindent Abusing the notation, we will often write $F/\Gamma$ instead of $\pi_\Gamma(F)$. 

The description of a \textsc{cfr} nilspace $\ns$ as a quotient $F/\Gamma$ provides a method to locate useful nilspace  extensions of $\ns$ (recall that these are nilspaces $\nss$ with fibrations $\nss\to\ns$), by replacing the quotienting group $\Gamma$  by adequate subgroups of $\Gamma$. This method is quite naturally motivated by algebraic topology (recall that if $X$ is a topological space, then coverings of $X$ can be represented by subgroups of the fundamental group of $X$).

In Section \ref{sec:cfrmainresult} we apply this method by locating subgroups $H\le \Gamma$ which are also translation lattices on $F$, and such that $F/H$ has additional useful properties (this method is especially used to prove Theorem \ref{thm:wscover} below). 

We close this section by recalling a property of translations which was introduced in \cite{CGSS-doucos}, called \emph{pureness}. Note that for any nilspace $\ns$, for the canonical homomorphisms $\eta_i:\tran(\ns)\to \tran(\ns_i)$ we have $
\tran(\ns)=\ker(\eta_0)\geq \ker(\eta_1) \geq \cdots \geq \ker(\eta_{k-1})\geq \ker(\eta_k=\id)=\{\id\}$.
Thus we have the partition
\begin{equation}\label{eq:transparti}
\tran(\ns)=\{\id\}\cup\bigsqcup_{i\in [k]} \ker(\eta_{i-1})\setminus \ker(\eta_i).
\end{equation}
We also have in general $\ker(\eta_{i-1})\supset \tran_i(\ns)$ and this inclusion can be strict, which can often complicate the analysis of the structure of $\ns$. Pureness ensures that this inclusion is an equality.

\begin{defn}[Pure translations; see Definition 5.40 in \cite{CGSS-doucos}]\label{def:pure} Let $\ns$ be a nilspace. A translation $\alpha\in\tran(\ns)$ is \emph{pure} if for every $i\in [k]$ we have $\alpha\in \ker(\eta_{i-1})\Rightarrow \alpha\in\tran_i(\ns)$. Equivalently $\alpha$ is pure if, for the maximal $i\in [k]$ such that $\alpha\in\ker(\eta_{i-1})$, we have $\alpha\in \tran_i(\ns)$. We say that a group of translations $G\leq\tran(\ns)$ is \emph{pure} if every translation in $G$ is pure, which is equivalent to the following property: for every $i\in [k]$, we have $G\cap \ker(\eta_{i-1})=G\cap \tran_i(\ns)$.
\end{defn}

\section{\texorpdfstring{\textsc{cfr} nilspaces are factors of nilmanifolds}{CFR nilspaces are factors of nilmanifolds}}\label{sec:cfrmainresult}
\noindent In this section we prove the main result of this paper, Theorem \ref{thm:cfr-are-factor-of-nilmanifolds}. The plan for this is, firstly, to outline the main new notions and ingredients used in the proof, then present the proof itself assuming some key results. After this, we shall present the proofs of these key results in various subsections. 

\medskip

The first main notion is the following new class of nilspaces. 

\begin{defn}[Weakly-splitting nilspaces]\label{def:weak-splitting}
A $k$-step \textsc{cfr} nilspace $\ns$ is \emph{weakly-splitting} if for every $i\in [k]$ the $i$-step factor $\ns_i$ is a product nilspace of the form $\ns_i=\nss_i\times\mc{D}_i(B_i)$ where $B_i$ is a finite abelian group and the $i$-th structure group of $\nss_i$ is a torus.
\end{defn}
\noindent To motivate this notion, recall that to prove Theorem \ref{thm:cfr-are-factor-of-nilmanifolds} we need to find a nilspace extension $\nss\to\ns$ where $\nss$ is a filtered nilmanifold. If $\nss$ had all of its structure groups being connected (hence tori), the proof would be complete, since nilspaces of this type (called \emph{toral} nilspaces) are known to be (connected) nilmanifolds; see \cite[Theorem 2.9.17]{Cand:Notes2}. Even if such an extension $\nss$ could be found, this would be possible only when $\ns$ was connected to begin with. In general this is not the case, and it is then natural to aim for a nilspace extension $\nss\to\ns$ where, in $\nss$, one can clearly separate the connected part from the discrete finite part. Indeed, it would be ideal to be always able to find an extension $\nss$ that is a product nilspace $\nss_t\times\nss_f$ where $\nss_t$ is a toral nilspace and $\nss_f$ is a finite nilspace,\footnote{This product decomposition of $\nss$ would be a nilspace analogue of the expression of compact abelian Lie groups as products of tori with finite abelian groups.} as then $\nss_t$ would already be a nilmanifold, and $\nss_f$ could further be extended to a finite coset nilspace by known results (see  \cite[Theorem 7]{SzegFin}). We do not reach this ideal situation in this paper, nor do we know whether it can always be reached. However, in Subsection \ref{subsec:ws} we take a significant step towards it. Indeed, we obtain the following theorem guaranteeing that $\ns$ can always be extended to a weakly-splitting nilspace (so that the product decomposition desired above occurs at least in the $i$-th bundle-layer of each nilspace factor $\nss_i$). Proving this is also a first important step in our proof of Theorem \ref{thm:cfr-are-factor-of-nilmanifolds}.

\begin{theorem}\label{thm:wscover} 
For every $k$-step \textsc{cfr} nilspace $\ns$, there is a $k$-step weakly-splitting \textsc{cfr} nilspace $\nss$ and a fibration $\varphi:\nss\to\ns$.
\end{theorem}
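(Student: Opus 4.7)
The plan is to apply Theorem \ref{thm:cfr-nil} to present $\ns = F/\Gamma$ with $F$ a $k$-step free nilspace and $\Gamma$ a finitely-generated translation lattice, and then to locate a subgroup $H\le\Gamma$ which is itself a translation lattice on $F$ such that $\nss:=F/H$ is weakly-splitting. The natural quotient map $\varphi:F/H\to F/\Gamma$ will then be the desired fibration, following the method sketched just after Theorem \ref{thm:cfr-nil} in the excerpt.

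To choose $H$, I would exploit the canonical nilspace product decomposition $F=F^{\mathrm{c}}\times F^{\mathrm{d}}$ with $F^{\mathrm{c}}=\prod_i\mc{D}_i(\mb{R})^{b_i}$ and $F^{\mathrm{d}}=\prod_i\mc{D}_i(\mb{Z})^{a_i}$ (see Definition \ref{def:free-nil-intro}), so that at each level $i\in[k]$ the $i$-th structure group of $F_i$ is naturally $\tran_i(F_i)\cong\mb{R}^{b_i}\oplus\mb{Z}^{a_i}$. The weakly-splitting condition of Definition \ref{def:weak-splitting} will be ensured if $\Lambda_i^H:=\eta_i(H)\cap\tran_i(F_i)$ decomposes as $K_i\oplus D_i$ with $K_i\le\mb{R}^{b_i}$ a full-rank lattice and $D_i\le\mb{Z}^{a_i}$ a finite-index subgroup: such a split structure at each level, combined with the ambient decomposition of $F$, should upgrade to a nilspace-level product $(F/H)_i=\nss_i\times\mc{D}_i(B_i)$ with $B_i=\mb{Z}^{a_i}/D_i$ and $\nss_i$ having $i$-th structure group the torus $\mb{R}^{b_i}/K_i$. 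I would build $H$ enforcing these conditions by a downward induction on $i$: at $i=k$, the fact that $\tran_k(F)\subseteq\ker(\eta_j)$ for all $j<k$ allows one to adjust $\Gamma\cap\tran_k(F)$ freely at the top level without disturbing lower-level data; at $i<k$, the analogous adjustment is made modulo $\tran_{i+1}(F)$ (which is already controlled), and a concluding saturation step re-imposes fiber-transitivity (Definition \ref{def:fib-tran}). The fibration property of $F/H\to F/\Gamma$ then follows from the standard fact that the quotient map induced by an inclusion of translation lattices is a fibration.

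I expect the main obstacle to be the following: in general, a cocompact discrete lattice $\Lambda_i$ in $\mb{R}^{b_i}\oplus\mb{Z}^{a_i}$ need \emph{not} contain any finite-index sublattice of the split form $K_i\oplus D_i$, because its generators can mix continuous and discrete directions irrationally (for instance $\mb{Z}(1,0)+\mb{Z}(0,1)+\mb{Z}(\sqrt{2},1)\subset\mb{R}^2\oplus\mb{Z}$ has no such splitting). Handling this is likely to force one to enlarge the free nilspace $F$ to a suitable nilspace extension $F'$ obtained by adding continuous components at the relevant levels, into which the obstructing irrational relations are absorbed, and to work with a translation lattice $H'$ on $F'$ so that $F'/H'$ is weakly-splitting and fibers over $F/\Gamma$. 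Carrying out such an enlargement coherently with the filtration on $\tran(F)$, while preserving the pureness (Definition \ref{def:pure}) and fiber-transitivity properties needed for $H'$ to remain a translation lattice, is where I expect the bulk of the technical work to reside; it likely exploits the projective-object role of free nilspaces established in \cite{CGSS-doucos}, propagating the adjustments coherently across levels via pureness.
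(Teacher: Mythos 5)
Your high-level framing matches the paper: present $\ns=F/\Gamma$ via Theorem \ref{thm:cfr-nil}, locate a sublattice $H\le\Gamma$ that is itself a translation lattice so that $F/H$ is weakly-splitting, and use the induced quotient map $F/H\to F/\Gamma$ as the fibration (this is what Corollary \ref{cor:weak-split-factor-with-subgroup} provides). You also correctly flagged a genuine difficulty --- a discrete cocompact subgroup of $\mb{R}^{b_i}\oplus\mb{Z}^{a_i}$ need not contain any finite-index split sublattice $K_i\oplus D_i$ (though as written your example generates a dense subgroup; the intended example should live in, say, $\mb{R}^2\oplus\mb{Z}$ with a generator like $(\sqrt 2,0,1)$ so that the group stays discrete).

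However, the proposal has two substantial gaps. First, having $\Lambda_i^H$ of split form is \emph{not} sufficient for $(F/H)_i$ to split as a product: by \cite[Theorem 3.15]{CGSS-doucos} a translation $\gamma\in\Gamma$ of height $j<i$ contributes an integer-valued polynomial $s_i\big(\pi_{i-1}(\cdot)\big)$ to the $\mb{Z}^{a_i}$-coordinate at level $i$, and these polynomial couplings --- not the top-level lattice $\Gamma_k$ alone --- are what obstruct the product decomposition. Your downward-induction plan of ``adjusting $\Gamma\cap\tran_i(F)$'' and then ``saturating'' never touches these couplings, so the quotient $(F/H)_i$ could still be a nontrivially twisted $B_i$-extension. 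Second, and relatedly, your proposed remedy for the lattice-splitting obstruction (enlarging $F$ to $F'$ by adding continuous coordinates) is not the mechanism the paper uses, and as sketched it would produce a quotient $F'/H'$ that is no longer obviously a $\Gamma$-related quotient of the original $F$, so the fibration to $\ns$ would not come from an inclusion of lattices and would need a separate construction. The paper instead stays inside $\tran(F)$ and takes $H=T:=\Gamma_1^{m_1}\cdots\Gamma_k^{m_k}$ (a genuine subgroup, by the Hall--Petresco argument in Lemma \ref{lem:grouppower}), choosing $m_1,\dots,m_{k-1}$ via Lemma \ref{highpower} so that every $s_k$-coupling polynomial of an element of $T$ takes values in $\wh{p_D}(\Gamma_k)$. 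This is precisely what lets the paper ``zero out'' the coupling: it defines $T'$ by replacing $s_k$ by $0$ and adjoining $\wh{p_D}(\Gamma_k)$, shows via Proposition \ref{prop:cond-for-equivalence} that $T$ and $T'$ are equivalent filtrations (so $F/T\cong F/T'$), and reads off the product decomposition at level $k$ from the coordinate-wise action of $T'$; the lower levels are then handled by an \emph{upward} induction on $k$ through a fiber-product with the inductively-constructed weakly-splitting extension of $\ns_{k-1}$, rather than by your downward pass over $i$. So the essential ingredient your proposal is missing is the high-power congruence trick: without it, neither the lattice issue nor the polynomial-coupling issue is actually resolved.
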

\begin{remark}
It can be deduced from the arguments that for every $x\in \ns$ the preimage $\psi^{-1}(\{x\})$ is finite.
\end{remark}
\noindent Note that this result generalizes \cite[Theorem 7]{SzegFin}, the latter being the special case for finite nilspaces. Indeed, a finite and weakly-splitting nilspace is always of the form $\prod_{i=1}^k\mc{D}_i(B_i)$ where $B_i$ is a finite abelian group for every $i\in [k]$.

Another useful feature of a nilspace $\ns$ being weakly-splitting is that this property can be reformulated in terms of a group-theoretic property in a certain presentation of the form $\ns=F/\Gamma$ where $F$ is a free nilspace and $\Gamma$ is a translation lattice in $\tran(F)$. We call this property \emph{orthogonality} of $\Gamma$, and we develop its relationship with weak splitting in Subsection \ref{subsec:ortho-ws} (see Definition \ref{def:ortlattice}).

\medskip

The next main concept used in our proof of Theorem \ref{thm:cfr-are-factor-of-nilmanifolds} is another useful class of nilspaces. 
Here let us simply state the definition and defer its more detailed discussion to Subsection \ref{subsec:LieParam}.
\begin{defn}[Ported nilspaces]\label{def:nicerep}
Given a free nilspace $F$, we say that a subgroup $\Gamma\le \tran(F)$ is \emph{porting} if it is a pure and orthogonal translation lattice. We say that a $k$-step \textsc{cfr} nilspace $\ns$ has a  \emph{ported presentation} if there is a $k$-step free nilspace $F$ and a porting subgroup $\Gamma\leq \tran(F)$ such that $\ns$ is isomorphic to $F/\Gamma$ as a compact nilspace. We say that a \textsc{cfr} nilspace is \emph{ported} if it has a ported presentation.
\end{defn}
\noindent This is a key notion for what follows. In particular, we will prove that ported nilspaces are coset nilspaces, hence nilmanifolds. Among ported nilspaces, there are those with the additional property that the porting group $\Gamma$ is a \emph{free graded} nilpotent group. Let us recall the latter concept before we formalize this special class of nilspaces.

\begin{defn}[Free graded nilpotent group]\label{def:fgngp}
Let $k\in\mb{N}$ and $\ell_1,\ldots,\ell_k\in \mb{Z}_{\ge 0}$. The \emph{free graded $k$-step nilpotent group} $\Gamma=\Gamma((\gamma_{i,j})_{i\in[k],j\in [\ell_i]})$ (or \emph{free $k$-step nilpotent group generated by the graded sequence  $(\gamma_{i,j})_{i\in[k],j\in [\ell_i]}$}; see \cite[Ch.\ 5]{MKS76}) is defined as follows. First let $\Gamma^*:=\langle \gamma_{i,j}\rangle_{i\in[k],j\in[\ell_i]}$ be the free group with generators $\gamma_{i,j}$ for $i\in [k], j\in [\ell_i]$. We define the \emph{degree} of any iterated commutator of the $\gamma_{i,j}$ by declaring that $\deg(\gamma_{i,j}):=i$ and $\deg([f,g]):=\deg(f)+\deg(g)$.\footnote{For example $\deg([[\gamma_{1,2},\gamma_{2,4}],\gamma_{5,1}])=\deg(\gamma_{1,2})+\deg(\gamma_{2,4})+\deg(\gamma_{5,1})=1+2+5=7$.} Then $\Gamma:=\Gamma^*/H$ where $H$ is the normal subgroup generated by iterated commutators of degree larger than $k$, i.e.\ $H=\langle h:h\in\comm{\Gamma^*},\;\deg(h)\ge k+1\rangle$.
\end{defn}
\noindent Recall that such a grading yields a filtration $(\Gamma_i)_{i\geq 0}$ on the group $\Gamma$ by letting $\Gamma_i$ be the subgroup generated by all commutators of degree at least $i$, see \cite[Theorem 5.13A]{MKS76}.

The special class of ported nilspaces in question is defined as follows.

\begin{defn}[Universal \textsc{cfr} nilspaces]\label{def:univCFRns}
A \emph{universal \textsc{cfr} nilspace} is a \textsc{cfr} nilspace $\ns$ which has a ported presentation $\ns=F/\Gamma$ such that $\Gamma$ is additionally a (finitely generated) free graded $k$-step nilpotent group where the degree of each generator $\gamma$ is the greatest integer $i$ such that $\gamma\in\tran_i(F)$.
\end{defn}
The term ``universal" here has several justifications. One of these is that universal \textsc{cfr} nilspaces are similar to the \emph{universal nilmanifolds} introduced in \cite[Definition 9.1]{GTZ} (note that universal nilspaces need not be connected as in \cite{GTZ}, and that they come equipped by definition with a nilspace structure). The main justification is that, to prove Theorem \ref{thm:cfr-are-factor-of-nilmanifolds}, we will prove the following stronger result, establishing indeed a form of universality of these objects.

\begin{theorem}\label{thm:realmain}
Every $k$-step \textsc{cfr} nilspace is a factor of a $k$-step universal \textsc{cfr} nilspace.
\end{theorem}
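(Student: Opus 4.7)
The plan is to build a universal $\textsc{cfr}$ nilspace fibering onto $\ns$ in three reduction steps. First, I would apply Theorem \ref{thm:wscover} to replace $\ns$ by a weakly-splitting extension $\nss_1\to\ns$; since fibrations compose, it then suffices to fiber a universal $\textsc{cfr}$ nilspace onto $\nss_1$. By the equivalence between weak splitting and orthogonality developed in Subsection \ref{subsec:ortho-ws}, I may write $\nss_1=F_1/\Gamma_1$ with $\Gamma_1$ an orthogonal translation lattice on a $k$-step free nilspace $F_1$. I would then pass from here to a \emph{ported} presentation $\nss_2=F/\Gamma$, where $\Gamma$ is additionally pure in the sense of Definition \ref{def:pure}; this may require one further fibration and is handled by a standard purification of the translation lattice.

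Next I would construct the universal cover of $\nss_2$. Using purity, each generator $\gamma_j$ of the finitely generated group $\Gamma$ has a well-defined degree $d_j:=\max\{i:\gamma_j\in\tran_i(F)\}$, and the commutator filtration induced by these degrees is compatible with the translation filtration on $\tran(F)$. Let $\tilde{\Gamma}$ be the free graded $k$-step nilpotent group on abstract generators $\tilde{\gamma}_1,\dots,\tilde{\gamma}_n$ with $\deg(\tilde{\gamma}_j)=d_j$ (Definition \ref{def:fgngp}). Its universal property then produces a unique filtration-preserving surjection $p:\tilde{\Gamma}\to\Gamma$ sending $\tilde{\gamma}_j\mapsto\gamma_j$.

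The heart of the argument is to realize $\tilde{\Gamma}$ faithfully as a porting subgroup of $\tran(\tilde{F})$ for some free nilspace $\tilde{F}$ fibering onto $F$ compatibly with the two actions. I would do this by enlarging $F$: set $\tilde{F}:=F\times F'$, where $F'$ is a continuous free nilspace (a product of components $\mc{D}_i(\mb{R})$ of the appropriate degrees) chosen large enough to support a faithful discrete-cocompact translation action of $\ker(p)$ at the prescribed heights, modeled on a Malcev-completion argument in the spirit of the universal nilmanifolds of \cite{GTZ}. The action of $\tilde{\Gamma}$ on $\tilde{F}$ is then the product of its action on $F$ through $p$ and this lift on $F'$; the projection $\tilde{F}\to F$ descends to the sought fibration $\tilde{F}/\tilde{\Gamma}\to F/\Gamma$, and composition with the earlier fibrations gives a fibration from the universal $\textsc{cfr}$ nilspace $\tilde{F}/\tilde{\Gamma}$ onto $\ns$.

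The main obstacle will be the last step: realizing $\tilde{\Gamma}$ inside $\tran(\tilde{F})$ as a \emph{porting} lattice, i.e.\ a fiber-transitive, fiber-discrete, fiber-cocompact translation group that is simultaneously pure and orthogonal. Purity is essentially forced by installing the generators at their prescribed heights, but orthogonality demands that the continuous components in $F'$ be distributed across the $k$ layers in a way that respects the weakly-splitting structure inherited from $F$, which calls for a careful inductive construction step by step; fiber-cocompactness with a possibly highly non-abelian $\ker(p)$ is the other delicate point, where the Malcev-theoretic choice of $F'$ will be essential.
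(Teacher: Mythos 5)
Your route is genuinely different from the paper's. After the reduction to a weakly-splitting nilspace (Theorem \ref{thm:wscover}), the paper does \emph{not} attempt a one-shot universal cover; instead, it proves Theorem \ref{thm:cosetnilspacecovering} by induction on the step $k$, driven by two nontrivial ingredients: Theorem \ref{thm1}, which extends a $k$-step universal nilspace to a $(k+1)$-step universal \emph{toral} extension, and Theorem \ref{thm2}, which shows that universal nilspaces are $k$-toral splitting so that fiber products with lower-step universal extensions can be split as direct products (Lemma \ref{lem:engine}). You propose to bypass this layer-by-layer machinery, take a single ported presentation $F/\Gamma$, pass to the free graded nilpotent group $\tilde\Gamma$ on the same graded generators, and realize $\tilde\Gamma$ as a porting lattice on a free nilspace $\tilde F=F\times F'$. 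That idea is close in spirit to the Mal'cev arguments inside the paper's Theorem \ref{thm1}, but as written it has at least two genuine gaps.

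First, the ``standard purification'' step is not standard, and the paper never establishes it. Proposition \ref{prop:ws<=>nr} gives you an \emph{orthogonal} translation lattice from weak splitting, but ported requires orthogonal \emph{and} pure (Definition \ref{def:nicerep}). The paper deliberately avoids claiming that one can always promote an orthogonal presentation to a ported one by a further fibration; that is precisely why universality is achieved constructively via Theorem \ref{thm1} rather than by ``purifying'' a given lattice. Without a concrete procedure, this is a real missing step, and your later definition of the degrees $d_j$ and the compatibility of the grading with the translation filtration both hinge on it.

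Second, the realization of $\tilde\Gamma$ inside $\tran(\tilde F)$ is where essentially all the content of Theorems \ref{thm1} and \ref{thm2} lives, and your sketch does not resolve it. A product action on $F\times F'$ is constrained: the $F$-component of every $\tilde\gamma_j$ must equal $\gamma_j$, so every relation that is killed in $\tilde\Gamma$ but holds in $\Gamma$ must be detected by the $F'$-component alone, simultaneously across all nilpotency degrees, while keeping $\tilde\Gamma$ fiber-transitive, fiber-discrete, fiber-cocompact, pure, and orthogonal on $\tilde F$, and while guaranteeing that $\tilde F/\tilde\Gamma\to F/\Gamma$ is a continuous fibration of compact nilspaces. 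The universal nilmanifolds of \cite{GTZ} live in the connected setting and do not encounter the torsion and disconnectedness issues that motivate the present paper; invoking them in passing does not discharge the fiber-cocompactness and fiber-transitivity obligations you flag at the end. The paper's one-degree-at-a-time argument in Theorem \ref{thm1} exists exactly so that Mal'cev rigidity (Theorem \ref{thm:rigidity1}, Lemma \ref{lem:surj&ker}) and the Lie parametrization (Proposition \ref{prop:Lieparamgam}) can be applied in a controlled way to a single connected top layer, with Theorem \ref{thm2} then absorbing the extra torus produced by the fiber-product; a one-shot version would need a substitute for all of this that your proposal does not supply.
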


\noindent As we shall see below, the proof of this theorem will proceed by induction with a sequence of nilspace extensions. This is a good moment to show that the properties of being a ported nilspace or a universal nilspace are stable under nilspace extensions of a very simple type. This is the purpose of the following lemma, which will also be instrumental later.

\begin{lemma}\label{lem:dirproduniv}
Let $\ns$ be a $k$-step ported \textsc{cfr} nilspace, and let $A$ be a compact abelian Lie group. Then $\ns\times\mc{D}_k(A)$ is also ported. Moreover, if $\ns$ is universal, then so is $\ns\times\mc{D}_k(A)$.
\end{lemma}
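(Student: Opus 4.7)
The plan is to build a ported presentation of $\ns \times \mc{D}_k(A)$ by taking the direct product of a ported presentation of $\ns$ with a canonical ported presentation of $\mc{D}_k(A)$, and to check that porting (and, when applicable, universality) survives the product.

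As a first step, I would produce a universal ported presentation of $\mc{D}_k(A)$. Since $A$ is a compact abelian Lie group, decompose $A \cong \mb{T}^a \times B$ with $B$ a finite abelian group, then write $B \cong \mb{Z}^b/L$ for some full-rank sublattice $L \le \mb{Z}^b$. Setting
\begin{equation*}
F'' := \mc{D}_k(\mb{R})^a \times \mc{D}_k(\mb{Z})^b, \qquad \Gamma'' := \mc{D}_k(\mb{Z})^a \times \mc{D}_k(L),
\end{equation*}
with $\Gamma''$ acting on $F''$ by additive translations, gives $\mc{D}_k(A) = F''/\Gamma''$ with $F''$ a $k$-step free nilspace (Definition \ref{def:free-nil-intro}) and $\Gamma''$ a finitely generated free abelian translation lattice. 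The group $\Gamma''$ is pure and orthogonal, and every non-identity translation of a $\mc{D}_k(X)$ lies in $\tran_k \setminus \tran_{k+1}$, so each of its $a + b$ generators has degree exactly $k$.

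Given a ported presentation $\ns = F/\Gamma$, I form $\widetilde F := F \times F''$ and $\widetilde \Gamma := \Gamma \times \Gamma'' \le \tran(F)\times\tran(F'') \subseteq \tran(\widetilde F)$, so that $\ns \times \mc{D}_k(A) \cong \widetilde F/\widetilde \Gamma$. To verify that $\widetilde \Gamma$ is porting I would: (i) check that fiber-transitivity, fiber-discreteness and fiber-cocompactness survive the product via the identification $(\widetilde F)_j = F_j \times F''_j$, using the inclusion $\tran_j(F_j) \times \tran_j(F''_j) \subseteq \tran_j((\widetilde F)_j)$; (ii) deduce purity from the fact that $\ker(\eta_{j-1})$ and $\tran_j$ on a direct product of free nilspaces decompose as products of the corresponding subgroups on the factors; and (iii) deduce orthogonality via its equivalent characterization (from Subsection \ref{subsec:ortho-ws}) as weak splitting of the quotient. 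For (iii), note that $\mc{D}_k(A)_i$ is a single point for every $i<k$ (since every function $\db{i+1}\to A$ is a cube on $\mc{D}_k(A)$ when $i+1\le k$, so the relation $\sim_i$ is trivial); hence $(\ns \times \mc{D}_k(A))_i \cong \ns_i$ for $i<k$, which is already weakly splitting. For $i=k$, starting from the splitting $\ns = \nss_k \times \mc{D}_k(B_k)$ given by the hypothesis and from $\mc{D}_k(A) \cong \mc{D}_k(\mb{T}^a) \times \mc{D}_k(B)$, I would regroup as $(\nss_k \times \mc{D}_k(\mb{T}^a)) \times \mc{D}_k(B_k \times B)$, whose first factor retains a torus as its $k$-th structure group.

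For universality, assume in addition that $\Gamma$ is a free graded $k$-step nilpotent group with the degree-matching property. I claim $\widetilde \Gamma$ is again free graded $k$-step nilpotent, generated by the original generators of $\Gamma$ together with the abelian generators of $\Gamma''$, all of the latter assigned degree $k$. The verification is short: any iterated commutator containing a new generator has total degree at least $k+1$ (since every generator has degree at least $1$ and the new ones have degree $k$), hence is trivialized in the $k$-step quotient — which is consistent with the direct product structure, where the new generators already centralize the old ones. The degree-matching property carries over since each new generator has height exactly $k$ in $\tran(\widetilde F)$. The main obstacle I expect is the orthogonality verification, which hinges on the precise equivalence between orthogonality of a translation lattice and weak splitting of the quotient proved in Subsection \ref{subsec:ortho-ws}; once that is in hand, the $k$-th layer regrouping is the only novel bookkeeping and everything else is a routine direct-product check.
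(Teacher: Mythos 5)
Your overall strategy — take a ported presentation $\ns \cong F/\Gamma$, adjoin a ported presentation of $\mc{D}_k(A)$ as a direct factor, and check that porting and universality survive the product — is the same as the paper's. The universality check (new generators are central of degree $k$, so they cannot introduce new relations of degree $\le k$, and the product decomposition $\widetilde\Gamma \cong \Gamma\times\Gamma''$ forces any trivial word to split into a trivial word in $\Gamma$ and a trivial word in the new generators) is essentially the paper's argument.

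There is, however, a genuine gap in your orthogonality verification, step (iii). You write that you will ``deduce orthogonality via its equivalent characterization as weak splitting of the quotient,'' but this is not an equivalence in the direction you need. Proposition \ref{prop:ws<=>nr} shows that orthogonality of a presentation implies weak splitting of the quotient, and that a weakly-splitting nilspace admits \emph{some} orthogonal presentation — but it does \emph{not} say that every presentation of a weakly-splitting nilspace is orthogonal. Indeed the remark immediately following that proposition explicitly warns that orthogonality is a property of a chosen representation $F/\Gamma$, not an intrinsic property of the nilspace, and gives the counterexample statement. So proving that $\widetilde F/\widetilde\Gamma$ is weakly splitting tells you nothing about whether \emph{your} $\widetilde\Gamma$ is orthogonal. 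Relatedly, your claim that $\Gamma'' = \mc{D}_k(\mb{Z})^a\times\mc{D}_k(L)$ is orthogonal is unjustified: for an arbitrary full-rank sublattice $L\le\mb{Z}^b$, the translations in $\mc{D}_k(L)$ need not be generated by elementary shifts (shifting a single coordinate by an integer), which is what Definition \ref{def:ortlattice} demands.

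Both problems disappear with the same small fix, which is exactly what the paper does: instead of an arbitrary $L$, invoke the classification of finite abelian groups (Smith normal form) to write $B\cong\prod_{i\in[r]}\mb{Z}/b_i\mb{Z}$, so that the corresponding sublattice is $\prod_i b_i\mb{Z}$ and $\Gamma''$ is generated by the elementary shifts $z\mapsto z+e_j$ (on the $\mb{R}$-coordinates) and $z\mapsto z+b_j e_j$ (on the $\mb{Z}$-coordinates). Then $\Gamma''$ is orthogonal by definition, and since $\widehat{p_D}(\widetilde\Gamma) = \widehat{p_D}(\Gamma)\times\widehat{p_D}(\Gamma'')$ with the two factors acting on disjoint discrete coordinates, $\widetilde\Gamma$ is orthogonal directly, with no appeal to weak splitting. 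Everything else in your proposal then goes through.
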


\begin{proof}
By assumption there is a ported presentation $\ns\cong F/\Gamma$. Let $B$ be a finite abelian group, let $r$ be the rank of $B$, and let $n\geq 0$ such that $A= \mb{T}^n\times B$. Let $F'$ denote the free nilspace $F\times\mc{D}_k(\mb{R}^n\times \mb{Z}^r)$. There is then a natural embedding of $\Gamma$ as a subgroup of $\tran(F')$, namely for each $\gamma\in\Gamma$, its image under this embedding is the translation $\gamma'$ defined by $\gamma'(x,y)=(\gamma x,y)$ for any $x\in F, y\in \mb{R}^n\times \mb{Z}^r$. 

Identifying $\Gamma$ with this embedded image, the idea now is to add new generators, to the generating set for $\Gamma$, to obtain a new porting and free group $\Gamma'$ such that $F'/\Gamma'\cong \ns\times\mc{D}_k(A)$. To define these generators, first for $i\in [r]$ let $b_i\in \mb{N}$ be such that $B\cong \prod_{i\in [r]} \mb{Z}/b_i\mb{Z}$. Then the list of new generators is as follows: for $j\in [n]$ we let $g_j$ be the translation in $\tran(F')$ which adds 1 in the $j$-th component of this new part $\mb{R}^n\times \mb{Z}^r$ that we added to $F$ to obtain $F'$ (note that this $j$-th component is a copy of $\mb{R}$), and for $j\in [n+1,n+r]$ we let $g_j$ be the translation in $\tran(F')$ which adds $b_j$ in the $j$-th coordinate of said new part (note that this $j$-th component is a copy of $\mb{Z}$). We thus immediately obtain that $\Gamma'$ is still orthogonal, and is in fact a porting subgroup if $\Gamma$ is. Note that the other properties, fiber-transitive, fiber-discrete, and fiber-cocompact follow easily for $\Gamma$ from the fact that $\Gamma$ satisfies them, and that the translations that we are adding all belong to $\tran_k(F')$. It is also clear that $F'/\Gamma'\cong \ns\times \mc{D}_k(A)$. 

To complete the proof, it now suffices to check that if $\Gamma$ is free graded $k$-step nilpotent, then so too is $\Gamma'$. For this we need to check that a finite word $w$ in this group is trivial if and only if it is a commutator of degree at least $k+1$. But since the new generators $g_j$ are in the center of $\tran(F')$ (being in $\tran_k(F')$), we can collect them into a word $w_k$ such that $w=w'w_k$ where $w'$ is a word only in elements of $\Gamma$. Moreover, the definition of the embedding of $\Gamma$ and of the generators $g_j$ implies (looking at the coordinates that these different translations act on) that $\Gamma'=\Gamma\times \langle g_j)_{j\in [n+r]}\rangle$. Hence, we must have $w'=\id$ in $\Gamma$, which implies that $w'$ is a commutator of degree at least $k+1$ (since $\Gamma$ is free graded $k$-step nilpotent), and we must also have $w_k=\id$, so we conclude that $w=w'$ is indeed a commutator of degree at least $k+1$.
\end{proof}

We now define the following final important notion for our strategy. 

\begin{defn}[Toral-splitting nilspaces]\label{def:tor-split}
We say that a compact nilspace $\ns$ is \emph{$\ell$-toral-splitting} if for any $t\in[\ell]$ and any $n\in \mb{N}$, any extension of $\ns$ by $\mc{D}_t(\mb{T}^n)$ splits.\footnote{We may talk about \emph{toral splitting nilspaces} when we do not want to specify the parameter $\ell$ for expository reasons or when it is clear from the context.}
\end{defn}

\begin{remark}
Recall the well-known fact, from abelian Lie group theory, that any abelian group extension of an abelian Lie group by a torus splits.\footnote{A short proof of this fact is as follows. Let $A$ be an abelian Lie group and suppose that we have an injective homomorphism $\varphi:\mb{T}^n\to A$. Then, passing to the duals we have a surjective homomorphism $\wh{\varphi}:\wh{A}\to \mb{Z}^n$. Clearly we can define a cross-section $s:\mb{Z}^n\to \wh{A}$ and it is easy to see then that $\wh{A}\cong \mb{Z}^n\times\ker(\wh{\varphi})$. Taking duals again the result follows.} Definition \ref{def:tor-split} captures a similar phenomenon more generally among compact nilspaces.
\end{remark}

Let us now state our main result from this section, which will imply Theorem \ref{thm:realmain} and thus Theorem \ref{thm:cfr-are-factor-of-nilmanifolds}. Note that, to prove Theorem \ref{thm:realmain}, instead of starting with a general \textsc{cfr} nilspace, we can assume (by Theorem \ref{thm:wscover}) that the initial nilspace is already weakly splitting (recall Definition \ref{def:weak-splitting}). The main result is then the following.

\begin{theorem}\label{thm:cosetnilspacecovering}
Let $\ns$ be a $k$-step weakly-splitting nilspace. Then there is a $k$-step universal \textsc{cfr} nilspace $\ns'$ and a fibration $\ns'\to\ns$. Thus $\ns$ is a factor of the filtered nilmanifold $\ns'$.
\end{theorem}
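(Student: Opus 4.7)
The plan is to prove the theorem by induction on the step $k$. For the base case $k=1$, a 1-step weakly-splitting nilspace is of the form $\mc{D}_1(\mb{T}^n\times B)$ for a torus $\mb{T}^n$ and a finite abelian group $B$, and this is manifestly a factor of $\mc{D}_1(\mb{R}^n\times\mb{Z}^r)$ (with $r=\mr{rk}(B)$) modulo the translation lattice generated by the unit vectors in the $\mb{R}$-coordinates and by $b_je_{n+j}$ in the $\mb{Z}$-coordinates (where $B\cong\prod_j\mb{Z}/b_j\mb{Z}$). A free abelian group is trivially free graded $1$-step nilpotent, and one checks directly that this lattice is porting, so the quotient is a $1$-step universal \textsc{cfr} nilspace covering $\ns$.

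For the inductive step, I would first observe that the $(k-1)$-step factor $\ns_{k-1}$ is again weakly-splitting, since the defining product decomposition at each level $i\le k-1$ of $\ns$ passes to $\ns_{k-1}$. Applying the inductive hypothesis to $\ns_{k-1}$ yields a $(k-1)$-step universal \textsc{cfr} nilspace $\ns''=F''/\Gamma''$ with a fibration $\psi:\ns''\to\ns_{k-1}$. Then form the fibre product $\wt{\ns}:=\ns\times_{\ns_{k-1}}\ns''$, which is a $k$-step \textsc{cfr} nilspace with canonical $(k-1)$-step factor isomorphic to $\ns''$, equipped with a fibration $\wt{\ns}\to\ns$. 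The extension $\wt{\ns}\to\ns''$ is an abelian bundle with structure group $A_k=T_k\times B_k$, where $T_k\cong\mb{T}^{n_k}$ and $B_k$ is a finite abelian group, this decomposition coming from the weak-splitting of $\ns$ at level $k$. I would next invoke a toral-splitting property for $\ns''$ in the sense of Definition \ref{def:tor-split}, which I expect to hold for universal nilspaces; granted this, the $T_k$-component of the cocycle defining $\wt{\ns}\to\ns''$ is a coboundary, so $\wt{\ns}$ may be rewritten as $\ns_B\times\mc{D}_k(T_k)$ where $\ns_B\to\ns''$ is an abelian bundle extension with structure group $B_k$.

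By Lemma \ref{lem:dirproduniv}, it then suffices to universally cover $\ns_B$: the toral factor $\mc{D}_k(T_k)$ is itself a factor of $\mc{D}_k(\mb{R}^{n_k})$ modulo the standard lattice (a free graded $k$-step nilpotent group with $n_k$ central generators of degree $k$), and the product of two universal \textsc{cfr} nilspaces is again universal by the lemma. To cover $\ns_B$, I would set $F':=F''\times\mc{D}_k(\mb{Z}^{r_k})$ with $r_k=\mr{rk}(B_k)$, and enlarge $\Gamma''$ to $\Gamma'\le\tran(F')$ as follows. First, adjoin central generators in $\tran_k(F')$ that shift the new $\mb{Z}^{r_k}$-coordinate by $b_je_j$, quotienting it onto $B_k$. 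Second, modify each lift to $F'$ of a generator $\gamma\in\Gamma''$ by a polynomial correction valued in $\mc{D}_k(\mb{Z}^{r_k})$ encoding the $B_k$-cocycle of $\ns_B\to\ns''$; this correction is itself an element of $\tran_k(F')$. Since $\tran(F')$ is $k$-step nilpotent, all these new translations and corrections are central, so by a direct adaptation of the argument used in Lemma \ref{lem:dirproduniv} the resulting $\Gamma'$ is again free graded $k$-step nilpotent and porting, with $F'/\Gamma'\cong\ns_B$. Composing with $\wt{\ns}\to\ns$ gives the desired universal covering of $\ns$.

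The main obstacle I anticipate is establishing the toral-splitting of $\ns''$: this amounts to a cohomological statement that every extension of the universal \textsc{cfr} nilspace $F''/\Gamma''$ by $\mc{D}_t(\mb{T}^n)$ splits, and I expect it to follow from the fact that such extensions trivially split already on the free nilspace $F''$ (since $F''$ is a direct product of factors of the form $\mc{D}_i(\mb{R})$ and $\mc{D}_i(\mb{Z})$), combined with the freeness and grading of $\Gamma''$ to push the splitting down to the quotient. A secondary difficulty is verifying that the cocycle-lifting in the construction of $\Gamma'$ preserves all the properties required to be porting, namely fiber-transitivity, fiber-discreteness, fiber-cocompactness, purity and orthogonality; this should follow from the centrality of the new generators, the boundedness of the corrections, and the inductive assumption that $\Gamma''$ was already porting.
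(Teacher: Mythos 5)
The base case and the first moves of the inductive step (passing to $\ns_{k-1}$, forming the fibre product with the $(k-1)$-step universal cover $\ns''$) are consistent with the paper's proof. However, there is a genuine gap in the plan from the point where you write ``I would next invoke a toral-splitting property for $\ns''$''. You want to split a degree-$k$ toral extension of $\ns''$, so you need $\ns''$ to be $k$-toral-splitting, but $\ns''$ is only a $(k-1)$-step universal nilspace, and such spaces are \emph{not} $k$-toral-splitting in general. Concretely, $\mb{T}^2 = \mc{D}_1(\mb{R}^2)/\mb{Z}^2$ is a $1$-step universal nilspace, yet the Heisenberg nilmanifold is a degree-$2$ extension of $\mb{T}^2$ by $\mc{D}_2(\mb{T})$ that does not split. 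Your heuristic — that the extension splits on $F''$ so the freeness of $\Gamma''$ should let the splitting descend — fails for exactly this reason: the cross-section that trivializes the pulled-back cocycle on $F''$ need not be $\Gamma''$-equivariant, and the freeness of $\Gamma''$ is what allows the commutator obstruction (the Heisenberg cocycle) to be nonzero, not what removes it. The paper's statement Theorem~\ref{thm2} is about \emph{$k$-step} universal nilspaces being $k$-toral-splitting; it does not yield what you want for the $(k-1)$-step nilspace $\ns''$.

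This is precisely the role of Theorem~\ref{thm1}, which your plan omits: one must first replace $\ns''=F''/\Gamma''$ by a $k$-step universal nilspace $W'$ whose $(k-1)$-step factor is $\ns''$. That extension cannot be built by adjoining central generators to $\Gamma''$ inside $\tran(F'')$, because $\Gamma''$, being free graded $(k-1)$-step nilpotent, already has all commutators of degree $k$ killed, and those relations would persist in any enlargement inside $\tran$ of the $(k-1)$-step free nilspace. Instead the paper forms the abstract free graded $k$-step nilpotent group on the same generators (thereby reviving the degree-$k$ commutators), embeds it by Mal'cev's theorem in a connected simply-connected Lie group, and recovers a $k$-step free nilspace presentation via Proposition~\ref{prop:invLieparam}. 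Once $W'$ is in hand, Theorem~\ref{thm2} applies to $W'$ and the fibre product $\nss\times_W W'$ splits.

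The same phenomenon invalidates the last step of your construction. You claim that the group $\Gamma'$ you build inside $\tran(F'' \times \mc{D}_k(\mb{Z}^{r_k}))$ is free graded $k$-step nilpotent with $F'/\Gamma'\cong\ns_B$. But for $\ns_B$ a trivial extension (and, given the weak-splitting of $\ns$ at level $k$, the $B_k$-part of the cocycle is in fact already trivial, so this is the relevant case), $\Gamma'$ is just $\Gamma''$ times some new central generators, and the old generators still commute modulo commutators of degree $k$ (they are subject to all of $\Gamma''$'s relations). If $\Gamma''$ has two or more degree-$1$ generators, the commutator of degree $2$ between them is trivial in $\Gamma'$, which is incompatible with $\Gamma'$ being free graded $k$-step nilpotent for $k\ge 2$. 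Also note that the ``polynomial correction'' you add to a degree-$j$ generator of $\Gamma''$ is generally not in $\tran_k(F')$: a non-constant polynomial of degree $d$ in the $\mc{D}_k$-coordinate gives a translation of height $k-d < k$, so ``all these corrections are central'' is not correct either.

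In short, the correct route requires the step-raising Theorem~\ref{thm1} before the toral-splitting Theorem~\ref{thm2} can be invoked, and this step-raising is achieved abstractly (free graded groups plus Mal'cev), not by enlarging $\Gamma''$ inside a $\tran$ group. Without that ingredient, the two claims your proof hinges on — that $\ns''$ is $k$-toral-splitting, and that the enlarged $\Gamma'$ is free graded $k$-step nilpotent — are both false.
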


We shall prove this by induction on $k$. The argument relies on two key ingredients, the following two theorems.

\begin{theorem}\label{thm1}
Let $\ns$ be a $k$-step universal \textsc{cfr} nilspace. Then there is a $(k+1)$-step universal \textsc{cfr} nilspace $\nss$ such that $\nss_k\cong\ns$ and such that $\nss$ is a toral extension of $\ns$.
\end{theorem}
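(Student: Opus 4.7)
The plan is to construct $\nss$ as the split (direct-product) toral extension $\ns\times\mc{D}_{k+1}(\mb{T}^n)$ for some $n\ge 1$ (say $n=1$), and to endow it with a ported, free graded $(k+1)$-step nilpotent presentation. The argument closely mirrors that of Lemma \ref{lem:dirproduniv}, but with the new factor placed in degree $k+1$ instead of $k$, which is precisely what raises the step of the resulting nilspace from $k$ to $k+1$. Since the theorem only requires existence of some $(k+1)$-step universal \textsc{cfr} nilspace lying over $\ns$ via a toral extension, a split extension suffices.

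Concretely, starting from a ported presentation $\ns\cong F/\Gamma$ where $F$ is a $k$-step free nilspace and $\Gamma\le\tran(F)$ is porting and free graded $k$-step nilpotent, I would form the $(k+1)$-step free nilspace $F':=F\times\mc{D}_{k+1}(\mb{R}^n)$, embed $\Gamma$ into $\tran(F')$ via $\gamma\cdot(x,y):=(\gamma x,y)$, and adjoin $n$ new generators $g_1,\ldots,g_n\in\tran_{k+1}(F')$, where $g_j$ shifts the $j$-th coordinate of $\mc{D}_{k+1}(\mb{R}^n)$ by $1$. Setting $\Gamma'$ to be the group generated by all these translations and $\nss:=F'/\Gamma'$, one immediately obtains $\nss\cong\ns\times\mc{D}_{k+1}(\mb{T}^n)$, hence $\nss_k\cong\ns$ and $\nss$ is a toral extension of $\ns$ by $\mc{D}_{k+1}(\mb{T}^n)$.

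The substance of the proof lies in checking two clusters of properties of $\Gamma'$. For the porting property: fiber-transitivity, fiber-discreteness and fiber-cocompactness are inherited from $\Gamma$, together with the fact that the new generators form a lattice $\mb{Z}^n$ inside $\tran_{k+1}(F')\cong\mb{R}^n$; purity holds because every $g_j$ lies in the topmost layer $\tran_{k+1}(F')$ while $\Gamma$ was assumed pure; orthogonality of $\Gamma'$ follows from orthogonality of $\Gamma$ together with the observation that the new $(k+1)$-st structure group $\mb{R}^n/\mb{Z}^n=\mb{T}^n$ is already a torus with no finite part. For the free graded $(k+1)$-step nilpotent property, the key observation is that each $g_j$ lies in $\tran_{k+1}(F')$, which is central in $\tran(F')$, so $\Gamma'$ splits as the direct product $\Gamma\times\mb{Z}^n$; one then has to verify the grading (the $g_j$ have degree exactly $k+1$ in the filtration of $\tran(F')$, and the degrees of the old generators are unchanged by the embedding $\Gamma\hookrightarrow\tran(F')$).

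The main obstacle, although essentially routine in view of Lemma \ref{lem:dirproduniv}, is verifying that $\Gamma\times\mb{Z}^n$ realises the \emph{free} graded $(k+1)$-step nilpotent group on the combined generating set. This amounts to showing that the only relations present are those forced by nilpotency of degree $>k+1$. Since any iterated commutator involving some $g_j$ has degree at least $(k+1)+1=k+2$, such commutators are automatically trivial, so the new generators impose no extra relation on $\Gamma$. A putative nontrivial word in $\Gamma'$ can then be rewritten, using centrality of the $g_j$, as $w'w''$ with $w''\in\langle g_1,\ldots,g_n\rangle$ and $w'$ a word in $\Gamma$, forcing $w'=\id$ and $w''=\id$ separately by freeness of $\Gamma$ and of $\mb{Z}^n$; this is the analogue, one degree higher, of the final calculation in the proof of Lemma \ref{lem:dirproduniv}.
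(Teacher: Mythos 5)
Your proposal has a genuine gap: the group $\Gamma'=\Gamma\times\mb{Z}^n$ you construct is in general \emph{not} a free graded $(k+1)$-step nilpotent group, so $\ns\times\mc{D}_{k+1}(\mb{T}^n)$ is not universal. The issue is that $\Gamma$, being the free graded $k$-step nilpotent group on its generators, satisfies the extra relations ``all iterated commutators of degree \emph{exactly} $k+1$ in the old generators are trivial.'' In the free graded $(k+1)$-step nilpotent group on the combined generating set, those degree-$(k+1)$ commutators are supposed to generate a nontrivial central free abelian subgroup; they must not vanish. Your final reduction, ``$w'=\id$ in $\Gamma$ by freeness of $\Gamma$, hence $w'$ is a commutator of degree $>k+1$,'' fails precisely here: freeness of $\Gamma$ only forces $w'$ to be a commutator of degree $>k$, not $>k+1$. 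Concretely, already for $k=1$: if $\Gamma=\mb{Z}^2$ with two generators $\gamma_1,\gamma_2$ of degree $1$, then $\Gamma\times\mb{Z}$ is abelian, while the free graded $2$-step nilpotent group on $\gamma_1,\gamma_2$ (degree $1$) and $g$ (degree $2$) is the Heisenberg group times $\mb{Z}$, in which $[\gamma_1,\gamma_2]\neq\id$. Your $\Gamma'$ is a proper quotient of the correct free object, so Definition~\ref{def:univCFRns} is not met. Note also this is the reason Lemma~\ref{lem:dirproduniv} cannot be transported verbatim to degree $k+1$: there the ambient group $\tran(F')$ is still $k$-step nilpotent, so a word trivial in $\Gamma$ is automatically a commutator of degree $>k$, which exhausts all relations; once the ambient step rises to $k+1$, that coincidence is lost.

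The paper's proof takes an essentially opposite route: instead of adjoining new generators of degree $k+1$ (which cannot create the needed freeness), it keeps the \emph{same} graded generating set and \emph{relaxes} the relations, defining $\Gamma'$ as the free graded $(k+1)$-step nilpotent group on the generators of $\Gamma$. The natural surjection $\pi:\Gamma'\to\Gamma$ has kernel $\Gamma'_{k+1}$ generated by the newly-liberated degree-$(k+1)$ commutators. Mal'cev's existence and rigidity theorems are then used to embed $\Gamma'$ in a connected simply-connected Lie group $L'$, extend $\pi$ to $\wt\pi:L'\to\wt\Gamma$, and cut out $G'=\wt\pi^{-1}(G)$; the resulting $\nss=G'/\Gamma'$ is a toral extension of $\ns$ (by the torus $\wt{\Gamma'_{k+1}}/\Gamma'_{k+1}$) that is by construction universal, but \emph{not} a split extension $\ns\times\mc{D}_{k+1}(\mb{T}^n)$ in general. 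A ported presentation is then obtained via Proposition~\ref{prop:invLieparam}. Your plan would need this relation-relaxation idea rather than a new-generator idea to have any chance of producing a universal object.
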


\begin{theorem}\label{thm2}
Every $k$-step universal \textsc{cfr} nilspace is $k$-toral splitting.
\end{theorem}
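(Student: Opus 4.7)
The plan is to analyze extensions via a ported presentation $\ns=F/\Gamma$, where $\Gamma$ is a free graded $k$-step nilpotent translation lattice on the free nilspace $F$. Given an extension $\pi:\nss\to\ns$ by $\mc{D}_t(\mb{T}^n)$ for some $t\in[k]$, I would first pull it back along the quotient $\rho:F\to F/\Gamma=\ns$ to obtain $\wt\nss:=F\times_\ns\nss$ with projection $\wt\pi:\wt\nss\to F$, itself a $\mc{D}_t(\mb{T}^n)$-extension of the free nilspace $F$. Since free nilspaces play the role of projective objects in the category of compact nilspaces (by the framework of \cite{CGSS-doucos}), this pulled-back extension splits, yielding a nilspace isomorphism $\wt\nss\cong F\times\mc{D}_t(\mb{T}^n)$ compatible with $\wt\pi$.

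Under this trivialization, the action of $\Gamma$ on $\wt\nss$ covering the $\Gamma$-action on $F$ takes the form $\gamma\cdot(x,a)=(\gamma x,\,a+\sigma_\gamma(x))$ for some continuous $\sigma_\gamma:F\to\mb{T}^n$. The action axioms yield the 1-cocycle identity $\sigma_{\gamma\delta}(x)=\sigma_\gamma(\delta x)+\sigma_\delta(x)$, and the requirement that each $\gamma\in\Gamma$ act as a nilspace translation of $\wt\nss$ forces $\sigma_\gamma$ to be a morphism $F\to\mc{D}_t(\mb{T}^n)$ of a degree controlled jointly by the height of $\gamma$ and the parameter $t$ (here pureness of $\Gamma$ enters). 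The extension $\pi$ splits over $\ns$ if and only if this cocycle is a coboundary: there exists a morphism $b:F\to\mc{D}_t(\mb{T}^n)$ with $\sigma_\gamma(x)=b(\gamma x)-b(x)$ for all $\gamma\in\Gamma$ and $x\in F$.

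To produce such $b$, I would exploit the hypothesis that $\Gamma$ is free graded nilpotent on a fixed list of generators $\{\gamma_{i,j}\}_{i\in[k],j\in[\ell_i]}$. A crucial observation is that if the coboundary identity $\sigma_\gamma(x)=b(\gamma x)-b(x)$ holds on all generators, then the cocycle identity combined with a direct computation on products forces it to hold on every $\gamma\in\Gamma$. So one only needs to arrange the coboundary equation on the generators. I would do this by induction up the degree filtration of $\Gamma$: orthogonality and pureness of $\Gamma$ ensure that at each inductive stage the new generator is isolated in a direct summand of the appropriate structural layer, so projectivity of $F$ provides a partial $b$ solving the current generator's equation while preserving those already established for generators of smaller or equal degree.

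The main obstacle I expect is this inductive construction: arranging $b$ compatibly across all generators of the same degree and across different degrees, without losing the required morphism property into $\mc{D}_t(\mb{T}^n)$. This is where orthogonality is decisive, since it yields a direct-product decomposition within each structural layer that controls how distinct generators couple with the torus being extended; together with the projectivity of $F$, which allows one to lift each partial $b$ to a genuine nilspace morphism, this should close the induction and deliver the required splitting.
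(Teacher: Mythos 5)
Your approach is genuinely different from the paper's. You pull back along $F\to F/\Gamma$, trivialize the extension over the free nilspace using projectivity (correct, by Theorem 4.1 of \cite{CGSS-doucos}), then try to show the resulting cocycle $\sigma_\gamma:F\to\mc{D}_{t-j}(\mb{T}^n)$ (for $\gamma$ of height $j$) is a coboundary $b(\gamma x)-b(x)$. The paper instead rewrites $\nss\cong F'/\Gamma'$ with $F'=F\times\mc{D}_\ell(\mb{R})$ via Proposition \ref{prop:ext-as-quo-of-free}, shows $\Gamma'\cong\Gamma\times\mb{Z}$ as an easy group-theoretic fact (this is where freeness of $\Gamma$ as a graded nilpotent group enters decisively: the section $s:\Gamma\to\Gamma'$ is well-defined on generators and extends to a homomorphism because the only relations in $\Gamma$ are commutators of degree $>k$, which already vanish in the step-$k$ group $\tran(F')$), and then uses the Lie parametrization plus Mal'cev rigidity (Theorem \ref{thm:rigidity1} and Lemma \ref{lem:prodquotsplit}) to propagate this discrete splitting first to $\beta(F'_{\mb{R}})\cong\beta(F_{\mb{R}})\times\mb{R}$ and then down to $\beta(\iota(F'))\cong\beta(\iota(F))\times\mb{R}$, hence to the nilspaces.

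The genuine gap in your sketch is exactly the step you flag: producing the coboundary $b$ as a \emph{morphism} $F\to\mc{D}_t(\mb{T}^n)$. Two specific issues. First, your ``crucial observation'' that checking the coboundary identity on generators suffices is a general cocycle fact valid for any group (relations are automatically respected since $\sigma_{\id}=0$), so it is not where freeness is actually doing work; you have to find where it does. Second, freeness on its own does not give cohomological vanishing: for a nontrivial module one can have $H^1(\Gamma,M)\neq 0$ even for $\Gamma$ free, so the claim that the cocycle must be a coboundary requires a real argument, and ``projectivity of $F$ plus orthogonality plus induction on degree'' is not enough as stated. Concretely, solving $b(\gamma_{i,j}x)-b(x)=\sigma_{\gamma_{i,j}}(x)$ simultaneously for all generators requires the mixed-difference compatibility $\Delta_{\gamma}\sigma_{\gamma'}=\Delta_{\gamma'}\sigma_{\gamma}$ to hold in a form that permits a polynomial antiderivative, and one has to control the contributions of commutators $[\gamma_{i,j},\gamma_{i',j'}]$ across degrees; you do not explain how orthogonality makes this system integrable, nor how to keep the degree of $b$ bounded by $t$. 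The paper sidesteps all of this by transferring the (trivial) group-theoretic splitting $\Gamma'\cong\Gamma\times\mb{Z}$ to the ambient Lie group via Mal'cev rigidity, which is the tool your argument is missing and which I do not see how to avoid without essentially reconstructing it.
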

\noindent These two ingredients are combined in the main inductive argument below. The following auxiliary lemma will be used to shorten the main proof.

\begin{lemma}\label{lem:engine}
Let $\nss$ be a degree-$k$ toral extension of a $(k-1)$-step universal \textsc{cfr} nilspace $W$, and let $B$ be a finite abelian group. Then $\nss\times \mc{D}_k(B)$ is a nilspace factor of a $k$-step universal \textsc{cfr} nilspace $\ns'$.
\end{lemma}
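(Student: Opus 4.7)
The plan is to combine the three key ingredients available: Theorem \ref{thm1} to promote $W$ to a universal $k$-step object, Theorem \ref{thm2} to split a suitable toral extension, and Lemma \ref{lem:dirproduniv} to absorb the $\mc{D}_k(B)$ factor. First I would apply Theorem \ref{thm1} to the $(k-1)$-step universal \textsc{cfr} nilspace $W$, obtaining a $k$-step universal \textsc{cfr} nilspace $W'$ which is itself a degree-$k$ toral extension of $W$; denote by $\mb{T}^{n_1}$ and $\mb{T}^{n_2}$ the tori appearing at the $k$-th level of $\nss$ and $W'$ respectively.

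Next, I would form the fiber product $E:=\nss\times_W W'$ with respect to the two given fibrations $\nss\to W$ and $W'\to W$. Since fibrations are preserved by pullback, both coordinate projections $\pi_1:E\to\nss$ and $\pi_2:E\to W'$ are fibrations, and $E$ is a compact $k$-step nilspace whose $(k-1)$-factor is $W$ and whose $k$-th structure group is $\mb{T}^{n_1}\times\mb{T}^{n_2}$. Crucially, $\pi_2$ exhibits $E$ as a degree-$k$ toral extension of $W'$ by $\mb{T}^{n_1}$. Since $W'$ is a $k$-step universal \textsc{cfr} nilspace, Theorem \ref{thm2} tells us that $W'$ is $k$-toral-splitting, so this extension splits: there is an isomorphism $\sigma:E\xrightarrow{\sim} W'\times\mc{D}_k(\mb{T}^{n_1})$ of nilspaces over $W'$.

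Now set $\ns':=W'\times\mc{D}_k(\mb{T}^{n_1}\times B)$. Since $\mb{T}^{n_1}\times B$ is a compact abelian Lie group, Lemma \ref{lem:dirproduniv} gives that $\ns'$ is still a $k$-step universal \textsc{cfr} nilspace. Composing $\sigma^{-1}\times\id_{\mc{D}_k(B)}$ with $\pi_1\times\id_{\mc{D}_k(B)}$ we obtain
\[
\ns' \;=\; W'\times\mc{D}_k(\mb{T}^{n_1})\times\mc{D}_k(B) \;\xrightarrow{\;\sigma^{-1}\times\id\;}\; E\times\mc{D}_k(B) \;\xrightarrow{\;\pi_1\times\id\;}\; \nss\times\mc{D}_k(B),
\]
which is a composition of fibrations (fibrations being stable under products), and hence a fibration, exhibiting $\nss\times\mc{D}_k(B)$ as a factor of the universal \textsc{cfr} nilspace $\ns'$, as required.

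The step I expect to require the most care is verifying that the fiber product $E=\nss\times_W W'$ genuinely inherits the structure of a compact $k$-step \textsc{cfr} nilspace with the stated $k$-th structure group $\mb{T}^{n_1}\times\mb{T}^{n_2}$ and with the desired coordinate fibrations; this should follow from the bundle description of $k$-step compact nilspaces over their $(k-1)$-factors recalled in Section \ref{sec:prelim}, but the verification is not instantaneous. A secondary subtlety is that the splitting $\sigma$ provided by Theorem \ref{thm2} must be an isomorphism of extensions \emph{over $W'$}, so that composing with $\pi_1$ produces a map that respects the fibration structure over $\nss$ rather than merely a set-theoretic bijection.
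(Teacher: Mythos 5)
Your proof is correct and takes essentially the same route as the paper: apply Theorem \ref{thm1} to lift $W$ to a $k$-step universal $W'$, form the fiber product $\nss\times_W W'$, use \cite[Lemma A.16]{CGSS-p-hom}--style facts to see it as a degree-$k$ toral extension of $W'$, invoke Theorem \ref{thm2} to split it, then use Lemma \ref{lem:dirproduniv} to absorb $\mc{D}_k(B)$. The subtleties you flag at the end are genuine but are exactly the ones the paper delegates to \cite[Lemma A.16]{CGSS-p-hom}, so your argument matches the paper's both in structure and in the verification burden it leaves to earlier results.
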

The following diagram can help to follow the proof.

\medskip

\[
\begin{tikzcd}
	{\ns:=\nss\times\mc{D}_k(B)} & {\ns':=W'\times\mc{D}_k(\mb{T}^n)\times \mc{D}_k(B)} \\
	\nss & {\nss\times_W W'=W'\times\mc{D}_k(\mb{T}^n)} \\
	W & {W'}
	\arrow[from=1-1, to=2-1]
	\arrow[from=1-2, to=1-1]
	\arrow[from=1-2, to=2-2]
	\arrow[from=2-1, to=3-1]
	\arrow[from=2-2, to=2-1]
	\arrow[from=2-2, to=3-2]
	\arrow[from=3-2, to=3-1]
\end{tikzcd}
\]

\medskip

\begin{proof}
By Theorem \ref{thm1}, the nilspace $W$ is a factor of a $k$-step universal \textsc{cfr} nilspace $W'$. Let $\ns=\nss\times\mc{D}_k(B)$. Consider the fiber-product $\nss\times_W W'$. By \cite[Lemma A.16]{CGSS-p-hom} (applied with $Q=W'$ and $\ns$ the nilspace $\nss$ here), this fiber-product is a degree-$k$ toral extension of $W'$ (by $\mb{T}^n$ say). By Theorem \ref{thm2} we have that $W'$ is $k$-toral splitting, whence $\nss\times_W W'=W'\times \mc{D}_k(\mb{T}^n)$. Let $\ns'=W'\times \mc{D}_k(\mb{T}^n)\times \mc{D}_k(B)\cong W'\times \mc{D}_k(A)$, where $A=\mb{T}^n\times B$ is a compact abelian Lie group.  As $\nss$ is a factor of $\nss\times_W W'$, we have that $\ns$ is a factor of $\ns'$. 

Now it only remains to prove that $\ns'$ is universal. This holds because $\ns'$ is a direct product (in the compact nilspace category) of the $k$-step universal \textsc{cfr} nilspace $W'$ with the degree-$k$ compact abelian Lie group $\mc{D}_k(A)$, and by Lemma \ref{lem:dirproduniv} this is again a universal \textsc{cfr} nilspace. This completes the proof.
\end{proof}

With Lemma \ref{lem:engine}, we can now prove the main result.

\begin{proof}[Proof of Theorem \ref{thm:cosetnilspacecovering}]
The following diagram represents the proof.
\[
\begin{tikzcd}
	{\ns:=\nss'\times\mc{D}_k(B_k)} & {\nss\times \mc{D}_k(B_k)} \\
	{\nss'} & {\nss:=\nss'\times_{\ns_{k-1}}W} \\
	{\ns_{k-1}} & W
	\arrow[from=1-1, to=2-1]
	\arrow[from=1-2, to=1-1]
	\arrow[from=1-2, to=2-2]
	\arrow[from=2-1, to=3-1]
	\arrow[from=2-2, to=2-1]
	\arrow[from=2-2, to=3-2]
	\arrow["{\psi_{k-1}}"', from=3-2, to=3-1]
\end{tikzcd}
\]

\medskip
\noindent We argue by induction on $k$. The case $k=1$ follows easily from the theory of compact abelian Lie groups.

Suppose then that $k>1$ and the theorem holds for $k-1$. The assumed weakly-splitting property of $\ns$ implies that $\ns=\nss'\times \mc{D}_k(B_k)$ where $\nss'$ is a toral extension of $\ns_{k-1}$ and $B_k$ is a finite abelian group. We have that $\ns_{k-1}$ is weakly splitting (by Definition \ref{def:weak-splitting}), so by induction there is a $(k-1)$-step universal \textsc{cfr} nilspace $W$ and a fibration $\psi_{k-1}:W\to\ns_{k-1}$. Now we take the fiber-product $\nss:=\nss' \times_{\ns_{k-1}} W$. Then $\nss'$ is a factor of $\nss$, and $\nss$ satisfies the assumptions in Lemma \ref{lem:engine} by \cite[Proposition A.16]{CGSS-p-hom}. We also have that $\nss\times \mc{D}_k(B_k)$ extends $\ns$. Then, by Lemma \ref{lem:engine} applied to the nilspace $\nss\times \mc{D}_k(B_k)$, we obtain that this nilspace (and hence also $\ns$) is a nilspace factor of a $k$-step universal nilspace, so we are done.
\end{proof}
\begin{remark}[Invariance of the discrete part]
By the proof of Theorem \ref{thm:cosetnilspacecovering}, the following holds for every $i\in[k]$. If $\ns_i\cong \nss'_i\times \mc{D}_i(B_i)$ where $\nss_i'$ is a toral extension of $\nss'_{i-1}$, then the extension nilspace $\nss$ satisfies that $\nss_i\cong W_i\times \mc{D}_i(B_i)$ where $W_i$ is a toral extension of $W_{i-1}$.
\end{remark}

\noindent Thus, the task for the remainder of this section is to prove Theorems \ref{thm:wscover}, \ref{thm1}, and \ref{thm2}. The outline of the next subsections is as follows. In Subsection \ref{subsec:ws}, we prove Theorem \ref{thm:wscover}. Next, we connect weak splitting with orthogonality in Subsection \ref{subsec:ortho-ws}. Then, Subsection \ref{subsec:LieParam} develops useful machinery to locate the ``correct" (``most economical") group yielding the nilmanifold presentation; we call this machinery the \emph{Lie parametrization}. Finally, in Subsection \ref{subsec:thm1} we prove Theorem \ref{thm1}, and in Subsection \ref{subsec:thm2} we prove Theorem \ref{thm2}.

\subsection{\texorpdfstring{Extending a \textsc{cfr} nilspace to a weakly-splitting nilspace}{Extending a CFR nilspace to a weakly-splitting nilspace}}\label{subsec:ws}\hfill\\
\noindent In this subsection we prove Theorem \ref{thm:wscover}. For this, we need additional notation and a purely group-theoretic lemma on nilpotent groups. 

If $H$ is a subset in a group $G$, then we define $H^i:= \{h^i:h\in H\}$. Note carefully that $H^i$ is not the group generated by the $i$-th powers as usual, rather it is just the \emph{set} of $i$-th powers of elements of $H$.

\begin{lemma}\label{lem:grouppower}
Let $(G,G_\bullet)$ be a filtered group of degree $k$. Let $m_1,m_2,\dots,m_k$ be natural numbers such that, for every $i\in[k-1]$, we have that $m_i/m_{i+1}$ is an integer multiple\footnote{A tighter condition can be deduced from the proof, but for our purposes this one is enough.} of $k!$. Then the following statements hold.
\begin{enumerate}
\item The product set $T=T(G,m_1,m_2,\dots,m_k):=G_1^{m_1}G_2^{m_2}\cdots G_k^{m_k}$ is a group.
\item For every $i\in[k]$ we have $G_1^{m_1}G_2^{m_2}\cdots G_i^{m_i}G_{i+1}\leq G_1^{m_i}G_{i+1}$.
\end{enumerate}
\end{lemma}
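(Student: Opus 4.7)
My plan is to prove (i) first and then to deduce (ii) from it, both via the Hall--Petresco commutator expansion combined with a divisibility observation on binomial coefficients. The preliminary observation is that for every $i\in[k-1]$ and every $s\in[k]$, the binomial $\binom{m_i}{s}$ is a multiple of $m_{i+1}$: indeed $s!\mid k!$ and $k!\mid m_i/m_{i+1}$, so $m_i/s!$ is a multiple of $m_{i+1}$, and then $\binom{m_i}{s}=(m_i/s!)(m_i-1)(m_i-2)\cdots(m_i-s+1)$ inherits the same divisibility. Iterating the hypothesis yields the chain $m_k\mid m_{k-1}\mid\cdots\mid m_1$, so in fact $\binom{m_i}{s}$ is a multiple of $m_j$ for every $j\ge i+1$.

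For (i), I would prove by downward induction on $i$ from $k$ down to $1$ that each tail $T_i:=G_i^{m_i}G_{i+1}^{m_{i+1}}\cdots G_k^{m_k}$ (with the convention $T_{k+1}:=\{\id\}$) is a normal subgroup of $G$; taking $i=1$ then gives (i). Normality follows because each $G_j$ is normal in $G$, which makes each $G_j^{m_j}$ conjugation-stable. The base case $i=k$ is immediate: since $[G_k,G_k]\le G_{2k}=\{\id\}$, the group $G_k$ is abelian, so $G_k^{m_k}$ is the image of the endomorphism $g\mapsto g^{m_k}$. For the inductive step, assuming $T_{i+1}$ is already a normal subgroup, the key computation showing that $T_i=G_i^{m_i}T_{i+1}$ is closed under multiplication is that for $a,b\in G_i$, Hall--Petresco gives $a^{m_i}b^{m_i}=(ab)^{m_i}\prod_{s\ge 2}c_s^{-\binom{m_i}{s}}$ with $c_s\in\gamma_s(\langle a,b\rangle)\subseteq G_{is}$. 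For each $s\ge 2$ one has $is\ge i+1$ and $m_{is}\mid m_{i+1}\mid\binom{m_i}{s}$, so every correction lies in $G_{is}^{m_{is}}\subseteq T_{i+1}$, which yields $a^{m_i}b^{m_i}\in G_i^{m_i}T_{i+1}$. Closure under inversion is analogous, using conjugation and normality of $T_{i+1}$.

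For (ii), I would prove by induction on $i$ the strengthened assertion that any element $a_1^{m_1}\cdots a_i^{m_i}$ with $a_j\in G_j$ can be written as $c^{m_i}\tilde r$ with $c\in G_1$ and $\tilde r\in T_{i+1}$. Given the statement at level $i-1$, write $a_1^{m_1}\cdots a_{i-1}^{m_{i-1}}=c^{m_{i-1}}r$ with $r\in T_i=G_i^{m_i}T_{i+1}$, and decompose $r=\tilde r_1^{m_i}r''$; set $\tilde c:=c^{m_{i-1}/m_i}\in G_1$ so that $c^{m_{i-1}}=\tilde c^{m_i}$. Pushing $a_i^{m_i}$ past $r''$ via the normality of $T_{i+1}$ yields $\tilde c^{m_i}\tilde r_1^{m_i}a_i^{m_i}r'''$ with $r'''\in T_{i+1}$. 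Two Hall--Petresco computations then finish the step: first, as in the proof of (i), $\tilde r_1^{m_i}a_i^{m_i}=e^{m_i}\epsilon_1$ with $e:=\tilde r_1 a_i\in G_i$ and $\epsilon_1\in T_{i+1}$; second, since $\tilde c\in G_1$ and $e\in G_i$, every commutator in $\gamma_s(\langle\tilde c,e\rangle)$ has weight at least $s+i-1\ge i+1$ and its $\binom{m_i}{s}$-th power lies in $T_{i+1}$ by the divisibility observation, so $\tilde c^{m_i}e^{m_i}=(\tilde c e)^{m_i}\epsilon_2^{-1}$ with $\epsilon_2\in T_{i+1}$. Setting $c':=\tilde c e\in G_1$ then closes the induction, and the inclusion (ii) follows from $T_{i+1}\subseteq G_{i+1}$.

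The main technical obstacle will be the careful bookkeeping at each step: verifying that every Hall--Petresco correction lands in the right subgroup $T_{i+1}$ by matching commutator weights in the filtration against binomial divisibilities. The strong hypothesis $k!\mid m_i/m_{i+1}$ is essentially tight, since binomials $\binom{m_i}{s}$ with $s$ ranging up to the full nilpotency class $k$ can appear in these expansions.
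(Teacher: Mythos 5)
Your proof is correct and relies on the same two core tools as the paper's: the Hall--Petresco formula and the observation that the binomials $\binom{m_i}{s}$ are divisible by $m_j$ for $j \ge i+1$. The organization, however, is genuinely different and arguably cleaner. For part (i), the paper takes an arbitrary product with factors from varying $G_{r_i}^{m_{r_i}}$, sorts the indices by conjugation-transpositions $xy = yx^y$, and merges adjacent equal indices via Hall--Petresco; this requires iterating sorting and merging until multiplicities disappear. Your downward induction establishing that each tail $T_i = G_i^{m_i}\cdots G_k^{m_k}$ is a \emph{normal} subgroup avoids that combinatorial rearrangement entirely: closure under multiplication and inversion follows in one Hall--Petresco computation at each level once $T_{i+1}$ is known to be normal, and the base case is the abelianity of $G_k$. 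For part (ii), the paper performs a step-by-step reduction from the left, rewriting $g_1^{m_1}g_2^{m_2} = (g_1^{d_1}g_2)^{m_2}\cdot(\text{corrections})$ and recursing; your version makes the same reduction but packages it as an honest induction with the strengthened hypothesis $a_1^{m_1}\cdots a_i^{m_i} = c^{m_i}\tilde r$ with $c\in G_1$ and $\tilde r\in T_{i+1}$, which makes the invariant explicit and reuses the normality of $T_{i+1}$ from part (i). One detail worth flagging: in both proofs the stated Hall--Petresco estimate $c_s\in G_{js}$ for $g,h\in G_j$ is not sharp enough on its own when the two elements lie in different filtration levels; you (correctly) invoke the graded refinement that a length-$s$ commutator in $\tilde c\in G_1$ and $e\in G_i$ has weight at least $s+i-1$, whereas the paper invokes the same fact only implicitly by computing $c_2 = [g_1^{d_1}, g_2]\in G_3$ directly. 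Your write-up makes this explicit, which is a small improvement in rigor.
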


\begin{proof} We will use the Hall--Petresco (H--P) formula \cite[End of page 356]{Petresco}, which says that for any $j\in[k]$, every $g,h\in G_j$, and $n\in \mb{N}$, the following holds:
\begin{equation}\label{eq:H-P}
g^nh^n=(gh)^n\prod_{i=2}^kc_i^{\binom{n}{i}},
\end{equation}
where for every $i\in[k]$ we have $c_i\in G_{ij}$. Now assume that we have two elements $a,b$ in $T$. We need to prove that $ab\in T$ and that $a^{-1}\in T$. We will prove a generalization of both: suppose we have a product $a_1a_2\cdots a_n$ where each $a_i$ is in $G_{r_i}^{m_{r_i}}$ for some sequence $r_i\in[k]$. Observe that since each $G_i$ is a normal subgroup, we can interchange terms in this product by the rule $xy=yx^y$ (where $x^y:=y^{-1}xy$). Such a transformation may change an individual element $a_i$ but preserve its property of lying in the set $G_i^{m_i}$, since if $a_i=d_{i}^{m_{r_i}}$ then $a_i^y=(d_{i}^y)^{m_{r_i}}\in G_{r_i}^{m_{r_i}}$.

By applying such transpositions we can assume without loss of generality that the sequence $(r_i)$ is strictly increasing. The main idea is that if $r_i=r_{i+1}=s$ for some $i$ then we can simplify the product using \eqref{eq:H-P} so that the set $\{j:r_j\leq s\}$ becomes smaller. By iterating this transformation we can eliminate multiplicities in the sequence $r_i$, which then proves that any such product is in the set $T$. To make this work, we need to show that the higher-order terms appearing in \eqref{eq:H-P} lie in the correct powers of a group. To see this, note that if $a_i=d_i^{m_s}$ and $a_{i+1}=d_{i+1}^{m_s}$ then $a_ia_{i+1}=(d_id_{i+1})^{m_s}\prod_{j=2}^kc_j^{\binom{m_s}{j}}$. Hence, as $c_j\in G_{sj}$, we need to prove that $\binom{m_s}{j}$ is a multiple of $m_{js}$ for all $j\in[2,k]$. But this clearly follows from our assumption on the sequence $m_1,m_2,\ldots,m_k$. This proves statement $(i)$.

Statement $(ii)$ also follows from \eqref{eq:H-P}, but in a different way. We prove it by a recursive process. Assume that we have an expression $g_1^{m_1}g_2^{m_2}\dots g_i^{m_i}$ where $g_j\in G_i$ for $1\leq j\leq i$. Our goal is to reduce this expression, step by step, to a $m_i$-th power modulo $G_{i+1}$. The first step already illustrates the process. Observe that by \eqref{eq:H-P} we have
\[g_1^{m_1}g_2^{m_2}=(g_1^{d_1})^{m_2}g_2^{m_2}=(g_1^{d_1}g_2)^{m_2}\prod_{t=2}^k c_t^{\binom{m_2}{t}}\] where $d_1=m_1/m_2$. Observe that $c_2=[g_1^{d_1},g_2]\in G_3$ and since $m_3$ divides ${\binom{m_2}{2}}$, we have that the term $c_2^{\binom{m_2}{2}}$ is in $G_3^{m_3}$. Since $m_3$ divides ${\binom{m_2}{t}}$ for all $1\leq t\leq k$ we have in somewhat wasteful way (not using that higher commutators are in an even better subgroup) that all the commutator terms in \eqref{eq:H-P} are in $G_3^{m_3}$. Thus using a similar reduction as in the first part of the proof we can collapse these terms and reduce our initial expression to $h_1^{m_2}h_3^{m_3}\dots h_i^{m_i}$ modulo $G_{i+1}$ where $h_1\in G_1,h_3\in G_3,h_4\in G_4,\dots$. We can then continue the process with decomposing $h_1^{m_2}h_3^{m_3}$ in a similar way using \eqref{eq:H-P}, which works since in particular we have $[h_1,h_3]\in G_4$. At the $j$-th step, we have an expression of the form $h_1^{m_j}h_{j+1}^{m_{j+1}}h_{j+2}^{m_{j+2}}\dots$. Once we reach the desired $i\in[k]$, the result follows.
\end{proof}

Now we apply this lemma to fiber-transitive groups $G$ of translations on free nilspaces, and deduce that the corresponding group $T$ is also fiber-transitive.

\begin{theorem}\label{thm:power-fib-tran} Let $F$ be a $k$-step free nilspace and let $G$ be a fiber-transitive subgroup of $\tran(F)$. Let $G_i:=G\cap\tran_i(F)$. Let $m_1,m_2,\dots,m_k$ be natural numbers such that $m_i/m_{i+1}$ is an integer multiple of $k!$. Then the group $T:=T(G,m_1,m_2,\dots,m_k)$ is also fiber-transitive. 
\end{theorem}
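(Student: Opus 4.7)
The goal is: given $t \in T$ with $t(x) = y$ and $\pi_i(x) = \pi_i(y)$, to construct $t' \in T \cap \tran_{i+1}(F)$ with $t'(x) = y$. The plan is to decompose $t$ via Lemma \ref{lem:grouppower}(ii), apply fiber-transitivity of $G$, and then verify that the resulting element lies in $T$.

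For the decomposition, write $t = g_1^{m_1} g_2^{m_2} \cdots g_k^{m_k}$ with $g_j \in G_j$. The tail $\beta := g_{i+1}^{m_{i+1}} \cdots g_k^{m_k}$ already lies in $T \cap \tran_{i+1}(F)$ (take the first $i$ factors to be $\id$), so only the head $\alpha := g_1^{m_1} \cdots g_i^{m_i}$ prevents $t$ from lying in $\tran_{i+1}(F)$. By Lemma \ref{lem:grouppower}(ii), $\alpha = h^{m_i} u$ for some $h \in G$ and $u \in G_{i+1}$, so that $t = h^{m_i}(u\beta)$ with $u\beta \in \tran_{i+1}(F)$. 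Now setting $z := u\beta(x)$, we have $\pi_i(z) = \pi_i(x) = \pi_i(y)$ and $h^{m_i}(z) = y$. Fiber-transitivity of $G$ applied to $h^{m_i} \in G$ then yields an element $v \in G \cap \tran_{i+1}(F) = G_{i+1}$ with $v(z) = y$, so the candidate $t^* := v u \beta$ lies in $\tran_{i+1}(F)$ and satisfies $t^*(x) = y$.

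The main obstacle is to show that $t^*$, or a suitable modification acting identically on $x$, actually belongs to $T$. The key input is the divisibility hypothesis $k! \mid m_j/m_{j+1}$, which via the Hall--Petresco formula forces the higher-order correction terms appearing in expansions of $m_i$-th powers (involving binomial coefficients $\binom{m_i}{r}$) to lie in the appropriate $m_j$-th-power sets for $j > i$. I would then descend through the levels $j = i+1, \ldots, k$, at each step applying Lemma \ref{lem:grouppower}(ii) to the shifted filtration $(G_{i+1}, \ldots, G_k)$ of degree $k - i$ (for which the divisibility hypothesis still suffices) and, where necessary, invoking fiber-transitivity of $G$ at the current level to absorb the resulting corrections into orbits, progressively refining $v$ so that the final element ends up in $G_{i+1}^{m_{i+1}} G_{i+2}^{m_{i+2}} \cdots G_k^{m_k} \subseteq T$. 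The iteration terminates at $j = k$ because $\tran_{k+1}(F) = \{\id\}$, yielding the desired $t' \in T \cap \tran_{i+1}(F)$ with $t'(x) = y$.
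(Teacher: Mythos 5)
Your decomposition $t = h^{m_i}(u\beta)$ and the reduction to the pair $(z,y)$ with $z := u\beta(x)$ are fine, but applying fiber-transitivity of $G$ directly to the power $h^{m_i}$ is exactly where the argument breaks. Fiber-transitivity returns \emph{some} $v\in G_{i+1}$ with $v(z)=y$, with no power structure whatsoever, so $t^* = vu\beta$ has the factor $vu\in G_{i+1}$ in place of the required $G_{i+1}^{m_{i+1}}$-factor. There is no reason for $vu$ to be (or be replaceable by) an $m_{i+1}$-th power, and your proposed iteration over $j = i+1,\ldots,k$ does not supply one: Lemma \ref{lem:grouppower}(ii) applied to the shifted filtration only reorganizes a product that is \emph{already} made of the correct powers, so it cannot be applied to $vu\beta$, and invoking fiber-transitivity again at a deeper level will again yield un-powered elements of $G$. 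In short, nothing in the sketch converts the output of fiber-transitivity into an $m_{i+1}$-th power, which is what membership in $T$ requires.

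The paper's proof resolves precisely this difficulty by a step your proposal has no analogue of. It uses Lemma \ref{lem:grouppower}(ii) at index $i+1$ (not $i$), writing $t = h g^{m_{i+1}}$ with $g\in G$ and $h\in G_{i+2}$, and then proves the nontrivial implication $g^{m_{i+1}}(x)\sim_i x \;\Rightarrow\; g(x)\sim_i x$, by an induction through the structure groups using the explicit polynomial description of translations (\cite[Theorem 3.15]{CGSS-doucos}) together with the fact that the structure groups of a free nilspace are torsion-free ($\mb{Z}^{a_j}\times\mb{R}^{b_j}$, so $m_{i+1}s=0 \Rightarrow s=0$). This lets one apply fiber-transitivity to the un-powered element $g$, producing $g_*\in G_{i+1}$ with $g_*(x)=g(x)$, and it is then $g_*^{m_{i+1}}$ — a genuine $m_{i+1}$-th power of an element of $G_{i+1}$ — that lies in $T_{i+1}$. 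A further check that $g_*^{m_{i+1}}(x)\sim_{i+1} g^{m_{i+1}}(x)$ closes the gap, and the reverse induction on $j$ then goes through with $t' := hg^{m_{i+1}}g_*^{-m_{i+1}}\in T$ and the new base point $g_*^{m_{i+1}}(x)$. Without the torsion-freeness step (or a substitute for it), the plan of ``absorbing corrections into orbits'' cannot deliver an element of $T$: the divisibility of the $m_j$'s controls the H--P error terms, but it does not convert arbitrary elements of $G_{i+1}$ into $m_{i+1}$-th powers.
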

\begin{proof}
We need to prove that for all $j\in[k]$ the following statement holds
\begin{equation}\label{eq:T-fib-tran}
\forall x,y\in F \text{ if } x\sim_j y \text{ and }\exists t\in T\text{ such that } t(x)=y \implies \exists t^*\in T_{j+1} \text{ with } t^*(x)=y.
\end{equation}
We are going to prove this by reverse induction on $j$. Note that, for $j=k+1$, the result holds trivially as $x=y$ and we may let $t^*=\id$. Assume now that \eqref{eq:T-fib-tran} holds for $j=i+1$ and let $x,y\in F$ and $t\in T$ be such that $x\sim_i y$ and $t(x)=y$.

Notice that $t= hg^{m_{i+1}}$ holds for some $g\in G, h\in G_{i+2}$, by the second statement of Lemma \ref{lem:grouppower}. Hence $g^{m_{i+1}}(x)\sim_{i+1}y$ and so (combined with $y\sim_i x$) we have  $g^{m_{i+1}}(x)\sim_i x$. We claim that this last relation implies that $g(x)\sim_i x$. We  prove this by induction using \cite[Theorem 3.15]{CGSS-doucos}. Indeed, note that $g^{m_{i+1}}(x)\sim_1 x$ but then, letting $s_1\in \ab_1$ be such that $\eta_1(g)(\pi_1(x))=\pi_1(x)+s_1$, we have $\eta_1(g^{m_{i+1}})(\pi_1(x))=\pi_1(x)+m_{i+1}s_1$. Then $g^{m_{i+1}}(x)\sim_1 x$ implies that $m_{i+1}s_1=0$. Since $F$ is a free nilspace, we have $\pi_1(F)=\mc{D}_1(\mb{Z}^{a_1}\times\mb{R}^{b_1})$, so $m_{i+1}s_1=0$ implies that $s_1=0$. This proves the case $i=1$. Now assume by induction that that $g(x)\sim_j x$ for $j\in[i-1]$. By \cite[Theorem 3.15]{CGSS-doucos}, the $j+1$-th coordinate of the translation $g$ adds a polynomial $s_{j+1}(\pi_j(x))$ to the $\mc{D}_{j+1}$ factor of $F$. By assumption we have $\pi_{j}(g(x))=\pi_{j}(x)$, so $\pi_{j+1}(g^{m_{i+1}}(x))=\pi_{j+1}(x)+m_{i+1}s_{j+1}(\pi_{j}(x))$. However, by assumption we also had $\pi_{j+1}(g^{m_{i+1}}(x))=\pi_{j+1}(x)$. Therefore $s_{j+1}(\pi_{j}(x))=0$ and hence $g(x)\sim_{j+1}x$. Our claim $g(x)\sim_i x$ then follows.

The above claim together with the fiber-transitive property of $G$ implies that there is $g_*\in G_{i+1}$ such that $g_*(x)=g(x)$. Thus, we have that $g$ and $g_*$ act on the same way on the $\sim_i$ class of $x$ in $\pi_{i+1}(F)$. To see this, similarly as before, note that $\pi_i(g(x))=\pi_i(x)$ and hence, by \cite[Theorem 3.15]{CGSS-doucos}, we must have that $\pi_{i+1}(g(x))=\pi_{i+1}(x)+s_{i+1}(\pi_i(x))$. Thus $\eta_{i+1}(g_*)$ must be precisely the addition of $s_{i+1}(\pi_i(x))$, i.e.\ $\eta_{i+1}(g_*)(\pi_{i+1}(x'))=\pi_{i+1}(x')+s_{i+1}(\pi_i(x))$ for any $x'\in F$. In particular ${g_*}^{m_{i+1}}(x)\sim_{i+1}g^{m_{i+1}}(x)$. We claim now that $g_*^{m_{i+1}}(x)$ and $y$ satisfy
\[
g_*^{m_{i+1}}(x) \sim_{i+1}y \text{ and } \exists t'\in T\text{ such that }t'(g_*^{m_{i+1}}(x)) = y.
\]
If we manage to prove this then the result will follow by induction, as we may then apply \eqref{eq:T-fib-tran} with $j=i+1$, the pair $g_*^{m_{i+1}}(x)$ and $y$, and $t'\in T$.

To prove the first of these statements, note that $y=t(x)=hg^{m_{i+1}}(x)\sim_{i+1} hg_*^{m_{i+1}}(x)$. But as $h\in G_{i+2}$, we have that the latter is $\sim_{i+1}g_*^{m_{i+1}}(x)$. To prove the second statement, note that $y=hg^{m_{i+1}}g_*^{-m_{i+1}}g_*^{m_{i+1}}(x)$ and both $t=hg^{m_{i+1}}$ and $g_*^{m_{i+1}}$ are in $T$. The result follows.
\end{proof}

\begin{corollary}\label{cor:weak-split-factor-with-subgroup}
Under the same assumptions as in Theorem \ref{thm:power-fib-tran}, if $G$ is fiber-discrete (resp. fiber-cocompact) then so is $T$. Moreover, the map $F/T\to F/G$ given by $\pi_T(x)\mapsto \pi_G(x)$ defines a fibration.
\end{corollary}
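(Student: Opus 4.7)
My plan handles the corollary in three parts.

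For \emph{fiber-discreteness}, since $T \leq G$ we have $\eta_j(T) \cap \tran_j(F_j) \leq \eta_j(G) \cap \tran_j(F_j)$ for each $j \in [k]$, and a subgroup of a discrete group is discrete.

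For \emph{fiber-cocompactness}, the plan is to exhibit a finite-index subgroup of the cocompact $\eta_j(G) \cap \tran_j(F_j)$ inside $\eta_j(T) \cap \tran_j(F_j)$. The key sub-claim is
\[
\eta_j(G_j) = \eta_j(G) \cap \tran_j(F_j).
\]
For the non-obvious inclusion, take $g \in G$ with $\eta_j(g) \in \tran_j(F_j)$. Since height-$j$ translations of the $j$-step nilspace $F_j$ act trivially on $F_{j-1}$, we have $\eta_{j-1}(g) = \id$, so $\pi_{j-1}(g(x)) = \pi_{j-1}(x)$ for every $x \in F$. Fiber-transitivity of $G$ (Definition \ref{def:fib-tran} with $i = j-1$) then yields $g' \in G \cap \tran_j(F) = G_j$ with $g'(x) = g(x)$, and since $\tran_j(F_j) \cong \ab_j$ acts freely on each fiber of $F_j \to F_{j-1}$ (so any height-$j$ translation is determined by its value at one point), we conclude $\eta_j(g) = \eta_j(g')$. (The $j = 1$ case is immediate from the free-nilspace identity $\tran_1(F) = \tran(F)$, which gives $G_1 = G$.) With the sub-claim in hand, abelianness of $\tran_j(F_j)$ gives $\eta_j(G_j^{m_j}) = \eta_j(G_j)^{m_j}$, and since $\eta_j(G_j)$ is a discrete (hence finitely generated abelian) subgroup of $\tran_j(F_j) \cong \mb{Z}^{a_j} \times \mb{R}^{b_j}$, its $m_j$-th powers have finite index in it. Thus $\eta_j(T) \cap \tran_j(F_j) \supseteq \eta_j(G_j)^{m_j}$ contains a finite-index subgroup of the cocompact $\eta_j(G) \cap \tran_j(F_j)$, and is therefore itself cocompact in $\tran_j(F_j)$.

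For the \emph{fibration statement}, the map $\pi_T(x) \mapsto \pi_G(x)$ is well-defined (since $T \leq G$) and a morphism (cubes on $F/T$ and $F/G$ are both projections of cubes on $F$). By Theorem \ref{thm:power-fib-tran}, $T$ is fiber-transitive; together with fiber-transitivity of $G$, standard results in nilspace theory (see \cite{CGSS-doucos}) make both $F \to F/T$ and $F \to F/G$ fibrations. A routine lifting argument---given a corner on $F/T$ and a cube on $F/G$ extending its image, lift the corner to $F$ via $F \to F/T$, extend it to a full cube on $F$ via $F \to F/G$, then project to $F/T$---yields the required corner-completion property for $F/T \to F/G$. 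The hard part will be the sub-claim $\eta_j(G_j) = \eta_j(G) \cap \tran_j(F_j)$, which hinges on combining fiber-transitivity with the freeness of the $\tran_j(F_j)$-action on fibers; everything else (the finite-index computation, the abelian-group theory, and the fibration lifting) is routine.
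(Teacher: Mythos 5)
Your proof is correct and takes essentially the same approach as the paper's, which is also very terse here (it argues at the top structure group and then says ``proceed by induction projecting onto the lower step factors,'' citing \cite[proof of Lemma 4.1]{CGSS} for the fibration claim). The main thing you do differently, and more explicitly, is isolate and prove the identity $\eta_j(G_j) = \eta_j(G)\cap\tran_j(F_j)$, which the paper leaves implicit in its induction; your derivation of it from fiber-transitivity applied at level $i=j-1$, combined with the fact that $\tran_j(F_j)\cong\ab_j$ acts freely on $\pi_{j-1}$-fibers, is correct (including the separate treatment of $j=1$). Your discreteness argument is also slightly cleaner than the paper's, since it needs no reduction at all: $\eta_j(T)\cap\tran_j(F_j)$ is a subgroup of the discrete $\eta_j(G)\cap\tran_j(F_j)$. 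One small point worth flagging, which applies equally to the paper's proof: the cocompactness argument (whether via your finite-index computation or the paper's generator-raising) tacitly uses that $\eta_j(G)\cap\tran_j(F_j)$ is finitely generated, i.e.\ that $G$ is also fiber-discrete; the ``resp.'' phrasing in the corollary is therefore a little loose, but harmless since the result is only ever applied to translation lattices.
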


\begin{proof}
Note that it is enough to prove the result for the last structure group and then proceed by induction projecting onto the lower step factors. We need to compute $T\cap G_k$. Note that clearly $ T\cap G_k\le G_k\le \ab_k(F)=\mb{Z}^{a_k}\times\mb{R}^{b_k}$ where $G_k$ is a discrete (resp. cocompact) lattice. Hence $T\cap G_k$ is clearly also a discrete lattice in $\ab_k(F)$. Moreover, if $G_k$ is cocompact and generated by $z_1,\ldots,z_{a_k+b_k}$, then $z_1^{m_k},\ldots,z_{a_k+b_k}^{m_k}\in T\cap G_k$ and therefore $T\cap G_k$ is cocompact as well. The fact that $T\le G$ clearly implies that the map $\pi_T(x)\mapsto \pi_G(x)$ is well-defined. It follows from \cite[proof of Lemma 4.1]{CGSS} that as both $\pi_T$ and $\pi_G$ are fibrations then so it is $\pi_T(x)\mapsto \pi_G(x)$.
\end{proof}

For the proof of Theorem \ref{thm:wscover} we need the next definition and lemmas.

\begin{defn}\label{def:tau}
Let $F$ be a $k$-step free nilspace, and let us write $F=C\times D$ where $C=\prod_{i=1}^k\mc{D}_i(\mb{R}^{b_i})$ is the continuous part of $F$ and $D=\prod_{i=1}^k\mc{D}_i(\mb{Z}^{a_i})$ is the discrete part. We define the \emph{projection to the discrete part} on $F$ to be the coordinate projection $p_D:C\times D\to D$.
\end{defn}
\noindent Recall that a translation $\alpha$ on a nilspace $\ns$ is said to be \emph{consistent} with a map $p:\ns\to\nss$ if for any $x,y\in \ns$, if $p(x)=p(y)$ then $p(\alpha(x))=p(\alpha(y))$ (in other words $\alpha$ preserves the fibers of $p$); see \cite[Definition 1.2]{CGSS}.

\begin{lemma}\label{lem:Dpartproj}
Every translation $\alpha\in \tran(F)$ is consistent with $p_D$ and thus the map $\wh{p_D}:\tran(F)\to\tran(D)$ is a well-defined homomorphism.    
\end{lemma}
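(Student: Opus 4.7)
The plan is to analyze a translation $\alpha\in\tran(F)$ coordinate-by-coordinate using the polynomial description of translations on free nilspaces from \cite[Theorem 3.15]{CGSS-doucos}. According to that description, $\alpha$ acts on the $j$-th structure group of $F$ by adding a polynomial map $P_j(\pi_{j-1}(x))$ taking values in $\mb{R}^{b_j}\times\mb{Z}^{a_j}$. Splitting $P_j$ into its continuous and discrete components $P_j^C$ and $P_j^D$, the key observation is that $P_j^D$ cannot depend on the continuous coordinates: for any fixed discrete input $d\in \prod_{i\le j-1}\mc{D}_i(\mb{Z}^{a_i})$, the restriction $P_j^D(\cdot,d)$ is a continuous map from a connected real vector space to the discrete set $\mb{Z}^{a_j}$, hence constant. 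Therefore $P_j^D(\pi_{j-1}(x))$ depends only on $\pi_{j-1}(p_D(x))$.

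From here consistency is immediate by induction on $j$: if $p_D(x)=p_D(y)$, then the equality of the $\mb{Z}^{a_j}$-coordinates of $\alpha(x)$ and $\alpha(y)$ at step $j$ follows from the equality at steps $<j$ combined with the fact that $P_j^D$ depends only on the discrete part of $\pi_{j-1}$. Hence $p_D(\alpha(x))=p_D(\alpha(y))$, so $\alpha$ is consistent with $p_D$ in the sense of \cite[Definition 1.2]{CGSS}, and we may well-define $\wh{p_D}(\alpha):D\to D$ by $\wh{p_D}(\alpha)(p_D(x)):=p_D(\alpha(x))$.

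It remains to verify that $\wh{p_D}(\alpha)$ is a translation on $D$ and that $\wh{p_D}$ is a group homomorphism. For the former, given $n\ge 0$, a face $F'\subseteq\db{n}$ of the appropriate codimension, and a cube $\q\in\cu^n(D)$, lift $\q$ to $\tilde\q\in\cu^n(F)$ via the inclusion $d\mapsto(0,d)$ of $D$ into $C\times D$. Since $\alpha\in\tran(F)$, we have $\alpha^{F'}(\tilde\q)\in\cu^n(F)$, and applying the nilspace morphism $p_D$ gives $\wh{p_D}(\alpha)^{F'}(\q)=p_D(\alpha^{F'}(\tilde\q))\in\cu^n(D)$, as required. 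For the homomorphism property, note that for any $\alpha,\beta\in\tran(F)$ and any $x\in F$,
\[
\wh{p_D}(\alpha\co\beta)(p_D(x))=p_D(\alpha(\beta(x)))=\wh{p_D}(\alpha)(p_D(\beta(x)))=\bigl(\wh{p_D}(\alpha)\co\wh{p_D}(\beta)\bigr)(p_D(x)),
\]
which finishes the proof. The main obstacle is the coordinate-wise reduction in the first paragraph; once the structure theorem \cite[Theorem 3.15]{CGSS-doucos} is invoked, the continuity-plus-discreteness argument forcing $P_j^D$ to ignore the continuous coordinates is the technical heart of the statement.
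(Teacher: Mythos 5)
Your proof is correct, and the core idea matches the alternative argument the paper itself gives in a footnote: use the explicit polynomial description of translations on a free nilspace from \cite[Theorem 3.15]{CGSS-doucos} and observe that the $\mb{Z}^{a_j}$-valued component of each increment polynomial, being continuous on a connected (real) domain with fixed discrete inputs, must be locally constant in the continuous variables. Your write-up spells out the induction on $j$ and the verification that $\wh{p_D}(\alpha)$ is itself a translation and that $\wh{p_D}$ is a homomorphism, steps the paper treats as routine.

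The paper's main proof, however, is shorter and more conceptual, bypassing the structure theorem entirely: the fibers of $p_D$ are exactly the connected components of $F = C\times D$ (since $C$ is connected and $D$ is discrete), and every $\alpha\in\tran(F)$ is a homeomorphism of $F$, hence permutes connected components; therefore $p_D(x)=p_D(y)$ forces $p_D(\alpha(x))=p_D(\alpha(y))$. What your argument buys is a more constructive picture — it shows not just consistency but \emph{which} coordinates the discrete part of $\alpha$ depends on, which is the viewpoint the paper uses elsewhere (e.g.\ in the proof of Lemma \ref{highpower}, where one reduces immediately to the discrete part). What the topological argument buys is brevity and independence from the precise form of the polynomial expansion; it only uses that translations are homeomorphisms. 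Both are valid.
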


\begin{proof}
This is a straightforward consequence of the continuity of $\alpha$, since $p_D(x)=p_D(y)$ implies that $x,y$ are in the same connected component of $F$.\footnote{Alternatively, by \cite[Lemma 3.5]{CGSS-doucos}, for every $\alpha\in\tran(F)$, the expression of $\alpha$ given by \cite[Theorem 3.15]{CGSS-doucos} involves no continuous coordinate from $F$, so $\alpha(x)$ depends only on $p_D(x)$ and the result follows.}
\end{proof}

\begin{lemma}\label{highpower} 
Let $F$ be a free nilspace and let $p_D$ be the projection to the discrete part on $F$. Then for every $n\in \mb{N}$ there exists $f(n)\in \mb{N}$ such that if $g\in\tran(F)$ and $x\in F$, then $p_D(g^{f(n)}(x))$ is congruent to $p_D(x)$ modulo $n$ in every coordinate.
\end{lemma}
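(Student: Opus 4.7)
The plan is to reduce the problem to a polynomial-orbit calculation on the discrete part $D$ of $F$. By Lemma \ref{lem:Dpartproj}, every translation $\alpha\in\tran(F)$ satisfies $p_D\co\alpha=\wh{p_D}(\alpha)\co p_D$, so iterating gives $p_D(g^m(x))=\beta^m(p_D(x))$ where $\beta:=\wh{p_D}(g)\in\tran(D)$. Hence it suffices to produce $f(n)\in\mb{N}$ depending only on $n$ and on the step $k$ such that for every $\beta\in\tran(D)$ and every $y\in D$, the element $\beta^{f(n)}(y)$ is congruent to $y$ modulo $n$ in every coordinate of the ambient lattice $\prod_{j=1}^k\mb{Z}^{a_j}$ underlying $D=\prod_{j=1}^k\mc{D}_j(\mb{Z}^{a_j})$.

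The key step is the polynomial-orbit property of translations. Since $\tran(D)$ is a $k$-step nilpotent group with filtration $\big(\tran_j(D)\big)_{j=1}^k$, the orbit map $m\mapsto\beta^m$ is a polynomial sequence of degree at most $k$ in this filtered group, either by a Hall--Petresco expansion like the one used in the proof of Lemma \ref{lem:grouppower}, or by the general theory of polynomial sequences in filtered nilpotent groups. Combining this with the explicit coordinate description of translations on a free nilspace from \cite[Theorem 3.15]{CGSS-doucos} (each coordinate of $\beta(y)$ is obtained from $y$ by adding an integer-valued polynomial of the lower coordinates), a short induction on $j$ shows that each coordinate of $\beta^m(y)$ is an integer-valued polynomial $P(m)$ of degree at most $k$. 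The classical fact that such a $P$ admits a unique expansion $P(m)=\sum_{i=0}^k a_i\binom{m}{i}$ with $a_i\in\mb{Z}$ then implies $P(m)\equiv P(0)\pmod{n}$ whenever $n\mid\binom{m}{i}$ for every $i\in[k]$.

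Finally, I would set $f(n):=n\cdot k!$ (any multiple thereof would do). For each $i\in[k]$,
$$\binom{n\,k!}{i}=n\cdot\frac{k!}{i!}\cdot(n\,k!-1)(n\,k!-2)\cdots(n\,k!-i+1),$$
and the factor $k!/i!$ is a positive integer since $i\le k$, so $n\mid\binom{f(n)}{i}$ for every $i\in[k]$; combined with the previous paragraph this yields the required congruence in every coordinate. The main obstacle I expect is justifying the polynomial-orbit claim rigorously in this setting: while it is a well-known feature of translations in filtered nilpotent groups, transferring it to the precise coordinate statement on $D$ requires tracking how the commutator terms in the Hall--Petresco expansion of $\beta^m$ propagate under evaluation at $y$ and contribute to each coordinate, using the structural description of $\tran(D)$ given by \cite[Theorem 3.15]{CGSS-doucos}.
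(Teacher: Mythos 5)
Your proof is correct and takes a genuinely different route from the paper's. Both arguments begin with the same reduction to the discrete part $D$ via Lemma~\ref{lem:Dpartproj}. After that, the paper's proof identifies $\tran(D)$, via a theorem of P.\ Hall, with a group of $r\times r$ unipotent upper-triangular integer matrices and then uses the expansion $(I_r+M)^m=\sum_{i<r} M^i\binom{m}{i}$ together with $M^r=0$ to obtain the congruence for $m$ sufficiently composed; this implicitly identifies the matrix entries with the polynomial coefficients of the translation so that a matrix congruence $\bmod\ n$ yields a coordinatewise congruence $\bmod\ n$ on $D$. Your proof instead stays in the coordinates of \cite[Theorem 3.15]{CGSS-doucos} and shows that, for fixed $y$, each coordinate of $g^m(y)$ is an integer-valued polynomial in $m$ of degree at most $k$. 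The step you flag as the ``main obstacle'' does go through: inducting on the coordinate index $j$ one shows that $g^m(y)_j-y_j$ is a polynomial in $(m,y_1,\ldots,y_{j-1})$ of graded degree at most $j$ (with $m$ given weight $1$ and $y_\ell$ weight $\ell$), using the recursion $g^m(y)_j-y_j=\sum_{i=0}^{m-1}T_j\bigl(g^i(y)_1,\ldots,g^i(y)_{j-1}\bigr)$, the graded-degree bound on $T_j$ from \cite[Theorem 3.15]{CGSS-doucos}, the fact that graded substitution preserves graded degree, and the fact that summing over $i\in[0,m-1]$ raises the $m$-degree by one. Your approach buys a clean explicit period $f(n)=n\cdot k!$ depending only on $n$ and the step $k$, whereas the paper's exponent depends on the Hall embedding dimension $r$ (tied to the rank of $D$, not just its step); your approach also avoids having to justify the compatibility between the matrix representation and the translation action, a point the paper's terse argument leaves to the reader.
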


\begin{proof} Since the conclusion can be written as $\wh{p_D}(g)^{f(n)}(p_D(x))\equiv p_D(x)$ mod $n$ and thus concerns just the discrete part of $F$, we can assume without loss of generality that $F$ equals its discrete part $D$. Now by \cite[Theorem 3.15]{CGSS-doucos} and \cite[Lemma 3.5]{CGSS-doucos}, every translation $g\in \tran(D)$ is described by polynomials with integer coefficients, and it follows that $\tran(D)$ is a finitely generated torsion-free nilpotent group. By a well-known theorem of P.\ Hall (see \cite[Theorem 6.5]{Clement&al}), we have that $\tran(F)$ can be identified with a group of unipotent upper-triangular $r\times r$ integer matrices, i.e.\ matrices which of the form $I_r+M$ where $M\in \mb{Z}^{r\times r}$ is strictly upper triangular. Note that then $(I_r+M)^m=\sum_{i=0}^m M^i{\binom{m}{i}}$ and that $M^r=0$. Therefore, if $m$ is sufficiently composed then the first $r$ binomial coefficients are all divisible by $n$, and the result follows.
\end{proof}

The last tool we need for the proof of Theorem \ref{thm:wscover} is the following.

\begin{proposition}\label{prop:cond-for-equivalence}
Let $\ns$ be a $k$-step nilspace and $H^1_\bullet,H^2_\bullet$ be filtered subgroups of $ \tran(\ns)$. Suppose that the following holds.
\begin{enumerate}
    \item\label{it:equ-fil-simplified-1} $H^1$ is fiber-transitive,
    \item\label{it:equ-fil-simplified-2} $\forall i\in[k]$, $\forall h\in H^1_i$, and $\forall x\in \ns$ there exists $h'\in H^2_i$ such that $h(x)=h'(x)$, and
    \item\label{it:equ-fil-simplified-3} $\forall h'\in H^2$ and $\forall x\in \ns$ there exists $h\in H^1$ such that $h(x)=h'(x)$.
\end{enumerate}
Then $H^1_\bullet$ and $H^2_\bullet$ are equivalent (see \cite[Definition 5.15]{CGSS-doucos}).
\end{proposition}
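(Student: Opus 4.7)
The plan is to reduce the equivalence of $H^1_\bullet$ and $H^2_\bullet$ (in the sense of \cite[Definition 5.15]{CGSS-doucos}) to the orbit-wise matching condition that for every $i \in [k]$ and every $x \in \ns$, one has $H^1_i \cdot x = H^2_i \cdot x$. The inclusion $H^1_i \cdot x \subseteq H^2_i \cdot x$ is an immediate restatement of hypothesis \ref{it:equ-fil-simplified-2}, so the substance is in the reverse inclusion.

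For the reverse inclusion, fix $h' \in H^2_i$ and $x \in \ns$ and set $y := h'(x)$. Since $h' \in H^2_i \subseteq \tran_i(\ns)$, we have $\eta_{i-1}(h') = \id$ and in particular $\pi_{i-1}(y) = \pi_{i-1}(x)$. By hypothesis \ref{it:equ-fil-simplified-3} there is some $g \in H^1$ with $g(x) = y$; this $g$ then satisfies $\pi_{i-1}(g(x)) = \pi_{i-1}(x)$. Fiber-transitivity of $H^1$ (hypothesis \ref{it:equ-fil-simplified-1}), applied to the pair $x, g(x)$ at level $i-1$, produces an element $h \in H^1 \cap \tran_i(\ns)$ with $h(x) = y$.

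The remaining task is to promote $h \in H^1 \cap \tran_i(\ns)$ to an element of $H^1_i$. I would do this by a bootstrap up the filtration: if we have found $h \in H^1_j$ with $h(x) = y$ and $j < i$, then hypothesis \ref{it:equ-fil-simplified-2} yields $h_* \in H^2_j$ matching $h$ at $x$, and the ratio $h_*^{-1} h' \in H^2_j$ fixes $x$. Since $h' \in H^2_i$, the discrepancy between $h_*$ and $h'$ is concentrated in the levels strictly above $j$, and one more application of hypothesis \ref{it:equ-fil-simplified-3} together with the fiber-transitivity of $H^1$ upgrades $h$ to some $\tilde h \in H^1_{j'}$ with $j' > j$ and $\tilde h(x) = y$. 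Iterating, after finitely many steps we reach $H^1_i$.

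The main obstacle I anticipate is the bookkeeping in this inductive refinement, in particular ensuring that the element produced by fiber-transitivity at each step lands in the correct level of the prescribed filtration $H^1_\bullet$ rather than merely in $H^1 \cap \tran_\bullet(\ns)$. Once the orbit identifications $H^1_i \cdot x = H^2_i \cdot x$ are established uniformly in $i$ and $x$, the equivalence of the filtrations in the sense of \cite[Definition 5.15]{CGSS-doucos} follows directly, completing the proof.
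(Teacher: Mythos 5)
Your core computation — for $h' \in H^2_i$, $x \in \ns$, noting $\pi_{i-1}(h'(x)) = \pi_{i-1}(x)$, applying hypothesis (iii) to get $g\in H^1$ with $g(x)=h'(x)$, and then applying fiber-transitivity of $H^1$ to land in $\tran_i(\ns)$ — is exactly the paper's intermediate step (establishing that (ii) holds with the roles of $H^1$ and $H^2$ swapped). That part is fine.

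However, there are two genuine gaps. First, you never prove that $H^2_\bullet$ is fiber-transitive, which is the paper's opening step, and it matters: it is what guarantees (via \cite[Lemma 5.7]{CGSS-doucos}) that $\pi_{H^2}(\ns)$ is actually a $k$-step nilspace, without which the equivalence in the sense of \cite[Definition 5.15]{CGSS-doucos} is not even well-posed. The argument uses all three hypotheses and is not a formality. Second, and more importantly, you reduce the whole statement to the orbit-wise identity $H^1_i\cdot x = H^2_i\cdot x$ and then assert that equivalence ``follows directly.'' It does not: equivalence of the filtrations concerns the induced cube structures on the quotients, not merely their underlying sets. The paper handles this by building the map $\pi_{H^1}(x)\mapsto\pi_{H^2}(x)$, checking it is a nilspace morphism, and invoking \cite[Lemma 5.13]{CGSS-doucos} to show that $H^1$-shifts of a cube $\q$ in $\cu^n(\ns)$ can be matched by $H^2$-shifts and vice versa. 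This cube-level matching is the substantive content of the conclusion, and your proposal leaves it unaddressed.

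Finally, the ``bootstrap'' you spend the most time worrying about is misplaced: the fiber-transitivity hypothesis on $H^1$ already produces an element of $H^1_i$ directly (the paper's convention identifies $H^1_i$ with $H^1\cap\tran_i(\ns)$, as its own proof makes explicit), so no upgrading is needed. And if the filtration really were arbitrary, your bootstrap would not fix it, since each application of fiber-transitivity still only lands in $H^1\cap\tran_{j'}(\ns)$ rather than the prescribed $H^1_{j'}$.
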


\begin{proof}
First, let us prove that $H^2_\bullet$ is also fiber-transitive. Let $x,y\in\ns$, $i\in[k]$, and $h'\in H^2$ be such that $h'(x)=y$ and $\pi_i(x)=\pi_i(y)$. By \eqref{it:equ-fil-simplified-3}, there exists $h\in H^1$ such that $h(x)=h'(x)$. By \eqref{it:equ-fil-simplified-1}, there exists $h^*\in H^1_{i+1}$ such that $h^*(x)=y$. By \eqref{it:equ-fil-simplified-2}, let $\tilde{h}\in H^2_{i+1}$ be such that $\tilde{h}(x)=h^*(x)=h(x)=h'(x)=y$. Hence $H^2_\bullet$ is also fiber-transitive.

We now claim that \eqref{it:equ-fil-simplified-2} holds but with the roles of $H^1$ and $H^2$ swapped. That is, let $i\in[k]$, $h'\in H^2_i$, and $x\in \ns$. Then, clearly $\pi_{i-1}(x)=\pi_{i-1}(h'(x))$. By \eqref{it:equ-fil-simplified-3}, let $h\in H^1$ be such that $h(x)=h'(x)$. By \eqref{it:equ-fil-simplified-1}, there exists $h^*\in H^1_{i}$ such that $h^*(x)=h(x)=h'(x)$ and the claim follows.

As both $H^1$ and $H^2$ are fiber-transitive, by \cite[Lemma 5.7]{CGSS-doucos}, both $\pi_{H^1}(\ns)$ and $\pi_{H^2}(\ns)$ are $k$-step nilspaces. Let us now define the map $\pi_{H^1}(\ns) \to \pi_{H^2}(\ns)$ as $\pi_{H^1}(x)\mapsto \pi_{H^2}(x)$. We claim that this map is a nilspace isomorphism. First, note that by \eqref{it:equ-fil-simplified-2} the map is well-defined. Moreover, it is a morphism because for any $\q\in \cu^n(\ns)$ we have that $\pi_{H^1}\co\q\mapsto\pi_{H^2}\co \q$ which is clearly in $\cu^n(\pi_{H^2}(\ns))$. Moreover, letting $d\in \cu^n(H^1)$ we have that $\pi_{H^1}(d\cdot \q)=\pi_{H^1}(\q)\mapsto \pi_{H^2}(\q)=\pi_{H^2}(d\cdot \q)$. By \cite[Lemma 5.13]{CGSS-doucos} there exists $d'\in \cu^n(H^2)$ such that $d'\cdot\q=d\cdot \q$. Swapping the roles of $H^1$ and $H^2$ (and using that \eqref{it:equ-fil-simplified-2} also works with those roles swapped) it follows that $H^1_\bullet$ and $H^2_\bullet$ are equivalent filtrations.
\end{proof}

\begin{remark}
It may be tempting to relax \eqref{it:equ-fil-simplified-2} to simply \eqref{it:equ-fil-simplified-3} with the roles changed. That is, that given two filtrations $H^1_\bullet$ and $H^2_\bullet$ we have that $H^1_\bullet$ is fiber-transitive and for any $j\in\{0,1\}$, $h\in H^j$, and $x\in\ns$ there exists $h'\in H^{1-j}$ such that $h(x)=h'(x)$. However, in this situation $H^2_\bullet$ may fail to be fiber-transitive. As an example, let $F=\prod_{i=1}^3\mc{D}_i(\mb{Z})$, let $H^1$ be generated by $\{(x,y,z)\mapsto (x,y+2,z),(x,y,z)\mapsto(x,y,z+2)\}$ and $H^2$ be generated by $\{(x,y,z)\mapsto (x,y+2,z+2x^2),(x,y,z)\mapsto(x,y,z+2)\}$. In this case, it is easy to check that $H^2_\bullet$ is not fiber-transitive but satisfies the aforementioned conditions.
\end{remark}

\begin{proof}[Proof of Theorem \ref{thm:wscover}] We argue by induction on the step $k$. The case $k=1$ is trivial. Hence, let $k>1$, let $\ns$ be a $k$-step \textsc{cfr} nilspace, and assume that Theorem \ref{thm:wscover} holds up to step $k-1$.

By Theorem \ref{thm:cfr-nil}, we have $\ns=F/G$ for some free $k$-step nilspace $F$ and some translation lattice $G\leq\tran(F)$. Then we make the following claim:
\begin{align}\label{claim}
\exists\, m_1,\dots,m_k\in \mb{N}\text{ such that }T=T(G,m_1,\ldots,m_k)\leq G\text{ is a translation lattice}\\
\text{such that }F/T\text{ is a nilspace extension of }F/G\text{ with }F/T\cong \ns'\times \mc{D}_k(B)\nonumber \\
\text{ where $\ns'$ is a toral extension of $\ns'_{k-1}$ and $B$ is a finite abelian group}. \nonumber
\end{align}
To prove this claim, let $m_{i}=m (k!)^{k-i}$ for $i<k$ and $m_{k}=1$ for a sufficiently composed integer $m$. The choice of $m$ is given by Lemma \ref{highpower} as $m=f(d)$ where $d:=|\ab_k(p_D(F)/\wh{p_D}(G))|$. To see that this works, let $F=\prod_{i=1}^k\mc{D}_i(\mb{Z}^{a_i}\times\mb{R}^{b_i})$ and let $\underline{x}:=(x_i,x'_i)_{i\in[k]}$ for $x_i\in \mb{Z}^{a_i}$ and $x_i'\in \mb{R}^{b_i}$ be any element of $F$. Note that, for every $g\in \tran(F)$, we have a (unique) expression, given by \cite[Theorem 3.15]{CGSS-doucos}, of the form 
\begin{equation}\label{eq:expre-free-trans}
g(\underline{x})=\big(x_1+s_1,x_1'+s_1',\ldots,x_k+s_k((x_i)_{i\in[k-1]}),x_k'+s_k'((x_i,x_i')_{i\in[k-1]})\big).\end{equation} We are going to focus on the $x_k$ coordinate.

By our choice of $m_{1},\ldots,m_{k}$ and Lemma \ref{highpower}, if $g\in G$ and $i<k$ we have that the $x_k$ coordinate of $p_D(g^{m_{i}}(x))$ is congruent modulo $d$ with $x_k$ for any $\underline{x}\in F$. Hence, for $g^{m_i}$, its corresponding function $s_k$ (evaluated at any point) is always a multiple of $d$. Recall that, by the definition of $T$ (Lemma \ref{lem:grouppower}), any element $t\in T$ has an expression of the form $t=\prod_{i=k}^{k}g_i^{m_i}$ where $g\in G_i$. Thus, the polynomial $s_k$ corresponding to $tg_k^{-1}=\prod_{i=k}^{k-1}g_i^{m_i}$ is always a multiple of $d$ (this is simply because this polynomial $s_k$ in \eqref{eq:expre-free-trans} will correspond to the sum of the $s_k$ polynomials corresponding to each of the $g_i^{m_i}$ evaluated at various points and all of those are multiples of $d$). As the last structure group of $F/G$ is precisely $\mb{Z}^{a_k}\times\mb{R}^{b_k}/G_k$ and by assumption its discrete part $\ab_k(p_D(F)/\wh{p_D}(G))=\mb{Z}^{a_k}/\wh{p_D}(G_k)$ has order $d$, we have that the lattice $d\mb{Z}^{a_k}$ is a subgroup of $ \wh{p_D}(G_k)$. Therefore, for any $t=(\prod_{i=1}^{k-1}g_i^{m_i})g_k$ its corresponding polynomial $s_k$ takes values in $\wh{p_D}(G_k)$.

We now define a new group $T'\le \tran(F)$ as follows. For each $t\in T$ consider its corresponding expression given by \eqref{eq:expre-free-trans} and define a new $\kappa_t\in \tran(F)$ by changing $s_k$ of $t$ by 0 and leaving the rest invariant. Then we let $T':=\{\kappa_t\}_{t\in T}\cup \wh{p_D}(G_k)$. As $T$ is a subgroup of $\tran(F)$, is it easy to check that $T'$ is as well. Indeed, as in the expression \eqref{eq:expre-free-trans} the $x_k$ term does not appear in any of the $(s_i,s_i')_{i\in[k]}$ we clearly have that $\kappa_t\kappa_{t'}=\kappa_{tt'}$. On the other hand, the group $\wh{p_D}(G_k)$ (seen as a subgroup of $\tran(F)$ in the obvious way) is in the center of $\tran(F)$ so clearly $T'=\{\kappa_tz:t\in T,z\in \wh{p_D}(G_k)\}$ with multiplication given by $\kappa_tz\kappa_{t'}z'=\kappa_{tt'}zz'$. Moreover, we want to apply Proposition \ref{prop:cond-for-equivalence} to $H^1=T$ and $H^2=T'$. To see that we can do it, note that \eqref{it:equ-fil-simplified-1} holds because $T$ is fiber-transitive. To see \eqref{it:equ-fil-simplified-2}, note that by definition of $\kappa_t$, if $t\in T_i$ then $\kappa_t\in T'_i=T'\cap \tran_i(F)$. Moreover, note that given any $x\in F$, we have $t(x)=\kappa_t(x)+(dz_k,0)$ where $z\in \mb{Z}^{a_k}$, using the fact that we proved in the previous paragraph. But as $d\mb{Z}^{a_k}\le \wh{p_D}(G_k)$, the map $x\mapsto x+(dz,0)$ is an element of $G_k$. Then, the map $x\mapsto \kappa_t(x)+(dz,0)$ is clearly in $T'_i$. The third condition \eqref{it:equ-fil-simplified-3} follows similarly.

Therefore $T'$ is fiber-transitive and $T$ and $T'$ are equivalent filtrations. Hence, by \cite[Lemma 5.18]{CGSS-doucos} we have that $F/T\cong F/T'$. But now note that $F/T'$ is clearly of the form $\ns'\times\mc{D}_k(B)$ because $T'\cong \{\kappa_t\}_{t\in T}\times \wh{p_D}(G_k)$ and this group acts on $F'\times\mc{D}_k(\mb{Z}^{a_k})$ where $F':=\big(\prod_{i=1}^{k-1}\mc{D}_i(\mb{Z}^{a_i}\times\mb{R}^{b_i})\big)\times\mc{D}_k(\mb{R}^{b_k})$ independently on each coordinate. In particular $F/T'\cong (F'/\{\kappa_t\}_{t\in T})\times \mc{D}_k(\mb{Z}^{a_k}/\wh{p_D}(G_k))$. This proves claim \eqref{claim}.

By Corollary \ref{cor:weak-split-factor-with-subgroup}, we have a fibration $\ns'\times \mc{D}_k(B)\to \ns$ where $\ns'=T'\cong (F'/\{\kappa_t\}_{t\in T})$ and $B=\mb{Z}^{a_k}/\wh{p_D}(G_k)$. Note that $\ns'$ is a toral extension of $\ns'_{k-1}$. By the induction hypothesis, let $\nss'$ be a $k-1$-step weakly-splitting nilspace such that there exists a fibration $\varphi:\nss'\to \ns'_{k-1}$. Thus, we may consider the fiber-product $\ns'\times_{\ns'_{k-1}} \nss'$ (see \cite[Lemma 4.2]{CGSS}). By \cite[Proposition A.16]{CGSS-p-hom}, we have that $\ns'\times_{\ns'_{k-1}} \nss'$ is an degree-$k$ extension of $\nss'$ by the last structure group of $\ns'$, which is a torus and its $k-1$-factor is $\nss'$. Thus $\ns'\times_{\ns'_{k-1}} \nss'$ is weakly-splitting and there exists a fibration $\ns'\times_{\ns'_{k-1}} \nss'\to \ns'$. Letting $\nss:=(\ns'\times_{\ns'_{k-1}} \nss')\times\mc{D}_k(B)$ we have found a weakly-splitting nilspace and a fibration $\nss\to \ns'\times \mc{D}_k(B)\to\ns$.\end{proof}

\begin{remark}
We leave it as an exercise to check that another route to proving Theorem \ref{thm:wscover} consists in constructing iteratively sequences $m_{i,j}$ for each $i\in[k]$ and $j\in[i]$. The first step is to set $m_{1,j}:=m_{j}$ as in the proof above, and then construct a second sequence $m_{2,1},\ldots,m_{2,k-1}$ that attains the weakly-splitting property in the $(k-2)$-th structure group, and so on. The final sequence is then $m_i':=\prod_{j=1}^im_{i,j}$ for $i=1,\ldots,k$, and the weakly-splitting nilspace $\nss$ is given by $F/T$, where $T=T(G,m_1',\ldots,m_k')$. We omit the details.
\end{remark}

\begin{remark}
Note that 2-step weakly-splitting nilspaces are always coset nilspaces. This follows from the fact that abelian toral extensions of abelian Lie groups always split, combined with \cite[Proposition 3.3.39]{Cand:Notes1}.
\end{remark}

\begin{question}
Is every weakly-splitting nilspace a coset nilspace?  
\end{question}
\noindent A positive answer to this question would provide a more direct way to prove Theorem \ref{thm:cfr-are-factor-of-nilmanifolds} than the arguments we use in what follows.

\subsection{Weak splitting as orthogonality}\label{subsec:ortho-ws}\hfill\\
\noindent Next, we consider the following important property, which we will show to be closely related (in fact essentially equivalent) to weak splitting.

\begin{defn}[Orthogonality]\label{def:ortlattice} Let $F=\prod_{i=1}^k\mathcal{D}_i(\mathbb{Z}^{b_i})$ be a discrete free nilspace. We can identify $F$ with $\mb{Z}^b$ where $b=\sum_{i=1}^k b_i$. We say that $\Gamma\leq\tran(F)$ is an \emph{orthogonal lattice} if $\Gamma$ is generated by translations of the form $x\mapsto x+(0,\ldots,0,w_i,0,\ldots,0)$ for $w_i\in \mb{Z}$ and $i\in [b]$. We call such  translations \emph{elementary generators}. For a general (not necessarily discrete) nilspace $F'$, we say that $\Gamma\leq\tran(F')$ has the \emph{orthogonality property} (or just that $\Gamma$ is orthogonal) if $\wh{p_D}(\Gamma)$ is an orthogonal lattice.
\end{defn}
As a first step towards relating weak splitting with orthogonality, we establish the following equivalent formulation of weak splitting.
\begin{proposition}\label{prop:weak-ortho-eq}
Let $\ns$ be a $k$-step \textsc{cfr} nilspace. The following properties are equivalent.
\setlength{\leftmargini}{0.8cm}
\begin{enumerate}
    \item\label{it:wsdef} The nilspace $\ns$ is weakly-splitting.
    \item\label{it:wsequiv} For every $i\in [k]$, letting $B_i$ be the finite abelian group such that $\ab_i(\ns)=\mb{T}^{n_i}\times B_i$, there is a (continuous) fibration $q_i:\ns_i\to \mc{D}_i(B_i)$.
\end{enumerate}
\end{proposition}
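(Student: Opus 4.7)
The direction (i) $\Rightarrow$ (ii) is immediate: given $\ns_i = \nss_i \times \mc{D}_i(B_i)$, I take $q_i$ to be the continuous fibration given by projection onto the second factor.

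For the converse, fix $i \in [k]$. The plan is to use $q_i$ to identify $B_i$ with a subgroup of degree-$i$ translations on $\ns_i$, form the quotient by this $B_i$-action, and combine it with $q_i$ to realise the product decomposition. First I analyse the homomorphism induced by $q_i$ on $i$-th structure groups: it is a continuous surjective homomorphism $\ab_i = \mb{T}^{n_i}\times B_i \to B_i$, and since $\mb{T}^{n_i}$ is connected while $B_i$ is finite, this kills the torus, and the resulting surjection $B_i\to B_i$ is an automorphism which I absorb into $q_i$ so that the induced map becomes the standard projection. Fixing a splitting $\ab_i = \mb{T}^{n_i}\times B_i$ (which exists because $\mb{T}^{n_i}$ is divisible) and using the identification $\ab_i\cong \tran_i(\ns_i)$, I view $B_i$ as a subgroup of degree-$i$ translations acting freely on $\ns_i$, with respect to which $q_i$ is equivariant.

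Next I form the quotient nilspace $\nss_i:=\ns_i/B_i$ with quotient fibration $p:\ns_i\to\nss_i$; since $B_i\leq \tran_i(\ns_i)$, the quotient $\nss_i$ is an $i$-step nilspace whose lower factors coincide with those of $\ns_i$ and whose $i$-th structure group is $\ab_i/B_i\cong \mb{T}^{n_i}$, a torus as required. I then define the morphism $\Phi:\ns_i\to\nss_i\times \mc{D}_i(B_i)$ by $\Phi(x):=(p(x),q_i(x))$. Bijectivity follows from $B_i$-equivariance of $q_i$ and freeness of the $B_i$-action: the unique preimage of $(y,b)$ is $\bigl(b-q_i(\tilde y)\bigr)\cdot \tilde y$ for any lift $\tilde y\in p^{-1}(y)$.

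The main technical obstacle I anticipate is verifying that $\Phi^{-1}$ is also a nilspace morphism, equivalently that $\Phi$ is a fibration. I would argue as follows: given a cube $(c_1,c_2)\in \cu^n(\nss_i)\times \cu^n(\mc{D}_i(B_i))$, lift $c_1$ along the fibration $p$ to some $c_1'\in \cu^n(\ns_i)$; then the pointwise difference $c_2-q_i\co c_1'$ is a cube in the degree-$i$ abelian group nilspace $\mc{D}_i(B_i)$. A cube with values in $B_i\subseteq \tran_i(\ns_i)$ acts pointwise on $\ns_i$ by cube-preserving translations (this is the key observation, following from $B_i\leq \tran_i(\ns_i)$ and the definition of translation groups), producing a corrected lift $c_1''\in\cu^n(\ns_i)$ with $\Phi\co c_1''=(c_1,c_2)$. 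This establishes the desired isomorphism $\ns_i\cong \nss_i\times\mc{D}_i(B_i)$.
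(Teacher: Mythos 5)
Your proof is correct and follows the same overall strategy as the paper: quotient $\ns_i$ by (a copy of) $B_i$ inside $\ab_i(\ns)$ to get a torus-topped factor $\nss_i$, and pair the quotient map with $q_i$ to get an isomorphism $\Phi = (p,q_i)$ onto $\nss_i\times\mc{D}_i(B_i)$. The one genuine divergence is in how you handle the torus: the paper quotients by the finite summand from the given decomposition $\ab_i=\mb{T}^{n_i}\times B_i$ and then, in its surjectivity claim, uses continuity of $q_i$ (preservation of connectedness) to "absorb" the $\mb{T}^{n_i}$-part of a lifted cube; you instead first observe that the structure homomorphism $\phi_i:\ab_i\to B_i$ of the fibration $q_i$ has kernel exactly the identity component $\mb{T}^{n_i}$, normalize $q_i$ so that $\phi_i$ is the projection, and then the torus simply never appears in the cube-lifting step, so no connectedness argument is needed. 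This is a slightly cleaner route to the same conclusion. One small imprecision to fix: the assertion that ``a cube with values in $B_i$ acts pointwise on $\ns_i$ by cube-preserving translations'' does not follow just from $B_i\leq\tran_i(\ns_i)$ and the definition of $\tran_i$ (which only gives cube-preservation for face-actions $\alpha^F$); the fact you actually need is the degree-$i$ abelian bundle property of $\ns_i$ over $\ns_{i-1}$, namely that for $d\in\cu^n(\mc{D}_i(\ab_i))$ and $\q\in\cu^n(\ns_i)$ the pointwise translate $d\cdot\q$ is again a cube (see \cite[\S 3.2.3]{Cand:Notes1}). Similarly, the existence and fibration property of the quotient $p:\ns_i\to\ns_i/B_i$ should be justified by a citation (the paper uses \cite[Proposition A.19]{CGSS-p-hom}) rather than treated as immediate.
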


\begin{remark}
Property \eqref{it:wsequiv} is readily seen to be equivalent to the existence of a fibration $\ns\to \prod_{i=1}^k \mc{D}_i(B_i)$.
\end{remark}

\begin{proof} To prove that  \eqref{it:wsdef}  implies \eqref{it:wsequiv}, if $\ns_i=\nss_i\times\mc{D}_i(B_i)$ then we can take $q_i$ simply to be the projection to the second component $\nss_i\times\mc{D}_i(B_i)\to \mc{D}_i(B_i)$. 

To see the converse, let $\varphi_i$ denote the map $\ns_i\to\ns_i/B_i$ which takes the quotient by $B_i$ in each $\pi_{k-1}$-fiber. By \cite[Proposition A.19]{CGSS-p-hom}, this map is a fibration making $\ns_i$ a degree $i$ extension of $\ns_i/B_i$. Now we define
\[
\psi_i: \ns_i\to (\ns_i/B_i)\times B_i, \; x\mapsto (\varphi_i(x), q_i(x)).
\]
Since $\varphi_i$ and $q_i$ are both morphisms, we have that $\psi_i$ is a nilspace morphism. We claim that in fact $\psi_i$ is a nilspace isomorphism. To prove this, we claim that
\begin{equation}\label{eq:wsequivkey}
\forall n\geq 0, \forall\q\in \cu^n(\ns_i), q_i^{\db{n}}\text{ restricts to a bijection }\q+\cu^n\!\big(\mc{D}_i(\{0\}\!\times\! B_i)\big)\to \cu^n\!\big(\mc{D}_i(B_i)\big).
\end{equation}
To prove this, it suffices to show that the restriction in question is surjective (since the two sets in \eqref{eq:wsequivkey} have equal cardinality). To see the surjectivity, let $z=q_i^{\db{n}}(\q)\in \cu^n\!\big(\mc{D}_i(B_i)\big)$ and fix any $t\in \cu^n\!\big(\mc{D}_i(B_i)\big)$. Then by the fiber-surjectivity of $q_i$, there is $(\tau,b)\in \cu^n(\mc{D}_i(\mb{T}^{n_i}\times B_i))$ such that $q_i^{\db{n}}(\q+(\tau,b))=z+t$. Now the problem is that $\tau$ may not be the 0-cube. However, the continuity of $q_i$ implies continuity of $q_i^{\db{n}}$, hence preservation of connectedness, so that all of $\q+\cu^n(\mc{D}_i(\mb{T}^{n_i}\times\{b\}))$ is mapped to $z+t$ (not just $(\tau,b)$), whence in particular $q_i(x+(0,b))=z+t$. As $t$ was arbitrary, this proves the surjectivity, and  \eqref{eq:wsequivkey} follows.

Now, from the case $n=0$ of \eqref{eq:wsequivkey} we obtain that $\psi_i$ is bijective, and from the general case of \eqref{eq:wsequivkey} we obtain that $\psi_i$ is cube-surjective (in fact cube-bijective), and it follows that the inverse map $\psi_i^{-1}$ is also a morphism, so  $\psi_i$ is a nilspace isomorphism.

The desired weak splitting at level $i$ now follows upon setting $\nss_i=(\ns_i/B_i)$, which has $i$-th structure group $\mb{T}^{n_i}$ by \cite[Proposition A.19]{CGSS-p-hom}. \end{proof}

We shall use the following fact about extensions to relate weak splitting and orthogonality.

\begin{proposition}[Extensions seen via quotients of free nilspaces]\label{prop:ext-as-quo-of-free}
Let $\ns$ be a Lie-fibered $k$-step nilspace isomorphic to $F/\Gamma$ where $F$ is a $k$-step free nilspace, i.e.\ $F=\prod_{i=1}^k\mc{D}_i(\mb{Z}^{a_i}\times \mb{R}^{b_i})$ and $\Gamma$ is a fiber-transitive, fiber-discrete subgroup of $\tran(F)$. Let $\nss$ be a (nilspace) extension of $\ns$ by $\mc{D}_\ell(\mb{T})$. Then $\nss\cong F\times \mc{D}_\ell(\mb{R})/\Gamma'$ where $\Gamma'$ is a fiber-transitive, fiber-discrete group acting on $F\times \mc{D}_\ell(\mb{R})$ generated, for $j\in[k]$, by elements of $\tran_j(F\times\mc{D}_\ell(\mb{R}))$ given by $(\underline{x},z)\in F\times \mc{D}_\ell(\mb{R})\mapsto (\gamma(\underline{x}),z+\gamma'(\underline{x}))$ where $\gamma\in \Gamma_j\subset  \Gamma$ and $\gamma'=\gamma'_\gamma\in \hom(F,\mc{D}_{\ell-j}(\mb{R}))$ and the element in $\tran_\ell(F\times\mc{D}_\ell(\mb{R}))$ given by $(\underline{x},z)\mapsto (\underline{x},z+1)$ .
\end{proposition}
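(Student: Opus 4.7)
The plan is to construct $\nss$ as a quotient of the free nilspace $Q:=F\times\mc{D}_\ell(\mb{R})$ by exploiting the projective behavior of free nilspaces. First, I would pull back the extension $\nss\to\ns$ along the quotient $F\to F/\Gamma=\ns$: the fiber product $\wt{\nss}:=F\times_\ns\nss$, via its first projection, is a degree-$\ell$ extension of $F$ by $\mc{D}_\ell(\mb{T})$ (using \cite[Proposition A.16]{CGSS-p-hom}), and the $\Gamma$-action on $F$ lifts tautologically to a $\Gamma$-action on $\wt{\nss}$ whose quotient is $\nss$. Since $F$ is free, it behaves as a projective object in the compact nilspace category, so this $\mc{D}_\ell(\mb{T})$-extension of $F$ splits: $\wt{\nss}\cong F\times\mc{D}_\ell(\mb{T})$ as a nilspace over $F$.

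Next, the translation $\tau_1:(\underline{x},z)\mapsto(\underline{x},z+1)$ lies in $\tran_\ell(Q)$, and $Q/\langle\tau_1\rangle\cong F\times\mc{D}_\ell(\mb{T})$. Composing with the projection $\wt{\nss}\to\nss$ yields a fibration $\Phi:Q\to\nss$, and it remains to identify the kernel subgroup of $\tran(Q)$ corresponding to this quotient. Under the identification $\wt{\nss}\cong F\times\mc{D}_\ell(\mb{T})$, each $\gamma\in\Gamma_j:=\Gamma\cap\tran_j(F)$ acts as $(\underline{x},t)\mapsto(\gamma(\underline{x}),\,t+c_\gamma(\underline{x}))$ for some $c_\gamma\in\hom(F,\mc{D}_{\ell-j}(\mb{T}))$; the degree bound $\ell-j$ is forced by the requirement that the composed map be in $\tran_j(\wt{\nss})$.

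The key lifting step is that for the free nilspace $F$, any morphism $F\to\mc{D}_{\ell-j}(\mb{T})$ lifts to a morphism $F\to\mc{D}_{\ell-j}(\mb{R})$. This is carried out using the explicit polynomial parametrization of morphisms from free nilspaces given by \cite[Theorem 3.15]{CGSS-doucos}, by lifting each coefficient through the covering $\mb{R}\to\mb{T}$. Fixing such lifts $\gamma'=\gamma'_\gamma\in\hom(F,\mc{D}_{\ell-j}(\mb{R}))$, I would then define $\wt{\gamma}\in\tran_j(Q)$ by $\wt{\gamma}(\underline{x},z):=(\gamma(\underline{x}),z+\gamma'(\underline{x}))$, which is readily checked to be a height-$j$ translation of $Q$ using the filtration structure of $\mc{D}_\ell(\mb{R})$.

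Finally, let $\Gamma'\leq\tran(Q)$ be the subgroup generated by $\tau_1$ and all such $\wt{\gamma}$. By construction the fibration $\Phi$ descends to an isomorphism $Q/\Gamma'\cong\nss$. Fiber-transitivity of $\Gamma'$ follows from that of $\Gamma$ on $F$, with the generator $\tau_1$ producing transitivity along the new $\mc{D}_\ell(\mb{R})$-fibers in the last structure group; fiber-discreteness follows from that of $\Gamma$ combined with the discreteness of $\langle\tau_1\rangle\cong\mb{Z}$ inside $\mb{R}$. The main technical obstacles are the splitting of $\wt{\nss}$ over $F$ and the coefficientwise lifting of the cocycles $c_\gamma$; both rest on the projectivity-type properties of free nilspaces from \cite{CGSS-doucos}, and together they account for essentially all the nontrivial content of the proposition.
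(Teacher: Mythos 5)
Your proposal follows essentially the same route as the paper's proof: pull back along $F\to F/\Gamma$ to get a $\mc{D}_\ell(\mb{T})$-extension of the free nilspace $F$, split it using \cite[Theorem 4.1]{CGSS-doucos}, identify the lifted $\Gamma$-action on $F\times\mc{D}_\ell(\mb{T})$ as $(\underline{x},t)\mapsto(\gamma(\underline{x}),t+\tilde\gamma(\underline{x}))$, lift the $\mb{T}$-valued cocycle to an $\mb{R}$-valued one via \cite[Lemma 3.6 and Theorem 3.15]{CGSS-doucos}, and present $\nss$ as a quotient of $F\times\mc{D}_\ell(\mb{R})$ by the resulting $\Gamma'$. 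The argument is correct; the only material difference is that the paper separately verifies $(\nss\times_{F/\Gamma}F)/\Gamma^*\cong\nss$ as an explicit intermediate step before passing to coordinates, whereas you compress this into the final isomorphism check.
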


\begin{proof}
Consider the following commutative diagram, where our extension is given by $\psi:\nss\to \ns\cong F/\Gamma$,
\begin{equation}
\begin{aligned}[c]
\begin{tikzpicture}
  \matrix (m) [matrix of math nodes,row sep=2em,column sep=4em,minimum width=2em]
  {\nss\times_{F/\Gamma} F & F  \\
     \nss & F/\Gamma. \\};
  \path[-stealth]
    (m-1-1) edge node [above] {$p_2$} (m-1-2)
    (m-1-1) edge node [right] {$p_1$} (m-2-1)
    (m-1-2) edge node [right] {$\pi_\Gamma$} (m-2-2)
    (m-2-1) edge node [above] {$\psi$} (m-2-2);
\end{tikzpicture}
\end{aligned}
\end{equation}
By \cite[Lemma 2.18]{CGSS-doucos}, it follows that $\nss\times_{F/\Gamma} F$ is an extension of $F$ by $\mc{D}_\ell(\mb{T})$. In fact, we know that it is a $k$-step Lie-fibered nilspace. Note that we have an action of $\mc{D}_\ell(\mb{T})$ by addition of the first coordinate and it is easy to see (using that $\psi$ is an extension) that with this action the map $p_2$ is a (continuous) morphism that satisfies the conditions of \cite[Definition 3.3.13]{Cand:Notes1}.

Hence, by \cite[Theorem 4.1]{CGSS-doucos} we have that such extension splits. Thus, there exists a morphism $\iota:F\to \nss\times_{F/\Gamma} F$ such that $p_2\co\iota=\id$ and in particular $\nss\times_{F/\Gamma} F\cong F\times \mc{D}_\ell(\mb{T})$.

Now, we want to see that $\nss\times_{F/\Gamma} F$ is actually isomorphic to $F\times \mc{D}_\ell(\mb{R})$ modulo some group $\Gamma'$ with the stated characteristics. Given any $\gamma\in \Gamma_j$ (for some $j\in[k]$), we may let $\gamma^*\in \tran_j(\nss\times_{F/\Gamma} F)$ be the map $(y,f)\mapsto (y,\gamma(f))$. First, let us check that this is indeed a translation on $\nss\times_{F/\Gamma} F$. To check that this map is well-defined note that for any $(y,f)\in \nss\times_{F/\Gamma} F$ we have that $\psi(y)=\pi_\Gamma(f)$ which clearly implies that $\psi(y)=\pi_\Gamma(\gamma(f))$. The fact that this is a translation follows easily from the definitions. Consider $\Gamma^*:=\{(\id,\gamma):\gamma\in \Gamma\}\subset \tran(\nss\times_{F/\Gamma} F)$. We claim that such group is fiber-discrete and fiber-transitive on $\nss\times_{F/\Gamma} F$ and that $(\nss\times_{F/\Gamma} F)/\Gamma^*\cong \nss$.

The fiber-transitive property follows from that of $\Gamma$, as indeed $\Gamma^*$ only acts on the second component of $\nss\times_{F/\Gamma} F$. The fiber-discrete property follows from the description of the structure groups of $\nss\times_{F/\Gamma} F$ given by \cite[Lemma 2.18]{CGSS-doucos}. Indeed, the only non-trivial thing to check would be the case of the $\ell$-th structure group. But in that case, it follows easily that $\ab_\ell(\nss\times_{F/\Gamma} F)\cong \ab_\ell(F)\times \mb{T}$ and the action of $\Gamma^*_\ell$ on such a group is by addition on $\ab_\ell(F)$ (and hence the property follows from that of $\Gamma$). Finally note that $p_1$ factors through $\Gamma^*$, i.e.\ there is a fibration $(\nss\times_{F/\Gamma} F)/\Gamma^*\to \nss$. Moreover, using the description of the structure groups of $\nss\times_{F/\Gamma} F$ and the fact that $\psi$ is an extension (and hence, a fibration by \cite[Proposition A.17]{CGSS-p-hom}) we can conclude that it is in fact a nilspace isomorphism.

Finally, we want to see $\nss$ as a quotient of a free nilspace. First of all, note that as we have an isomorphism $\nss\times_{F/\Gamma} F\cong F\times \mc{D}_\ell(\mb{T})$ clearly any $\gamma\in\Gamma^*_j$, via such isomorphism, we must have an expression of the form $(f,z)\in F\times \mc{D}_\ell(\mb{T})\mapsto (\gamma(f),z+\tilde{\gamma}(f))$ for some $\tilde{\gamma}\in \hom(F,\mc{D}_{\ell-j}(\mb{T}))$ by \cite[Theorem 3.15]{CGSS-doucos} and the fact that on $F$ they must act as $\gamma$. By \cite[Lemma 3.6]{CGSS-doucos} note that $\tilde{\gamma}$ can be regarded as a polynomial map $\gamma'\in\hom(F,\mc{D}_{\ell-j}(\mb{R})) $ composed with the quotient map $\mb{R}\to\mb{T}$. Moreover, clearly $F\times \mc{D}_\ell(\mb{T})=F\times \mc{D}_\ell(\mb{R})/\langle (f,z)\mapsto(f,z+1)\rangle$ and the transformation $(f,z)\in F\times\mc{D}_\ell(\mb{R})\mapsto(f,z+1)$ clearly commutes with any transformation of the form $(f,z)\in F\times\mc{D}_\ell(\mb{R})\mapsto (\gamma(f),z+\gamma'(f))$. If we denote by $\Gamma'$ the group of translations in $F\times\mc{D}_\ell(\mb{R})$ generated by $(f,z)\mapsto(\gamma(f),z+\gamma'(f))$ and $(f,z)\mapsto(f,z+1)$ it is easy to see that $\Gamma'$ is fiber-discrete and fiber-transitive and that $(F\times\mc{D}_\ell(\mb{R}))/\Gamma'\cong \nss$.\end{proof}

\begin{remark}
Although we stated the previous result for $\mc{D}_\ell(\mb{T})$-extensions, the same argument works for any compact abelian Lie group, with the natural adaptations.
\end{remark}

We can now establish the relationship between being weakly-splitting and having a representation with an orthogonal lattice.

\begin{proposition}\label{prop:ws<=>nr}
Let $\ns$ be a $k$-step \textsc{cfr} nilspace. Then the following holds.
\setlength{\leftmargini}{0.7cm}
\begin{enumerate}
    \item\label{it:ortho-implies-wesp} If $\ns\cong F/\Gamma$ where $\Gamma\le \tran(F)$ is an orthogonal translation lattice\footnote{Note that, for $\ns$ to be a ported nilspace, the only property of $\Gamma$ missing here is pureness.}, then $\ns$ is weakly-splitting.
    \item\label{it:wesp-implies-ortho} If $\ns$ is weakly-splitting, then there exist a $k$-step free nilspace $F$ and an orthogonal translation lattice $\Gamma\le \tran(F)$ such that $\ns\cong F/\Gamma$.
    \end{enumerate}
\end{proposition}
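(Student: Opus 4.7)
The plan is to handle the two directions separately. For \eqref{it:ortho-implies-wesp} I will use Proposition \ref{prop:weak-ortho-eq} to reduce weak splitting to the existence of fibrations $q_i\colon\ns\to \mc{D}_i(B_i)$; for \eqref{it:wesp-implies-ortho} I will induct on the step $k$, using Proposition \ref{prop:ext-as-quo-of-free} to build the free presentation level-by-level.

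For \eqref{it:ortho-implies-wesp}, suppose $\ns \cong F/\Gamma$ with $\Gamma$ orthogonal. Since $\wh{p_D}(\Gamma)\le \tran(D)$ is generated by elementary shifts, each acting on a single coordinate at a fixed degree, it decomposes as an internal direct sum $\wh{p_D}(\Gamma) = \bigoplus_{i=1}^k H_i$ with $H_i\le \mb{Z}^{a_i}$ of finite index (finiteness coming from fiber-cocompactness). Hence $D/\wh{p_D}(\Gamma)\cong \prod_i \mc{D}_i(B_i)$ with $B_i := \mb{Z}^{a_i}/H_i$. Composing with the coordinate projection $p_D\colon F\to D$ gives a fibration $F\to \prod_i \mc{D}_i(B_i)$ which descends to a fibration $\ns\to \prod_i \mc{D}_i(B_i)$, since $\Gamma$ acts on $D$ precisely as $\wh{p_D}(\Gamma)$; projecting to each factor supplies the $q_i$ required by Proposition \ref{prop:weak-ortho-eq}. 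To match $B_i$ with the finite part of $\ab_i(\ns)$, one observes that the induced surjection $\ab_i(\ns)\to B_i$ has kernel representable as $\mb{R}^{b_i}/L_c$ for a suitable lattice $L_c$ (since any class in the kernel admits a representative of trivial discrete part), hence the kernel is connected and therefore a subtorus, yielding $\ab_i(\ns)\cong \mb{T}^{n_i}\times B_i$.

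For \eqref{it:wesp-implies-ortho}, I induct on $k$. The base case $k=1$ is immediate: write $\ns\cong \mb{T}^{n_1}\times B_1$ with $B_1\cong \prod_{j=1}^{r_1} \mb{Z}/w_j\mb{Z}$, take $F=\mc{D}_1(\mb{R}^{n_1}\times \mb{Z}^{r_1})$, and let $\Gamma$ be generated by unit shifts on the $\mb{R}^{n_1}$-coordinates and by $z\mapsto z+w_j\, e_{n_1+j}$ on the $\mb{Z}^{r_1}$-coordinates, all of which are elementary. For the inductive step, weak splitting gives $\ns\cong \nss'\times \mc{D}_k(B_k)$ with $\nss'$ a toral extension of $\ns_{k-1}$. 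Since $\ns_{k-1}$ is weakly-splitting, the induction hypothesis yields $\ns_{k-1}\cong F'/\Gamma'$ with $\Gamma'$ orthogonal. Applying Proposition \ref{prop:ext-as-quo-of-free} (with the remark extending it to $\mb{T}^{n_k}$-extensions) realizes $\nss'\cong (F'\times \mc{D}_k(\mb{R}^{n_k}))/\tilde{\Gamma}$, where $\tilde{\Gamma}$ is generated by lifts of generators of $\Gamma'$ together with the unit shifts on the new $\mc{D}_k(\mb{R}^{n_k})$ coordinate. Each correction term of a lifted generator lies in $\hom(F',\mc{D}_{k-j}(\mb{R}^{n_k}))$, taking values purely in the continuous new coordinate, so $\wh{p_D}(\tilde{\Gamma})=\wh{p_D}(\Gamma')$ and hence $\tilde{\Gamma}$ is still orthogonal. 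Finally, to incorporate $\mc{D}_k(B_k)$, enlarge $F$ to $F'\times \mc{D}_k(\mb{R}^{n_k})\times \mc{D}_k(\mb{Z}^{r_k})$ with $r_k=\textrm{rk}(B_k)$, and let $\Gamma$ consist of $\tilde{\Gamma}$ (acting trivially on the new discrete factor) together with the elementary shifts on $\mc{D}_k(\mb{Z}^{r_k})$ realizing the cyclic decomposition of $B_k$. Orthogonality is preserved because the new generators are elementary and the old ones leave the new discrete coordinate untouched; fiber-transitivity, fiber-discreteness and fiber-cocompactness all follow from those of $\Gamma'$ and the added generators; and $F/\Gamma\cong \nss'\times \mc{D}_k(B_k)\cong \ns$. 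The main obstacle is controlling orthogonality across the inductive lifting step, but this reduces to the observation above that $\wh{p_D}$ of any lifted generator equals $\wh{p_D}$ of the original.
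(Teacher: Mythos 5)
Your approach to both parts matches the paper's own proof: in~\eqref{it:wesp-implies-ortho} you induct on $k$ via Proposition~\ref{prop:ext-as-quo-of-free} and enlarge the free nilspace to accommodate $B_k$ through a minimal-rank surjection, exactly as the paper does (and you are appropriately careful to choose the presentation $\mb{Z}^{r_k}\to B_k$ so that $\ker$ is generated by multiples of standard basis vectors, which the paper's ``$a\in\ker(\varphi)$'' formulation glosses over); in~\eqref{it:ortho-implies-wesp} you build the fibration $\ns\to\prod_i\mc{D}_i(B_i)$ and invoke Proposition~\ref{prop:weak-ortho-eq}, whereas the paper constructs $q_k:\ns\to\mc{D}_k(B_k)$ one level at a time, but as the remark after Proposition~\ref{prop:weak-ortho-eq} notes these are the same thing.

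There is, however, a step in your argument for~\eqref{it:ortho-implies-wesp} that deserves scrutiny, and which the paper itself asserts without justification. You claim that the kernel of the induced map $\ab_i(\ns)\to B_i = \mb{Z}^{a_i}/H_i$ is connected ``since any class in the kernel admits a representative of trivial discrete part.'' Unwinding this, the claim needs every $h\in H_i$ to lie in the image $p_D\big(\Gamma\cap\tran_i(F)\big)$, i.e.\ each degree-$i$ elementary generator of $\wh{p_D}(\Gamma)$ must be $\wh{p_D}(\gamma')$ for some $\gamma'\in\Gamma\cap\tran_i(F)$. This does not follow from Definition~\ref{def:ortlattice} as stated: an elementary generator $g$ of $\wh{p_D}(\Gamma)$ shifting only a degree-$i$ discrete coordinate may be $\wh{p_D}(\gamma)$ for a $\gamma\in\Gamma$ that acts nontrivially on \emph{continuous} coordinates of lower degree, so $\gamma\notin\tran_i(F)$ even though $\wh{p_D}(\gamma)\in\tran_i(D)$; fiber-transitivity and pureness do not rescue this, since those conditions see $\gamma$ through its action on $F$ while the obstruction lives precisely in the mismatch between the degree of $\gamma$ and the degree of $\wh{p_D}(\gamma)$. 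A concrete test case is $F=\mc{D}_1(\mb{R})\times\mc{D}_2(\mb{Z})$ with $\Gamma=\langle(r,n)\mapsto(r+1,n+1),\,(r,n)\mapsto(r,n+2)\rangle$: here $\wh{p_D}(\Gamma)=\mb{Z}$ is generated by the elementary shift $n\mapsto n+1$, but $p_D(\Gamma\cap\tran_2(F))=2\mb{Z}$, so $H_2\neq p_D(\Gamma\cap\tran_2(F))$ and your kernel is $\mb{Z}/2$, not connected. You should either supply a separate argument for this containment (perhaps using a stronger, implicit form of orthogonality in which the elementary generators are lifted compatibly with the degree filtration of $\tran(F)$, as the constructions in Theorem~\ref{thm:wscover} and part~\eqref{it:wesp-implies-ortho} in fact produce) or flag it as the substantive point; as written the sentence asserts something that is false for a general orthogonal translation lattice.
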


\begin{proof}
We begin by proving \eqref{it:ortho-implies-wesp}. For this, it suffices to prove that the nilspace $\ns=F/\Gamma$ is weakly-splitting. 
Let us write $F=\mc{D}_1(\mb{R}^{a_1}\times \mb{Z}^{b_1}) \times \cdots\times \mc{D}_k(\mb{R}^{a_k}\times \mb{Z}^{b_k})$  where $a_i,b_i\in \mb{Z}_{\geq 0}$. Considering the coordinate index set of $F$, namely $\big[\sum_{i=1}^k a_i+b_i\big]$, let $J$ denote the subset of this index set corresponding to the $\mc{D}_k(\mb{Z}^{b_k})$ component and $J'$ the index set corresponding to any $\mc{D}_i(\mb{Z}^{b_i})$ coordinates (in particular $J\subset J'$). Let $F_J$ (resp. $F_{J'}$) denote the free nilspace obtained from $F$ by deleting any coordinate whose index is not in $J$ (resp. $J'$). Thus $F_J\cong \mc{D}_k(\mb{Z}^{b_k})$, and let $P_J:F\to F_J$ be the corresponding coordinate projection. Note that $F_{J'}=D$, the discrete part $F$ and $p_D:F\to F_{J'}$ is the projection to the discrete part (see Definition \ref{def:tau}).

We now claim that the subgroup $\Gamma\leq \tran(F)$ is consistent\footnote{Recall that given a morphism $m:\ns\to \nss$, its set of \emph{consistent translations} is $\{\alpha\in \tran(\ns):\forall x,y\in \ns\;\text{if}\;m(x)=m(y)\;\text{then}\;m(\alpha(x))=m(\alpha(y))\}$. A translation $\alpha\in \tran(\ns)$ is \emph{consistent} if it belongs to such set.} with $P_J$, and to prove this we shall use the orthogonality property. Let $x,y\in F$ and $\gamma\in \Gamma$ be such that $P_J(x)=P_J(y)$. We want to prove that $P_J(\gamma x)=P_J(\gamma y)$, i.e.\ that every element of $\Gamma$ is \emph{consistent} with $P_J$. By Lemma \ref{lem:Dpartproj}, every $g\in \tran(F)$ is consistent with $P_D$. Hence, if we let $q:F_{J'}=D\to F_J$, then $P_J=q\co P_D$ and the set of consistent translations of $P_J$ is the pre-image under $\wh{P_D}$ of the set of consistent translations of $q$. Thus, if suffices to prove that $\wh{P_D}(\Gamma)$ is consistent with $q$. By orthogonality, we have that $\wh{p_D}(\gamma)$ is an element of a lattice in $D=\prod_{i=1}^k \mb{Z}^{b_i}$ generated by multiples of the standard generators of $D$. Therefore, it is immediate that if $p_D(x)=p_D(y)$, then $q(\wh{p_D}(\gamma)p_D(x))=q(\wh{p_D}(\gamma)p_D(y))$, simply because on the coordinates indexed by $J$ the map $\wh{p_D}(\gamma)$ is adding a constant element of $\mb{Z}^{b_k}$. Thus $\wh{p_D}(\Gamma)$ is consistent with $q$ and our claim follows.

It follows from consistency (see \cite[Lemma 1.5]{CGSS}) that we have a group homomorphism $\wh{P_J}:\Gamma\to \tran(F_J)=\tran(\mc{D}_k(\mb{Z}^{b_k}))$, well-defined as the map sending $\gamma$ to the translation $x\in F_J\mapsto P_J(\gamma x')$ for any $x'\in F$ with $P_J(x')=x$.

The image $\Gamma_J:=\wh{P_J}(\Gamma)$ is a subgroup of the abelian group $\tran(F_J)\cong \mb{Z}^{b_k}$. Moreover, the finite part $B_k$ of the $k$-th structure group of $\ns$ is precisely $F_J/(\Gamma_J)_k\cong \tran_k(F_J)/(\Gamma_J)_k$. Now note that the map $P_J$ is a nilspace fibration $F\to F_J$, and that the projection $\pi_J:F_J\to F_J/\Gamma_J =\mc{D}_k(B_k)$ is also a nilspace fibration. Hence $Q_J:=\pi_J\co P_J$ is a nilspace fibration from $F$ to $\mc{D}_k(B_k)$, and it now only remains to show that $Q_J$ factors through quotienting by $\Gamma$ on $F$, yielding a fibration $\ns\to \mc{D}_k(B_k)$. But this factoring indeed holds, because projecting by $P_J$ and then quotienting with $\pi_J$ is the same as first quotienting by $\Gamma$ and then projecting to $J$. Indeed, let $x,y\in F$ such that $y=\gamma(x)$ for some $\gamma\in \Gamma$. Then  $P_J(x)=P_J(y)=\wh{P_J}(\gamma)P_J(x)$. As $\wh{P_J}(\gamma)\in \Gamma_J$ we clearly have that $P_J(x)\Gamma_J=P_J(y)\Gamma_J$ and thus $\pi_J\co P_J$ factors through $\pi_\Gamma$ as desired.

Let us prove now \eqref{it:wesp-implies-ortho}. We are going to prove it by induction on $k$ with the case $k=0$ being trivial. If $\ns$ is now a $k$-step weakly-splitting \textsc{cfr} nilspace, we have that $\ns\cong \nss\times \mc{D}_k(B_k)$ where $B_k$ is a finite abelian group and $\nss$ is a $k$-step extension of $\ns_{k-1}$ by $\mc{D}_k(\mb{T}^n)$ for some $n\in \mb{N}$. Moreover, we know that $\ns_{k-1}\cong F/\Gamma$ where $F$ is a $k-1$-step free nilspace and $\Gamma\le \tran(F)$ is a fiber-transitive, fiber-discrete, fiber-cocompact, orthogonal action.

By Proposition \ref{prop:ext-as-quo-of-free} (iterated $n$ times), we know that $\nss\cong F\times \mc{D}_k(\mb{R}^n)/\Gamma'$ where $\Gamma'$ acts as follows. For $(x,y)\in F\times \mc{D}_k(\mb{R}^n)$ the group $\Gamma'$ is generated by translations of the form $(x,y)\mapsto(\gamma(x),y+\gamma^*(x))$ where $\gamma\in \Gamma$ and $\gamma^*\in \hom(\ns_{k-1},\mc{D}_k(\mb{R}^n))$ and $(x,y)\mapsto(x,y+e_i)$ where $e_i\in \mb{R}^n$ is the vector with 1 in the $i$-th coordinate and 0 in the rest. Hence, it is clear that $\Gamma'$ is also orthogonal. 

Let now $m\in \mb{N}$ be minimal such that there exists a surjective homomorphism $\varphi:\mb{Z}^m\to B_k$. On $F\times \mc{D}_k(\mb{R}^n)\times \mc{D}_k(\mb{Z}^m)$ we define $\Gamma''$ as follows: it is generated by transformations of the form $(x,y,z)\mapsto(\gamma(x),y+\gamma^*(x),z)$ where $(\gamma(\cdot),\cdot+\gamma^*(\cdot))\in \Gamma'$ and by the translations $(x,y,z)\mapsto(x,y,z+a)$ where $a\in \ker(\varphi)$. Note that these transformations commute and it is easy to see that they form an orthogonal translation lattice such that $\ns\cong F\times \mc{D}_k(\mb{R}^n)\times \mc{D}_k(\mb{Z}^m)/\Gamma''$.
\end{proof}

\begin{remark}
Note that being weakly-splitting is an intrinsic property of a nilspace (in the sense that it is preserved under nilspace isomorphisms). In contrast, orthogonality is a property relative to a chosen representation of the nilspace as $F/\Gamma$. Property \eqref{it:wesp-implies-ortho} shows that, for weakly-splitting nilspaces, there exists at least one representation $F/\Gamma$ in which $\Gamma$ in orthogonal. However, in general it is not true that \emph{every} representation of a weakly-splitting nilspace is orthogonal (we leave it as an exercise to obtain concrete examples).
\end{remark}

\subsection{Free nilspaces as Lie-group torsors}\label{subsec:LieParam}\hfill\\
Here we collect some basic facts about simply-connected nilpotent Lie groups, and use them to establish a useful result: given a connected free nilspace $F$ acted upon by a translation lattice $\Gamma$, we can identify $F$ as a torsor (principal homogeneous space) of a connected and simply-connected nilpotent Lie group. We refer to this identification as the \emph{Lie parametrization} of $F$ (see Theorem \ref{thm:flierep}). We then extend this construction to the case where $F$ is not necessarily connected, as long as $\Gamma$ is a \emph{porting group} as per Definition \ref{def:nicerep}. This yields our main result in this subsection, Proposition \ref{prop:Lieparamgam}.

\medskip

We recall the following classical result from Lie theory \cite[Theorem 1.2.1]{C&G}.
\begin{theorem}\label{thm:corresp-lie-gr-alg}
Let $G$ be a connected and simply-connected nilpotent Lie group and let $\mk{g}$ be its Lie algebra. Then the exponential map $\exp:\mk{g}\to G$ is a diffeomorphism, with inverse map $\log:G\to \mk{g}$. Moreover, the Baker--Campbell--Hausdorff formula holds for all elements of $\mk{g}$.
\end{theorem}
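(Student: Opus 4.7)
The plan is to construct the inverse map $\log$ directly, by putting a Lie-group structure on $\mk{g}$ that matches $G$. Because $\mk{g}$ is nilpotent of some finite class $k$, every iterated Lie bracket of length greater than $k$ vanishes, and so the formal Baker--Campbell--Hausdorff series
\[
X \ast Y \;:=\; X + Y + \tfrac{1}{2}[X,Y] + \tfrac{1}{12}\bigl([X,[X,Y]]-[Y,[X,Y]]\bigr) + \cdots
\]
terminates after finitely many terms. It is therefore a polynomial expression in $X,Y$, and defines a smooth map $\ast:\mk{g}\times\mk{g}\to\mk{g}$ without any analytic convergence issue to worry about.

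I would first verify that $G':=(\mk{g},\ast)$ is a Lie group: associativity is a purely formal identity in the free nilpotent Lie algebra of class $k$ on three generators, so it transfers to $\mk{g}$ by universality; the identity is $0$ and the inverse of $X$ is $-X$. Since the underlying manifold is the vector space $\mk{g}$, this group is automatically connected and simply connected. Computing the differential of left translation at $0$ recovers the Lie bracket from the quadratic part of $\ast$, so the Lie algebra of $G'$ is canonically $\mk{g}$ itself. Lie's third theorem (uniqueness of connected, simply connected Lie groups with a prescribed Lie algebra) then supplies an isomorphism $\Phi:G'\to G$ integrating the identity map on $\mk{g}$.

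The step I expect to be the main obstacle is to identify $\Phi$ with the Lie-theoretic exponential $\exp_G$ under the tautological identification of the underlying set of $G'$ with $\mk{g}$. The key observation is that in $G'$ the one-parameter subgroup through $X\in\mk{g}$ is literally $t\mapsto tX$, because all iterated commutators $[X,X],[X,[X,X]],\ldots$ vanish in the BCH polynomial, so that $(tX)\ast(sX)=(t+s)X$. Hence $t\mapsto\Phi(tX)$ is a one-parameter subgroup of $G$ whose tangent at the identity is $X$, which by definition of $\exp_G$ means $\Phi(tX)=\exp_G(tX)$. Thus $\exp_G=\Phi$ as maps $\mk{g}\to G$, and since $\Phi$ is a diffeomorphism so is $\exp_G$; its inverse is the desired $\log$. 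The Baker--Campbell--Hausdorff identity in $G$ is then automatic, because the product in $G'$ is by construction the BCH polynomial and $\Phi$ transports it onto the product in $G$.

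An alternative route that avoids invoking Lie's third theorem as a black box is to argue inductively along the lower central series $G\ge G_2\ge\cdots\ge G_{k+1}=\{\id\}$, realizing each quotient $G/G_{i+1}$ as a central $\mb{R}^{n_i}$-extension of $G/G_i$ and lifting the exponential diffeomorphism one central extension at a time. This inductive version sits in closer spirit to the step-by-step methods using filtrations and central extensions that are used throughout the rest of this paper, which is why it would be my preferred approach if one wanted a self-contained argument.
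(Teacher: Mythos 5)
The paper does not actually prove this statement --- it is stated as a recalled classical fact and cited directly to Corwin--Greenleaf \cite[Theorem 1.2.1]{C&G}, with no argument given. So there is no ``paper's proof'' to compare against. That said, your proposal is a correct rendering of the standard textbook argument, and it is essentially the route taken in the cited reference: use nilpotency to make the BCH series a polynomial, equip $\mk{g}$ with the resulting group law to build a connected simply-connected Lie group $G'$ with Lie algebra $\mk{g}$, integrate the identity Lie-algebra map to an isomorphism $\Phi:G'\to G$ (this is really Lie's second theorem for the integration step, combined with the third for existence/uniqueness of the simply-connected integrating group --- a small naming slip but not a substantive one), and then identify $\Phi$ with $\exp_G$ via one-parameter subgroups. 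Your observation that $(tX)\ast(sX)=(t+s)X$ is exactly the right way to close this last step. The alternative inductive approach you sketch through the lower central series is also sound and, as you note, closer to the filtration-and-central-extension style of argument used elsewhere in the paper, but neither the paper nor the reference needs it here since the BCH argument is shorter and self-contained.
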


\noindent Using this, we can define 1-parameter subgroups: each element $g\in G$ lies in the 1-parameter subgroup $\{\exp(t \log g):t\in \mb{R}\}$, which we denote by $g^\mathbb{R}$. These subgroups correspond to 1-dimensional subspaces in the Lie algebra. 

Important examples of connected and simply-connected Lie groups are given by the translation groups of connected free nilspaces.

\begin{lemma}\label{lem:confreetranssimpcon}
Let $F$ be a $k$-step connected free nilspace. Then $\tran(F)$ is a connected and simply-connected Lie group.
\end{lemma}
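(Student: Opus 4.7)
The plan is to exhibit $\tran(F)$ as a finite iterated extension by finite-dimensional real vector spaces, and then conclude via the homotopy long exact sequence. Connectedness and simple-connectedness are preserved under such extensions, so the result will follow by reverse induction along the filtration $\tran_\bullet(F)$.

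First, I would invoke \cite[Theorem 3.15]{CGSS-doucos} to obtain an explicit polynomial description of $\tran(F)$. Since $F$ is connected, it has trivial discrete part, so $F=\prod_{i=1}^k\mc{D}_i(\mb{R}^{b_i})$. Every translation $\alpha\in\tran(F)$ then has the form $\alpha(\underline{x})=\underline{x}+(s_i(x_1,\dots,x_{i-1}))_{i\in[k]}$, where each $s_i$ ranges over a finite-dimensional real vector space $V_i$ of admissible polynomial maps $\prod_{j<i}\mb{R}^{b_j}\to\mb{R}^{b_i}$ of degree at most $i$. The composition of two such translations is a polynomial expression in the parameters $(s_i)$, so $\tran(F)$ inherits a smooth manifold structure diffeomorphic to $V:=\bigoplus_i V_i$, and becomes a real (nilpotent) Lie group.

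Next, I would read off the filtration: $\alpha\in\tran_j(F)$ iff $s_i=0$ for all $i<j$, and $\alpha\mapsto s_j$ descends to an isomorphism $\tran_j(F)/\tran_{j+1}(F)\cong V_j$, a finite-dimensional real vector space. The normality of $\tran_{j+1}(F)$ in $\tran_j(F)$ is automatic from the filtration property $[\tran_i,\tran_j]\subseteq \tran_{i+j}$, so the quotient map is a smooth principal bundle with fiber $\tran_{j+1}(F)$ and base $V_j$.

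Finally, I would argue by reverse induction on $j$. The base case $\tran_{k+1}(F)=\{\id\}$ is trivial. Given that $\tran_{j+1}(F)$ is connected and simply-connected, the short exact sequence $1\to\tran_{j+1}(F)\to\tran_j(F)\to V_j\to 1$ gives a principal bundle of smooth manifolds with simply-connected fiber and contractible base, so the homotopy long exact sequence forces $\tran_j(F)$ to be connected and simply-connected as well. Taking $j=0$ yields the statement. The only real subtlety is establishing the explicit identification of the quotients $\tran_j(F)/\tran_{j+1}(F)$ as real vector spaces and checking compatibility of the Lie group structure with the filtration, but both are direct consequences of the polynomial parametrization from \cite[Theorem 3.15]{CGSS-doucos} specialized to the purely continuous setting.
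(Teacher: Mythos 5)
Your first paragraph already contains the paper's entire proof: the polynomial parametrization from \cite[Theorem 3.15]{CGSS-doucos} exhibits a diffeomorphism $\tran(F)\cong V=\bigoplus_i V_i$ onto a finite-dimensional real vector space, and connectedness and simple-connectedness are topological invariants, so you are done at that point. The subsequent iterated-extension machinery and the homotopy long exact sequence are superfluous.

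Moreover, the superfluous part contains an error that you should be aware of. The identification ``$\alpha\in\tran_j(F)$ iff $s_i=0$ for all $i<j$'' is wrong: that condition characterizes $\ker(\eta_{j-1})$, not $\tran_j(F)$, and in general $\tran_j(F)\subsetneq\ker(\eta_{j-1})$. For instance, on $F=\mc{D}_1(\mb{R})\times\mc{D}_2(\mb{R})$ the translation $(x,y)\mapsto(x,y+bx)$ with $b\neq 0$ lies in $\ker(\eta_1)$ but not in $\tran_2(F)$: applying it along an edge of a $3$-cube changes the alternating sum of the second coordinates. The correct description of $\tran_j(F)$ also imposes degree constraints on the $s_i$ with $i\ge j$ (this is exactly what \cite[Proposition 5.42]{CGSS-doucos}, cited elsewhere in the paper, keeps track of, and is the source of the whole ``pureness'' discussion in Definition \ref{def:pure}). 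The inductive argument would still go through with the corrected identification, since the successive quotients $\tran_j(F)/\tran_{j+1}(F)$ remain finite-dimensional real vector spaces, but the cleanest route is simply to stop after the diffeomorphism, as the paper does.
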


\begin{proof}
By \cite[Lemma 3.6]{CGSS-doucos} and \cite[Theorem 3.15]{CGSS-doucos}, we have an explicit description of the Lie group $\tran(F)$ in terms of polynomials of (graded) degree at most $k$. In particular, this establishes a diffeomorphism (not necessarily a group homomorphism) between $\tran(F)$ and $\mb{R}^n$ for some $n\in \mathbb{N}$. Since the property of connectedness and simple-connectedness is preserved by homeomorphisms, the result follows.
\end{proof}

\begin{lemma}\label{lem:free-conn-action}
Let $F$ be a connected free nilspace and let $\Gamma\leq\tran(F)$ be a fiber-transitive and fiber-discrete group. Then $\Gamma$ acts freely on $F$, i.e.\ if $\gamma(x)=x$ for some $x\in F$, then $\gamma=\id$.
\end{lemma}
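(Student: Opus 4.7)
The plan is to induct on $j\in\{1,\ldots,k\}$ to show that $\gamma\in\ker(\eta_j)$. Since $F_k=F$ gives $\ker(\eta_k)=\{\id\}$, this will force $\gamma=\id$ and complete the proof. For the base case $j=1$, the group $\tran(F_1)=\tran(\mc{D}_1(\mb{R}^{b_1}))\cong\mb{R}^{b_1}$ acts by ordinary addition, so $\eta_1(\gamma)$ is translation by a constant vector; this constant must vanish because $\eta_1(\gamma)$ fixes $\pi_1(x)$.

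For the inductive step, I would assume $\gamma\in\ker(\eta_{j-1})$, so $\gamma$ preserves every fiber of $\pi_{j-1}$. By the polynomial description of translations on free nilspaces (see \cite[Theorem 3.15]{CGSS-doucos}), the assignment $y\mapsto\pi_j(\gamma(y))-\pi_j(y)$, well-defined in the $j$-th structure group $\ab_j\cong\mb{R}^{b_j}$ because $\gamma$ is $\pi_{j-1}$-fiber-preserving, descends to a continuous polynomial map $s_j:F_{j-1}\to\mb{R}^{b_j}$; the whole task is to show $s_j\equiv 0$. For each $y\in F$, fiber-transitivity applied to the pair $(y,\gamma(y))$ with index $j-1$ produces some $\gamma'\in\Gamma\cap\tran_j(F)$ with $\gamma'(y)=\gamma(y)$. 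Because $\gamma'$ has height $j$, it acts on $\ab_j$ by a constant $c_{j,y}$, so $s_j(\pi_{j-1}(y))=c_{j,y}$. These constants all lie in $\eta_j(\Gamma\cap\tran_j(F))\subseteq\eta_j(\Gamma)\cap\tran_j(F_j)$, which by fiber-discreteness is a discrete subgroup of $\tran_j(F_j)\cong\mb{R}^{b_j}$. Since $F_{j-1}$ is connected as the continuous image of the connected nilspace $F$, the image of $s_j$ is a connected subset of a discrete group, hence a single point. Combined with $s_j(\pi_{j-1}(x))=0$, which follows from $\gamma(x)=x$, one concludes $s_j\equiv 0$, so $\gamma\in\ker(\eta_j)$ and the induction closes.

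The main obstacle is the inductive step itself, where all three hypotheses on $(F,\Gamma)$ must be used simultaneously: fiber-transitivity supplies at each point a height-$j$ substitute for $\gamma$, fiber-discreteness confines the associated constants to a discrete lattice, and the connectedness of $F$ traps the continuous image of $s_j$ into a singleton. Omitting any one of these ingredients makes the conclusion fail; for example on $\mc{D}_1(\mb{R})\times\mc{D}_2(\mb{R})$ the translation $(x_1,x_2)\mapsto(x_1,x_2+ax_1)$ fixes the origin without being the identity, which is exactly the situation ruled out by combining fiber-transitivity with fiber-discreteness above.
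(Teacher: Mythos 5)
Your proof is correct and uses the same three key ingredients in the same way as the paper's proof: the polynomial description of translations on a free nilspace (\cite[Theorem 3.15]{CGSS-doucos}), fiber-transitivity to produce a height-$j$ translation matching $\gamma$ at each point, fiber-discreteness to confine the associated polynomial to a discrete lattice, and connectedness to force that polynomial to be constant. The only difference is presentational: the paper argues by contradiction, picking the smallest $i$ with $T_i\neq 0$, projecting to the $i$-step factor, and concluding that $\gamma$ shifts every point by a non-zero constant, whereas you run an induction on $j$ showing $\gamma\in\ker(\eta_j)$ by using the fixed point to pin the constant to zero at each stage. These are essentially contrapositives of each other and not genuinely different routes.
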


\begin{proof}
Note that $F=\prod_{i=1}^k\mc{D}_i(\mb{R}^{a_i})$ for some $a_i\in \mb{Z}_{\geq 0}$. Let $\gamma\in \Gamma$ and let us prove that if $\gamma\not=\id$ then it cannot fix any element of $F$. By \cite[Lemma 3.6]{CGSS-doucos} and \cite[Theorem 3.15]{CGSS-doucos} we know that $\gamma(\underline{x_1},\ldots,\underline{x_k})=(\underline{x_1},\ldots,\underline{x_k})+(0,\ldots,0,T_i(\underline{x_1},\ldots,\underline{x_{i-1}}),\ldots)$ for some $i\in[k]$ where $T_i\not=0$. We claim that $T_i$ is a constant map. Indeed, projecting onto the $i$-th factor we can assume without loss of generality that $i=k$. By the fiber-transitive property the polynomial $T_k(\underline{x_1},\ldots,\underline{x_{k-1}})$ should take values in the lattice $\Gamma_k$, which is discrete by assumption. This is possible only if $T_k$ is a constant, since $T_k$ is continuous and $F$ is connected. We have thus proved our claim and since $T_i$ is thus a non-zero constant, it is clear that $\gamma$ does not fix any element of $F$.
\end{proof}

\begin{corollary}\label{cor:freeconpure}
Let $F$ be a connected free nilspace and let $\Gamma\subset \tran(F)$ be a fiber-transitive and fiber-discrete group. Then $\Gamma$ is pure, i.e., for any $i\in[k]$ we have $\ker(\eta_{i-1})\cap \Gamma = \Gamma_i$.
\end{corollary}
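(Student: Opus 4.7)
The plan is to deduce purity directly from the freeness of the $\Gamma$-action guaranteed by Lemma \ref{lem:free-conn-action}, combined with fiber-transitivity. The inclusion $\Gamma \cap \tran_i(F) \subseteq \Gamma \cap \ker(\eta_{i-1})$ is automatic, since $\tran_i(F) \subseteq \ker(\eta_{i-1})$, so what remains is to establish the reverse inclusion $\Gamma \cap \ker(\eta_{i-1}) \subseteq \Gamma \cap \tran_i(F)$. The case $i = 1$ is trivial because $\tran_1(F) = \tran(F)$, so I will assume $i \geq 2$ and fix an arbitrary $\gamma \in \Gamma \cap \ker(\eta_{i-1})$; the goal becomes to show that $\gamma \in \tran_i(F)$.

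The heart of the argument will be to use fiber-transitivity to produce, at a single chosen point, a translation in the smaller subgroup $\Gamma \cap \tran_i(F)$ that matches $\gamma$ there, and then to invoke freeness of the $\Gamma$-action to conclude that this translation must in fact coincide with $\gamma$ everywhere. Concretely, I will pick any $x_0 \in F$ and set $y_0 = \gamma(x_0)$. Since $\gamma \in \ker(\eta_{i-1})$, we have $\pi_{i-1}(x_0) = \pi_{i-1}(y_0)$, so the fiber-transitive property (Definition \ref{def:fib-tran}, applied with index $i-1 \in [k]$) will supply some $\gamma^* \in \Gamma \cap \tran_i(F)$ with $\gamma^*(x_0) = y_0$. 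Then the element $(\gamma^*)^{-1}\gamma \in \Gamma$ fixes $x_0$, so by Lemma \ref{lem:free-conn-action} it equals $\id$, whence $\gamma = \gamma^* \in \Gamma \cap \tran_i(F)$.

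Because fiber-transitivity hands us the required $\gamma^*$ after a single application and the freeness of the action then immediately forces the equality $\gamma = \gamma^*$, I do not anticipate any serious obstacle: the whole proof should amount to a short chain of two steps relying on tools already in place. The only subtlety worth flagging is the index shift in the application of Definition \ref{def:fib-tran}, which requires $i \geq 2$ in order to have a legal index $i-1 \in [k]$; this is precisely why the case $i = 1$ must be peeled off at the start.
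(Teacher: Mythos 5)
Your proposal is correct. The logical pathway is the same as the paper's: first invoke Lemma \ref{lem:free-conn-action} to get freeness of the $\Gamma$-action, then upgrade freeness together with fiber-transitivity to purity. The difference is that where the paper delegates the second step to an external result (\cite[Proposition 5.42]{CGSS-doucos}), you prove it directly in two lines: for $\gamma\in\Gamma\cap\ker(\eta_{i-1})$ and any base point $x_0$, the relation $\pi_{i-1}(x_0)=\pi_{i-1}(\gamma(x_0))$ lets you apply Definition \ref{def:fib-tran} with index $i-1$ to produce $\gamma^*\in\Gamma\cap\tran_i(F)$ agreeing with $\gamma$ at $x_0$, and then freeness forces $(\gamma^*)^{-1}\gamma=\id$, so $\gamma=\gamma^*\in\tran_i(F)$. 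Your index bookkeeping is right (the application of Definition \ref{def:fib-tran} needs $i-1\in[k]$, hence $i\geq 2$), and your peeled-off case $i=1$ is trivially correct since $\tran_1(F)=\tran(F)=\ker(\eta_0)$. This self-contained argument is arguably preferable: it uses only Definition \ref{def:fib-tran} and Lemma \ref{lem:free-conn-action}, whereas the paper's route relies on the external structural machinery (polynomial characterizations of $\tran_i$ vs.\ $\ker(\eta_{i-1})$) packaged in Proposition 5.42 of \cite{CGSS-doucos}.
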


\begin{proof}
This follows from \cite[Proposition 5.42]{CGSS-doucos} combined with Lemma \ref{lem:free-conn-action}.
\end{proof}

We will use the following classical result (see \cite[Proposition 2.5, p.\ 31] {Rag} and also \cite[Theorem 5.4.3, p.\ 217]{C&G}).

\begin{theorem}[Zariski closure of a subgroup]\label{thm:lieconclos}
Let $G$ be a simply-connected nilpotent Lie group and let $H$ be a subgroup of $G$. Then there is a unique minimal (relative to inclusion) connected closed subgroup $\tilde{H}$ of $G$ such that $\tilde{H}\supset H$, namely $\tilde{H}$ is the intersection of all connected closed subgroups of $G$ that include $H$. If $H$ is closed, then $\tilde{H}/H$ is compact. Moreover $\tilde{H}$ is simply-connected. We call $\tilde{H}$ the \emph{Zariski closure} of $H$. 
\end{theorem}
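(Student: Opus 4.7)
The strategy is to exploit the Lie algebra correspondence afforded by Theorem~\ref{thm:corresp-lie-gr-alg}. Let $\mk{g}$ denote the Lie algebra of $G$, with $\exp:\mk{g}\to G$ the associated diffeomorphism and $\log$ its inverse. I would define $\mk{h}\subseteq\mk{g}$ to be the smallest $\mb{R}$-linear Lie subalgebra of $\mk{g}$ containing $\log(H)$ (equivalently, the intersection of all Lie subalgebras containing $\log(H)$, which exists since $\mk{g}$ is finite-dimensional), and set $\tilde{H}:=\exp(\mk{h})$. Since $\mk{h}$ is a real subspace closed under the Baker--Campbell--Hausdorff operation, $\tilde{H}$ is a connected closed subgroup of $G$, and $\tilde{H}\supseteq H$ by construction.

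For minimality and uniqueness, every connected closed subgroup $K$ of $G$ can be written as $\exp(\mk{k})$ for a unique Lie subalgebra $\mk{k}\subseteq\mk{g}$ (applying Theorem~\ref{thm:corresp-lie-gr-alg} to $K$, itself a simply-connected nilpotent Lie group by restriction of $\exp$). If $K\supseteq H$ then $\log(H)\subseteq\mk{k}$, whence $\mk{h}\subseteq\mk{k}$ by minimality of $\mk{h}$, so $\tilde{H}\subseteq K$. Thus $\tilde{H}$ coincides with the intersection of all such $K$, establishing both minimality and uniqueness. Simple-connectedness of $\tilde{H}$ is then immediate: via $\exp$ it is diffeomorphic to $\mk{h}\cong\mb{R}^d$ for $d=\dim\mk{h}$.

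The main obstacle is the compactness of $\tilde{H}/H$ when $H$ is closed. I would argue by induction on the nilpotency class of $G$. The base case is the abelian setting: a closed subgroup of $\mb{R}^n$ has the form $V\oplus\Lambda$ for some linear subspace $V$ and some lattice $\Lambda$ in a complement of $V$; its Zariski closure is $V$ together with the $\mb{R}$-span of $\Lambda$, and the quotient is a torus. For the inductive step, let $Z$ be a nontrivial connected closed central subgroup of $G$ (for instance the last nontrivial term of the descending central series). Applying the induction hypothesis to $G/Z$ and to the image of $H$ (after first replacing $H$ by $HZ$ to ensure closedness in the quotient), one obtains that $\tilde{H}Z/HZ$ is compact. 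The remaining kernel piece $(\tilde{H}\cap Z)/(H\cap Z)$ is handled by the abelian base case, and the short exact sequence
\[
1 \to (\tilde{H}\cap Z)/(H\cap Z) \to \tilde{H}/H \to \tilde{H}Z/HZ \to 1
\]
then yields compactness of $\tilde{H}/H$. The delicate point is verifying that the Zariski closure operation is compatible with passing to the quotient by $Z$, which follows from the fact that $\log(Z)$ is a Lie ideal of $\mk{g}$ and that $\exp$ intertwines the quotient $\mk{g}/\log(Z)$ with the quotient $G/Z$.
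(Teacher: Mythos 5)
Your construction of $\tilde{H}=\exp(\mk{h})$, where $\mk{h}$ is the Lie subalgebra of $\mk{g}$ generated by $\log(H)$, together with the deduction of existence, uniqueness, minimality and simple-connectedness, is correct. (The paper does not supply a proof of this statement; it simply cites Raghunathan's Proposition~2.5.)

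The proposed proof of compactness of $\tilde{H}/H$ has a genuine gap, however, and the central-quotient induction cannot be repaired by a minor adjustment. Two separate things go wrong. First, when $H$ is closed the image $HZ/Z$ of $H$ in $G/Z$ need not be closed, so the inductive hypothesis (which requires a \emph{closed} subgroup) is unavailable for it; replacing $H$ by $HZ$ does not help, because $HZ$ itself need not be closed in $G$. Second, the ``kernel piece'' $(\tilde{H}\cap Z)/(H\cap Z)$ is not in general handled by the abelian base case: $\tilde{H}\cap Z$ can be strictly larger than the Zariski closure of $H\cap Z$ inside $Z$, and then $(\tilde{H}\cap Z)/(H\cap Z)$ is not compact and does not embed as a closed subgroup of $\tilde{H}/H$. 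Both failures occur in the following example. Take $G=\mb{R}^4$ with the class-$2$ nilpotent group law $(a,b,c,d)\cdot(a',b',c',d')=(a+a',\,b+b',\,c+c'+ab',\,d+d')$, so that $Z:=[G,G]=\{(0,0,c,0):c\in\mb{R}\}$ is the last nontrivial term of the descending central series, and let $H$ be the subgroup generated by $(1,0,0,0)$, $(0,0,1,\sqrt{2})$ and $(0,0,0,1)$; these pairwise commute, so $H\cong\mb{Z}^3$ is a closed discrete subgroup. One computes $\tilde{H}=\{(a,0,c,d):a,c,d\in\mb{R}\}$ and $\tilde{H}/H\cong\mb{T}^3$, compact as the theorem asserts. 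But $H\cap Z=\{0\}$ while $\tilde{H}\cap Z=Z\cong\mb{R}$, so the alleged kernel is $\mb{R}$, which is not compact and whose image in $\tilde{H}/H$ is a non-closed one-parameter subgroup; and the image of $H$ (or of $HZ$) in $G/Z\cong\mb{R}^3$ is $\mb{Z}\times\{0\}\times(\mb{Z}+\sqrt{2}\,\mb{Z})$, which is not closed. The compactness assertion therefore requires an argument of a different shape from the one proposed.
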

\noindent For a proof, see \cite[Proposition 2.5]{Rag}. The next result will enable us to conclude that certain subgroups are simply-connected.

\begin{lemma}\label{lem:subgpsimpcon}
Let $G$ be a nilpotent connected and simply-connected Lie group. Then any connected closed subgroup is simply-connected.
\end{lemma}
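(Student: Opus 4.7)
The plan is to reduce the problem to showing that a connected closed subgroup is the exponential image of its Lie algebra, and then invoke the fact that the exponential map on a simply-connected nilpotent Lie group is a diffeomorphism onto the ambient group.

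More precisely, I would proceed as follows. By Theorem \ref{thm:corresp-lie-gr-alg}, the exponential map $\exp:\mk{g}\to G$ is a diffeomorphism, and the Baker--Campbell--Hausdorff (BCH) formula holds on all of $\mk{g}$. Let $H\le G$ be a connected closed subgroup, which is automatically nilpotent, and let $\mk{h}\subseteq\mk{g}$ be its Lie algebra. The key claim is that $\exp(\mk{h})=H$. The inclusion $\exp(\mk{h})\subseteq H$ is standard: as $\mk{h}$ is the Lie algebra of the closed subgroup $H$, for each $X\in\mk{h}$ the one-parameter subgroup $t\mapsto \exp(tX)$ lies in $H$. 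For the reverse inclusion, the main point is that $\exp(\mk{h})$ is itself a connected closed subgroup of $G$ whose Lie algebra is $\mk{h}$: indeed, for $X,Y\in\mk{h}$, BCH gives $\exp(X)\exp(Y)=\exp(\mr{BCH}(X,Y))$ with $\mr{BCH}(X,Y)\in\mk{h}$ since $\mk{h}$ is a subalgebra, so $\exp(\mk{h})$ is a subgroup; it is closed and connected because $\exp$ is a diffeomorphism and $\mk{h}$ is a closed connected subspace of $\mk{g}$. As $H$ and $\exp(\mk{h})$ are connected Lie subgroups of $G$ with the same Lie algebra, they coincide.

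Once $H=\exp(\mk{h})$ is established, the conclusion is immediate: since $\exp:\mk{g}\to G$ is a diffeomorphism, its restriction to the vector subspace $\mk{h}$ is a diffeomorphism onto $H$. Thus $H$ is diffeomorphic to $\mk{h}\cong\mb{R}^{\dim\mk{h}}$, so in particular $H$ is contractible and hence simply-connected.

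The only delicate point is showing $H\subseteq \exp(\mk{h})$; everything else is routine given Theorem \ref{thm:corresp-lie-gr-alg}. One clean way to handle this without appealing to general Lie-subgroup theory is as follows: consider the inclusion $\iota:H\hookrightarrow G$. Since $H$ is a connected closed subgroup of a nilpotent Lie group, $H$ is itself a nilpotent Lie group, and its own exponential map $\exp_H:\mk{h}\to H$ satisfies $\iota\co\exp_H=\exp\co\,d\iota$. Thus $\exp(\mk{h})=\exp_H(\mk{h})\subseteq H$, and in fact $\exp_H(\mk{h})$ is all of the identity component of $H$ (since this is so whenever $\exp$ is locally surjective, which it is at the identity), giving $H=\exp(\mk{h})$ by connectedness of $H$. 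This completes the proof.
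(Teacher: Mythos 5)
Your proof is correct and takes essentially the same route as the paper: identify the subgroup with (the exponential image of) its Lie subalgebra, which is a vector subspace, and use that $\exp$ is a diffeomorphism to conclude simple-connectedness. The paper compresses this into two sentences by citing the Lie subalgebra/subgroup correspondence directly. One small caveat on your second paragraph's parenthetical: local surjectivity of $\exp$ at the identity does \emph{not} by itself imply $\exp_H(\mk{h})$ equals the identity component of $H$ (e.g.\ $\exp$ on $\mathrm{SL}(2,\mb{R})$ is a local diffeomorphism at $\id$ but not surjective); what you actually need is that $\exp(\mk{h})$ is a subgroup (which your BCH argument in the first paragraph establishes), since an open subgroup of a connected group is the whole group. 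So the first-paragraph argument is the one that closes the inclusion $H\subseteq\exp(\mk{h})$; the second paragraph's alternative justification, as stated, is not quite right.
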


\begin{proof}
By the correspondence between Lie subalgebras and connected Lie subgroups \cite[Theorem 5.20]{Hall}, any such subgroup $H$ of $G$ corresponds to a Lie subalgebra, which is in particular a vector subspace, hence simply-connected. Since the exponential map is a diffeomorphism, it follows that $H$ is simply-connected.
\end{proof}

\noindent The term ``Zariski closure" is used for instance in \cite[Remark 2.6 and Theorem 2.10]{Rag}, where it is noted that, invoking Ado's Theorem to realize $H$ as a matrix group, one sees that $\tilde{H}$ is indeed a closure in the Zariski topology.

Given a connected free nilspace $F$, we know that $\tran(F)$ is a filtered nilpotent Lie group. However, for many purposes this group is too large. The concept of Zariski closure will play a key role by enabling us, given a discrete subgroup $\Gamma$ of $\tran(F)$ acting on $F$ in a sufficiently nice way, to use the smaller connected and simply-connected subgroup $\tilde{\Gamma}\le\tran(F)$ to analyze the compact nilspace $F/\Gamma$. 

We recall the following central notion in Lie group theory, see \cite[\S 3.1]{HKbook}.

\begin{defn}[Rational subgroups, connected case]
Let $G/\Gamma$ be a nilmanifold (see \cite[Definition 1.1]{GTorb}), where $G$ is connected and simply-connected. A subgroup $G'\leq G$ is said to be \emph{rational} (relative to $\Gamma$) if it is a closed, connected subgroup of $G$ such that $G'\cap \Gamma$ is cocompact in $G'$ (see \cite[Definition 1.10]{GTorb}). 
\end{defn}

We will use a slight generalization of the previous definition, in which $G$ is not required to be connected and simply-connected. Instead, we let $G$ be a nilpotent Lie group and let $\Gamma$ be a discrete cocompact subgroup. We say that $G'\le G$ is \emph{rational} if it is a closed subgroup of $G$ and $G'\cap \Gamma$ is cocompact in $G'$.

In the connected case, this notion of rationality is equivalent to the original notion involving rational structure constants of the Lie algebra  \cite[Theorem 5.1.11]{C&G}. In the more general (not necessarily connected) setting, we just define rationality in terms of cocompactness as above. Note that by the proof of \cite[Proposition 1.1.2]{Cand:Notes2}, rationality in this sense is necessary and sufficient for the associated coset nilspace to be a compact nilspace (that is, for each cube set to be appropriately closed). 

It turns out that, in the setting of connected free nilspaces, a translation lattice $\Gamma$ is automatically pure, and its Zariski closure $\tilde{\Gamma}$ is then rational relative to $\Gamma$.

\begin{lemma}\label{lem:ratsubgpsintransF}
Let $F$ be a $k$-step connected free nilspace and let $\Gamma\le \tran(F)$ be a translation lattice. Then $\Gamma$ and $\wt{\Gamma}$ are both pure. Moreover, for every $i\in [k]$ the group $\Gamma\cap \tran_i(F)=\Gamma\cap \ker(\eta_{i-1})$ is a cocompact subgroup of $\tilde{\Gamma}\cap \tran_i(F)=\tilde{\Gamma}\cap \ker(\eta_{i-1})$ (i.e.\footnote{The equivalence between $H$ being a rational subgroup of $G$ relative to a lattice $\Gamma\leq G$ and cocompactness of $H\cap \Gamma$ in $H$ follows from \cite[Theorem 5.1.11]{C&G}.} the group $\Gamma\cap \tran_i(F)$ is a rational subgroup of $\tilde{\Gamma}$ relative to $\Gamma$).
\end{lemma}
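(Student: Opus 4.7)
Purity of $\Gamma$ is immediate from Corollary \ref{cor:freeconpure}. For purity of $\wt{\Gamma}$ and the cocompactness assertion, the plan is to proceed by induction on the step $k$, with the abelian base case $k=1$ being immediate (any cocompact lattice in $\mb{R}^{a_1}$ has Zariski closure the whole space). The inductive step works with the projection $\eta_{k-1}:\tran(F)\to \tran(F_{k-1})$, after first verifying that $\eta_{k-1}(\Gamma)$ is again a translation lattice on the $(k-1)$-step connected free nilspace $F_{k-1}$ (the three defining properties being inherited in a straightforward way once one uses purity of $\Gamma$).

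Two structural facts drive the argument. First, $\tran_k(F)$ lies entirely inside $\wt{\Gamma}$: by fiber-cocompactness, $\Gamma\cap\tran_k(F)$ is a cocompact lattice in the vector group $\tran_k(F)\cong \mb{R}^{a_k}$, and cocompact lattices in Euclidean space have Zariski closure equal to the ambient group, so by monotonicity $\tran_k(F)\subseteq \wt{\Gamma}$. Second, there is a functoriality identity $\eta_{k-1}(\wt{\Gamma})=\wt{\eta_{k-1}(\Gamma)}$: the inclusion $\supseteq$ holds because $\eta_{k-1}(\wt{\Gamma})$ is a closed connected subgroup of $\tran(F_{k-1})$ containing $\eta_{k-1}(\Gamma)$ (using that in a simply-connected nilpotent Lie group, the image of a closed connected subgroup under a Lie-group homomorphism is again closed and connected), while $\subseteq$ follows because the preimage $\eta_{k-1}^{-1}(\wt{\eta_{k-1}(\Gamma)})$ is closed and connected (an extension of the connected group $\ker(\eta_{k-1})$ by a connected group) and contains $\Gamma$.

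The crux of the argument is top-level purity, $\wt{\Gamma}\cap\ker(\eta_{k-1})=\tran_k(F)$, which I plan to establish by dimension-counting via the Hirsch length of $\Gamma$. Purity of $\Gamma$ together with fiber-cocompactness identifies each successive quotient $(\Gamma\cap\tran_j(F))/(\Gamma\cap\tran_{j+1}(F))$ with the cocompact lattice $\eta_j(\Gamma)\cap\tran_j(F_j)$ in $\tran_j(F_j)\cong \mb{R}^{a_j}$, so the Hirsch length of $\Gamma$ equals $\sum_j a_j$, which equals $\dim\wt{\Gamma}$ via the Malcev correspondence. Applying the first isomorphism theorem to $\eta_{k-1}|_{\wt{\Gamma}}$ (whose image $\wt{\eta_{k-1}(\Gamma)}$ has dimension $\sum_{j<k} a_j$ by induction) gives $\dim(\wt{\Gamma}\cap\ker(\eta_{k-1}))=a_k=\dim\tran_k(F)$; combined with the inclusion $\tran_k(F)\subseteq \wt{\Gamma}\cap\ker(\eta_{k-1})$ and both sides being closed connected subgroups (in a simply-connected nilpotent Lie group, intersections of closed connected subgroups are connected via $\exp$), equal dimensions force equality.

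For lower levels $i<k$, the plan is to lift through $\eta_{k-1}$: given $g\in\wt{\Gamma}\cap\ker(\eta_{i-1})$, the inductive purity on $F_{k-1}$ places $\eta_{k-1}(g)$ inside $\wt{\eta_{k-1}(\Gamma)}\cap\tran_i(F_{k-1})$, and the surjectivity $\eta_{k-1}(\wt{\Gamma}\cap\tran_i(F))=\wt{\eta_{k-1}(\Gamma)}\cap\tran_i(F_{k-1})$ provides a lift $g'\in\wt{\Gamma}\cap\tran_i(F)$ with $\eta_{k-1}(g')=\eta_{k-1}(g)$, whence $g(g')^{-1}\in\wt{\Gamma}\cap\ker(\eta_{k-1})=\tran_k(F)\subseteq\tran_i(F)$ forces $g\in\tran_i(F)$. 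This surjectivity itself is proved by checking that both sides are closed connected subgroups sharing the same cocompact lattice $\eta_{k-1}(\Gamma\cap\tran_i(F))$, then invoking the principle that a closed connected subgroup containing a cocompact lattice of a larger closed connected subgroup must equal it (by comparison of Hirsch lengths). Cocompactness of $\Gamma\cap\tran_i(F)$ in $\wt{\Gamma}\cap\tran_i(F)$ then follows routinely from the short exact sequence $1\to\tran_k(F)\to\wt{\Gamma}\cap\tran_i(F)\to\wt{\eta_{k-1}(\Gamma)}\cap\tran_i(F_{k-1})\to 1$ compared with its $\Gamma$-analogue, both having cocompact lattices in kernel and quotient. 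The main obstacle throughout is the top-level dimension count, which hinges on correctly matching $\dim\wt{\Gamma}$ with the Hirsch length of $\Gamma$.
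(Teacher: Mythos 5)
Your proof is correct but follows a genuinely different route from the paper's. The paper derives this lemma almost immediately from Lemma~\ref{lem:PureZariski}, whose statement~\eqref{eq:PZclaim} gives an explicit product decomposition of elements of $\wt{\Gamma}\cap\ker(\eta_i)$ as $\gamma_1^{t_1}\cdots\gamma_r^{t_r}$ with $\gamma_j\in\Gamma\cap\tran_{i+1}(F)$ and $t_j\in\mb{R}$; that lemma is proved via Mal'cev bases and the commutator manipulations from~\cite{GTarit}, and once~\eqref{eq:PZclaim2} is in hand both the purity and the cocompactness assertions follow in a couple of lines. You instead set up an induction on the step $k$: after checking that $\eta_{k-1}(\Gamma)$ is again a translation lattice, you establish top-level purity $\wt{\Gamma}\cap\ker(\eta_{k-1})=\tran_k(F)$ by a dimension count (Hirsch length of $\Gamma$ equals $\sum_j a_j$ by purity plus fiber-cocompactness, equals $\dim\wt{\Gamma}$ by the Mal'cev correspondence via Theorem~\ref{thm:lieconclos}, and a rank--nullity count through $\eta_{k-1}|_{\wt\Gamma}$ then forces equality with $\tran_k(F)$), bootstrap lower-level purity by lifting through $\eta_{k-1}$ and using the top-level case, and close the cocompactness via comparison of short exact sequences. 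Your argument is more topological/Lie-theoretic and entirely self-contained for this lemma; the paper's route is more computational but the product decomposition~\eqref{eq:PZclaim} is needed again later (e.g.\ in the lemma establishing~\eqref{eq:L'-match}, feeding into Proposition~\ref{prop:Lieparamgam}), so the paper proves the stronger factorization statement once rather than giving a purpose-built argument here. One small thing worth making explicit if you write this up: the identity $\eta_{k-1}(\wt{\Gamma})=\wt{\eta_{k-1}(\Gamma)}$ and the closedness/connectedness of images and intersections all rely on specific facts about simply-connected nilpotent Lie groups (images and intersections of closed connected subgroups remain closed and connected), which should be cited precisely, as they fail for general Lie groups.
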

\noindent To prove this crucial result, we use the following fact about pure subgroups of $\tran(F)$ (recall  Definition \ref{def:pure}).

\begin{lemma}\label{lem:PureZariski}
Let $F$ be a $k$-step connected free nilspace and let $\Gamma$ be a pure subgroup of $\tran(F)$. Then
\begin{equation}\label{eq:PZclaim}
g\in \tilde{\Gamma}\cap\ker(\eta_i) \quad \Longrightarrow \quad g=\gamma_1^{t_1}\cdots \gamma_r^{t_r}\text{ for some }\gamma_j\in \Gamma\cap\tran_{i+1}(F)\text{ and }t_j\in \mb{R}.
\end{equation}
In particular $\wt{\Gamma}$ is pure, i.e.\ we have
\begin{equation}\label{eq:PZclaim2}
\wt{\Gamma}\cap\ker(\eta_i) = \wt{\Gamma\cap\tran_{i+1}}(F) = \wt{\Gamma}\cap\tran_{i+1}(F),
\end{equation}
and so $\tilde{\Gamma}$ is also fiber-transitive and free.
\end{lemma}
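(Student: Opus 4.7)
The plan is to prove \eqref{eq:PZclaim} by reverse induction on $i$, after which \eqref{eq:PZclaim2} follows immediately. The setup is standard: by Lemma \ref{lem:confreetranssimpcon} and Theorem \ref{thm:corresp-lie-gr-alg}, $\tran(F)$ is connected simply-connected nilpotent with $\exp:\mk g\to\tran(F)$ (where $\mk g:=\mr{Lie}(\tran(F))$) a diffeomorphism; and by Theorem \ref{thm:lieconclos} together with Lemma \ref{lem:subgpsimpcon}, $\wt\Gamma$ is the closed connected Lie subgroup whose Lie algebra $\mk h$ is the smallest Lie subalgebra of $\mk g$ containing $\log\Gamma$. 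Consequently $\wt\Gamma$ is generated as a group by the one-parameter subgroups $\gamma^{\mb R}$ for $\gamma\in\Gamma$, so every element $g\in\wt\Gamma$ admits some finite expression $g=\delta_1^{s_1}\cdots\delta_m^{s_m}$ with $\delta_j\in\Gamma$ and $s_j\in\mb R$.

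The base case $i\ge k$ is trivial since $\ker(\eta_k)=\{\mr{id}\}$. For the inductive step I take $g\in\wt\Gamma\cap\ker(\eta_i)$ with $i<k$ and project via $\eta_{k-1}:\tran(F)\to\tran(F_{k-1})$: the image $\eta_{k-1}(\Gamma)$ is pure on the $(k-1)$-step connected free nilspace $F_{k-1}$ (a direct consequence of purity of $\Gamma$ together with the factorisation $\eta_j=\eta_j^{F_{k-1}}\co\eta_{k-1}$), while the Lie-algebraic identity $\eta_{k-1}(\wt\Gamma)=\wt{\eta_{k-1}(\Gamma)}$ holds because $d\eta_{k-1}$ sends the generating set $\log\Gamma$ of $\mk h$ onto $\log\eta_{k-1}(\Gamma)$. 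An outer induction on the step $k$ of the free nilspace (with base case $k=1$ being the trivial abelian situation) therefore yields $\eta_{k-1}(g)=\prod_j\eta_{k-1}(\gamma_j)^{t_j}$ with $\eta_{k-1}(\gamma_j)\in\tran_{i+1}(F_{k-1})$; purity of $\Gamma$ lifts this to $\gamma_j\in\Gamma\cap\tran_{i+1}(F)$, and setting $h:=\prod_j\gamma_j^{t_j}$ reduces matters to the top-level case $gh^{-1}\in\wt\Gamma\cap\ker(\eta_{k-1})$, for which we need a product expression in terms of elements of $\Gamma\cap\tran_k(F)\subseteq\Gamma\cap\tran_{i+1}(F)$.

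The top-level case exploits that $\ker(\eta_{k-1})$ is abelian (two translations supported only on the top structure group commute), so $\wt\Gamma\cap\ker(\eta_{k-1})=\exp(\mk h\cap\mk k_{k-1})$ sits inside a vector group and products of $\gamma^t$ become $\mb R$-linear combinations of $\log\gamma$. Any $X\in\mk h\cap\mk k_{k-1}$ is an $\mb R$-linear combination of iterated Lie brackets $Y_I=[\log\gamma_{j_1},[\log\gamma_{j_2},\ldots]]$ of elements of $\log\Gamma$; by the Baker--Campbell--Hausdorff formula in the filtered nilpotent setting, each such $Y_I$ coincides modulo $\mk g_{w(I)+1}$ with $\log$ of the corresponding iterated group commutator in $\Gamma\cap\tran_{w(I)}(F)$, where $w(I):=\sum_\ell h(\gamma_{j_\ell})$ is the total weight (well-defined by purity of $\Gamma$). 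A leading-term analysis along the filtration $\mk g_1\supseteq\cdots\supseteq\mk g_k$, using purity of $\Gamma$ at each level to rule out low-weight contributions in $\mk k_{k-1}$, then forces the surviving contributions to lie at total weight $\ge k$; hence $X\in\mb R\text{-span}(\log(\Gamma\cap\tran_k(F)))$, and exponentiation yields the required expression for $gh^{-1}$.

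From \eqref{eq:PZclaim}, the equalities in \eqref{eq:PZclaim2} follow immediately, since each $g\in\wt\Gamma\cap\ker(\eta_i)$ lies in $\wt{\Gamma\cap\tran_{i+1}(F)}\subseteq\tran_{i+1}(F)$, which together with the obvious inclusions yields the chain. Fiber-transitivity of $\wt\Gamma$ follows from its purity plugged into the defining condition, and freeness of the action is obtained by a polynomial argument analogous to the proof of Lemma \ref{lem:free-conn-action}, with the role of fiber-discreteness now played by purity to force the leading polynomial $T_i$ describing an element of $\wt\Gamma$ to be a nonzero constant whenever the element is nontrivial. I expect the main obstacle to be the top-level analysis: the delicate interplay between the Baker--Campbell--Hausdorff formula, the filtration on $\mk g$, and purity of $\Gamma$ that is needed to show both $\mk h\cap\mk k_{k-1}\subseteq\mk g_k$ and that this intersection coincides with the $\mb R$-span of $\log(\Gamma\cap\tran_k(F))$.
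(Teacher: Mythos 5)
Your organization---an outer induction on the step $k$, projecting $g$ via $\eta_{k-1}$, lifting the resulting factorization back using purity, and reducing to a ``top-level case'' $g'\in\tilde\Gamma\cap\ker(\eta_{k-1})$---differs genuinely from the paper's, which proves \eqref{eq:PZclaim} by a \emph{forward} induction on $i$: the base $i=0$ comes from the Mal'cev basis of $\tilde\Gamma/\Gamma$, and the step $i-1\to i$ commutes the factors $\gamma_j\in\tran_i(F)\setminus\tran_{i+1}(F)$ to the front, reads off the constants $\alpha^j:=\eta_i(\gamma_j)$, and solves a linear system whose coefficient matrix has rational entries because (after a change of coordinates) the $\alpha^j$ lie in $\mb Z^\ell$; this rationality is precisely what permits merging one-parameter factors into a single factor whose generator still lies in $\Gamma$. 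Your auxiliary reductions---purity of $\eta_{k-1}(\Gamma)$, the identity $\eta_{k-1}(\tilde\Gamma)=\wt{\eta_{k-1}(\Gamma)}$, and lifting the generators back into $\Gamma\cap\tran_{i+1}(F)$---are sound consequences of purity.

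The genuine gap is the top-level case, which you flag as the main obstacle and leave as a sketch. The inclusion $\mk h\cap\mk k_{k-1}\subseteq\mb R\text{-}\mathrm{span}\bigl(\log(\Gamma\cap\tran_k(F))\bigr)$ that you need does \emph{not} follow from purity via a ``leading-term analysis'': purity constrains group elements of $\Gamma$, whereas the obstruction lives at the level of irrational real combinations of their logarithms. Concretely, take $k=2$, $F=\mc{D}_1(\mb R)\times\mc{D}_2(\mb R)$, $\gamma_1:(x,y)\mapsto(x+1,y)$, $\gamma_2:(x,y)\mapsto(x+\sqrt2,y+x)$, and $\Gamma=\langle\gamma_1,\gamma_2\rangle$. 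This $\Gamma$ is pure (a word in $\gamma_1,\gamma_2$ lands in $\ker(\eta_1)$ only when the net exponent of each generator vanishes, since $1$ and $\sqrt2$ are $\mb Q$-independent, and the commutator $[\gamma_1,\gamma_2]$ already lies in $\tran_2(F)$), yet $\log\gamma_2-\sqrt2\,\log\gamma_1\in\mk h\cap\ker(d\eta_1)$ acts non-constantly on the second coordinate, so lies outside $\mb R\text{-}\mathrm{span}\bigl(\log(\Gamma\cap\tran_2(F))\bigr)$. What is actually needed here is the discreteness of the groups $\eta_i(\Gamma\cap\tran_i(F))$, which the paper's proof invokes through the step ``we may assume $\alpha^j\in\mb Z^\ell$'' and which in the applications comes from $\Gamma$ being a translation lattice. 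Any repaired top-level argument must carry this discreteness through the Lie-algebraic bookkeeping, which in effect reproduces the paper's commutation-and-rationality step; a leading-term analysis based on purity alone cannot close the gap.
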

\begin{proof}
We prove \eqref{eq:PZclaim} by induction on $i\in [0,k]$. To prove the case $i=0$, by \cite[Definition 2.1 and the remarks below it]{GTorb}, we have that $\tilde{\Gamma}/\Gamma$ admits a Mal'cev basis with respect to the lower central series of $\tilde{\Gamma}$. Hence, we can apply statement $(iii)$ of that definition to get \eqref{eq:PZclaim}.

Now suppose that $i>0$ and that  \eqref{eq:PZclaim} holds for $i-1$. We want to prove that  \eqref{eq:PZclaim} holds for $i$, so suppose that $g\in \tilde{\Gamma}\cap\ker(\eta_i)$. Then $g\in \ker(\eta_{i-1})$, so by the case $i-1$ we have
\begin{equation}
g=\gamma_1^{s_1}\cdots \gamma_r^{s_r}\text{ where }\gamma_j\in \Gamma\cap\tran_i(F)\text{ and }s_j\in\mb{R}.
\end{equation}
Suppose that there are among these factors some $\gamma_j$ that are in $\tran_i(F)\setminus\tran_{i+1}(F)$ (if there are none then we are already done). Then, by commuting towards the left in this product every such element $\gamma_j$, and in each such commuting operation using formula \cite[(C.2)]{GTarit}, we can rewrite $g$ as a product $\gamma_1^{t_1}\cdots \gamma_u^{t_1}\tau_{u+1}^{t_{u+1}}\cdots \tau_v^{t_v}$, where each $\gamma_j$ is in $\tran_i(F)\setminus \tran_{i+1}(F)$ and every $\tau_k\in \tran_{i+1}(F)$. Now, by pureness of the $\gamma_j$, for each such element there is a constant $\alpha^j$ such that $\eta_i(\gamma_j)$ is a translation that just adds the constant $\alpha^j$ in the last fiber of $F_i$. By definition of $F$, such a fiber is isomorphic to $\mc{D}_i(\mb{R}^\ell)$ for some $\ell\ge 0$. Moreover, we can assume without loss of generality (just by composing with an isomorphism) that $\eta_i(\Gamma_i)$ generates a subgroup of the lattice $\mb{Z}^{\ell}\subset\mb{R}^\ell$.\footnote{If $\Gamma$ is fiber-cocompact then we may assume that the lattice is in fact $\mb{Z}^\ell$. Otherwise we just know it is some sub-lattice possibly non cocompact.}

Select a maximal linearly independent subset of $\{\alpha^1,\ldots,\alpha^u\}$. Then, by a similar commutation argument as above using \cite[(C.2)]{GTarit}, we can assume that these linearly independent $\alpha^j$ are the first $v$ ones, i.e.\ we have
\begin{equation}\label{eq:zariski-is-pure-2}
g= \gamma_1^{t_1}\cdots \gamma_v^{t_v}\gamma_{v+1}^{t_{v+1}}\cdots \gamma_u^{t_u}\tau_{u+1}^{t_{u+1}}\cdots \tau_v^{t_v}
\end{equation}
where $\alpha^1,\ldots,\alpha^v$ are linearly independent and each $\alpha^j$ with $j\in [u+1,v]$ is a linear combination of the $\alpha^j$ with $j\in[v]$. We want to prove that either $v=u$ or we can write $g$ with a similar expression as above but with at most $u-1$ terms which lie in $\tran_i(F)\setminus\tran_{i+1}(F)$.

To prove the latter statement, recall that $g\in \ker(\eta_i)$ and therefore we have the system of equations $t_1\alpha^1+\cdots+t_u\alpha^u=0\in \mb{R}^\ell$. If we write $\alpha^j=(\alpha^j_1,\ldots,\alpha^j_\ell)$ for $j\in [u]$, we find that the $t_j$ satisfy the system of equations
\[
\begin{psmallmatrix} \alpha_1^1 & \ldots & \alpha_\ell^1\\[0.1em]\cdot  &  & \cdot\\[0.1em] \alpha^u_1 & \cdots & \alpha^u_\ell \end{psmallmatrix} \begin{psmallmatrix} t_1 & \\[0.1em]\cdot \\[0.1em] t_u  \end{psmallmatrix} = \begin{psmallmatrix}0 & \\[0.1em]\cdot \\[0.1em] 0 \end{psmallmatrix}.
\]
By assumption the first $\alpha^1,\ldots,\alpha^v$ are linearly independent, so the solution of this system is of the form \[\begin{psmallmatrix} \text{Id}_{v\times v} \\[0.1cm]  R  \end{psmallmatrix} \begin{psmallmatrix} t_1 & \\[0.1em]\cdot \\[0.1em] t_v  \end{psmallmatrix} = \begin{psmallmatrix} t_1 & \\[0.1em]\cdot \\[0.1em] t_u  \end{psmallmatrix}\] where $R$ is a $u\times v$ matrix with entries in $\mb{Q}$ (here we are using  that all $\alpha^j\in \mb{Z}^\ell$). In particular, we have that $t_{v+1}=r_1t_1+\cdots+r_vt_v$ where $r_j\in \mb{Q}$. Hence, there exist natural numbers $q,n_1,\ldots,n_v$ such that $t_{v+1}=\tfrac{1}{q}(n_1t_1+\cdots+n_vt_v)$. Now, in \eqref{eq:zariski-is-pure-2}, we want to merge $\gamma_v^{t_v}\gamma_{v+1}^{t_{v+1}}$ into a single factor (times possible additional factors of higher order). To do so, firstly, using again \cite[(C.2)]{GTarit} we can distribute $\gamma_{v+1}^{t_{v+1}} = \gamma_{v+1}^{(n_1t_1)/q}\cdots \gamma_{v+1}^{(n_vt_v)/q}$ in the expression \eqref{eq:zariski-is-pure-2} so as to have
\[
g = (\gamma_1^{t_1}\gamma_{v+1}^{(n_1t_1)/q})\cdots (\gamma_v^{t_v}\gamma_{v+1}^{(n_vt_v)/q})t_{v+2}^{t_{v+2}}\cdots\gamma_u^{t_u}\tau
\]
where $\tau$ is a product of elements from one-parameter subgroups of higher-order terms (as in \eqref{eq:zariski-is-pure-2}). The key point now is that, by \cite[(C.1)]{GTarit}, we have $\gamma_j^{t_j}\gamma_{v+1}^{(n_jt_j)/q} = (\gamma_j^{q})^{t_j/q}(\gamma_{v+1}^{n_j})^{t_j/q} = (\gamma_j^q\gamma_{v+1}^{n_j})^{t_j/q}\nu$ where $\nu$ are one-parameter subgroups of higher-order commutators and $\gamma_j^q\gamma_{v+1}^{n_j}\in \Gamma$. Thus we have reduced the number of factors in  \eqref{eq:zariski-is-pure-2} by one.

Repeating the above process, we eventually end up with no terms in $\tran_i(F)\setminus\tran_{i+1}(F)$ (and thus are done) or we have an expression of the form
\[
g= \gamma_1^{t_1''}\cdots \gamma_v^{t_v''}{\tau''}_{v+1}^{t''_{v+1}}\cdots {\tau''}_w^{t_w''}
\]
where the ${\tau''}_j$ are in $\tran_{i+1}(F)$ and the $\alpha^1,\ldots,\alpha^v$ are linearly independent. But now, precisely by this linear independence, the only way that $t_1''\alpha^1+\cdots+t_v''\alpha^v$ can be 0 (as it should be since $\eta_i(g)=\id$) is that each $t_i''$ is 0, and therefore we conclude that $g={\tau''}_{v+1}^{t''_{v+1}}\cdots {\tau''}_w^{t_w''}\in \tran_{i+1}(F)$, thus concluding the proof of the inductive step. This completes the proof of \eqref{eq:PZclaim}.

Now we prove \eqref{eq:PZclaim2}. Note that it suffices to prove the chain of inclusions $\wt{\Gamma}\cap\ker(\eta_i) \subset \wt{\Gamma\cap\tran_{i+1}}(F) \subset \wt{\Gamma}\cap\tran_{i+1}(F)$ as $\wt{\Gamma}\cap\tran_{i+1}(F)\subset \wt{\Gamma}\cap\ker(\eta_i)$ is trivial. To see the first inclusion, note that by \eqref{eq:PZclaim}, an element $g\in \tilde{\Gamma}\cap\ker(\eta_i)$ is a product of elements of the form $\gamma_j^{t_j}$ where $\gamma_j\in \Gamma\cap \tran_{i+1}(F)$, and so $\gamma_j^{t_j}\in \wt{\Gamma\cap \tran_{i+1}}(F)$, and since the latter is a group we conclude that $g\in \wt{\Gamma\cap \tran_{i+1}}(F)$, which proves the first claimed inclusion. The second inclusion  is clear because each of $\tilde{\Gamma}$ and $\tran_{i+1}(F)$ are connected, so by \cite[Lemma 2.4, p.\ 31]{Rag} so is their intersection, and so this intersection includes  $\wt{\Gamma\cap\tran_{i+1}}(F)$ by minimality of the latter closure.
\end{proof}

\begin{proof}[Proof of Lemma \ref{lem:ratsubgpsintransF}]
By the fiber-discrete property and \cite[Lemma 5.24]{CGSS-doucos}, the group $\Gamma$ is discrete in $\tran(F)$. By Corollary \ref{cor:freeconpure} we have that $\Gamma$ is pure, and then by Lemma \ref{lem:PureZariski} so is $\tilde{\Gamma}$.

Now for each $i\in [k]$, by Theorem \ref{thm:lieconclos} we have that $\tilde{\Gamma}$ is closed connected and furthermore, as $\Gamma$ is closed, by that same theorem $\tilde{\Gamma}/\Gamma$ is compact.

To prove more generally that $\Gamma\cap \tran_i(F)$ is cocompact in $\tilde{\Gamma}\cap \tran_i(F)$, first note that, since the former group is certainly cocompact in $\wt{\Gamma\cap \tran_i}(F)$, it suffices to prove that $\tilde{\Gamma} \cap \tran_i(F) = \wt{ \Gamma \cap \tran_i(F)}$, but this is given by \eqref{eq:PZclaim2}. This completes the proof.
\end{proof}

\begin{remark}
Note that \eqref{eq:PZclaim2} is quite a delicate fact. In particular it is \emph{not} true in general that if $\Gamma$ is a discrete cocompact subgroup of a connected simply-connected Lie group $G$, and $H$ is a connected subgroup of $G$, then the analogue of \eqref{eq:PZclaim2} holds, namely the equality $\tilde{\Gamma}\cap H=\wt{\Gamma\cap H}$. Indeed, let for instance $G=\mb{R}^2$, let $\Gamma$ be some rank 2 lattice therein, and let $H$ be a 1-dimensional subspace of $G$ having trivial intersection with $\Gamma$. Then since $\tilde{\Gamma}=G$ we have $H=\tilde{\Gamma}\cap H\supsetneq \wt{\Gamma\cap H}=\{0\}$. In fact, the equality $\tilde{\Gamma}\cap H=\wt{\Gamma\cap H}$ holds precisely when $H\cap \Gamma$ is cocompact in $\tilde{\Gamma}\cap H$.
\end{remark}
\begin{remark}
The above results also imply that $\eta_{k-1}(\tilde{\Gamma})=\wt{\eta_{k-1}(\Gamma)}$ in $\tran(F_{k-1})$. Indeed, by \cite[Lemma 5.1.4 (a), p.\ 196]{C&G} applied with $G=\tilde{\Gamma}$,  $H=\ker(\eta_{k-1})\cap G$,  and $\pi=\eta_{k-1}$, we have that $\eta_{k-1}(\Gamma)$ is uniform in $G/H\cong \eta_{k-1}(\tilde{\Gamma})$, and so by uniqueness this latter group must be the Zariski closure of $\eta_{k-1}(\Gamma)$ as claimed.
\end{remark}
The following result, which is central to this subsection, indicates more precisely the relevance of the Zariski closure. It tells us that the connected free nilspace $F$ is a principal homogeneous space (or \emph{torsor}) of the Lie group $\tilde{\Gamma}$.

\begin{theorem}[Lie parametrization of a connected free nilspace]\label{thm:flierep} Let $F$ be a connected free nilspace and let $\Gamma\le \tran(F)$ be a translation lattice. Then the Zariski closure $\tilde{\Gamma}$ acts transitively and freely on $F$. In particular there is a unique bijection $\beta:F\to \tilde{\Gamma}$ such that $\beta(0)=1_{\tilde{\Gamma}}$ and for every $\gamma\in \Gamma$ and $x\in F$ we have $\beta(\gamma(x))=\gamma\beta(x)$. Moreover $\beta$ is a nilspace isomorphism \textup{(}relative to the group nilspace structure on $\tilde{\Gamma}$ associated with the filtration $\tilde{\Gamma}_i:=\tilde{\Gamma}\cap \tran_i(F)$\textup{)}.
\end{theorem}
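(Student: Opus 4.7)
The plan is to first establish transitivity and freeness of the $\tilde{\Gamma}$-action on $F$, then define $\beta$ as the inverse of the orbit map at $0$, and finally upgrade $\beta$ to a nilspace isomorphism. I would proceed by induction on the step $k$; the base case $k=1$ is easy since $F = \mc{D}_1(\mb{R}^{a_1})$ and $\Gamma$ being fiber-cocompact forces its Zariski closure to be all of $\tran_1(F) \cong \mb{R}^{a_1}$.

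For the inductive step, I would first prove transitivity. The key observation is that, since $F$ is connected, for each $i\in[k]$ the group $\eta_i(\Gamma)\cap \tran_i(F_i)$ is a cocompact lattice in the real vector group $\tran_i(F_i)\cong \mb{R}^{a_i}$, whose Zariski closure is the whole ambient group. Combined with \eqref{eq:PZclaim2}, this yields $\tilde{\Gamma}\cap \tran_i(F)\supseteq \tran_i(F)$ at every level; in particular, at the top level $\tilde{\Gamma}\cap \tran_k(F) = \tran_k(F)\cong \mb{R}^{a_k}$ acts transitively on each $\pi_{k-1}$-fiber of $F$. Using also the remark following Lemma \ref{lem:ratsubgpsintransF} (giving $\eta_{k-1}(\tilde{\Gamma}) = \wt{\eta_{k-1}(\Gamma)}$) to apply the inductive hypothesis on $F_{k-1}$, transitivity of $\tilde{\Gamma}$ on $F$ follows. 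For freeness, if $\gamma \in \tilde{\Gamma}$ fixes some $x\in F$, then $\eta_{k-1}(\gamma)$ fixes $\pi_{k-1}(x)$, so the inductive hypothesis gives $\eta_{k-1}(\gamma)=\id$; hence $\gamma \in \tilde{\Gamma}\cap \ker(\eta_{k-1}) = \tilde{\Gamma}\cap\tran_k(F)$ by purity of $\tilde{\Gamma}$ (Lemma \ref{lem:PureZariski}), and then $\gamma$ must be the identity because elements of $\tran_k(F)\cong \mb{R}^{a_k}$ act by pure translation on the last coordinate, so a nontrivial such translation has no fixed point.

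With transitivity and freeness established, $\beta$ can be defined unambiguously as the inverse of the orbit map $\tilde{\Gamma}\to F$, $\gamma\mapsto \gamma(0)$. The identities $\beta(0)=1_{\tilde{\Gamma}}$ and $\beta(\gamma x)=\gamma\beta(x)$ then follow immediately from the uniqueness of $\beta(y)$ as the unique element of $\tilde{\Gamma}$ sending $0$ to $y$: both $\beta(\gamma x)$ and $\gamma\beta(x)$ send $0$ to $\gamma x$.

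The main obstacle is verifying that $\beta$ is a nilspace isomorphism with respect to the group nilspace structure on $\tilde{\Gamma}$ given by the filtration $\tilde{\Gamma}_i:=\tilde{\Gamma}\cap \tran_i(F)$. One direction is almost automatic: the orbit map $\beta^{-1}$ is a morphism, because Host--Kra cubes on $(\tilde{\Gamma},\tilde{\Gamma}_\bullet)$ are generated by face-translations $g^C$ with $g\in \tilde{\Gamma}_{\codim(C)}$, and each such generator applied to the constant cube $0^{\db{n}}$ produces an element of $\cu^n(F)$ by the very definition of the translation group $\tran_{\codim(C)}(F)$, so a product of such generators likewise sends the zero cube to a cube in $F$. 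To show $\beta$ itself is a morphism, I would argue inductively on $k$. Given $q\in \cu^n(F)$, the projection $\pi_{k-1}\co q$ is a cube in $F_{k-1}$ which by the inductive isomorphism corresponds to a Host--Kra cube in $\eta_{k-1}(\tilde{\Gamma})$; one then lifts this cube to a cube in $\tilde{\Gamma}$ via the central extension $1\to \tran_k(F)\to \tilde{\Gamma}\to \eta_{k-1}(\tilde{\Gamma})\to 1$, and corrects the lift by a cube in the abelian group $\tran_k(F)\cong \mb{R}^{a_k}$, whose cube structure as the top piece of $(\tilde{\Gamma},\tilde{\Gamma}_\bullet)$ matches that of $\mc{D}_k(\mb{R}^{a_k})$ as the top piece of $F$. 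The compatibility between the $\mc{D}_k(\mb{R}^{a_k})$-extension structures on $\tilde{\Gamma}$ (as group nilspace over $\eta_{k-1}(\tilde{\Gamma})$) and on $F$ (as free nilspace over $F_{k-1}$) is the technical heart of the argument, and is what guarantees that the lifted correction actually lies in $\cu^n(\tilde{\Gamma})$.
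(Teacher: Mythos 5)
Your decomposition (transitivity and freeness $\Rightarrow$ definition of $\beta$ $\Rightarrow$ morphism in both directions) matches the paper's, and your arguments for transitivity, freeness, and $\beta^{-1}$ being a morphism are essentially sound, though you reach them by a step-by-step induction where the paper deduces freeness and the stronger statement \eqref{eq:strongtrans} in one stroke from the purity of $\wt{\Gamma}$ via \cite[Proposition 5.42]{CGSS-doucos}. One minor error: the claim ``$\wt{\Gamma}\cap\tran_i(F)\supseteq\tran_i(F)$ at every level'' is false for $i<k$ (it would force $\wt{\Gamma}=\tran(F)$); $\tran_i(F)$ contains non-constant polynomial translations that need not be in $\wt{\Gamma}$. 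The inclusion is correct only at the top level $i=k$, where $\tran_k(F)\cong\mb{R}^{a_k}$ is abelian and $\Gamma_k$ is cocompact in it. Since your subsequent argument only invokes the $i=k$ case and the inductive hypothesis on $F_{k-1}$, this is a misstatement rather than a fatal flaw, but it should be corrected.

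The genuine gap is the final step, showing $\beta$ is a morphism, which you yourself flag as ``the technical heart'' and leave unverified. The paper resolves it constructively: order the vertices of $\db{n}$ so each prefix is simplicial, then build a Host--Kra cube $\q'_j\in\cu^n(\wt{\Gamma})$ agreeing with $\beta\co\q$ on the first $j$ vertices, correcting the next vertex by applying a face-translation $h^{F^u_{j+1}}$ with $h$ supplied by the strong fiber-transitivity \eqref{eq:strongtrans} at an appropriate height — and that statement at \emph{every} height, not just the top one, is exactly what \cite[Proposition 5.42]{CGSS-doucos} gives and what your induction does not. Your alternative ``lift through $\eta_{k-1}$ and correct by a $\mc{D}_k$-cube'' strategy can in fact be completed: one needs to note that $\eta_{k-1}\co\beta=\beta_{k-1}\co\pi_{k-1}$, that a cube in $\eta_{k-1}(\wt{\Gamma})$ lifts through the group fibration to a cube $\tilde{\q}\in\cu^n(\wt{\Gamma})$, that $\beta^{-1}\co\tilde{\q}\in\cu^n(F)$, and that the vertex-wise discrepancy $d(v)\in\tran_k(F)$ is just the difference of the $\mc{D}_k(\mb{R}^{a_k})$-components of the two cubes $\q$ and $\beta^{-1}\co\tilde{\q}$ in the product $F\cong F_{k-1}\times\mc{D}_k(\mb{R}^{a_k})$, hence a cube in $\mc{D}_k(\tran_k(F))$, giving $\beta\co\q=d\cdot\tilde{\q}\in\cu^n(\wt{\Gamma})$. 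None of this is in the proposal, so as written the central claim of the theorem is not established.
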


This map $\beta$ is what we call the \emph{Lie parametrization} of $F$. Note that $\beta$ induces an embedding of $\Gamma$ as the subset $\beta^{-1}(\Gamma)$ of $F$.

\begin{proof}
The existence and bijectivity of the map $\beta$ will follow from the fact that $\tilde{\Gamma}$ defines a free transitive action on $F$. Assuming this holds, note that for every $x\in F$ there exists a unique $h\in \tilde{\Gamma}$ such that $h(0_F)=x$, whence we shall  define $\beta(x):=h$. Note that it then follows that $\gamma\beta(x)=\gamma h=\beta(\gamma(x))$, since $\gamma h$ is an element of $\tilde{\Gamma}$ (hence the unique element) mapping $0_F$ to $\gamma(x)$. Our first main task is thus to prove that $\tilde{\Gamma}$ acts freely and transitively on $F$. 

Since by Lemma \ref{lem:ratsubgpsintransF} the group $\wt{\Gamma}$ is pure, we have by \cite[Proposition 5.42]{CGSS-doucos} that $\wt{\Gamma}$ acts freely, and is also fiber-transitive, i.e.\ the following holds:
\begin{equation}\label{eq:strongtrans}
\forall x,y\in F, s\geq 0, \text{ if } \pi_s(x)=\pi_s(y) \text{ then }  \exists h\in \tilde{\Gamma}\cap \tran_{s+1}(F) \text{ such that }h(x)=y. 
\end{equation}
This already implies (with the special case $s=0$) that $\wt{\Gamma}$ acts transitively as required.

Now it only remains to prove that $\beta$ is a nilspace isomorphism. First let us prove the simple fact that $\beta^{-1}$ is a morphism. By definition recall that $\beta(x)$ is the unique element $h\in \tilde{\Gamma}$ such that $h(0)=x$. Hence $\beta^{-1}(h)=h(0)$. Therefore $\beta^{-1}$ is simply the evaluation in $0$. Given now $\q\in \cu^n(\tilde{\Gamma})$, note that $\beta^{-1}\co\q=\q\co 0^{\db{n}}$ (where $0^{\db{n}}$ is the constant cube in $\cu^n(F)$ with value $0$) which is clearly a cube in $F$ by the simple fact that $\cu^n(\tilde{\Gamma})\subset \cu^n(\tran(F))$.

We now prove that $\beta$ is a morphism. To this end, fix some $n\in\mb{N}$ and let $\{v_1,\ldots,v_{2^n}\}=\db{n}$ be any ordering of the vertices of $\db{n}$ as follows. For any $j\in[2^n]$, consider the vertices of $\db{n}$ defined by replacing any number of coordinates of $v_j$ equal to 1 by 0, then the resulting vertex is some $v_i$ for $i<j$. In other words, for any $j\in[2^n]$ the set $\{v_1,\ldots,v_j\}$ is a simplicial subset of $\db{n}$, see \cite[Definition 3.1.4]{Cand:Notes1}. Moreover, for any $j\in[2^n]$ let $F_j^\ell:=\{v_i\in\db{n}:v_i\sbr{w}\le v_i\sbr{w},\;\forall w\in[n]\}$ and $F_j^u:=\{v_i\in\db{n}:v_i\sbr{w}\ge v_i\sbr{w},\;\forall w\in[n]\}$.

Now, given any $\q\in \cu^n(F)$, we shall construct iteratively a sequence $\q'_j\in \cu^n(\tilde{\Gamma})$ such that $\q'_j(v_i)=\beta\co\q(v_i)$ for all $i\le j$, so that when we reach $j=2^n$ the desired claim will be proved. For $j=1$ we simply define $\q_1'=\beta(\q(0^n))$ (the constant cube) which is of course an element of $\cu^n(\tilde{\Gamma})$ and agrees with $\beta\co\q$ on $v_1=0^{n}$.

Assume now that we have defined $\q_j'$. To define $\q_{j+1}'$, note that $\beta\co\q|_{F_{j+1}^\ell}$ and $\q_j'|_{F_{j+1}^\ell}$ are two cubes in $\cu^{\dim(F_{j+1}^\ell)}(\tilde{\Gamma})$ that agree on all vertices except maybe on $v_{j+1}$. Composing with $\beta^{-1}$ on both sides we find that $\q|_{F_{j+1}^\ell}$ and $\q_j'\co 0^{\db{\dim(F_{j+1}^\ell)}}$ are cubes in $\cu^{\dim(F_{j+1}^\ell)}(F)$ that agree on all points except for maybe $v_{j+1}$. Hence $\q(v_{j+1})\sim_{\dim(F_{j+1}^\ell)} \q'_{j}\co 0^{\db{\dim(F_{j+1}^\ell)}}(v_{j+1})$. By \eqref{eq:strongtrans} there exists some $h\in \tilde{\Gamma}\cap \tran_{\dim(F_{j+1}^\ell)+1}(F)$ such that 
\[
h(\q'_{j}\co 0^{\db{\dim(F_{j+1}^\ell)}}(v_{j+1})) = \q(v_{j+1}).
\]
Then $\q_{j+1}':=h^{F^u_{j+1}}\q_{j}'\in \cu^n(\tilde{\Gamma})$ satisfies the desired properties. Continuing this way, we conclude that $\beta\co\q = \q'_{2^n}\in \cu^n(\tilde{\Gamma})$, and it follows that $\beta$ is a morphism. Hence $\beta$ is indeed a nilspace isomorphism.\end{proof}

\begin{corollary}\label{cor:Lie-param-conn}
Under the assumptions of Theorem \ref{thm:flierep}, the isomorphism $\beta:F\to \tilde{\Gamma}$ induces a nilspace isomorphism $\overline{\beta}:F/\Gamma\to \Gamma\backslash \tilde{\Gamma}$ where $F/\Gamma$ is the quotient of a free nilspace by a fiber-transitive action and $\Gamma\backslash\tilde{\Gamma}$ is the (left) coset nilspace. Equivalently, by composing $\beta$ with the inversion map $\inv:\tilde{\Gamma}\to \tilde{\Gamma}$ given by $\gamma\mapsto\gamma^{-1}$ we have that $F/\Gamma\cong \tilde{\Gamma}/\Gamma$.
\end{corollary}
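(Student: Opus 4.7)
The plan is to transport the nilspace isomorphism $\beta : F \to \tilde{\Gamma}$ of Theorem \ref{thm:flierep} down to the quotient spaces, using crucially the $\Gamma$-equivariance $\beta(\gamma(x)) = \gamma\beta(x)$. The natural candidate is $\overline{\beta} : F/\Gamma \to \Gamma\backslash\tilde{\Gamma}$, $[x] \mapsto \Gamma\beta(x)$, and I would first check that this is well-defined and bijective. Well-definedness is immediate from equivariance; injectivity follows by combining equivariance with injectivity of $\beta$, since $\Gamma\beta(x) = \Gamma\beta(y)$ forces $\beta(y) = \gamma\beta(x) = \beta(\gamma(x))$ for some $\gamma \in \Gamma$, whence $y = \gamma(x)$; surjectivity is clear from surjectivity of $\beta$.

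Next I would verify that both $\overline{\beta}$ and $\overline{\beta}^{-1}$ are nilspace morphisms. The two quotient maps $\pi_\Gamma : F \to F/\Gamma$ and $\pi : \tilde{\Gamma} \to \Gamma\backslash\tilde{\Gamma}$ are both fibrations: the first by the fiber-transitivity of $\Gamma$ combined with \cite[Lemma 5.7]{CGSS-doucos}, and the second by the left-coset analogue of the coset-nilspace construction of Definition \ref{def:hk-cubes}. Therefore any cube $\bar{q} \in \cu^n(F/\Gamma)$ lifts to some $q \in \cu^n(F)$ with $\pi_\Gamma^{\db{n}}(q) = \bar{q}$, and then $\overline{\beta}^{\db{n}}(\bar{q}) = \pi^{\db{n}}(\beta^{\db{n}}(q)) \in \cu^n(\Gamma\backslash\tilde{\Gamma})$ because $\beta$ is a morphism and $\pi$ is cube-preserving. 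The mirror argument with $\beta^{-1}$ (also a morphism by Theorem \ref{thm:flierep}) yields that $\overline{\beta}^{-1}$ is likewise a morphism.

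Finally, the equivalent formulation follows because inversion on $\tilde{\Gamma}$ induces a nilspace isomorphism $\Gamma\backslash\tilde{\Gamma} \to \tilde{\Gamma}/\Gamma$, $\Gamma g \mapsto g^{-1}\Gamma$; equivalently, the composition $\inv \circ \beta : F \to \tilde{\Gamma}$ satisfies the right-equivariance $(\inv\circ\beta)(\gamma(x)) = (\inv\circ\beta)(x)\,\gamma^{-1}$ and descends by the same argument to an isomorphism $F/\Gamma \cong \tilde{\Gamma}/\Gamma$. The entire proof is essentially formal once Theorem \ref{thm:flierep} is in hand; the only points requiring a separate check are that the coset projection $\pi$ is a fibration and that inversion preserves the Host--Kra cube structure on the group nilspace $\tilde{\Gamma}$, both of which are standard facts from the theory of coset nilspaces.
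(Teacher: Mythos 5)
Your proof is correct and takes essentially the same approach as the paper's: the key point in both is that the $\Gamma$-equivariance of $\beta$ identifies the translation action of $\Gamma$ on $F$ with the left-multiplication action of $\Gamma$ on $\tilde{\Gamma}$, so the nilspace isomorphism $\beta$ descends to an isomorphism of quotients. The paper states this in two sentences; you have simply spelled out the routine checks (well-definedness, bijectivity, cube-surjectivity via lifting, inversion) that the paper leaves implicit.
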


\begin{proof}
Note that $\beta$ defines a nilspace isomorphism and thus, any quotient of $F$  translates into a quotient of $\tilde{\Gamma}$. By the equivariance property, the action of $\Gamma$ is given precisely by left multiplication, and the result follows.
\end{proof}

\begin{remark}
Corollary \ref{cor:Lie-param-conn} can be used to give an alternative proof of \cite[Theorem 2.9.17]{Cand:Notes2}, i.e., that any toral nilspace is a coset nilspace.
\end{remark}
\noindent Having developed the Lie parametrization in the case of connected free nilspaces, we now begin to work toward an extension of this machinery valid for general (possibly disconnected) free nilspaces acted upon by \emph{porting} translation lattices. To this end, we begin with the following notion.

\begin{defn}\label{def:conn-closure}
The \emph{connected closure} of a free nilspace $F$ is obtained from $F$ by replacing all discrete components $\mc{D}_i(\mb{Z})$ of $F$ by continuous ones $\mc{D}_i(\mb{R})$. We denote this connected closure by $F_\mb{R}$, and we let $\iota$ denote the embedding $F\to F_\mb{R}$ given by $x\mapsto x$. Since every translation in $\tran(F)$ is given by polynomials (see \cite[Theorem 3.15]{CGSS-doucos}), we can use the same polynomials to define a translation on $F_\mb{R}$. We thus obtain an embedding homomorphism $\wh{\iota}:\tran(F)\to\tran(F_{\mb{R}})$.
\end{defn}

\begin{lemma}\label{lem:pure-i} Let $F$ be a free nilspace and $g\in\tran(F)$. Then $g$ is pure if and only if $\wh{\iota}(g)$ is pure.
\end{lemma}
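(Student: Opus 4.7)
The plan is to reduce both directions to the polynomial description of translations on free nilspaces given by \cite[Theorem 3.15]{CGSS-doucos} and \cite[Lemma 3.6]{CGSS-doucos}. Writing $F = \prod_{i=1}^k \mc{D}_i(\mb{Z}^{a_i}\times\mb{R}^{b_i})$ and $F_\mb{R} = \prod_{i=1}^k \mc{D}_i(\mb{R}^{a_i+b_i})$, any $g\in\tran(F)$ is described by a tuple of polynomials $(p_{j,\ell})$ acting on each coordinate, and by the definition of $\wh{\iota}$ in Definition \ref{def:conn-closure}, $\wh{\iota}(g)$ is described by literally the same polynomials, just viewed on the ambient real vector space. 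The key observation is that pureness is a condition that can be read off from these polynomials and that is invariant under this change of domain.

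More precisely, I would check two claims. First, $g\in\ker(\eta_{i-1})$ if and only if $\wh{\iota}(g)\in\ker(\eta_{i-1})$: both conditions assert that the polynomials $p_{j,\ell}$ corresponding to coordinates in the first $i-1$ layers vanish. The only subtlety is that vanishing on $F$ involves polynomials restricted to $\mb{Z}$-coordinates, but since $\mb{Z}^{a_j}$ is Zariski-dense in $\mb{R}^{a_j}$, a polynomial that vanishes on the $\mb{Z}$-coordinates extends to one that vanishes on the corresponding $\mb{R}$-coordinates, so the two conditions are equivalent. Second, $g\in\tran_i(F)$ if and only if $\wh{\iota}(g)\in\tran_i(F_\mb{R})$: by \cite[Theorem 3.15]{CGSS-doucos}, lying in $\tran_i$ amounts to each polynomial $p_{j,\ell}$ depending only on coordinates of total graded degree at most $j-i$, a condition that is manifestly invariant under $\wh{\iota}$.

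With these two claims in hand, the equivalence of pureness is immediate: pureness asks that for each $i\in[k]$, membership of $g$ in $\ker(\eta_{i-1})$ implies membership in $\tran_i$, and both of these membership conditions transfer exactly between $g$ and $\wh{\iota}(g)$ by the two claims above. The main (mild) obstacle is ensuring that the polynomial description actually matches up coordinate-wise under $\wh{\iota}$, which is precisely the content of \cite[Lemma 3.5]{CGSS-doucos} showing that translations on $F$ involve no continuous input variables of $F$ in their polynomial expression.
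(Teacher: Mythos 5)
Your proposal is correct and follows essentially the same route as the paper's proof, which simply cites \cite[Proposition 5.42]{CGSS-doucos} for the fact that membership in $\tran_i(F)$ and $\ker(\eta_{i-1})$ is read off from the polynomials describing $g$, and observes that $\wh{\iota}$ leaves those polynomials unchanged. You spell out the two membership criteria explicitly and add the (correct but essentially automatic) observation that a polynomial vanishing on the integer lattice is the zero polynomial; this is the same argument written in more detail.
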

\begin{proof}
By \cite[Proposition 5.42]{CGSS-doucos}, whether a translation $\alpha\in \tran(F)$ belongs to a subgroup $\tran_i(F)$ or $\ker(\eta_{i-1})$ is completely characterized by the form of the polynomials $T_i$ in the description of $\alpha$. Since $\wh{\iota}$ does not change these polynomials, the result follows.
\end{proof}

\begin{lemma}\label{lem:embednice}
Let $\ns$ be a $k$-step \textsc{cfr} nilspace of the form $\ns=F/\Gamma$ where $\Gamma\le\tran(F)$ is a pure translation lattice. Then $\wh{\iota}(\Gamma)$ is also a pure translation lattice in $\tran(F_{\mb{R}})$. Moreover, if $\Gamma$ was orthogonal then so is $\wh{\iota}(\Gamma)$.
\end{lemma}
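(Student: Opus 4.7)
The plan is to establish all four defining properties of a pure translation lattice --- pureness, fiber-transitivity, fiber-discreteness, fiber-cocompactness --- for $\wh{\iota}(\Gamma)\leq \tran(F_{\mb{R}})$, and then check orthogonality when the corresponding hypothesis holds. Pureness is immediate: applying Lemma \ref{lem:pure-i} elementwise, each $g\in \Gamma$ being pure in $\tran(F)$ forces $\wh{\iota}(g)$ to be pure in $\tran(F_{\mb{R}})$. Orthogonality is also essentially automatic: by the construction of the connected closure in Definition \ref{def:conn-closure}, $F_{\mb{R}}$ has no discrete components at all, hence its discrete part (in the sense of Definition \ref{def:tau}) is trivial, and consequently $\wh{p_D}(\wh{\iota}(\Gamma))$ is the trivial subgroup, which is vacuously generated by elementary generators in the sense of Definition \ref{def:ortlattice}.

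For fiber-discreteness and fiber-cocompactness, I would observe that since $\wh{\iota}$ leaves the polynomial expression of a translation (as provided by \cite[Theorem 3.15]{CGSS-doucos}) unchanged, the induced map $\wh{\iota}_j:\tran(F_j)\to \tran(F_{\mb{R},j})$ restricts on $\tran_j$ to the natural inclusion $\mb{Z}^{a_j}\times \mb{R}^{b_j}\hookrightarrow \mb{R}^{a_j+b_j}$, because a constant shift remains a constant shift. One then identifies $\eta_j(\wh{\iota}(\Gamma))\cap \tran_j(F_{\mb{R},j})$ with the image of $\eta_j(\Gamma)\cap \tran_j(F_j)$ under this inclusion. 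Discreteness transfers because the subspace topology that $\mb{R}^{a_j+b_j}$ induces on $\mb{Z}^{a_j}\times \mb{R}^{b_j}$ coincides with its natural one; cocompactness transfers by composing the two cocompact inclusions $\eta_j(\Gamma)\cap \tran_j(F_j)\subseteq \mb{Z}^{a_j}\times \mb{R}^{b_j}\subseteq \mb{R}^{a_j+b_j}$, the last quotient being the torus $\mb{T}^{a_j}$.

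The main obstacle will be fiber-transitivity, because its one-point formulation refers to arbitrary $y\in F_{\mb{R}}$, and such $y$ need not lie in $F$, so the fiber-transitivity of $\Gamma$ on $F$ cannot be invoked directly. I intend to exploit pureness of $\wh{\iota}(\Gamma)$ combined with the polynomial description of \cite[Theorem 3.15]{CGSS-doucos}. The key observation is that any nontrivial pure $\wh{\iota}(\gamma)\in \tran_\ell(F_{\mb{R}})\setminus \tran_{\ell+1}(F_{\mb{R}})$ has $s_j\equiv 0$ for $j<\ell$ and the polynomial $s_\ell$ forced to be a \emph{nonzero constant}: were $s_\ell$ to vanish identically, $\wh{\iota}(\gamma)$ would lie in $\ker(\eta_\ell)$, and pureness would push it into $\tran_{\ell+1}(F_{\mb{R}})$, contradicting $\wh{\iota}(\gamma)\notin \tran_{\ell+1}(F_{\mb{R}})$. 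Consequently, if $\wh{\iota}(\gamma)(y)\sim_i y$ for some $y\in F_{\mb{R}}$, the case $\ell\leq i$ would require the nonzero constant $s_\ell$ to vanish at $\pi_{\ell-1}(y)$, which is impossible; hence $\ell>i$, so $\wh{\iota}(\gamma)\in \tran_{i+1}(F_{\mb{R}})$, and one may simply take $\gamma'=\wh{\iota}(\gamma)$ as the witness to fiber-transitivity.
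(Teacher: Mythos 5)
Your proposal is correct and follows essentially the same route as the paper: pureness via Lemma \ref{lem:pure-i}, fiber-discreteness and fiber-cocompactness via the closed cocompact inclusion $\mb{Z}^{a_j}\times\mb{R}^{b_j}\hookrightarrow\mb{R}^{a_j+b_j}$, and orthogonality being vacuous since $F_{\mb{R}}$ has no discrete components. The one place you diverge in style is fiber-transitivity: the paper simply asserts it as a direct consequence of pureness (relying on \cite[Proposition 5.42]{CGSS-doucos}), whereas you re-derive it from scratch using the polynomial form of translations from \cite[Theorem 3.15]{CGSS-doucos}; your argument is essentially the proof of the implication in that cited proposition, so it is a more self-contained rendering of the same step rather than a different route.
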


\begin{proof}
By Lemma \ref{lem:pure-i} we have that $\wh{\iota}(\Gamma)$ is also pure, and in particular, fiber-transitive. It follows also that $\wh{\iota}(\Gamma)$ is also fiber-discrete and fiber-cocompact (simply because the lattice generated by $\eta_i(\wh{\iota}(\Gamma_{i}))\le \mb{Z}^{a_i}\times\mb{R}^{b_i}$ is clearly still discrete and cocompact when we embed $\mb{Z}^{a_i}\times\mb{R}^{b_i}\xhookrightarrow{}\mb{R}^{a_i}\times\mb{R}^{b_i}$). Finally, note that $\wh{\iota}(\Gamma)$ is trivially orthogonal in $ \tran(F_{\mb{R}})$ since there are no discrete components in $F_{\mb{R}}$.
\end{proof}

\noindent Our aim now is to establish a generalization of the Lie parametrization, valid even when $F$ may include discrete coordinates. As in the previous lemma, let $\Gamma$ be a pure translation lattice. Equipped with Lemma \ref{lem:embednice} there are two natural approaches:
\setlength{\leftmargini}{0.6cm}
\begin{enumerate}[label*=\arabic*.]
\item Consider the embedding $\iota:F\to F_{\mb{R}}$, use the Lie parametrization $\beta:F_{\mb{R}}\to L$ (where $L$ is the Zariski closure of $\wh{\iota}(\Gamma)$), then take $\beta(\iota(F))\le L$ as a candidate ambien group, hoping that $F/\Gamma\cong \beta(\iota(F))/\Gamma$.
\item Consider the stabilizer $\stab_L(\iota(F)):=\{h\in L:h(\iota(F))=\iota(F)\}$ and hope that $F/\Gamma\cong \stab_L(\iota(F))/\Gamma$.
\end{enumerate}
\noindent However, these approaches cannot work in general, as one can find examples of nilspaces $F/\Gamma$ where $\Gamma\le\tran(F)$ is a pure translation lattice, and yet $\stab_L(\iota(F))\subsetneq \beta(\iota(F))$ and $\beta(\iota(F))$ is not a subgroup of $L$.\footnote{Fix any prime $p$, let $F:=\mc{D}_1(\mb{Z})\times\mc{D}_2(\mb{Z})$, and let $\Gamma:=\langle \gamma_1,\gamma_2\rangle$ where $\gamma_1(x,y):=(x+p,y+x)$ and $\gamma_2(x,y):=(x,y+p)$. We omit the details.} 
Fortunately, there is a property that makes both approaches work and yield the same result: lattice orthogonality (see Definition \ref{def:ortlattice}). More precisely, for this approach to work it suffices if $\Gamma$ is a \emph{porting} subgroup of $\tran(F)$ (recall Definition \ref{def:nicerep}), which includes the \emph{orthogonality property}. This property is useful because it enables the following ``correction" operation.

\begin{lemma}
Let $\ns=F/\Gamma$ be a $k$-step ported \textsc{cfr} nilspace. Let $F_{\mb{R}}$ be the connected closure of the free nilspace $F$, with embedding $\iota:F\to F_\mb{R}$. Let $L$ be the Zariski closure of $\wh{\iota}(\Gamma)$ in $\tran(F_{\mb{R}})$, with filtration $\big(L_i:=L\cap\tran_i(F_\mb{R})\big)_{i\geq 0}$, and let $\beta:F_\mb{R}\to L$ be the corresponding Lie parametrization. Let $\stab_L(\iota(F))=\{h\in L: h(\iota(F))=\iota(F)\}$. Then, for every $i\in [k]$ and $h\in L \cap \tran_i(F_{\mb{R}})$, the following holds:
\begin{multline}\label{eq:L'-match}
\eta_i(h)(\iota(F_i))=\iota(F_i)\; \Rightarrow \exists\, \alpha_i\in \stab_L(\iota(F))\cap L_i\text{ such that }\alpha_i^{-1}h \in\ker(\eta_i).
\end{multline}
\end{lemma}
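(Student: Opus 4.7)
The plan is to reformulate the claim as finding $\alpha_i\in L_i$ that stabilizes $\iota(F)$ and has the same level-$i$ projection as $h$, and then to build this lift using orthogonality and purity. The condition $\alpha_i^{-1}h\in \ker(\eta_i)$ is equivalent to $\eta_i(\alpha_i)=\eta_i(h)$. Identifying $\eta_i(h)$ with a constant shift $c$ in the abelian structure group $\ab_i(F_\mb{R})\cong \mb{R}^{a_i+b_i}$, the hypothesis $\eta_i(h)(\iota(F_i))=\iota(F_i)$ is equivalent to $c\in \ab_i(F)=\mb{Z}^{a_i}\times \mb{R}^{b_i}$, since a height-$i$ translation on $(F_\mb{R})_i$ preserves $\iota(F_i)$ exactly when its constant shift preserves this sub-lattice inside each $\pi_{i-1}$-fiber. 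Since $\Gamma$ is porting, Lemma \ref{lem:embednice} gives that $\wh{\iota}(\Gamma)$ is a pure translation lattice in $\tran(F_\mb{R})$, and Lemma \ref{lem:PureZariski} applied to the connected free nilspace $F_\mb{R}$ then decomposes $h=g_1^{t_1}\cdots g_r^{t_r}$ with $g_j\in \wh{\iota}(\Gamma)\cap \tran_i(F_\mb{R})$ and $t_j\in \mb{R}$; applying $\eta_i$ gives $c=\sum_j t_j\,\eta_i(g_j)$ in $\ab_i(F_\mb{R})$.

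I would next construct $\alpha_i$ by handling separately the discrete and continuous parts of $c=(c_\mb{Z},c_\mb{R})$ in $\mb{Z}^{a_i}\times \mb{R}^{b_i}$. Orthogonality provides, for each $j\in [a_i]$, a generator $\mu_j\in \Gamma\cap \tran_i(F)$ whose image under $\wh{p_D}\co \eta_i$ is the elementary shift $w_je_j$ for some $w_j\in \mb{Z}_{>0}$, so that the discrete directions can be recovered at exponents $n_j=c_\mb{Z}(j)/w_j$. For the continuous directions, I would use auxiliary elements $\nu_k\in L_i$ whose action on the entire discrete part $D$ of $F$ is trivial, constructed by starting with fiber-cocompactness generators at level $i$ and subtracting integer combinations of orthogonal generators at all levels $\ge i$ so as to cancel discrete actions globally; any real power $\nu_k^{s_k}$ then stabilizes $\iota(F)$ automatically because $\wh{p_D}(\nu_k^{s_k})=\id$. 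Setting $\alpha_i=\prod_j \mu_j^{n_j}\cdot \prod_k \nu_k^{s_k}$ with the real $s_k$ chosen so that $\eta_i(\alpha_i)=c$ produces the candidate.

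The main obstacle is that $c_\mb{Z}$ need not lie in the sublattice $\prod_j w_j\mb{Z}$ generated by the $\eta_i(\mu_j)$, so the exponents $n_j=c_\mb{Z}(j)/w_j$ can be rational but non-integer, and the corresponding fractional powers $\mu_j^{n_j}\in L$ a priori need not stabilize $\iota(F)$ at levels above $i$. Verifying stabilization requires controlling the higher-level polynomial action of $\mu_j$ under the rescaling by $1/w_j$ so that it keeps integer values on integer inputs, and this arithmetic compatibility is precisely what orthogonality of $\Gamma$ on $F$ makes available. If the direct argument does not immediately land in $\stab_L(\iota(F))$, a downward induction on $i$ provides a fallback: write $\alpha_i=h\beta$ with $\beta\in L_{i+1}$ absorbing the residual obstruction, and invoke the lemma at the higher level $j>i$ where the obstruction appears; the base case $i=k$ is trivial because the hypothesis then reads $h(\iota(F))=\iota(F)$, so $\alpha_k=h$ works.
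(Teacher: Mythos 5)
Your opening steps match the paper: use Lemma \ref{lem:embednice} to transport purity/orthogonality to $\wh{\iota}(\Gamma)\leq\tran(F_{\mb{R}})$, then apply Lemma \ref{lem:PureZariski} to factor $h=g_1^{t_1}\cdots g_r^{t_r}$ with $g_j\in\wh{\iota}(\Gamma)\cap\tran_i(F_{\mb{R}})$ and $t_j\in\mb{R}$. But you then discard this factorization and try to hit the total constant shift $c=\eta_i(h)$ directly, using integer powers of degree-$i$ elementary generators for $c_{\mb{Z}}$ plus auxiliary discretely-trivial elements $\nu_k$ for $c_{\mb{R}}$. As you yourself flag, this stalls precisely because the hypothesis only gives $c_{\mb{Z}}\in\mb{Z}^{a_i}$, not $c_{\mb{Z}}\in\prod_j w_j\mb{Z}$, so the required exponents $n_j$ need not be integers, and fractional powers of the orthogonal generators need not stabilize $\iota(F)$. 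Your fallback does not fill this gap: writing $\alpha_i=h\beta$ with $\beta\in L_{i+1}$ trivially yields $\alpha_i^{-1}h\in\ker(\eta_i)$, but gives no handle on why $h\beta\in\stab_L(\iota(F))$; the obstruction is genuinely at level $i$, not at a higher level, so invoking the statement at levels $j>i$ is circular.

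The paper's resolution is to perform the orthogonality-matching factor-by-factor on the $g_\ell$, \emph{before} raising to the real powers $t_\ell$, rather than on the aggregate shift $c$ afterward. Each $g_\ell$ lies in $\wh{\iota}(\Gamma)\cap\tran_i(F_{\mb{R}})$, i.e.\ is a genuine lattice element of height $\geq i$; hence its degree-$i$ discrete shift already lies in the orthogonal sublattice $\prod_j w_j\mb{Z}$, and one can find $\zeta_\ell\in\Gamma$, a product of \emph{integer} powers of degree-$i$ elementary generators (those whose discrete action is supported on a single degree-$i$ coordinate), with the same degree-$i$ discrete shift as $g_\ell$. There is no divisibility obstruction here because $g_\ell\in\Gamma$, whereas $h$ only lies in $L$. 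One then sets $\alpha_i:=\zeta_1^{t_1}\cdots\zeta_r^{t_r}$ with the \emph{same} exponents $t_\ell$; the degree-$i$ discrete shift of $\alpha_i$ automatically equals $\sum_\ell t_\ell\cdot(\text{shift of }\zeta_\ell)=\sum_\ell t_\ell\cdot(\text{shift of }g_\ell)$, which is the degree-$i$ discrete shift of $h$ and hence integral by hypothesis. The remaining point, which is what makes the irrational $t_\ell$ harmless, is that each $\zeta_\ell$ has \emph{identically zero} discrete action at degrees $>i$ (its $\wh{p_D}$-image is a constant shift at degree $i$, so the degree-$>i$ polynomials vanish), and this persists under the one-parameter subgroups $\zeta_\ell^t$ by the explicit form of translations in \cite[Theorem 3.15]{CGSS-doucos}; consequently $\alpha_i$ stabilizes $\iota(F)$. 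This ``replace each factor, then re-exponentiate'' step is the key idea your proposal is missing.
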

\begin{proof}
Suppose that $h\in L\cap \tran_i(F_{\mb{R}})$ and $\eta_i(h)$ stabilizes $\iota(F_i)$. By \eqref{eq:PZclaim} we have $h=\gamma_1^{t_1}\cdots \gamma_r^{t_r}$ where $\gamma_j\in\tran_i(F_{\mb{R}})$ and  $t_j\in \mb{R}$. By the orthogonality assumption, for each $\ell\in [r]$ we can find elementary generators $g_{\ell,1},\ldots,g_{\ell,s_\ell}$ of $\Gamma$, such that the action of each such generator on the discrete components of $F$ is non-trivial only\footnote{That is, in a discrete component $\mc{D}_j(\mb{Z})$ with $j\neq i$, each of these generators just shifts by 0.} in discrete components of degree $i$ (in particular each such generator is in $\tran_i(F)$), and such that for some integers $n_{\ell,1},\ldots,n_{\ell,s_\ell}$ we have that the element $\zeta_\ell:= g_{\ell,1}^{n_{\ell,1}}\cdots g_{\ell,s_\ell}^{n_{\ell,s}}\in \Gamma$ satisfies that $\eta_i(\zeta_\ell),\eta_i(\gamma_\ell)$ act as the same constant shift in each discrete component of $F_i$. The advantage of the elements $\zeta_\ell$ is that, by our choice of the generators, on  each discrete components of $F$ of degree higher than $i$, these elements $\zeta_{\ell}$ just shift by 0. Now let $\alpha_i:=\zeta_1^{t_1}\cdots\zeta_r^{t_r}$, and note that $\alpha_i\in L_i$. Moreover, by construction we have that $\eta_i(\alpha_i)=\eta_i(\zeta_1)^{t_1}\cdots \eta_i(\zeta_r)^{t_r}$ acts the same way as $\eta_i(h)=\eta_i(\gamma_1)^{t_1}\cdots \eta_i(\gamma_r)^{t_r}$ on the discrete coordinates of $F_i$, which implies that $\alpha_i$ stabilizes the discrete part of $\iota(F)$ of degree at most $i$, and also that $\alpha_i^{-1}h\in\ker(\eta_i)$. Since $\alpha_i$ also acts trivially on the discrete components of $\iota(F)$ of degree greater than $i$ (because so do the $\zeta_i$, and therefore so too do the $\zeta_i^{t_i}$, as can be seen from the expression of such translations -- and their 1-parameter subsgroups -- from \cite[Theorem 3.15]{CGSS-doucos}), we conclude that $\alpha_i$ stabilizes $\iota(F)$. This proves \eqref{eq:L'-match}.
\end{proof}

We are almost ready to prove the main result. We just need the following technical lemma which follows from \cite[Lemma B.2]{CGSS-doucos}.

\begin{lemma}\label{lem:closed-submanifolds}
Let $G$ be an \textsc{lch}\footnote{This means \emph{locally-compact, Haursdorff, second-countable topological space}, see \cite[Definition 2.1]{CGSS-doucos}.} group and let $\Gamma\le G$ be a closed subgroup. Then, for any closed subgroup $H\le G$ such that $\Gamma\le H$, we have that $H/\Gamma$ is closed inside $G/\Gamma$.
\end{lemma}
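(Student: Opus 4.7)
The plan is to use the quotient map $\pi:G\to G/\Gamma$ and the standard fact that, for quotient topologies, a subset of $G/\Gamma$ is closed if and only if its preimage under $\pi$ is closed in $G$. I would first observe that, since $\Gamma\le H$, the preimage of $H/\Gamma$ under $\pi$ is precisely $H\Gamma=H$ (this is where the hypothesis $\Gamma\le H$ is used essentially; without it one would have only $H\Gamma$, which need not equal $H$).

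Next I would note that $H$ is closed in $G$ by hypothesis, so $\pi^{-1}(H/\Gamma)=H$ is closed in $G$. By the defining property of the quotient topology on $G/\Gamma$, this immediately yields that $H/\Gamma$ is closed in $G/\Gamma$.

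The only thing to verify is that, in the \textsc{lch} setting considered here, the space $G/\Gamma$ is indeed endowed with the quotient topology and that $\pi$ is a continuous (and in fact open) surjection, so that the preimage characterization of closedness is available. This is exactly the content of \cite[Lemma B.2]{CGSS-doucos}, which ensures the required compatibility between the topological and algebraic structures when passing to the coset space of a closed subgroup in an \textsc{lch} group. I do not expect any genuine obstacle here; the argument is essentially a one-line application of the quotient topology, with the cited lemma supplying the necessary topological background.
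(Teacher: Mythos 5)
Your proof is correct and takes a genuinely different, simpler, and more general route than the paper's. You use only the defining property of the quotient topology: a set $C\subseteq G/\Gamma$ is closed if and only if $\pi^{-1}(C)$ is closed in $G$, and you correctly compute $\pi^{-1}(H/\Gamma)=H\Gamma=H$ (using $\Gamma\le H$), which is closed by hypothesis. This argument needs neither the \textsc{lch} assumption nor closedness of $\Gamma$; it works for an arbitrary topological group $G$, any subgroup $\Gamma$, and any closed subgroup $H\supseteq\Gamma$. The paper proceeds differently: it invokes \cite[Lemma B.2]{CGSS-doucos} (a result about closed group actions that permits lifting of convergent sequences through the quotient map) together with metrizability of $G$ and $G/\Gamma$, to argue that for a limit $h_n\Gamma\to s\Gamma$ with $h_n\in H$ one can find $r_n\in\Gamma\le H$ with $h_nr_n\to s$, hence $s\in H$ by closedness of $H$. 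Your route dispenses with the auxiliary lemma and the metric hypotheses; the paper's route is in keeping with the sequence-based bookkeeping used elsewhere around that appendix, but it is heavier machinery for this particular statement. One small remark on your writeup: there is actually nothing to verify in your last paragraph, since $G/\Gamma$ is by definition endowed with the quotient topology, so the preimage characterization of closed sets is automatic and requires no citation — the appeal to \cite[Lemma B.2]{CGSS-doucos} there is superfluous (and in fact that lemma is about something slightly different, namely closedness of the graph of a group action, not the definition of the quotient topology).
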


\begin{proof}
Let us apply \cite[Lemma B.2]{CGSS-doucos} with the pair $(X,G):=(G,\Gamma)$ and the action being right multiplication by elements of $\Gamma$. Since $G$ is a metric space, to check that $\{(g,g\gamma):g\in G,\gamma\in \Gamma\}$ is closed it suffices to show that for any convergent sequence $(g_n,g_n\gamma_n)\to (g,t)$ we have $t=g\gamma'$ for some $\gamma'\in \Gamma$. To see this claim, note that since $g_n\to g$ we have $g_n^{-1}g_n\gamma_n=\gamma_n\to g^{-1}t$, and since $\Gamma$ is closed we have $g^{-1}t\in \Gamma$, so the claim follows with $\gamma':=g^{-1}t$. Now, to prove that $H/\Gamma$ is closed, let $h_n\Gamma\to s\Gamma$ be a convergent sequence where $h_n\in H$. Using that $G/\Gamma$ is a metric space, it suffices to prove that $s\Gamma\in H/\Gamma$. By \cite[Lemma B.2]{CGSS-doucos} there exists $r_n\in \Gamma$ such that $h_nr_n\to s$. But as $\Gamma\le H$ and $H$ is closed we have that $s\in H$ and the result follows.
\end{proof}

\begin{proposition}[Lie parametrization for ported nilspaces]\label{prop:Lieparamgam}
Let $\ns=F/\Gamma$ be a $k$-step \textsc{cfr} ported nilspace. Let $F_{\mb{R}}$ be the connected closure of $F$ with  embedding $\iota:F\to F_\mb{R}$. Let $L$ be the Zariski closure of $\wh{\iota}(\Gamma)$ in $\tran(F_{\mb{R}})$ and let $\beta:F_\mb{R}\to L$ be the  Lie parametrization of $F_\mb{R}$. Then $G:=\beta(\iota(F))$ is a closed subgroup of $L$ which includes $\wh{\iota}(\Gamma)$, and such that $G_i:=G\cap \tran_i(F_\mb{R})$ is rational\footnote{Note that we need this rationality in order for $(G/\Gamma,(G_i)_i)$ to be a proper filtered nilmanifold.} relative to $\wh{\iota}(\Gamma)$. Moreover $\beta\co\iota$ is a nilspace isomorphism from $F$ to the group nilspace associated with $(G,(G_i)_{i\geq 0})$.
\end{proposition}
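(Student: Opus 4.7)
The plan is to identify $G := \beta(\iota(F))$ with the stabilizer $\stab_L(\iota(F)) := \{h \in L : h(\iota(F)) = \iota(F)\}$; the latter is automatically a closed subgroup of $L$, so all the claimed group-theoretic properties follow. The easy inclusion $\stab_L(\iota(F)) \subseteq G$ is immediate since $0 \in \iota(F)$ forces $h = \beta(h(0)) \in G$ for any $h$ in the stabilizer. The inclusion $\wh{\iota}(\Gamma) \subseteq G$ follows from the equivariance of $\beta$ given by Theorem \ref{thm:flierep}: for $\gamma \in \Gamma$, $\wh{\iota}(\gamma)(0_{F_\mb{R}}) = \iota(\gamma(0_F)) \in \iota(F)$, hence $\wh{\iota}(\gamma) = \beta(\iota(\gamma(0_F))) \in G$.

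The main work is the reverse inclusion $G \subseteq \stab_L(\iota(F))$, which I would prove by iteratively applying \eqref{eq:L'-match}. Given $g \in G$, set $\ell_1 := g$; by pureness of $L$ (so $L = L \cap \ker(\eta_0) = L \cap \tran_1(F_\mb{R}) = L_1$) we have $\ell_1 \in L_1$. Inductively, assuming $\ell_i \in L_i$ with $\ell_i(0) \in \iota(F)$, the sub-claim below shows $\eta_i(\ell_i)(\iota(F_i)) = \iota(F_i)$. Then \eqref{eq:L'-match} produces $\alpha_i \in \stab_L(\iota(F)) \cap L_i$ with $\alpha_i^{-1} \ell_i \in \ker(\eta_i) \cap L = L_{i+1}$ (using the pureness identity \eqref{eq:PZclaim2}); setting $\ell_{i+1} := \alpha_i^{-1} \ell_i$, the fact that $\alpha_i$ stabilizes $\iota(F)$ preserves the invariant $\ell_{i+1}(0) \in \iota(F)$. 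After $k$ iterations, $\ell_{k+1} \in L_{k+1} = \{\id\}$ forces $g = \alpha_1 \cdots \alpha_k \in \stab_L(\iota(F))$.

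The hard part is the sub-claim $\eta_i(\ell_i)(\iota(F_i)) = \iota(F_i)$, and this is where orthogonality is crucial. By \eqref{eq:PZclaim} together with the height preservation under $\wh{\iota}$ implicit in Lemma \ref{lem:pure-i}, write $\ell_i = \prod_j \wh{\iota}(\delta_j)^{t_j}$ with $\delta_j \in \Gamma \cap \tran_i(F)$ and $t_j \in \mb{R}$. By \cite[Theorem 3.15]{CGSS-doucos}, $\eta_i(\wh{\iota}(\delta_j))$ shifts the $\mc{D}_i$-coordinate of $(F_\mb{R})_i$ by a polynomial $S_j = (S_j^a, S_j^b)$ with $S_j^a : F_{i-1} \to \mb{Z}^{a_i}$ and $S_j^b : F_{i-1} \to \mb{R}^{b_i}$. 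The orthogonality of $\wh{p_D}(\Gamma)$ means it is generated by elementary generators on $D$, which are constant coordinate shifts in $\tran(D)$ that commute pairwise; hence every element of $\wh{p_D}(\Gamma)$ is a constant shift, forcing $\wh{p_D}(\delta_j)$ to act as a constant shift in $\mb{Z}^{a_i}$ on the $\mc{D}_i$-discrete coordinate, i.e.\ $S_j^a$ is a constant vector. Since $\tran_i((F_\mb{R})_i)$ is abelian, $\eta_i(\ell_i)$ shifts the $\mc{D}_i$-coordinate by $\big(\sum_j t_j S_j^a, \sum_j t_j S_j^b(\cdot)\big)$; the constant $\sum_j t_j S_j^a \in \mb{R}^{a_i}$ coincides with the $\mc{D}_i$-discrete component of $\ell_i(0) \in \iota(F)$, hence lies in $\mb{Z}^{a_i}$. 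Thus $\eta_i(\ell_i)$ maps $\iota(F_i)$ into itself, and bijectivity gives equality.

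With $G = \stab_L(\iota(F))$ established, $G$ is a closed subgroup of $L$ containing $\wh{\iota}(\Gamma)$. Each $G_i = G \cap L_i$ is closed, and rationality of $G_i$ relative to $\wh{\iota}(\Gamma)$ follows from Lemma \ref{lem:ratsubgpsintransF} (giving compactness of $L_i/\wh{\iota}(\Gamma)_i$) together with Lemma \ref{lem:closed-submanifolds} applied to $\wh{\iota}(\Gamma)_i \le G_i \le L_i$, which makes $G_i/\wh{\iota}(\Gamma)_i$ a closed, and therefore compact, subset of $L_i/\wh{\iota}(\Gamma)_i$. Finally, $\beta \co \iota$ is a nilspace isomorphism from $F$ onto the group nilspace $(G, G_\bullet)$ because $\beta$ is one from $F_\mb{R}$ onto $(L, L_\bullet)$ by Theorem \ref{thm:flierep}, $\iota$ embeds $F$ into $F_\mb{R}$ as a nilspace, and the Host--Kra cubes of $(G, G_\bullet)$ are exactly the Host--Kra cubes of $(L, L_\bullet)$ whose vertices lie in $G$.
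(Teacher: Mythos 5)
Your overall strategy matches the paper's: identify $G=\beta(\iota(F))$ with $\stab_L(\iota(F))$, note the stabilizer inclusion is easy, and establish the reverse inclusion by iteratively applying \eqref{eq:L'-match} to peel off a factor in each filtration degree; the closedness, rationality, and nilspace-isomorphism parts coincide with the paper as well.

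However, the argument for the sub-claim $\eta_i(\ell_i)(\iota(F_i))=\iota(F_i)$ contains a genuine error and a misattribution. You assert that $\eta_i(\wh{\iota}(\delta_j))$ ``shifts the $\mc{D}_i$-coordinate of $(F_\mb{R})_i$ by a polynomial $S_j(\cdot)$,'' and then invoke orthogonality to force the discrete part $S_j^a$ to be constant. This is backwards: since $\delta_j\in\Gamma\cap\tran_i(F)$, the image $\eta_i(\wh{\iota}(\delta_j))$ lies in $\tran_i\big((F_\mb{R})_i\big)$, and $(F_\mb{R})_i$ is an $i$-step nilspace, so $\tran_i\big((F_\mb{R})_i\big)\cong\ab_i\big((F_\mb{R})_i\big)$ consists precisely of \emph{constant} shifts on the top fiber --- there is no polynomial dependence to remove, and orthogonality is not needed for this. (Your later notation $\sum_j t_j S_j^b(\cdot)$ perpetuates the error by suggesting the continuous component still varies.) The paper's argument is shorter and correct: once pureness of $L$ gives $\ell_i\in L_i$, the shift $\eta_i(\ell_i)$ is automatically a constant; and since $\eta_i(\ell_i)(\pi_i(0))=\pi_i(\ell_i(0))\in\pi_i(\iota(F))$, the constant has integer entries on the discrete components, hence $\eta_i(\ell_i)$ stabilizes $\iota(F_i)$. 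Orthogonality \emph{is} crucial in the proof, but only inside the lemma establishing \eqref{eq:L'-match} (where one uses elementary generators to build the correcting element $\alpha_i$); claiming it is ``where orthogonality is crucial'' for the sub-claim misplaces the role of the hypothesis. You should replace your paragraph for the sub-claim with the two-line pureness argument.
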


\begin{proof}
Recall that for each $x\in F_{\mb{R}}$ we define $\beta(x)$ to be the unique element of $L=\wt{\wh{\iota}(\Gamma)}$ such that $\beta(x) (0)=x$.

The inclusion $\beta(\iota(F))\supset \wh{\iota}(\Gamma)$ is clear, since for any $\gamma\in \Gamma$ viewed as a translation in $\tran(F_{\mb{R}})$, this $\gamma$ must be the element $\beta(\gamma(0_F))$.

We now turn to the main claim in the lemma, namely that $\beta(\iota(F))$ is a subgroup of $L$. Recall that $\stab_L(\iota(F)):=\{h\in L: h(\iota(F))=\iota(F)\}$ is the set-wise stabilizer of $\iota(F)$ in $L$. By basic group theory, this is a subgroup of $L$. We claim that
\begin{equation}\label{eq:invprop}
\beta(\iota(F))=\stab_L(\iota(F)).
\end{equation}
It is clear that $\stab_L(\iota(F))\subset \beta(\iota(F))$, since if $h\in \stab_L(\iota(F))$ then, letting $x=h(0_{F_\mb{R}})$ (which is in $\iota(F)$ since $0_{F_\mb{R}}$ is), we have $h=\beta(x)\in \beta(\iota(F))$.

To prove that $\beta(\iota(F))\subset \stab_L(\iota(F))$, the idea is to use  \eqref{eq:L'-match} iteratively to obtain a factorization of the following kind:
\begin{equation}\label{eq:L'factorization}
\forall\, h \in \beta(\iota(F)),\quad h = \alpha_1 \alpha_2 \cdots \alpha_k\text{ where }\alpha_i\in \stab_L(\iota(F))\cap \tran_i(F).
\end{equation}
Since $\stab_L(\iota(F))$ is a subgroup, it follows from this that $h\in \stab_L(\iota(F))$, which proves the desired inclusion $\beta(\iota(F))\subset \stab_L(\iota(F))$.

To deduce \eqref{eq:L'factorization} from \eqref{eq:L'-match}, we start with any $h\in \beta(\iota(F))$, thus $h\in L$ and $h(0)\in \iota(F)$. Then $\eta_1(h)$ is a constant shift on $\pi_1(F_{\mb{R}})$ which sends $0$ into $\pi_1(\iota(F))$, so it must act on discrete components by integer shifts. Therefore $\eta_1(h)(\iota(F_1))=\iota(F_1)$. Applying \eqref{eq:L'-match}, we find $\alpha_1\in \stab_L(\iota(F))\cap\tran_1(F)$ such that $\alpha_1^{-1}h\in \ker(\eta_1)$. Moreover, as $\alpha_1,h$ are both in $L$, so is $\alpha_1^{-1}h$ and so this is pure (by Lemma \ref{lem:PureZariski}), whence $\alpha_1^{-1}h\in L\cap \tran_2(F_{\mb{R}})$. Furthermore, as $h(0)\in \iota(F)$ and $\alpha_1^{-1}$ stabilizes $\iota(F)$, we have also $\alpha_1^{-1}h(0)\in \iota(F)$, and we can now run the same argument with $\alpha_1^{-1}h$. For general $i$, we start with $h_i:=\alpha_{i-1}^{-1}\cdots\alpha_1^{-1}h\in L$ which maps $0$ into $\iota(F)$, and such that $\eta_{i-1}(h_i)=\id$. Then by pureness of every element in $L$ (Lemma \ref{lem:PureZariski}), we have $h_i\in L_i$ and in particular $\eta_i(h_i)$ is a constant shift. Moreover $\eta_i(h_i)(\pi_i(0))\in \pi_i(\iota(F))$, so $\eta_i(h_i)$ must act by integer-shifts in discrete components, and this implies that $\eta_i(h_i)(\iota(F_i))=\iota(F_i)$. Now, by \eqref{eq:L'-match}, there is $\alpha_i\in \stab_L(\iota(F))$ such that $\alpha_i^{-1}h_i\in \ker\eta_i$. Since both $\alpha_i^{-1}$ and $h_i$ are in $L$, so is $\alpha_i^{-1}h_i$, whence $\alpha_i^{-1}h_i$ is pure. Thus, defining $h_{i+1}:=\alpha_i^{-1}h_i$, we have $h_{i+1}\in L\cap \ker(\eta_i)$. By pureness, we have that $h_{i+1}\in L_{i+1}$. Moreover, since $h_i$ sends $0$ into $\iota(F)$ and $\alpha_i^{-1}$ stabilizes $\iota(F)$, we have $\alpha_i^{-1}h_i(0)\in \iota(F)$, so we can continue the process. This proves  \eqref{eq:L'factorization}.

Note that then $\stab_L(\iota(F))$ is a closed subgroup, because $\iota(F)$ is a closed subset of $F_\mb{R}$ and $\beta:F_\mb{R}\to L$ is a homeomorphism, which maps $\iota(F)$ onto this subgroup by \eqref{eq:L'factorization}.

To see the rationality of each group $G_i$, i.e.\ that $L_i\cap\wh{\iota}(\Gamma)$ is cocompact in $G_i$, note that $G_i$ is a closed subgroup of $L_i$, that $L_i\cap\wh{\iota}(\Gamma)$ is cocompact in $L_i$ by rationality of $L_i$, and that $L_i\cap\wh{\iota}(\Gamma)\subset L_i\cap G=G_i$. Therefore, by Lemma \ref{lem:closed-submanifolds}, the quotient space $G_i/(L_i\cap\wh{\iota}(\Gamma))$ is a closed subspace of the compact space $L_i/(L_i\cap\wh{\iota}(\Gamma))$, and is therefore compact as required.

Finally, we prove that $\beta\co\iota$ is a nilspace isomorphism. We already have by Theorem \ref{thm:flierep} that $\beta$ is a nilspace isomorphism from $F_\mb{R}$ to the group nilspace $L:=\wt{\iota(\Gamma)}$. Clearly $\beta\co\iota$ is a bijection because $\beta$ is and $\iota$ is a bijection between $F$ and $\iota(F)$. Thus, we only need to prove that it $\beta\co\iota$ and its inverse are morphisms. Both $\iota$ and $\beta$ are morphisms (the latter by Theorem \ref{thm:flierep}), hence so is $\beta\co\iota$. To prove that the inverse is a morphism, consider the obvious map $\nu:\iota(F)\to F$ such that $\nu\co\iota=\id_F$. Then $(\beta\co\iota)^{-1}=\nu\co\beta^{-1}|_{\beta(\iota(F))}$. As these maps are both morphisms, the result follows.
\end{proof}
\noindent Let us give a first application of the Lie parametrization for ported nilspaces.

\begin{proposition}\label{prop:nicerep=>coset}
Every ported nilspace is a coset nilspace.
\end{proposition}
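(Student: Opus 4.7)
The plan is to reduce the statement directly to the Lie parametrization established in Proposition \ref{prop:Lieparamgam}, and then descend the resulting isomorphism to the appropriate quotient.

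Given a ported nilspace $\ns$, by definition there exists a ported presentation $\ns \cong F/\Gamma$ with $F$ a $k$-step free nilspace and $\Gamma \leq \tran(F)$ pure and orthogonal. I would invoke Proposition \ref{prop:Lieparamgam} to obtain the connected closure $F_{\mb{R}}$, the embedding $\wh{\iota}: \tran(F) \to \tran(F_{\mb{R}})$, the Zariski closure $L$ of $\wh{\iota}(\Gamma)$ in $\tran(F_{\mb{R}})$, and the Lie parametrization $\beta: F_{\mb{R}} \to L$. The proposition yields that $G := \beta(\iota(F))$ is a closed subgroup of $L$ containing $\wh{\iota}(\Gamma)$, that each $G_i := G \cap \tran_i(F_{\mb{R}})$ is rational relative to $\wh{\iota}(\Gamma)$, and that $\beta \co \iota$ is a nilspace isomorphism from $F$ to the group nilspace associated with the filtered group $(G, (G_i)_{i \geq 0})$.

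Next I would verify that $\beta \co \iota$ intertwines the action of $\Gamma$ on $F$ with the left-multiplication action of $\wh{\iota}(\Gamma)$ on $G$. This follows directly from the equivariance clause of Theorem \ref{thm:flierep}: for any $\gamma \in \Gamma$ and $x \in F$, one has $\beta(\iota(\gamma(x))) = \beta(\wh{\iota}(\gamma)(\iota(x))) = \wh{\iota}(\gamma)\, \beta(\iota(x))$. Consequently, $\beta \co \iota$ descends to a bijection $\overline{\beta \co \iota}: F/\Gamma \to G/\wh{\iota}(\Gamma)$, and standard diagram-chasing (using that the quotient maps are fibrations) promotes this to a nilspace isomorphism.

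Finally, I would check that $G/\wh{\iota}(\Gamma)$ is a coset nilspace in the sense of Definition \ref{def:hk-cubes}. The filtration $(G_i)_{i \geq 0}$ on $G$ is of degree $k$ (inherited by intersection from $(\tran_i(F_{\mb{R}}))_{i \geq 0}$, which is a filtration of degree $k$ because $F_{\mb{R}}$ is $k$-step), and the rationality of each $G_i$ relative to $\wh{\iota}(\Gamma)$ proved in Proposition \ref{prop:Lieparamgam} is precisely the condition needed to ensure that the Host--Kra cube sets on $(G, G_{\bullet})$ descend to a proper compact nilspace structure on $G/\wh{\iota}(\Gamma)$. Chaining the isomorphisms, $\ns \cong F/\Gamma \cong G/\wh{\iota}(\Gamma)$ exhibits $\ns$ as a coset nilspace. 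The only potential obstacle is the bookkeeping to confirm that the isomorphism $\beta \co \iota$ really does send $\cu^n(F)$ bijectively onto the Host--Kra cubes $\cu^n(G_\bullet)$, and hence that the quotient cube structure matches on both sides, but this is already contained in the statement of Proposition \ref{prop:Lieparamgam} that $\beta \co \iota$ is a nilspace isomorphism onto the group nilspace of $(G,G_\bullet)$.
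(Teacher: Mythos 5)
Your proposal is correct and follows essentially the same route as the paper's own proof: both invoke Proposition \ref{prop:Lieparamgam} to obtain the nilspace isomorphism $\beta\co\iota\colon F\to G$, verify equivariance via the defining property of the Lie parametrization, and descend the isomorphism to the quotient $F/\Gamma \cong G/\wh{\iota}(\Gamma)$. The paper's version is more terse (it cites the equivariance as "arguing as in the proof of Corollary \ref{cor:Lie-param-conn}" rather than spelling it out, and notes the passage $\wh{\iota}(\Gamma)\backslash G \cong G/\wh{\iota}(\Gamma)$ via inversion), but the underlying argument is the same.
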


\begin{proof}
By Proposition \ref{prop:Lieparamgam}, the Lie parametrization $\beta$ yields a nilspace isomorphism $\beta\co\iota:F\to G$ where $(G,(G_i)_i)$ is as in that proposition. Moreover, arguing as in the proof of Corollary \ref{cor:Lie-param-conn}, we have $(\beta\co\iota)(\gamma(x))=\gamma(\beta\co\iota)(x)$. Hence $\beta\co\iota$ is equivariant and thus it induces an isomorphism between $F/\Gamma$ and $\wh{\iota}(\Gamma)\backslash G \cong  G/\wh{\iota}(\Gamma)$.\end{proof}

\begin{remark}
The proof of Propositions  \ref{prop:Lieparamgam} and \ref{prop:nicerep=>coset} also give us that $\ns$ is a coset nilspace in which the ambient group $G$ is a nilpotent Lie group such that its factor by the connected component of the identity is a free abelian group (indeed this abelian group corresponds to the discrete part of $F$ and is free by the orthogonality condition).
\end{remark}

To close this subsection, we establish an inverse of the Lie parametrization map.

\begin{proposition}\label{prop:invLieparam}
Let $L$ be a simply-connected Lie group with a filtration $(L_i)_{i\geq 0}$ of degree $k$, such that $L_i/L_{i+1}$ is torsion-free for every $i\in [k]$. Then $L$, as a group nilspace, is isomorphic to a $k$-step free nilspace.
\end{proposition}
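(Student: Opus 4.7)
The plan is to construct an explicit nilspace isomorphism $\Phi:F\to L$, where $F:=\prod_{i=1}^k\mc{D}_i(\mb{R}^{b_i})$ with $b_i:=\dim_{\mb{R}}(L_i/L_{i+1})$, and $\Phi$ is given by Malcev coordinates of the second kind. (As usual for group nilspaces, I tacitly assume $L_0=L_1$; otherwise the group nilspace structure on $L$ would fail the ergodicity axiom of Definition \ref{def:nilspace}.) First I verify that each $L_i$ is a closed, connected, simply-connected Lie subgroup with $L_i/L_{i+1}\cong\mb{R}^{b_i}$. This follows by induction on $i$ starting from $L=L_1$ connected: the quotient $L_i/L_{i+1}$ is abelian (as $[L_i,L_i]\subseteq L_{2i}\subseteq L_{i+1}$ for $i\geq 1$), torsion-free by hypothesis, and connected as a continuous image of $L_i$, hence isomorphic to some $\mb{R}^{b_i}$; the long exact homotopy sequence for $L_{i+1}\to L_i\to \mb{R}^{b_i}$ then yields $\pi_0(L_{i+1})=0$, and Lemma \ref{lem:subgpsimpcon} gives simple-connectedness.

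Next I pick a basis $(e_{i,j})_{i\in[k],\,j\in[b_i]}$ of the Lie algebra $\mk{l}$ of $L$ adapted to the filtration, so that $\{e_{j,\ell}:j\geq i,\,\ell\in[b_j]\}$ is a basis of the Lie subalgebra $\mk{l}_i$ of $L_i$, and define
\[
\Phi:F\to L,\qquad \Phi\big((x_i)_{i=1}^k\big):=\exp(X_1(x_1))\,\exp(X_2(x_2))\cdots\exp(X_k(x_k)),
\]
with $X_i(x_i):=\sum_{j=1}^{b_i}x_{i,j}\,e_{i,j}\in\mk{l}_i$. By classical Malcev theory, $\Phi$ is a global diffeomorphism which sends the subspace of tuples whose first $i{-}1$ coordinates vanish onto $L_i$; in particular $\Phi$ matches the natural filtration of $F$ with that of $L$. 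I then show that $\Phi$ is a nilspace isomorphism by induction on $k$. The base $k=1$ is immediate: $L$ is then abelian, simply-connected and torsion-free, hence $L\cong\mb{R}^{b_1}$ as the group nilspace $\mc{D}_1(\mb{R}^{b_1})$. For the inductive step, the subgroup $L_k\cong\mb{R}^{b_k}$ is central in $L$ (since $[L,L_k]\subseteq L_{k+1}=\{\id\}$), and the quotient $L/L_k$ with the induced degree-$(k-1)$ filtration satisfies the hypotheses, so by the inductive hypothesis $L/L_k$ is nilspace-isomorphic to $F':=\prod_{i=1}^{k-1}\mc{D}_i(\mb{R}^{b_i})$. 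Moreover $L$ is a degree-$k$ nilspace extension of $L/L_k$ by $\mc{D}_k(L_k)=\mc{D}_k(\mb{R}^{b_k})$, with $L_k$ identifying with the $k$-th structure group of $L$.

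It remains to show this $\mc{D}_k(\mb{R}^{b_k})$-extension splits as nilspaces, giving $L\cong F'\times\mc{D}_k(\mb{R}^{b_k})=F$. The candidate section is the restriction of $\Phi$ to the locus $x_k=0$; showing this section is a nilspace morphism is the main obstacle of the proof. The verification proceeds by unpacking the Host--Kra generators of $L$ in Malcev coordinates: for a face $\sigma\subseteq\db{n}$ and $g\in L_{\mathrm{codim}(\sigma)}$, the generator $g^{\sigma}$ becomes, via $\Phi^{-1}$, a tuple whose $i$-th component is a polynomial of cube-degree at most $i$ into $\mb{R}^{b_i}$ (this follows from the form of $g$ in Malcev coordinates combined with the Baker--Campbell--Hausdorff formula), and arbitrary Host--Kra cubes of $L$ are products of such generators. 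Conversely, the image under $\Phi$ of a cube of $F$ decomposes as a product of such generators. The compatibility of this two-way correspondence reduces to Leibman's theorem that polynomial maps into a filtered nilpotent Lie group form a group under pointwise multiplication, applied to $(L,L_\bullet)$. Once this is in hand, the $\mc{D}_k(\mb{R}^{b_k})$-extension splits and the induction closes, completing the proof.
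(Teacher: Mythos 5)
Your construction via Mal'cev coordinates of the second kind is a genuinely different route from the paper's proof, which simply iterates \cite[Theorem 4.1]{CGSS-doucos} (the splitting theorem for degree-$i$ extensions of free nilspaces by torsion-free abelian Lie groups), layer by layer from $L/L_2$ upward. Your approach is more explicit but, as written, has a gap that matters for how the proposition is actually used.

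The gap is that you read ``simply-connected'' in the strict topological sense and immediately conclude that $L=L_1$ is connected, that every $L_i$ is connected (via the long exact sequence), and hence that every quotient $L_i/L_{i+1}$ is isomorphic to $\mb{R}^{b_i}$. But the paper's proof treats the general case $L_i/L_{i+1}\cong\mb{R}^{c_i}\times\mb{Z}^{d_i}$, i.e.\ it explicitly allows disconnected filtration terms, and this generality is essential: when Proposition \ref{prop:invLieparam} is invoked in the proof of Theorem \ref{thm1}, the group is $G'=q^{-1}(\mb{Z}^t)\leq L'$, which is disconnected whenever $t>0$. So the proposition as used is about groups that are not connected, and a free nilspace is allowed to have discrete factors $\mc{D}_i(\mb{Z}^{d_i})$. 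Your induction-with-Mal'cev-coordinates argument would need to be redone to accommodate this: the Lie algebra and the exponential map only parametrize the identity component, so the decomposition $\Phi$ does not even make sense globally for a disconnected $L$, and the section you construct lives only on that component. This is not a small fix. By contrast, the paper's citation of \cite[Theorem 4.1]{CGSS-doucos} already covers extensions by arbitrary torsion-free abelian Lie groups $\mb{R}^{c_i}\times\mb{Z}^{d_i}$, so the disconnected case comes for free. As a secondary point, the splitting step in your inductive argument (expressing Host--Kra cubes in Mal'cev coordinates, invoking BCH and Leibman's group-of-polynomials theorem) is plausible but only sketched; this is precisely the part that the cited splitting theorem packages cleanly.
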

\begin{proof}
Every quotient $L_i/L_{i+1}$ is a torsion-free abelian Lie group, hence\footnote{Indeed a torsion-free abelian Lie group $H$ is a closed cocompact subgroup of a connected simply-connected abelian Lie group $G$, so is compactly generated by \cite[Proposition 5.75 and Remark 7.48(iv)]{H&M-Cpct}, and therefore $H$ has the claimed form by \cite[Theorem 7.57 (ii)]{H&M-Cpct}), using that $H$ is torsion-free.}  of the form $\mb{R}^{c_i}\times\mb{Z}^{d_i}$ for some non-negative integers $c_i,d_i$. The $i$-th structure group of the group nilspace $L$ is $L_i/L_{i+1}$, and then by applying \cite[Theorem 4.1]{CGSS-doucos} iteratively starting from the free 1-step nilspace $L/L_2$, we deduce inductively for each $i\in [k]$ that $L/L_{i+1}$ (the $i$-step nilspace factor of the group nilspace $L$) is isomorphic to the product nilspace of $\mc{D}_i(L_i/L_{i+1})$ with the free nilspace $L/L_{i+1}$. We thus conclude that the group nilspace $L$ is isomorphic to the free nilspace $\prod_{i\in [k]} \mc{D}_i(L_i/L_{i+1})$.
\end{proof}

\subsection{Extending $k$-step universal nilspaces to $(k+1)$-step universal nilspaces}\label{subsec:thm1}\hfill\\
\noindent We prove Theorem \ref{thm1} in this subsection. To do so, we will combine the tools developed in the previous subsections with some consequences of classical rigidity results in nilpotent Lie groups. One such result is the following version of a theorem originally due to Mal'cev \cite[Theorem 5]{Mal'cev:nil}.
\begin{theorem}\label{thm:rigidity1}
Let $G$ and $G'$ be connected simply-connected nilpotent Lie groups and let $\Gamma$ and $\Gamma'$ be a discrete cocompact subgroups of $G$ and $G'$ respectively. Then any homomorphism $\pi:\Gamma\to \Gamma'$ can be extended uniquely to a continuous homomorphism $\wt{\pi}:G\to G'$.
\end{theorem}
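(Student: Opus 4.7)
The plan is to construct $\wt{\pi}$ explicitly via Mal'cev coordinates of the second kind, and then verify the homomorphism property by a polynomial-density argument on $\mb{R}^{2n}$. By Theorem \ref{thm:corresp-lie-gr-alg}, in both $G$ and $G'$ every element $\gamma$ admits well-defined real powers $\gamma^t := \exp(t\log\gamma)$. I would fix a Mal'cev basis $\gamma_1,\ldots,\gamma_n\in\Gamma$ adapted to the lower central series of $G$. The classical Mal'cev coordinate theorem then gives a diffeomorphism $\mb{R}^n\to G$, $(t_1,\ldots,t_n)\mapsto \gamma_1^{t_1}\cdots\gamma_n^{t_n}$, carrying $\mb{Z}^n$ bijectively onto $\Gamma$, under which the group law on $G$ is given by polynomials with rational coefficients.

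The candidate extension would then be defined by
\[
\wt{\pi}(\gamma_1^{t_1}\cdots\gamma_n^{t_n}) \;:=\; \pi(\gamma_1)^{t_1}\cdots\pi(\gamma_n)^{t_n},
\]
where the real powers on the right are taken in $G'$ via its exponential map. This map is manifestly smooth, and it restricts to $\pi$ on $\Gamma$: when all $t_i$ are integers the element $\gamma_1^{t_1}\cdots\gamma_n^{t_n}$ lies in $\Gamma$, and the homomorphism property of $\pi$ gives $\pi(\gamma_1^{t_1}\cdots\gamma_n^{t_n})=\pi(\gamma_1)^{t_1}\cdots\pi(\gamma_n)^{t_n}$, exactly matching $\wt{\pi}$.

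The central step would be to verify $\wt{\pi}(gh)=\wt{\pi}(g)\wt{\pi}(h)$ for all $g,h\in G$. Fixing also Mal'cev coordinates on $G'$ (relative to any Mal'cev basis of $\Gamma'$), both sides of this identity, read off in those $G'$-coordinates, become polynomial maps $\mb{R}^{2n}\to \mb{R}^{\dim G'}$ in the parameters $(t_1,\ldots,t_n,s_1,\ldots,s_n)$ that parametrize $(g,h)$ on $G$. On $\mb{Z}^{2n}$ these two polynomial maps agree, because there $g,h\in\Gamma$ and the identity reduces to $\pi(gh)=\pi(g)\pi(h)$. Since a polynomial on $\mb{R}^{2n}$ that vanishes on $\mb{Z}^{2n}$ is identically zero, the identity extends to all real parameters, proving that $\wt{\pi}$ is a group homomorphism.

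For uniqueness, any continuous homomorphic extension $\wt{\pi}'$ must send each one-parameter subgroup $t\mapsto \gamma^t$ (with $\gamma\in\Gamma$) to a continuous one-parameter subgroup through $\pi(\gamma)$ in $G'$; by Theorem \ref{thm:corresp-lie-gr-alg} such a subgroup is uniquely $\pi(\gamma)^t$. Hence $\wt{\pi}'$ agrees with $\wt{\pi}$ on all such subgroups and, via the Mal'cev coordinate decomposition, everywhere on $G$. The principal obstacle in this plan is the classical polynomial structure of the group law in Mal'cev second-kind coordinates (the fact that this law, together with real powers, is polynomial with rational coefficients); I would invoke this as a standard input, since it follows by a careful induction along the lower central series using the Baker--Campbell--Hausdorff formula, and all the other steps are routine consequences once it is in hand.
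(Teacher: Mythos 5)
Your proposal is correct. The paper does not present its own argument but instead cites Mal'cev's original paper and Raghunathan's book, and your Mal'cev-coordinate construction is precisely the classical proof that those references give (Raghunathan phrases it via the induced rational structure on the Lie algebra and first-kind coordinates rather than second-kind coordinates, but the underlying mechanism — polynomiality of the group law in Mal'cev coordinates together with Zariski density of $\mb{Z}^{2n}$ in $\mb{R}^{2n}$ — is the same). Two points worth making explicit in your write-up: first, to see that $\wt{\pi}$ is a polynomial map in coordinates you should note that each fixed one-parameter subgroup $t\mapsto\pi(\gamma_i)^t$ is polynomial in $t$ when read in $G'$-Mal'cev coordinates (this follows since $\exp$ and $\log$ are polynomial diffeomorphisms, Theorem \ref{thm:corresp-lie-gr-alg}, and products are polynomial); and second, the uniqueness step implicitly uses that any continuous one-parameter subgroup $\phi:\mb{R}\to G'$ with $\phi(1)=g'$ equals $t\mapsto \exp(t\log g')$, which again follows from $\exp$ being a global diffeomorphism. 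Neither point is a gap — both are routine — but they are the places where the hypothesis of simple-connectedness of $G'$ is being used and should be flagged.
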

\begin{proof}
See \cite[Theorem 5]{Mal'cev:nil}, or Theorem 2.11 in Chapter II of the book \cite{Rag}. 
\end{proof}
\noindent The following associated lemma will be useful.

\begin{lemma}\label{lem:surj&ker}
Let $G$, $G'$, $\Gamma$, $\Gamma'$, $\pi$, $\wt{\pi}$  be as in Theorem \ref{thm:rigidity1}. If $\pi$ is surjective, then so is $\wt{\pi}$. Moreover $\ker\wt{\pi}=\wt{\ker\pi}$.   
\end{lemma}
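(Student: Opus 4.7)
The plan is to prove the two assertions separately: first surjectivity via a dimension count for cocompact lattices, then the kernel identity by combining Mal'cev rigidity (Theorem \ref{thm:rigidity1}) with another dimension count. The main tools are Lemma \ref{lem:subgpsimpcon}, Theorem \ref{thm:lieconclos}, and the classical Mal'cev fact that for a connected simply-connected nilpotent Lie group $H$ with discrete cocompact subgroup $\Lambda$, one has $\dim H = h(\Lambda)$, where $h$ denotes Hirsch length.

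For surjectivity, I would first observe that $\tilde\pi(G)$ is a closed connected Lie subgroup of $G'$. Indeed, by Theorem \ref{thm:corresp-lie-gr-alg} the exponential map on $G'$ is a diffeomorphism, so $\tilde\pi(G) = \exp_{G'}(d\tilde\pi(\mathfrak{g}))$, which is closed (as the diffeomorphic image of a linear subspace of $\mathfrak{g}'$) and is a subgroup by the Baker--Campbell--Hausdorff formula. Since $\pi$ is surjective, $\tilde\pi(G)\supseteq \Gamma'$, so $\tilde\pi(G)/\Gamma'$ is a closed subset of the compact space $G'/\Gamma'$ and hence compact. Thus $\Gamma'$ is discrete cocompact in $\tilde\pi(G)$, which is simply-connected by Lemma \ref{lem:subgpsimpcon}. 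Therefore $\dim\tilde\pi(G) = h(\Gamma') = \dim G'$, and since $\tilde\pi(G)$ is a closed connected subgroup of $G'$ of full dimension, $\tilde\pi(G) = G'$.

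For the kernel identity, the inclusion $\widetilde{\ker\pi}\subseteq\ker\tilde\pi$ would follow from the fact that $\ker\tilde\pi$ is a closed connected subgroup of $G$ containing $\ker\pi$, so minimality of the Zariski closure (Theorem \ref{thm:lieconclos}) does the job. Closedness is automatic; for connectedness I would use that by the surjectivity just proved, $G/\ker\tilde\pi\cong G'$ as Lie groups, and then the covering $G/(\ker\tilde\pi)^0 \to G'$ with deck group $\ker\tilde\pi/(\ker\tilde\pi)^0$ must be an isomorphism by simple-connectedness of $G'$, forcing $\ker\tilde\pi = (\ker\tilde\pi)^0$. For the reverse inclusion, I would count dimensions. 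By Theorem \ref{thm:lieconclos}, $\ker\pi$ is discrete cocompact in $\widetilde{\ker\pi}$, so $\dim\widetilde{\ker\pi} = h(\ker\pi)$. Additivity of Hirsch length on the short exact sequence $1\to\ker\pi\to\Gamma\to\Gamma'\to 1$ then gives $h(\ker\pi) = h(\Gamma) - h(\Gamma') = \dim G - \dim G'$, which by surjectivity of $\tilde\pi$ equals $\dim\ker\tilde\pi$. Two connected Lie subgroups of $G$ of the same dimension with one contained in the other must coincide.

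I do not expect any single step to be the main obstacle; rather, the delicate part is marshalling the classical Mal'cev facts ($\dim H = h(\Lambda)$ and additivity of Hirsch length across short exact sequences of polycyclic groups) and citing them cleanly, e.g.\ from \cite{Rag}, without circularity with Theorem \ref{thm:rigidity1}.
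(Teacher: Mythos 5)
Your proof is correct, and it takes a genuinely different route from the paper's. For surjectivity, the paper simply defers to an inspection of the proof of Theorem \ref{thm:rigidity1} in Raghunathan, whereas you supply an explicit argument via a dimension count: show $\tilde\pi(G)$ is a closed connected subgroup containing $\Gamma'$, observe $\Gamma'$ is then a lattice in it, and conclude $\dim\tilde\pi(G)=h(\Gamma')=\dim G'$. For the kernel identity, the inclusion $\widetilde{\ker\pi}\subseteq\ker\tilde\pi$ is handled by both you and the paper via connectedness of $\ker\tilde\pi$ plus minimality of the Zariski closure (you deduce connectedness by a covering-space argument using simple-connectedness of $G'$; the paper deduces it from $\ker\tilde\pi=\exp(\ker\mathrm{d}\tilde\pi)$). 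The real divergence is in the reverse inclusion: the paper shows $\ker\tilde\pi\subseteq\widetilde{\ker\pi}$ by passing through the Mal'cev completion — an element of $\ker\tilde\pi$ with some power in $\Gamma$ has that power in $\ker\pi$, so it lies in the Mal'cev completion of $\ker\pi$, and then density finishes — while you instead equate Hirsch lengths/dimensions ($\dim\widetilde{\ker\pi}=h(\ker\pi)=h(\Gamma)-h(\Gamma')=\dim G-\dim G'=\dim\ker\tilde\pi$) and invoke the fact that nested connected Lie subgroups of equal dimension coincide. Your approach has the advantage of being uniform (one technique carries both claims) and avoids the somewhat delicate density step in the paper's argument (that $M(\Gamma)\cap\ker\tilde\pi$ is dense in $\ker\tilde\pi$), at the cost of invoking additivity of Hirsch length across the short exact sequence $1\to\ker\pi\to\Gamma\to\Gamma'\to 1$ and the identity $\dim H=h(\Lambda)$ for lattices in simply-connected nilpotent Lie groups, both of which are standard but worth citing explicitly (e.g.\ from \cite{Rag}). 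Both proofs are valid.
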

\begin{proof}
The surjectivity claim can be established by an inspection of the proof of Theorem \ref{thm:rigidity1} (e.g.\ in \cite[p. 33, proof of Theorem 2.11]{Rag}). To see the second claim, recall that the Zariski closure $G$ of $\Gamma$ includes as a dense subgroup the Mal'cev completion of $\Gamma$ (see \cite[Theorem 9.20]{Khukhro}), defined as the subgroup of $G$ containing every element $g$ for which there exists $r\in \mb{N}$ such that $g^r\in \Gamma$. Now, if $\gamma\in \ker\wt{\pi}$, since in particular $\gamma\in \wt{\Gamma}$, then there is $r$ such that $\gamma^r\in \Gamma$. Thus $\pi(\gamma^r)=\wt{\pi}(\gamma^r)=\wt{\pi}(\gamma)^r=1$, so $\gamma^r\in \ker(\pi)$. Therefore $\gamma$ is in the Mal'cev completion of $\ker(\pi)$, hence in $\wt{\ker(\pi)}$. By a density argument, it follows that $\ker\wt{\pi}\subset \wt{\ker(\pi)}$. To prove the opposite inclusion, note that $\ker\wt{\pi}$ is connected and closed. That is because the exponential map is a homeomorphism and, using that $\ud\tilde{\pi}\co\exp=\tilde{\pi}\co \exp$, we have that $\ker\wt{\pi}=\exp(\ker\ud\wt{\pi})$ and then clearly $\ker\ud\wt{\pi}$ is connected and closed. Moreover $\ker\wt{\pi}$ also includes $\ker\pi$, so it must include $\wt{\ker\pi}$ by minimality of the latter.\end{proof}

\begin{lemma}\label{lem:G-tor-free}
Let $(\Gamma,\Gamma_\bullet)$ be a discrete finitely-generated filtered group of degree $k$. Suppose that for all $i\in[k]$ the group $\Gamma_i/\Gamma_{i+1}$ is torsion-free. Let $\wt{\Gamma}$ be a connected simply-connected Lie group including $\Gamma$ as a discrete co-compact subgroup,\footnote{As provided by Mal'cev's existence theorem; see \cite[Theorem 2.18]{Rag}.} let $G$ be a closed subgroup of $\wt{\Gamma}$ and for $i\in[k]$ let $G_i:= G\cap \wt{\Gamma_i}$.\footnote{More generally, we may assume that $(G,G_\bullet)$ is a closed filtered group such that $\Gamma_i\le G_i\le \wt{\Gamma_i}$ and we would have the same conclusion (with a lengthier proof).} Then, for all $i\in[k]$ the group $G_i\wt{\Gamma_{i+1}}$ is a closed subgroup of $\wt{\Gamma_{i}}$ and $G_i/G_{i+1}$ is torsion-free.
\end{lemma}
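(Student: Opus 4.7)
My plan is to pass to the abelian quotient $H_i:=\wt{\Gamma_i}/\wt{\Gamma_{i+1}}$ and deduce both conclusions from its structure. As a preliminary, an easy induction on $i$ using the hypothesis shows that $\Gamma/\Gamma_i$ is torsion-free for every $i$; combined with the Mal'cev-completion property (for every $g\in\wt{\Gamma_i}$ there is $n$ with $g^n\in\Gamma_i$), this yields $\Gamma\cap\wt{\Gamma_i}=\Gamma_i$. Since $[\Gamma,\Gamma_{i+1}]\subseteq\Gamma_{i+2}\subseteq\Gamma_{i+1}$, the subgroup $\Gamma_{i+1}$ is normal in $\Gamma$; passing to the Lie algebra $\mathfrak{g}$ of $\wt{\Gamma}$, the corresponding Lie subalgebra $\mathfrak{g}_{i+1}$ is then an ideal, so $\wt{\Gamma_{i+1}}$ is a closed normal subgroup of $\wt{\Gamma_i}$. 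Hence $H_i$ is a connected, simply-connected, abelian Lie group, i.e.\ $H_i\cong\mb{R}^{r_i}$ with $r_i$ the rank of $\Gamma_i/\Gamma_{i+1}$; and writing $q\colon\wt{\Gamma_i}\to H_i$ for the quotient, $q(\Gamma_i)\cong\Gamma_i/\Gamma_{i+1}$ sits in $H_i$ as a cocompact lattice (by e.g.\ Corollary 5.1.4(a) of \cite{C&G}).

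The torsion-freeness of $G_i/G_{i+1}$ is then immediate: the kernel of $q|_{G_i}$ is $G_i\cap\wt{\Gamma_{i+1}}=G_{i+1}$, so $G_i/G_{i+1}\cong q(G_i)$ embeds into the torsion-free group $H_i\cong\mb{R}^{r_i}$.

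For the closedness of $G_i\wt{\Gamma_{i+1}}$, I would use the inclusion $\Gamma_i\leq G_i$, which is the inclusion made explicit in the footnote of the statement and which is essential (routine examples show that without it the product can genuinely fail to be closed). Under this inclusion, $G_i/\Gamma_i$ is a closed subset of the compact nilmanifold $\wt{\Gamma_i}/\Gamma_i$ and is therefore compact. The natural continuous projection
\[
\wt{\Gamma_i}/\Gamma_i \;\longrightarrow\; \wt{\Gamma_i}/(\Gamma_i\wt{\Gamma_{i+1}})\;\cong\; H_i/q(\Gamma_i)\;\cong\;\mb{T}^{r_i}
\]
therefore sends $G_i/\Gamma_i$ to a compact, hence closed, subset of the torus $\mb{T}^{r_i}$. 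Pulling this closed set back through the above chain of quotients shows that $G_i\Gamma_i\wt{\Gamma_{i+1}}=G_i\wt{\Gamma_{i+1}}$ is closed in $\wt{\Gamma_i}$, as required. The main technical input is thus the compactness of $G_i/\Gamma_i$, and this is precisely where the inclusion $\Gamma_i\leq G_i$ is used.
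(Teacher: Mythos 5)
Your proof is correct and takes a genuinely different route for the torsion-freeness part. The paper invokes Mal'cev rigidity (Theorem \ref{thm:rigidity1}) and Lemma \ref{lem:surj&ker} to identify $\wt{\Gamma_i}/\wt{\Gamma_{i+1}}$ with $\wt{\Gamma_i/\Gamma_{i+1}}$ and inherits torsion-freeness from the target; you instead recognize $H_i=\wt{\Gamma_i}/\wt{\Gamma_{i+1}}$ directly as a connected simply-connected abelian Lie group, hence a vector group $\mb{R}^{r_i}$. This is more concrete and lets the closedness argument land in a torus, but it obliges you to justify separately that $\wt{\Gamma_{i+1}}$ is normal in $\wt{\Gamma_i}$ and that $H_i$ is abelian; your appeal to Lie-algebra ideals is correct in spirit but ultimately rests on Zariski-density considerations that the paper sidesteps by extracting normality for free from $\ker\wt{\pi}$. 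For closedness, both arguments are compactness arguments relying on $\Gamma\le G$ (the paper picks a compact $K\subset\wt{\Gamma_{i+1}}$ with $\wt{\Gamma_{i+1}}=K\Gamma_{i+1}$ and projects to $\wt{\Gamma_i}/G_i$; you use compactness of $G_i/\Gamma_i$ and project to $\mb{T}^{r_i}$), and you are right to flag that inclusion as essential. One small imprecision: the claim that every $g\in\wt{\Gamma_i}$ has $g^n\in\Gamma_i$ for some $n$ is true only on the Mal'cev completion of $\Gamma_i$, a proper dense subgroup of $\wt{\Gamma_i}$; since you apply it only to $g\in\Gamma\cap\wt{\Gamma_i}$, where it holds because $\Gamma_i$ has finite index in $\Gamma\cap\wt{\Gamma_i}$ (both being lattices in $\wt{\Gamma_i}$), your preliminary $\Gamma\cap\wt{\Gamma_i}=\Gamma_i$ is nevertheless correct.
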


\begin{proof}
First note that the assumptions imply that $\Gamma$ is torsion-free. Indeed, using the 3-rd isomorphism theorem we have that $(\Gamma_i/\Gamma_k)/(\Gamma_{i+1}/\Gamma_k)\cong \Gamma_i/\Gamma_{i+1}$ is torsion-free for each $i\in [k-1]$, so by induction $\Gamma/\Gamma_k$ is torsion-free. But then $\Gamma$ is a central extension of the torsion-free group $\Gamma/\Gamma_k$ by the torsion-free group $\Gamma_k$, hence $\Gamma$ is torsion-free. This justifies our use of Mal'cev's theorem providing the group $\wt{\Gamma}$ (this theorem requires $\Gamma$ to be torsion-free).

Now fix any $i\in[k]$ and consider the quotient homomorphism $\pi:\Gamma_i\to \Gamma_i/\Gamma_{i+1}$. By Theorem \ref{thm:rigidity1}, this extends (uniquely) to a continuous surjective homomorphism $\wt{\pi}:\wt{\Gamma_i}\to\wt{\Gamma_i/\Gamma_{i+1}}$. By Lemma \ref{lem:surj&ker}, we have that $\ker(\wt{\pi})=\wt{\ker(\pi)}$ and hence $\wt{\Gamma_i/\Gamma_{i+1}}\cong \wt{\Gamma_i}/\wt{\Gamma_{i+1}}$. Since $\wt{\Gamma_i/\Gamma_{i+1}}$ is torsion-free, then so is $\wt{\Gamma_i}/\wt{\Gamma_{i+1}}$. Note that, as $\wt{\Gamma_{i+1}}/\Gamma_{i+1}$ is compact, there exists a compact set $K\subset \wt{\Gamma_{i+1}}$ such that $\wt{\Gamma_{i+1}} = K\Gamma_{i+1}$. In particular $KG_i=K\Gamma_{i+1}G_i=\wt{\Gamma_{i+1}}G_i$. Now letting $\pi_{G_i}:\wt{\Gamma_{i}}\to \wt{\Gamma_{i}}/G_i$ be the quotient map, note that the image of $K\subset \wt{\Gamma_{i}}$ is compact, hence closed in $\wt{\Gamma_{i}}/G_i$. But this image is simply $KG_i/G_i=\wt{\Gamma_{i+1}}G_i/G_i$ which is thus closed. Therefore, its pre-image under $\pi_{G_i}$ is also closed, and this is precisely $\wt{\Gamma_{i+1}}G_i$. By the second isomorphism theorem, note that  $G_i\wt{\Gamma_{i+1}}/\wt{\Gamma_{i+1}}\cong G_i/G_{i+1}$. As $G_i\wt{\Gamma_{i+1}}/\wt{\Gamma_{i+1}}\le \wt{\Gamma_i}/\wt{\Gamma_{i+1}}$ and the latter is torsion-free, we have that $G_i/G_{i+1}$ is torsion-free and the result follows.
\end{proof}

We can now proceed to the main proof in this subsection.

\begin{proof}[Proof of Theorem \ref{thm1}]
By assumption, we have $\ns=F/\Gamma$ where $\Gamma$ is a porting subgroup of $\tran(F)$ which is a free graded $k$-step nilpotent group. Let $F=C\times D$ be the decomposition of $F$ into its continuous part $C$ and its discrete part $D$ as per Lemma \ref{lem:Dpartproj}, let $t\in \mb{Z}_{\geq 0}$ be such that $D=\mb{Z}^t$ as a set (thus $t=b_1+\cdots+b_k$), and let $\wh{p_D}:\Gamma\to\tran(D)$ be the homomorphism given by Lemma \ref{lem:Dpartproj}. By the orthogonality assumption, the image of $\wh{p_D}$ is a subgroup $A$ of $\mb{Z}^t$ generated by the images under $\wh{p_D}$ of the elementary generators in $\Gamma$. Note that by assumption on the elementary generators, we can suppose that $A$ has full rank in $\mb{Z}^t$, in particular the Zariski closure of $A$ is the same as that of $\mb{Z}^t$, namely $\mb{R}^t$. By Theorem \ref{thm:rigidity1}, there is a unique continuous surjective homomorphism $\wt{\wh{p_D}}:\wt{\Gamma}\to \mb{R}^t$ that extends $\wh{p_D}$. Letting $G$ be the Lie group parametrizing $F$ as per Proposition \ref{prop:Lieparamgam}, we have
\begin{equation}
G=\wt{\wh{p_D}}^{-1}(\mb{Z}^t).
\end{equation}
Indeed the inclusion $G\subset\wt{\wh{p_D}}^{-1}(\mb{Z}^t)$ follows immediately from the fact that $\wt{\wh{p_D}}(G)=\mb{Z}^t$, which in turn holds since, by Proposition \ref{prop:Lieparamgam}, the free nilspace $F$ is the orbit of $0$ in $F_\mb{R}$ under the free action of $G$. The opposite inclusion $G\supset\wt{\wh{p_D}}^{-1}(\mb{Z}^t)$ follows from the fact that, inside $\wt{\wh{p_D}}^{-1}(\mb{R}^t)=\wt{\Gamma}$, if a translation is in $\wt{\wh{p_D}}^{-1}(\mb{Z}^t)$ then it is in the setwise stabilizer of $F$, which is precisely the group $G$ as established in \eqref{eq:invprop}.

Let $\Gamma'$ be the free graded $(k+1)$-step nilpotent group with same graded generators as $\Gamma$. Thus $\Gamma'$ has no generators of degree $k+1$ and is obtained from $\Gamma$ just by deleting, from the set of relations defining $\Gamma$, the relations involving commutators of degree exactly $k$, leaving only the relations involving commutators of degree at least $k+1$. This definition of $\Gamma'$ immediately yields a surjective homomorphism $\pi:\Gamma'\to \Gamma$, namely $\pi$ is the identity map on the set of generators, extended to a homomorphism the natural way (see e.g.\ \cite[Corollary 1.1.3]{MKS76}).  Let us specify also the filtration of degree $k+1$ on $\Gamma'$ that we shall use, as this will determine a $(k+1)$-step nilspace structure. By assumption on $\ns$ (i.e.\ Definition \ref{def:univCFRns}), the given filtration on $\Gamma$ has $i$-th term $\Gamma_i=\Gamma\cap \tran_i(F)$. We then define the degree $k+1$ filtration $(\Gamma'_i=\pi^{-1}(\Gamma_i)\big)_{i\geq 0}$. In particular $\Gamma'_{k+1}=\ker(\pi)$ is the subgroup of $\Gamma'$ generated by basic commutators of degree $k+1$ in the generators (see \cite[\S 2.3]{MKS76}).

Now we make a key use of Mal'cev's theorem \cite[Theorem 6]{Mal'cev} (see also \cite[Theorem 2.18, page 40]{Rag}). Since $\Gamma'$ is torsion-free and finitely generated, by this theorem there is a connected simply-connected Lie group $L'$ such that $\Gamma'$ is a discrete co-compact subgroup of $L'$. Moreover, there is a natural filtration of rational subgroups $L'_i$ of $L$, namely $L_i':=\wt{\Gamma_i'}$. These are rational relative to $\Gamma$ by Lemma \ref{lem:ratsubgpsintransF}. To see that this is a filtration, note that it suffices to prove that for all $i,j\in[k]$ we have $[\wt{L_i},\wt{L_j}]=\wt{[L_i,L_j]}$ and this follows from \cite[Theorem 6.5 (b)]{Wil}.

By Theorem \ref{thm:rigidity1}, the homomorphism $\pi$ extends uniquely to a continuous surjective homomorphism $\wt{\pi}:L'\to \wt{\Gamma}$. Let $q=\wt{\wh{p_D}}\co\wt{\pi}$, a continuous surjective homomorphism $L'\to \mb{R}^t$ extending $
\wh{p_D}\co\pi: \Gamma'\to A$. (Note that by uniqueness in Theorem \ref{thm:rigidity1}, the homomorphism $q$ is the unique such extension of $\wh{p_D}\co\pi$). Using Lemma \ref{lem:surj&ker}, we have
\begin{equation}\label{eq:kerext1}
\ker(\wt{\pi})=\wt{\ker(\pi)}=\wt{\Gamma'_{k+1}}=L'_{k+1}.
\end{equation}

We can now define the desired nilspace $\nss$. Let $G':= q^{-1}(\mb{Z}^t)=\wt{\pi}^{-1}(G)\leq L'$, and equip $G'$ with the natural degree $k+1$ filtration $G'_\bullet=(G'_i:=L'_i\cap G')_{i\geq 0}$. Note that $G'\supset \Gamma'$ (indeed $q^{-1}(\mb{Z}^t)\supset q^{-1}(A) \supset \pi^{-1}\wh{p_D}^{-1}(A)=\Gamma'$). We can thus define the coset nilspace
\begin{equation}
\nss:=G'/\Gamma'\text{ equipped with the Host--Kra cube-sets } \cu^n(G'_\bullet), n\geq 0.
\end{equation}
We first claim that
\begin{equation}\label{eq:kerext2}
\nss_k\cong\ns.
\end{equation}
To see this, first recall that in the $(k+1)$-step coset nilspace $G'/\Gamma'$, the $k$-step factor $\nss_k$ is $(G'/G'_{k+1})/[(\Gamma'G'_{k+1})/G'_{k+1}]$ (see \cite{Cand:Notes1}). By \eqref{eq:kerext1} we have
\begin{equation}\label{eq:kerext3}
G_{k+1}'=L_{k+1}'\cap G'=\wt{\pi}^{-1}(\{1\})\cap \wt{\pi}^{-1}(G)=\wt{\pi}^{-1}(\{1\}\cap G)=\wt{\pi}^{-1}(\{1\})=\ker(\wt{\pi}).
\end{equation}
It follows that $G'/G'_{k+1}\cong G$, since $G$ is the image of $G'$ under the continuous homomorphism $\wt{\pi}|_{G'}$, which has kernel $G'_{k+1}$ by \eqref{eq:kerext3}. Also, by standard results we have $(\Gamma'G'_{k+1})/G'_{k+1}\cong \Gamma'/(\Gamma'\cap G'_{k+1})=\Gamma'/\Gamma'_{k+1}$, and this is isomorphic to $\Gamma$, since (as noted above) $\Gamma'_{k+1}$ is the subgroup generated by commutators of degree exactly $k+1$ in the generators. This completes the proof of \eqref{eq:kerext2}.

Next, in order to obtain a ported presentation $\nss\cong F'/\Gamma'$ as needed in Definition \ref{def:univCFRns}, we apply Proposition \ref{prop:invLieparam}. We can use this result because the quotient groups $\Gamma_i/\Gamma_{i+1}$ are torsion-free, see \cite[\S 5.11]{MKS76}. By Lemma \ref{lem:G-tor-free}, the quotients $G'_i/G'_{i+1}$ are also torsion-free. Thus, applying Proposition \ref{prop:invLieparam}, we obtain a $(k+1)$-step free nilspace $F'$ isomorphic to the group nilspace $(G',G'_\bullet)$. Moreover, by combining the isomorphisms $F'\to G'$, $G\to F$, and the nilspace fibration (surjective filtered-group homomorphism) $\wt{\pi}:G'\to G$, we obtain a nilspace fibration $F'\to F$, so by \cite[Lemma 8.9]{CGSS-doucos} we can view $F$ as the $k$-step direct-component (and $k$-step nilspace factor) of $F'$. This view will now enable us to see that the action of $\Gamma'$ on $F'$ has all the required properties. 

Firstly, we can see that $\Gamma'$ is orthogonal on $F'$. Indeed, this follows straightforwardly from the assumed orthogonality of $\Gamma$ on $F$ if we can prove that the degree $k+1$ component of $F'$ is connected (i.e.\ a power of $\mb{R}$), as then the action of $\Gamma'$ on the discrete part of $F'$ is the same as that of $\Gamma$ on $F$. But from construction of $F'$ (Proposition \ref{prop:invLieparam}), the degree $k+1$ component of $F'$ is connected because by \eqref{eq:kerext3} the group $G'_{k+1}$ is connected. This completes the proof of orthogonality of $\Gamma'$ on $F'$. Secondly, the action of $\Gamma'$ on $F'$ is free because by  Proposition \ref{prop:Lieparamgam}, the action of the whole of $G'$ on $F'$ is free, so in particular the subgroup $\Gamma'\leq G'$ acts freely on $F'$. Next, the fiber-transitivity of $\Gamma'$ follows easily from that of $\Gamma$. Indeed, the required property \cite[(15)]{CGSS-doucos} already holds for $i\leq k-1$ by the fiber-transitivity of $\Gamma$, and for $i=k$ the property holds by the transitivity of the action of $\tran_{k+1}(F)$. Alternatively, note that via the nilspace isomorphism $F'\to G'$ and the fact that $\Gamma'$ is acting on the group nilspace $G'$ by right multiplication, the action is fiber-transitive by \cite[Remark 5.9]{CGSS-doucos}. By \cite[Proposition 5.42]{CGSS-doucos}, it follows that the action of $\Gamma'$ is pure. That $\Gamma'$ is fiber-discrete and fiber-cocompact (recall \cite[Definition 1.4]{CGSS-doucos}) clearly follows in a similar way from these properties for $\Gamma'$ on $G'$. Indeed, note we need to prove that for all $i\in[k]$ we have that $\Gamma_i'G_{i+1}'/G_{i+1}$ is a cocompact lattice in $G_i/G_{i+1}$. By the second isomorphism theorem note that $\Gamma_i'G_{i+1}'/G_{i+1}\cong \Gamma_i'/\Gamma_{i+1}'$ and hence the group is discrete. To check that it is cocompact it suffices to check that $G_i'/\Gamma_i G_{i+1}'$ is compact. But clearly this is a factor of the compact space $G_i'/\Gamma_i'$ which is compact using that $\Gamma_i'\le G_i\le L_i'$ and $L_i'/\Gamma_i'$ is compact. The result follows.
\end{proof}

\subsection{Universal nilspaces are toral splitting}\label{subsec:thm2}\hfill\\
Recall the notions of \emph{universal \textsc{cfr} nilspaces} from Definition \ref{def:univCFRns}  and toral-splitting nilspaces from Definition \ref{def:tor-split}. The main result in this subsection is the proof of Theorem \ref{thm2}, which we recall here for convenience.
\begin{theorem}\label{thm:thm2}
Every $k$-step universal \textsc{cfr} nilspace is $k$-toral splitting.
\end{theorem}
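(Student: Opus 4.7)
The plan is to reduce the splitting of any $\mc{D}_t(\mb{T}^n)$-extension to a purely group-theoretic splitting problem for the porting lattice, and then to exploit the universal property of the free graded $k$-step nilpotent group to solve it. By definition of a universal \textsc{cfr} nilspace combined with Proposition \ref{prop:nicerep=>coset}, I would write $\ns=F/\Gamma\cong G/\Gamma$, where $\Gamma$ is free graded $k$-step nilpotent with generators $(\gamma_{i,j})_{i\in[k],j\in[\ell_i]}$ of degree $i$, and $G$ is the ambient Lie group provided by the Lie parametrization (Proposition \ref{prop:Lieparamgam}).

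Fix $t\in[k]$, $n\in\mb{N}$, and a $\mc{D}_t(\mb{T}^n)$-extension $\nss\to\ns$. Iterating Proposition \ref{prop:ext-as-quo-of-free} $n$ times presents $\nss\cong\wt{F}/\Gamma'$ where $\wt{F}=F\times\mc{D}_t(\mb{R}^n)$ and $\Gamma'$ is generated by the lifts $(x,z)\mapsto(\gamma x,z+\gamma^*(x))$ of $\gamma\in\Gamma$ together with the standard integer shifts in the new $\mb{R}^n$-coordinate. This description yields a filtered surjective homomorphism $p:\Gamma'\twoheadrightarrow\Gamma$ whose kernel is the central $\mb{Z}^n$, and in particular $\Gamma'_i\twoheadrightarrow\Gamma_i$ is surjective for every $i\in[k]$, since each $\gamma\in\Gamma_i$ admits a lift $(\gamma,\gamma^*)\in\Gamma'_i$ by construction. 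Via Proposition \ref{prop:Lieparamgam} applied to $\nss$ (which is still a ported \textsc{cfr} nilspace since orthogonality and pureness are preserved under such extensions), I would also obtain $\nss\cong G'/\Gamma'$ with an associated filtered continuous surjection $G'\twoheadrightarrow G$ whose kernel is the central $\mb{R}^n$ at degree $t$.

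Next I would build a filtered section $s:\Gamma\to\Gamma'$. For each generator $\gamma_{i,j}\in\Gamma_i$, choose any preimage $\wt{\gamma}_{i,j}\in\Gamma'_i$. Since $\nss$ is $k$-step, the filtration on $\Gamma'$ has degree $k$, so every iterated commutator of total grading $>k$ in the $\wt{\gamma}_{i,j}$ lies in $\Gamma'_{k+1}=\{\id\}$. By the universal property of $\Gamma$ as a free graded $k$-step nilpotent group (Definition \ref{def:fgngp}), the assignment $\gamma_{i,j}\mapsto \wt{\gamma}_{i,j}$ then extends uniquely to a filtered group homomorphism $s:\Gamma\to\Gamma'$, and $p\co s=\id_\Gamma$ because equality on generators propagates under the universal-property extension.

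Finally, I would promote $s$ to a nilspace section. By Mal'cev rigidity (Theorem \ref{thm:rigidity1}) applied to the Zariski closures $\wt{\Gamma},\wt{\Gamma'}$, which are connected and simply-connected by Theorem \ref{thm:lieconclos} together with Lemma \ref{lem:ratsubgpsintransF}, the section $s$ extends uniquely to a continuous filtered homomorphism $\wt{s}:\wt{\Gamma}\to\wt{\Gamma'}$. Using the characterization $G=\wt{\wh{p_D}}^{-1}(\mb{Z}^t)\subseteq\wt{\Gamma}$ from the proof of Theorem \ref{thm1}, together with the fact that a $\mc{D}_t(\mb{T}^n)$-extension does not alter the discrete part of the nilspace (so the discrete-coordinate projection on $\Gamma'$ factors through $p$ followed by the one on $\Gamma$), the restriction $\varsigma:=\wt{s}|_G$ lands in $G'$, sends $\Gamma$ into $\Gamma'$, and satisfies $p\co\varsigma=\id_G$. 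Descending to the coset quotients then produces the desired nilspace section $\sigma:\ns\to\nss$, so the extension splits. The principal obstacle I foresee is this last verification: carefully tracking the orthogonal discrete structure of both $\Gamma$ and $\Gamma'$ to confirm that $\wt{s}$ genuinely restricts to a map $G\to G'$ compatible with all filtrations, and that the induced $\sigma$ is a nilspace morphism for the Host--Kra cube structures. The group-theoretic heart of the argument, by contrast, is the universal-property deduction in the third paragraph, which is essentially formal once the freeness of $\Gamma$ is in hand.
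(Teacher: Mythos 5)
Your proposal is correct and follows essentially the same route as the paper's proof. Both arguments hinge on using the freeness of the graded nilpotent group $\Gamma$ to build a filtered section $s:\Gamma\to\Gamma'$ lifting generators (the paper packages this as proving $\Gamma'\cong\Gamma\times\mb{Z}$, which is equivalent since the kernel is central), and then on Mal'cev rigidity to transfer that algebraic splitting to the ambient Lie groups and hence to the coset nilspace — the paper via Lemma \ref{lem:prodquotsplit} together with the verification $\beta(\iota(F'))\cong\beta(\iota(F))\times\mb{R}$, you via direct extension of $s$ to $\wt{s}$ and restriction to $G=\wt{\wh{p_D}}^{-1}(\mb{Z}^t)$.
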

To prove this we shall use the following consequence of rigidity results. 

\begin{lemma}\label{lem:prodquotsplit}
Let $\ns$ be a coset nilspace of the form $L/\Gamma$, where $L$ is a connected simply-connected nilpotent Lie group with a filtration  $L_\bullet$ of degree $k$ consisting on connected simply-connected closed subgroups, and $\Gamma$ is a discrete rational subgroup of $L$. Let $K^{(1)},K^{(2)}$ be filtered subgroups of $\Gamma$ such that there exists an isomorphism $\phi:\Gamma\to  K^{(1)}\times K^{(2)}$, and for $i=1,2$ let $\wt{K}^{(i)}$ be the Zariski closure of $K^{(i)}$ in $L$. Assume that, for all $j\in[k]$, the restriction map $\phi|_{\Gamma_j}:\Gamma_j\to K^{(1)}_j\times K^{(2)}_j$ is also an isomorphism.

Then there exists a group isomorphism $\varphi: L\to \wt{K}^{(1)}\times \wt{K}^{(2)}$. Moreover, for $i=1,2$, if $\wt{K}^{(i)}/K^{(i)}$ denotes the coset nilspace associated to the filtration $(\wt{K}^{(i)}_j)_{j\in[k]}$, we have a nilspace isomorphism $\ns\cong (\wt{K}^{(1)}/K^{(1)})\times (\wt{K}^{(2)}/K^{(2)})$.
\end{lemma}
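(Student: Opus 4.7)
The plan is to apply Mal'cev rigidity (Theorem~\ref{thm:rigidity1}) to the abstract isomorphism $\phi^{-1}\colon K^{(1)}\times K^{(2)}\to \Gamma$ so as to upgrade it to an isomorphism of the ambient Lie groups, and then descend to the quotient. First I would set up the Mal'cev framework: by Theorem~\ref{thm:lieconclos} each $\wt{K}^{(i)}$ is closed and connected in $L$, by Lemma~\ref{lem:subgpsimpcon} it is also simply-connected, and since $K^{(i)}\subseteq\Gamma$ is discrete (hence closed) in $L$, Theorem~\ref{thm:lieconclos} also gives that $K^{(i)}$ is cocompact in $\wt{K}^{(i)}$. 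Hence $\wt{K}:=\wt{K}^{(1)}\times\wt{K}^{(2)}$ is a connected simply-connected nilpotent Lie group containing $K^{(1)}\times K^{(2)}$ as a discrete cocompact subgroup. The same observation applied to $\Gamma\subseteq L$, together with the standard fact that a proper closed connected subgroup $H\subsetneq L$ yields a quotient $L/H$ diffeomorphic to $\mb{R}^{\dim L-\dim H}$ (and thus non-compact), implies $\wt{\Gamma}=L$, since $L/\wt{\Gamma}$ is a continuous image of the compact $L/\Gamma$.

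Once this setup is in place, I would apply Theorem~\ref{thm:rigidity1} to extend $\phi^{-1}$ to a continuous homomorphism $\Phi\colon \wt{K}\to L$. Lemma~\ref{lem:surj&ker} then gives surjectivity of $\Phi$ together with $\ker\Phi=\wt{\ker\phi^{-1}}=\{1\}$, so $\Phi$ is a topological group isomorphism. I would set $\varphi:=\Phi^{-1}\colon L\to\wt{K}$, which by construction satisfies $\varphi|_\Gamma=\phi$.

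The crucial step is verifying that $\varphi$ respects the filtrations. Here I would invoke the level-wise hypothesis: for each $j\in[k]$, the restriction $\phi^{-1}|_{K^{(1)}_j\times K^{(2)}_j}\colon K^{(1)}_j\times K^{(2)}_j\to\Gamma_j$ is an isomorphism, and applying the same arguments as above inside the simply-connected subgroup $L_j$ (using that $\Gamma_j$ is cocompact in $L_j$ by rationality of $L_j$ relative to $\Gamma$, so $L_j=\wt{\Gamma_j}$) shows that Theorem~\ref{thm:rigidity1} yields an extension $\wt{K}^{(1)}_j\times \wt{K}^{(2)}_j\to L_j$. By the uniqueness clause in Theorem~\ref{thm:rigidity1}, this extension must coincide with the restriction of $\Phi$ to $\wt{K}^{(1)}_j\times \wt{K}^{(2)}_j$, whence $\varphi(L_j)=\wt{K}^{(1)}_j\times \wt{K}^{(2)}_j$ for every $j$, making $\varphi$ a filtered Lie group isomorphism.

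Finally, since $\varphi$ is a filtered isomorphism sending $\Gamma$ onto $K^{(1)}\times K^{(2)}$, it descends to a nilspace isomorphism between $L/\Gamma$ and $\wt{K}/(K^{(1)}\times K^{(2)})$; and because Host--Kra cubes on a direct product of filtered groups are precisely products of Host--Kra cubes on each factor, the target nilspace factors as $(\wt{K}^{(1)}/K^{(1)})\times(\wt{K}^{(2)}/K^{(2)})$. The main obstacle will be the filtration check in the third paragraph: without the hypothesis that $\phi$ restricts to isomorphisms at every level, the Mal'cev extension could permute filtration levels, and it is precisely the uniqueness clause in Mal'cev rigidity that converts the level-wise discrete hypothesis into the continuous filtered isomorphism needed to split $\ns$ as a direct product.
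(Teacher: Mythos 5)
Your proposal is correct and follows essentially the same route as the paper's own proof: apply Mal'cev rigidity (Theorem~\ref{thm:rigidity1}) to extend the lattice isomorphism $\phi$ to a Lie group isomorphism, then use the level-wise hypothesis together with the uniqueness clause in Mal'cev rigidity to show the extension respects the filtrations, and finally descend to the coset nilspace. The only cosmetic differences are that you apply rigidity to $\phi^{-1}$ rather than $\phi$, use Lemma~\ref{lem:surj&ker} where the paper cites a corollary from Raghunathan, and spell out explicitly the (true but implicit in the paper) facts that $\wt{\Gamma}=L$ and $\wt{\Gamma_j}=L_j$.
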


\begin{proof}
For any $i=1,2$ and $j\in[k]$, since $K^{(i)}$ is co-compact in $\wt{K}^{(i)}$, we have that $K^{(1)}_j\times K^{(2)}_j$ is cocompact in $\wt{K}^{(1)}_j\times \wt{K}^{(2)}_j$. By \cite[p. 33 Theorem 2.11]{Rag}, there is a unique extension $\varphi:L_1\to \wt{K}^{(1)}_1\times \wt{K}^{(2)}_1$ of $\phi$. Moreover, by \cite[p. 34 Corollary 2]{Rag}, the map $\varphi$ is an isomorphism. Note that for every $j\in[k]$, the map $\phi|_{\Gamma_j}$ extends to a unique map $L_j\to \wt{\Gamma_j}\cong \wt{K}^{(1)}_j\times \wt{K}^{(2)}_j$. By uniqueness in \cite[p. 33 Theorem 2.11]{Rag}, we must have that such extension is precisely $\varphi|_{L_j}$. Hence $\varphi$ is a filtered group isomorphism between $(L,L_\bullet)$ and $(\wt{K}^{(1)}_1\times \wt{K}^{(2)}_1,\wt{K}^{(1)}_\bullet\times \wt{K}^{(2)}_\bullet)$ and therefore it induces a nilspace isomorphism between $\ns$ and $\ns \to (G_1/K_1)\times (G_2/K_2)$ as claimed.
\end{proof}

\begin{proof}[Proof of Theorem \ref{thm:thm2}]
Let $\ns$ be a $k$-step universal \textsc{cfr} nilspace. Thus $\ns=F/\Gamma$ where $\Gamma$ is a porting subgroup of $\tran(F)$ which is also a free graded $k$-step nilpotent group. Let $\nss$ be a $\mc{D}_\ell(\mb{T})$-extension of $\nss$. We need to prove that this is a split extension.

The first step consists in proving the following claim.
\begin{equation}\label{eq:ts1}
\text{For }F':=F\times\mc{D}_\ell(\mb{R}), \text{ there is }\Gamma'\leq \tran(F')\text{ so that } \nss\cong F'/\Gamma' \text{ and } \Gamma'\cong\Gamma\times \mb{Z}.
\end{equation}
To prove this, we start by applying Proposition \ref{prop:ext-as-quo-of-free}. This  tells us that $\nss\cong F'/\Gamma'$ where $\Gamma'$ is a fiber-transitive, fiber-discrete subgroup of $\tran(F')$ generated by the translation $(x,z)\mapsto (x,z+1)$ together with translations of the form $(x,z)\in F\times\mc{D}_\ell(\mb{R}) \mapsto (\gamma(x),z+\gamma'(x))$, where $\gamma\in \Gamma_j$ (for some $j\in [k]$) and $\gamma'\in \hom\big(F,\mc{D}_{\ell-j}(\mb{R})\big)$. We now use the assumption that $\Gamma$ is \emph{free} filtered of degree $k$, to show that $\Gamma'\cong \Gamma\times\mb{Z}$. To see this, let $s:\Gamma\to \Gamma'$ be the injective homomorphism defined as follows: first for each of the finitely many generators $\gamma_1,\ldots,\gamma_r\in \Gamma$, we let $s(\gamma_i)$ be the translation $(x,z)\mapsto (\gamma_i(x),z+\gamma_i'(x))$ given by the above application of Proposition \ref{prop:ext-as-quo-of-free}, and then for a general element $\gamma=\prod_{j=1}^n\gamma_{i_j}^{a_j}$, we let $s(\gamma):=\prod_{j=1}^ns(\gamma_{i_j})^{a_j}$. It follows from the assumptions on $\Gamma$ that $s$ is thus a well-defined map, and it is then clear that $s$ is an injective homomorphism. More precisely, from group theory (see e.g.\ \cite[Corollary 1.1.2]{MKS76}), we know that for $s$ to be well-defined it suffices to ensure that the following property holds:
\begin{equation}\label{eq:ts2}
\text{For every relation $\gamma_{i_1}^{n_1}\cdots\gamma_{i_w}^{n_w}=\id$ in $\Gamma$, we have $s(\gamma_{i_1})^{n_1}\cdots s(\gamma_{i_w})^{n_w}=\id$.}
\end{equation}
By definition of the free filtered group $\Gamma$ of degree $k$, the only such relations are commutator expressions of degree at least $k+1$. Since $\ell\leq k$, the group $\tran(F')$ (of which $\Gamma'$ is a subgroup), is also of degree $k$, so the same commutator in the elements $s(\gamma_{i_j})$ must indeed be the identity. This proves \eqref{eq:ts2}.

Now let $Z$ be the subgroup of $\Gamma'$ generated by the map $\gamma^*:(x,z)\mapsto (x,z+1)$ (thus $Z\cong \mb{Z}$). If we prove that $s(\Gamma)\cap Z=\{\id\}$, then by the centrality of $Z$ in $\Gamma'$, we will have $\Gamma'\cong\Gamma\times\mb{Z}$, proving \eqref{eq:ts1}. Let us now prove that $s(\Gamma)\cap Z=\{\id\}$. Suppose for a contradiction that  $s(\gamma)=(\gamma^*)^n$ for some $\gamma\in \Gamma$ and some non-zero $n\in \mb{Z}$. Since $\Gamma$ is pure on $F$, if $\gamma\not=\id$ then $\gamma$ acts non-trivially on some component of $F$, by \cite[Proposition 5.42]{CGSS-doucos}. However, the equality $s(\gamma)=(\gamma^*)^n$ implies precisely that $\gamma$ acts trivially on $F$. Hence $\gamma=\id$, contradicting that $n\neq 0$. This proves \eqref{eq:ts1}.

We now move on to the second main step in the proof. Note that the first step was just about $\Gamma$ and $\Gamma'$, but ultimately we want to prove that the nilspace $\nss$ itself splits. To this end, we use the Lie parametrization $\beta:F'\to \beta(\iota(F'))$ given by Proposition \ref{prop:Lieparamgam}. 

Recall the notion of connected closure (see Definition \ref{def:conn-closure}) and recall that by Theorem \ref{thm:flierep} we have that $\beta(F'_{\mb{R}})$ is the Zariski closure of $\Gamma'$. Note that Theorem \ref{thm:flierep} gives also a filtration of the form $\wt{\Gamma'}\cap \tran_i(F'_{\mb{R}})$ which is thus a connected simply-connected filtration. The same argument applies to the filtered groups $\beta(F_{\mb{R}})$ and $Z=\mb{Z}$. As $\Gamma'_j\cong \Gamma_j\times Z_j$ for all $j\in[k]$, by Lemma \ref{lem:prodquotsplit} we have that $\Gamma'\cong \Gamma\times\mb{Z}$ and there exists a filtered group isomorphism $\varphi:\beta(F'_{\mb{R}})\to  \beta(F_{\mb{R}})\times \mb{R}$. In particular, we have an isomorphism $\beta(F'_{\mb{R}})\cong \beta(F_{\mb{R}})\times \mb{R}$ as group nilspaces.

Next, we show that the splitting of the connected group $\beta(F'_\mb{R})$ via $\varphi$ just proved implies a corresponding splitting
\begin{equation}\label{eq:ts3}
\beta(\iota(F'))\cong \beta(\iota(F)) \times \mb{R}
\end{equation}
via the same map $\varphi$. Indeed, we know, by Proposition \ref{prop:Lieparamgam}, that $\beta(\iota(F'))$ is a closed subgroup and it includes, as closed subgroups, both $\beta(\iota(F))$ and $\wt{Z}\cong \mb{R}$. To see that it includes $\beta(\iota(F))$  note that by \cite[p. 33, Theorem 2.11]{Rag} the homomorphism $s:\Gamma\to\Gamma'$ extends uniquely to a homomorphism $\wt{s}:\wt{\Gamma}\to\wt{\Gamma'}$, and by uniqueness this map equals $\varphi|_{(\cdot,0)}$. The fact that $\wt{Z}$ is a closed subgroup of $\wt{\Gamma'}$ is shown similarly. Now, since $\wt{\Gamma}$ and $Z$ are already known to have trivial intersection in $\wt{\Gamma'}$, we also have 
\[
\beta(\iota(F)) \cap \wt{Z} = \{\id\}.
\]
As $\wt{Z}$ is central in $\wt{\Gamma'}$, to prove \eqref{eq:ts3} it now suffices to show that $\beta(\iota(F'))\subset \beta(\iota(F)) \cdot \wt{Z}$. This can again be deduced from the transitivity of the action of $\beta(\iota(F'))$ on $\iota(F')$. Therefore $\varphi|_{\beta(\iota(F'))}:\beta(\iota(F'))\to \beta(\iota(F))\times\mb{R}$ is an isomorphism. By similar arguments, we have analogues of \eqref{eq:ts3} for all other terms in the filtration on $\beta(\iota(F'))$.

By Proposition \ref{prop:nicerep=>coset}, the nilspace $\nss$ is isomorphic (as a compact nilspace) to $\beta(\iota(F'))/\Gamma'$. But we already know that the isomorphism $\varphi$, restricted to $\beta(\iota(F'))$ and $\Gamma'$, induces  isomorphisms $\beta(\iota(F'))\cong \beta(\iota(F))\times\mb{R}$ and $\Gamma'\cong \Gamma\times \mb{Z}$. Thus $\nss\cong \ns \times \mc{D}_\ell(\mb{T})$ (note that by Theorem \ref{thm:flierep} the filtration corresponding to $Z$ is precisely $\mc{D}_\ell(Z)$ and thus the corresponding filtration on $\wt{Z}\cong \mb{R}$ is also $\mc{D}_\ell(\mb{R})$).
\end{proof}

\begin{remark}
It may be tempting to try to generalize Theorem \ref{thm:thm2} to a statement of the following form: for every fibration $\varphi:\nss\to\ns$ where $\ns$ is a universal nilspace there exists a nilspace $N$ such that $\nss\cong\ns\times N$. However, this is not true in general, not even assuming that both $\ns$ and $\nss$ are connected and universal. To see this, let $G:=\begin{psmallmatrix} 1 & \mb{R} & \mb{R}\\[0.1em]0  & 1 & \mb{R}\\[0.1em] 0 & 0 & 1 \end{psmallmatrix}$ be the Heisenberg group,  $\Gamma:=\begin{psmallmatrix} 1 & \mb{Z} & \mb{Z}\\[0.1em]0  & 1 & \mb{Z}\\[0.1em] 0 & 0 & 1 \end{psmallmatrix}$, and $\Gamma':=\begin{psmallmatrix} 1 & 2\mb{Z} & 2\mb{Z}\\[0.1em]0  & 1 & \mb{Z}\\[0.1em] 0 & 0 & 1 \end{psmallmatrix}$. It can be checked that $G/\Gamma'\to G/\Gamma$ is a fibration but there is no $N$ such that $G/\Gamma'\cong G/\Gamma\times N$.
\end{remark}

\section{An inverse theorem for all finite abelian groups in terms of nilmanifolds}\label{sec:projinv} 
\noindent Given a map $f$ from $\mb{Z}^n$ into a set $X$, if there is an integer $M$ such that for every $i\in [n]$ we have $f(x+Me_i)=f(x)$ for all $x\in\mb{Z}^n$, then we say that $f$ is $M$-\emph{periodic} (or just \emph{periodic}, if we do not need to specify the period $M$).

In this section we prove Theorem \ref{thm:projinv}. To motivate the ingredients that we shall use, let us consider the situation we obtain when we combine the general inverse theorem \cite[Theorem 1.6]{CSinverse} with Theorem \ref{thm:cfr-are-factor-of-nilmanifolds}. Given the initial finite abelian group $\ab$, the general inverse theorem gives us a nilspace morphism $\varphi:\ab\to\ns$ for some bounded-complexity \textsc{cfr} nilspace $\ns$. Then Theorem \ref{thm:cfr-are-factor-of-nilmanifolds} provides a fibration $\psi:\nss\to\ns$ where $\nss$ is a filtered nilmanifold. If we could lift the morphism $\varphi$ directly to a morphism $\ab\to\nss$, then the Jamneshan--Tao conjecture would be proved. Currently, we are unable to do this in general. However, by allowing $\ab$ to be extended to a boundedly-larger group $\ab'$, we are able to obtain a morphism $\varphi':\ab'\to \nss$, and this is what leads to the notion of projected nilsequence and to Theorem \ref{thm:projinv}. The main task is thus to obtain such a morphism $\varphi'$, and this will be achieved with Theorem \ref{thm:CFRpermorphlift} below. The proof of Theorem \ref{thm:projinv} will then be readily completed in Subsection \ref{subsec:PfInvThm}. 

To obtain the desired morphism $\varphi':\ab'\to\nss$, we first view $\ab$ the standard way as the image of a surjective homomorphism on $\mb{Z}^n$ (where $n$ is the rank of $\ab$), and note that this homomorphism composed with $\varphi:\ab\to \ns$ yields a nilspace morphism $f:\mc{D}_1(\mb{Z}^n)\to \ns$ which has the additional property of being $M$-periodic, where $M$ is the exponent of $\ab$. Hence, it suffices to show that $f$ can be lifted to another periodic morphism $f':\mc{D}_1(\mb{Z}^n)\to \nss$, of period $M'$ being boundedly larger than $M$, as this periodicity would immediately imply that $f'$ factors through a morphism $\varphi':\ab'\to\nss$ of the desired type (where $\ab'=\mb{Z}^n/(M'\mb{Z}^n)$. This is precisely what we establish in the following theorem, which is the main result of this section.

\begin{theorem}\label{thm:CFRpermorphlift}
Let $\ns$ be a $k$-step \textsc{cfr} nilspace, let $\nss$ be a universal \textsc{cfr} nilspace, and let $\psi:\nss\to\ns$ be a fibration. Then, for any $M$-periodic morphism $f\in \hom(\mc{D}_1(\mb{Z}^n),\ns)$, there is an $M'$-periodic morphism $f'\in \hom(\mc{D}_1(\mb{Z}^n),\nss)$ such that $\psi\co f'=f$, where $M'=CM^C$ for $C=O_{k,\psi}(1)$.
\end{theorem}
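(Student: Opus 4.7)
My plan is to induct on the step $k$ of $\nss$, using the coset-nilspace presentation that comes from universality. By Proposition \ref{prop:nicerep=>coset} and Definition \ref{def:univCFRns}, I may write $\nss \cong G/\Gamma$ where $G$ is a nilpotent Lie group with a degree-$k$ filtration $G_\bullet$ and $\Gamma \leq G$ is a discrete cocompact subgroup that is free graded $k$-step nilpotent on finitely many generators. In this picture, morphisms $\mc{D}_1(\mb{Z}^n)\to\nss$ correspond to polynomial sequences $g:\mb{Z}^n\to G$ (with respect to $G_\bullet$) modulo the right-action by $\Gamma$-valued polynomial sequences, and $M'$-periodicity of the morphism translates to $g(x+M'e_j)g(x)^{-1}\in\Gamma$ for all $x\in\mb{Z}^n$ and $j\in[n]$.

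For the base case $k=1$, both $\nss$ and $\ns$ are compact abelian Lie groups and $\psi$ is a continuous surjective homomorphism with kernel $K\cong\mb{T}^a\times F$ for a finite abelian group $F$. The $M$-periodic morphism $f$ is affine, $f(x)=f(0)+\phi(x)$ with $\phi(Me_j)=0$ in $\ns$. Picking any preimages $\phi'(e_j)\in\psi^{-1}(\phi(e_j))$, I have $M\phi'(e_j)\in K$; splitting this as $t_j+d_j\in\mb{T}^a\times F$ and using divisibility of $\mb{T}^a$, I subtract a torus correction to eliminate $t_j$, leaving an element of $F$. Hence the adjusted $\phi'$ is $M|F|$-periodic, and I take $M'=M|F|$ with $|F|=O_\psi(1)$.

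For the inductive step, I first apply the theorem at step $k-1$ to $\psi_{k-1}:\nss_{k-1}\to\ns_{k-1}$ and $\pi_{k-1}\co f$, producing an $M_1$-periodic lift $f_1:\mc{D}_1(\mb{Z}^n)\to\nss_{k-1}$ of $\pi_{k-1}\co f$ with $M_1=O_{k-1,\psi}(M^{C_{k-1}})$. I then lift $f_1$ along the fibration $\nss\to\nss_{k-1}$ (whose fibers are orbits of $\ab_k(\nss)$) to some morphism $\tilde f:\mc{D}_1(\mb{Z}^n)\to\nss$, and further modify it by a polynomial map valued in $\ab_k(\nss)$ so as to ensure $\psi\co\tilde f=f$. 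The remaining obstruction to $M_1$-periodicity of $\tilde f$, measured in each direction $e_j$ by $\delta_j(x):=\tilde f(x+M_1e_j)-\tilde f(x)\in\ab_k(\nss)$, is a polynomial function of $x$ taking values in $\ker\bigl(\psi_k:\ab_k(\nss)\to\ab_k(\ns)\bigr)$, itself a compact abelian Lie group splitting as torus plus finite part. Applying the base-case argument coefficient-by-coefficient to the polynomial $\delta_j$ lets me kill these obstructions by an additional polynomial correction valued in $\ab_k(\nss)$, yielding an $M'$-periodic morphism with $M'=M_1\cdot e_k$ for a bounded $e_k=O_\psi(1)$.

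The main obstacle will be in this last step: assembling the coefficient-wise corrections into a single genuine polynomial map that simultaneously resolves the obstructions $\delta_1,\ldots,\delta_n$, while preserving the condition $\psi\co\tilde f=f$ and without inflating the period beyond a bounded factor. The free graded structure of $\Gamma$ provided by universality of $\nss$ should be essential here: it ensures both that the space of polynomial sequences in $G$ has enough flexibility to accommodate the adjustments at each layer, and that the period bounds compound to the claimed polynomial-in-$M$ bound $M'=CM^C$ after $k$ iterations.
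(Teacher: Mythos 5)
Your inductive outline matches the paper's, and the base case $k=1$ is handled in the same spirit (splitting $\ker\psi$ into a divisible torus part and a finite part). But the inductive step has a genuine gap, which you yourself flag as ``the main obstacle'': the directional-obstruction approach requires solving a cohomological equation $h(x+M_1e_j)-h(x)=-\delta_j(x)$ simultaneously over all $j\in[n]$, and nothing in your proposal shows that this system is solvable by a genuine polynomial $h:\mb{Z}^n\to\ab_k(\nss)$ with bounded period loss. Appealing to the free graded structure of $\Gamma$ does not resolve this, because the obstruction lives in the abelian last layer, not in $\Gamma$ itself.

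The paper avoids the assembly problem entirely by working with \emph{Taylor coefficients} rather than directional finite differences. Proposition~\ref{prop:perequivrat} establishes that $M$-periodicity of a polynomial map into a nilmanifold $G/\Gamma$ is equivalent (with polynomially controlled constants) to every Taylor coefficient $a_{\mf{t}}$ being $q$-rational relative to $\Gamma$, i.e.\ $a_{\mf{t}}^q\in\Gamma$ for $q=CM^C$. This is a \emph{pointwise} condition on each coefficient independently, so there is nothing to glue: one lifts each coefficient separately through the fibration (Lemma~\ref{lem:ratcoefflift}), reassembles the Taylor expansion, and reads off periodicity of the lift from Lemma~\ref{lem:multiGTlem}. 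Your ``coefficient-by-coefficient'' phrase gestures at this, but without the periodicity $\Leftrightarrow$ rationality equivalence you have no justification for why the coefficient-wise lifts yield a single periodic morphism.

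There is also a structural issue with the order of operations. You propose to first adjust $\tilde f$ so that $\psi\co\tilde f=f$ holds exactly, and then repair periodicity. But the adjusting polynomial into $\ab_k(\nss)$ need not itself be periodic, so after that step you may have destroyed whatever periodicity $\tilde f$ had. The paper does the opposite: it first produces a \emph{periodic} lift $\wt f:\mb{Z}^n\to\nss$ of the step-$(k-1)$ lift (Proposition~\ref{prop:nilmaniperiodlift}, which itself is a nontrivial lifting statement proved via rationality), at which point $m:=f-\psi\co\wt f$ is automatically a periodic $\ab_k(\ns)$-valued polynomial; one then lifts $m$ to a periodic $\ab_k(\nss)$-valued $m'$ via Lemma~\ref{lem:abcaseproj}, and sets $f'=\wt f+m'$. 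In that order, each step preserves periodicity by construction. Your sketch would need to reproduce Propositions~\ref{prop:nilmaniperiodlift} and~\ref{prop:perequivrat} (or equivalents) to be complete; without them, the argument does not close.

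Finally, a minor point: what universality of $\nss$ actually buys in this theorem is that $\nss$ is a nilmanifold (via Proposition~\ref{prop:nicerep=>coset}), so that the Mal'cev/Taylor machinery applies; the free graded structure of $\Gamma$ is not used directly here, contrary to what your last paragraph suggests.
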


\begin{remark}
When we write that an implicit variable depends on a map, e.g.\ on $\psi:\nss\to\ns$, we mean that it depends on the map itself and its domain and image.
\end{remark}

We shall prove Theorem \ref{thm:CFRpermorphlift} by induction on $k$, using the following two results.
\begin{proposition}[Lifting multivariable periodic polynomials]\label{prop:nilmaniperiodlift}
Let $(G/\Gamma,G_\bullet)$ be a degree-$k$ filtered nilmanifold (not necessarily connected), let $\nss$ be the associated compact nilspace, and let $f:\mb{Z}^n\to \nss_{k-1}$ be an $M$-periodic morphism. Then there exists an $M'$-periodic morphism $\wt{f}:\mb{Z}^n\to \nss$ such that $\pi_{k-1}\co\wt{f}=f$, where $M'=CM^C$ for $C=O_{k,G_\bullet,\Gamma}(1)$.
\end{proposition}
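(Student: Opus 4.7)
The plan is to produce a raw lift $g:\mb{Z}^n\to\nss$ of $f$ (ignoring periodicity), measure the failure of $g$ to be periodic as a family of polynomial ``defect maps'' $h_i:\mb{Z}^n\to\ab_k$ valued in the top structure group $\ab_k=G_k/(G_k\cap\Gamma)$, and then build an explicit polynomial correction $P:\mb{Z}^n\to\ab_k$ together with an enlarged period $M'=CM^C$ that cancels the defect.

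First, since $\mc{D}_1(\mb{Z}^n)$ is a free nilspace and $\pi_{k-1}:\nss\to\nss_{k-1}$ is a fibration, the projective lifting property gives a morphism $g:\mb{Z}^n\to\nss$ with $\pi_{k-1}\co g=f$. Using a Mal'cev basis adapted to $G_\bullet$, the lift $g$ can be chosen so that its polynomial coordinates in $G$ have rationality and complexity controlled polynomially in $M$ by those of $f$. I would then define $h_i(x):=g(x+Me_i)-g(x)$, measured in $\ab_k$ via the $\ab_k$-torsor structure on each $\pi_{k-1}$-fiber; this is well defined precisely because $f$ is $M$-periodic. One checks that $h_i\in\hom(\mc{D}_1(\mb{Z}^n),\mc{D}_k(\ab_k))$ is a polynomial morphism of degree $\le k$, and that the family $(h_i)_{i\in[n]}$ satisfies the cocycle identity $h_i(x+Me_j)+h_j(x)=h_j(x+Me_i)+h_i(x)$ arising from commutativity of the shifts $x\mapsto x+Me_i$.

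Next I would seek a polynomial correction $P\in\hom(\mc{D}_1(\mb{Z}^n),\mc{D}_k(\ab_k))$ of degree $\le k$ such that $\tilde g:=g+P$ is $M'$-periodic for some $M'=LM$. Since $P$ takes values in the top structure group, $\pi_{k-1}\co\tilde g=f$ still holds. Writing out the $M'$-periodicity condition reduces to solving the difference equations
\[P(x+LMe_i)-P(x)=-\sum_{\ell=0}^{L-1}h_i(x+\ell Me_i)\ \text{ in }\ab_k,\quad\forall\,i\in[n].\]
I would decompose $\ab_k\cong\mb{T}^m\times B$ (with $B$ a finite abelian group) and treat the two factors separately. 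On $B$, every polynomial map $\mb{Z}^n\to B$ of degree $\le k$ is automatically periodic with period depending only on $|B|$ and $k$, so an appropriate choice of $L$ depending on $|B|$ and $k$ kills the $B$-component. On $\mb{T}^m$, lifting $h_i^{\mb{T}}$ to a real-coefficient polynomial and expanding $P$ in a binomial basis makes the difference equation a triangular linear system in the coefficients of $P$, whose solvability reduces to clearing denominators controlled by a polynomial of degree $\le k$ in $M$.

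The step I expect to be the main obstacle is precisely this last one: producing $P$ with the correct quantitative bound $M'=CM^C$. The cocycle identity on the $h_i$ is essential to guarantee that the system of difference equations across different $i\in[n]$ is simultaneously solvable, and Hall--Petresco-type binomial identities in the style of Lemma \ref{lem:grouppower} are needed to manipulate the telescoped sums on the right-hand side. The polynomial-in-$M$ bound on the denominators, which is what yields $M^C$ rather than a worse dependence, ultimately rests on the fact that the polynomial degrees involved are at most $k$, independently of $M$; the constant $C$ then absorbs factorial factors coming from the binomial expansion and the torsion exponent of $B$.
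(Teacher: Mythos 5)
Your approach is a genuinely different route from the paper's. The paper works entirely at the level of Taylor coefficients: it uses the equivalence in Proposition \ref{prop:perequivrat} to turn $M$-periodicity of $f$ into $q$-rationality (with $q=CM^C$) of the Taylor coefficients of $f$ in $(G/G_k)/(\Gamma G_k/G_k)$, then uses Lemma \ref{lem:ratcoefflift} to lift each coefficient $a_{\mf{t}}G_k$ to a $qq'$-rational element $b_{\mf{t}}\in G_{|\mf{t}|}$, and finally uses Lemma \ref{lem:multiGTlem} to convert this rationality back into $M'$-periodicity of the new polynomial. Your strategy instead takes a raw lift $g$ by projectivity of the free nilspace $\mc{D}_1(\mb{Z}^n)$ and corrects it by solving a cohomological system of difference equations valued in $\ab_k$. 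The paper's version is purely algebraic, never decomposes $\ab_k$, and encapsulates all the quantitative loss in the periodicity-rationality dictionary; your version is more structural, but requires splitting $\ab_k\cong\mb{T}^m\times B$ and handling the two factors separately.

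The one genuine gap in the outline is the degree drop for the defect maps. You state that $h_i\in\hom(\mc{D}_1(\mb{Z}^n),\mc{D}_k(\ab_k))$ has degree $\le k$, which is correct a priori (two lifts of $f$ differ by such a polynomial), but this is not enough. The operator $P\mapsto P(\cdot+ae_i)-P(\cdot)$ sends degree-$\le k$ polynomials to polynomials of \emph{total} degree $\le k-1$: if $P=\sum_{|\mf{t}|\le k}p_{\mf{t}}\binom{x}{\mf{t}}$ then each term $p_{\mf{t}}\big[\binom{x+ae_i}{\mf{t}}-\binom{x}{\mf{t}}\big]$ has total degree $\le |\mf{t}|-1$. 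So your equation $P(x+LMe_i)-P(x)=-\sum_{\ell}h_i(x+\ell Me_i)$ has a degree-$\le k$ solution only if the right-hand side has degree $\le k-1$, which forces $h_i$ itself to have degree $\le k-1$. This degree drop does hold — the degree-$k$ Taylor coefficients of $\hat g$ lie in the central subgroup $G_k$, are unchanged under the shift $x\mapsto x+Me_i$, and can receive no commutator corrections at degree $k$ because $G_{k+1}$ is trivial — but it must be stated and proved, since your ``triangular linear system'' argument silently presupposes it. Without it, the system is overdetermined and the correction $P$ does not exist.

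Two smaller points. First, lifting $h_i^{\mb{T}}$ to a real-coefficient polynomial is not needed and introduces a spurious difficulty: the cocycle identity on the $h_i$ lifts to $\mb{R}^m$ only modulo $\mb{Z}^m$, so solvability over $\mb{R}$ is not immediate. It is cleaner to solve directly over $\mb{T}^m$, where divisibility provides solutions unconditionally given the cocycle condition; this also shows that the torus factor imposes no constraint on $L$, so your final period can be taken $M'=O_{G/\Gamma,k}(1)\cdot M$, which is stronger than the stated $CM^C$. Second, on the finite factor, the choice of $L$ should be made explicit: with $N_0=O_{|B|,k}(1)$ a common period of the degree-$\le(k-1)$ maps $h_i^B$, taking $L=|B|N_0$ makes $\sum_{\ell=0}^{L-1}h_i^B(x+\ell Me_i)$ vanish identically, so the $B$-component of $P$ may simply be taken to be zero.
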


\begin{lemma}[Abelian case of Theorem  \ref{thm:CFRpermorphlift}]\label{lem:abcaseproj}
Let $\ab,\ab'$ be compact abelian Lie groups such that there exists a continuous surjective homomorphism $\eta:\ab'\to \ab$, let $k\in \mb{N}$, and let $m$ be an $M$-periodic morphism $\mc{D}_1(\mb{Z}^n)\to \mc{D}_k(\ab)$ (i.e.\ a polynomial $\mb{Z}^n\to \ab$ of degree $k$). Then there exists an $M'$-periodic morphism $m':\mc{D}_1(\mb{Z}^n)\to \mc{D}_k(\ab')$ such that $\eta\co m' = m$, where $M'=CM^C$ with $C=O_{k,\eta}(1)$.
\end{lemma}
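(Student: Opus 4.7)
The idea is to produce an explicit lift via the binomial-basis representation and then control its period. Write $m$ uniquely as $m(x) = \sum_{|\alpha|\le k}\binom{x}{\alpha}\,c_\alpha$ with $c_\alpha = \Delta^\alpha m(0)\in\ab$. Choose any preimages $c'_\alpha\in\eta^{-1}(c_\alpha)$ (these exist by surjectivity of $\eta$) and set $m'_0(x):=\sum_\alpha\binom{x}{\alpha}\,c'_\alpha$, so that $\eta\co m'_0=m$ and $m'_0$ is a degree-$k$ polynomial $\mb{Z}^n\to\ab'$. The failure of $m'_0$ to be periodic is encoded in the defect polynomials $g_i(x):=m'_0(x+Me_i)-m'_0(x)$ of degree $<k$, which take values in $K:=\ker(\eta)$ by the $M$-periodicity of $m$ (their binomial-basis coefficients are precisely $\sum_{s\ge 1}\binom{M}{s}\,c'_{\beta+s e_i}$, and these lie in $K$ since their images under $\eta$ vanish).

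The main step is to enlarge the period to $M'=tM$ for some integer $t=O_{k,\eta}(1)$ so that the telescoped expression $m'_0(x+M'e_i)-m'_0(x) = \sum_{j=0}^{t-1}g_i(x+jMe_i)$ vanishes identically in $\ab'$, possibly after modifying each $c'_\alpha$ within its $K$-fiber. Using the structure theorem for compact abelian Lie groups, decompose $K\cong\mb{T}^b\times F_K$ with $F_K$ finite of exponent $e=e(\eta)$. For the $F_K$-component of $g_i$: any degree-$<k$ polynomial $\mb{Z}^n\to F_K$ is automatically periodic with a bounded period $P_1=O_{k,\eta}(1)$, since a sufficiently composite $M_0$ ensures $e\mid\binom{M_0}{s}$ for every $1\le s\le k$, forcing the $F_K$-part of the defect to vanish at period $M_0$. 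For the $\mb{T}^b$-component: lift to the universal cover $\mb{R}^b$ by taking real-valued representatives of the coefficients, so the periodicity condition becomes a requirement that a specific real polynomial take values in $\mb{Z}^b$; using divisibility of $\mb{R}^b$ together with the $M$-periodicity of $\eta\co m'_0 = m$, one can adjust the real lifts to satisfy this requirement after a further bounded enlargement of $t$.

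The principal obstacle is ensuring compatibility across multiple coordinate directions $i$ and multi-indices $\alpha$ simultaneously: the defect polynomials $g_i$ are coupled by the commutation relation $\Delta_{Me_i}\Delta_{Me_j}=\Delta_{Me_j}\Delta_{Me_i}$, and modifying a coefficient $c'_\alpha$ alters the defects at all multi-indices $\beta$ with $\beta+se_i=\alpha$ for some $s\ge 1$, linking different degrees. I would resolve this by a reverse induction on $|\alpha|$ from $k$ down to $0$: at step $|\alpha|=d$, the coefficients of higher degree are already fixed, and the freedom in each degree-$d$ coefficient (an element of the $K$-torsor $\eta^{-1}(c_\alpha)$) is used to kill the degree-$(d-1)$ defect coefficient in $g_i$, leaving higher-degree constraints intact. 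Only finitely many steps occur, each contributing an enlargement factor depending on $k$, $b$, and $e$, so the resulting bound $M'=CM^C$ with $C=O_{k,\eta}(1)$ holds, independent of $n$ and $M$.
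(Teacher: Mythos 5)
Your approach starts from the same ingredients (Taylor coefficients, arbitrary lift, then correction using the structure of $\ker(\eta)$), and the two final ideas — kill the finite part of the kernel by a bounded period enlargement, and use torus divisibility for the connected part — are exactly the right tools and match the paper's use of its Lemma~\ref{lem:ratcoefflift}. However, the "defect-polynomial" route you take is genuinely more delicate than what the paper does, and as written it leaves the hardest point unresolved. The paper does not attempt to kill the defects $g_i$ directly. Instead it first establishes (Proposition~\ref{prop:perequivrat}, restricted to the abelian case) that a degree-$k$ polynomial $\mb{Z}^n\to A$ into an abelian group is $M$-periodic \emph{if and only if} each nonconstant Taylor coefficient is killed by some $q=O_k(M^{O_k(1)})$. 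Given that equivalence, the proof is completely local in the multi-index: for each $\mf{t}\neq 0$ one applies Lemma~\ref{lem:ratcoefflift} to the single element $a_{\mf{t}}$ (viewing $\ab'$ with the degree-$2$ filtration $\ab'_2=\ker(\eta)$ and $\Gamma=\{\id\}$), which produces a lift $b_{\mf{t}}$ with $qq'b_{\mf{t}}=0$ where $q'$ is the exponent of the discrete part of $\ker(\eta)$; periodicity of $m'(x)=\sum b_{\mf{t}}\binom{x}{\mf{t}}$ is then automatic from the converse direction of the equivalence. This removes entirely the coupling issue you flag: there is no need to adjust $c'_\alpha$ to satisfy several defect conditions at once, because periodicity is reduced to a condition on each coefficient separately. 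In contrast, your proposed reverse induction on $|\alpha|$ requires showing that adjusting a single $c'_\alpha$ can simultaneously satisfy the constraints coming from each direction $i$ with $\alpha_i\geq 1$; you correctly note that the compatibility should come from $\Delta_{Me_i}\Delta_{Me_j}=\Delta_{Me_j}\Delta_{Me_i}$, but that consistency check is the substance of the argument and is not carried out. Similarly, the torus step ("adjust the real lifts so that a specific real polynomial takes values in $\mb{Z}^b$") is where divisibility must actually be invoked, and it is left as a gesture. So your proposal is in the right spirit and not wrong, but it trades the paper's clean coefficient-by-coefficient reduction for a global correction scheme whose key consistency step is not established.
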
 
\noindent Before proving these results, let us assume them and complete the picture of the overall strategy.

\begin{proof}[Proof of Theorem \ref{thm:CFRpermorphlift}]
The morphism $f_{k-1}:=\pi_{k-1}\co f: \mc{D}_1(\mb{Z}^n) \to \ns_{k-1}$ is $M$-periodic. Hence, letting $\psi_{k-1}:\nss_{k-1}\to\ns_{k-1}$ be the induced fibration (satisfying $\pi_{k-1,\ns}\co\psi=\psi_{k-1}\co\pi_{k-1,\nss}$), we can suppose by induction that there is an $O_{k-1,\psi_{k-1}}(M^{O_{k-1,\psi_{k-1}}(1)})$-periodic morphism $\wt{f}_{k-1}:\mb{Z}^n \to \nss_{k-1}$ such that $\psi_{k-1}\co \wt{f}_{k-1}=f_{k-1}$. By Proposition \ref{prop:nilmaniperiodlift}, there is an $O_{k,\nss}(M^{O_{k,\nss}(1)})$-periodic morphism $\wt{f}: \mb{Z}^n \to \nss$ such that $\pi_{k-1,\nss}\co \wt{f}=\wt{f}_{k-1}$. We then have
\[
\pi_{k-1,\ns} \co \psi\co\wt{f} = \psi_{k-1}\co \pi_{k-1,\nss}  \co\wt{f} = \psi_{k-1}\co \wt{f}_{k-1} = \pi_{k-1,\ns}\co f.
\]
Hence $\psi\co\wt{f} + m = f$, where $m$ is an $O_{k,\nss}(M^{O_{k,\nss}(1)})$-periodic morphism from $\mb{Z}^n$ to the $k$-th structure group $\ab_k(\ns)$. We thus reduce the problem to the abelian case of Lemma \ref{lem:abcaseproj}. Letting $\phi_k:\ab_k(\nss)\to\ab_k(\ns)$ be the $k$-th structure homomorphism of $\psi$ (see \cite[Definition  3.3.1]{Cand:Notes1}), Lemma \ref{lem:abcaseproj} provides an $O_{k,\phi_{k}}(M^{O_{k,\phi_{k}}(1)})$-periodic morphism (polynomial) $m':\mb{Z}^n\to\ab_k(\nss)$ such that $\phi_k\co m'=m$. We then have $\psi\co (\wt{f}+m') =  \psi\co\wt{f} + \phi_k\co m' = \psi\co\wt{f} + m = f$. Hence $\wt{f}+m'$ is a valid $O_{k,\psi}(M^{O_{k,\psi}(1)})$-periodic lift of $f$.
\end{proof}

\begin{remark}
Note that, if the constant $M'$ in Theorem \ref{thm:CFRpermorphlift} was exactly $M$ (or, more precisely, if for each $i\in [n]$ we could get as a period in the $i$-th coordinate the order of the $i$-th generator of $\ab$), then the lift $f'$ would essentially be a homomorphism on $\ab$ itself again, and the Jamneshan--Tao conjecture would follow. However, Theorem \ref{thm:CFRpermorphlift} does not hold in general with such a small $M'$. For example, let $G:=\begin{psmallmatrix} 1 & \mb{R} & \mb{R}\\[0.1em]0  & 1 & \mb{R}\\[0.1em] 0 & 0 & 1 \end{psmallmatrix}$ be the Heisenberg group and $\Gamma:=\begin{psmallmatrix} 1 & \mb{Z} & \mb{Z}\\[0.1em]0  & 1 & \mb{Z}\\[0.1em] 0 & 0 & 1 \end{psmallmatrix}$. Let $g_1:=\begin{psmallmatrix} 1 & 1/N & 0\\[0.1em]0  & 1 & 0\\[0.1em] 0 & 0 & 1 \end{psmallmatrix}$ and $g_2:=\begin{psmallmatrix} 1 & 0 & 0\\[0.1em]0  & 1 & 1/M\\[0.1em] 0 & 0 & 1 \end{psmallmatrix}$ for positive integers $N,M$. Let $\nss:=G/\Gamma$, let $\ns$ be the 1-step factor $\nss_1$ (isomorphic to $\mb{T}^2$), and let $f:\mb{Z}^2\to \ns$ be the morphism $(n,m)\mapsto\pi_1\co (g_1^n g_2^m)$. Clearly $f$ is $(N,M)$-periodic (i.e.\ with period $N$ in the first coordinate and $M$ in the second). However, in general there is no lift of $f$ to $\nss$ which is $(N,M)$ periodic. In fact, it can be proved that there exists an $(N,M)$ periodic lift to $\nss$ if and only if $N$ and $M$ are coprime (we omit the details). Note that, in the latter case, our morphism $f$ can be regarded as a morphism $\mb{Z}_{NM}\to \ns$, and thus as a periodic \emph{univariate} polynomial map (i.e.\ defined on $\mb{Z}$). In the univariate case of polynomial maps on $\mb{Z}$, it is already known from \cite[proof of Proposition 6.1]{CandSis} that for any degree-$k$ nilmanifold $\nss:=G/\Gamma$ where the last structure group is connected, any $M$-periodic polynomial map $f:\mb{Z}\to\nss_{k-1}$ can be lifted to an $M$-periodic polynomial map $\mb{Z}\to \nss$.
\end{remark}
\noindent We now turn to the proofs of Proposition \ref{prop:nilmaniperiodlift}, Lemma \ref{lem:abcaseproj}, and various related ingredients. 

Recall from \cite[Lemma A.1]{GTarit} that given a filtered group $(G,G_\bullet)$ of degree $k$, a polynomial map $f:\mb{Z}^n\to G_\bullet$ has a so-called \emph{Taylor expansion} of the following form:
\[
f(x)=\prod_{\mf{t}\in \mb{Z}_{\geq 0}^n} a_{\mf{t}}^{\binom{x}{\mf{t}}},
\]
where $a_{\mf{t}}\in G_{|\mf{t}|}$ (where $|\mf{t}|=t_1+\cdots +t_n$) and $\binom{x}{\mf{t}}=\binom{x_1}{t_1}\binom{x_2}{t_2}\cdots \binom{x_n}{t_n}$. In particular, the coefficients $a_{\mf{t}}$ are nontrivial only for $|\mf{t}|\leq k$, so there are at most $O_{k,n}(1)$ such coefficients.

We begin with a useful connection between periodicity of polynomial maps and the \emph{rationality} of their Taylor coefficients. Recall from \cite{LeibRat} that given a nilpotent Lie group $G$ with a subgroup $\Gamma\leq G$, an element $g$ in said to be \emph{rational} with respect to $\Gamma$ if $g^q\in \Gamma$ for some positive integer $q$.

We shall use the following fact.
\begin{lemma}\label{lem:ratcoefflift}
Let $(G/\Gamma,G_\bullet)$ be a degree-$k$ filtered nilmanifold (not necessarily connected). For any $i\in[k]$ and $g\in G_i$ let $gG_k$ be a rational element of the group $G/G_k$ with respect to $\Gamma G_k/G_k$. Then there exists $r\in G_k$ such that $gr\in G_i$ is a rational element in $G$ with respect to $\Gamma$. Moreover, if $g^qG_k\subset \Gamma G_k$ then $(gr)^{qq'}\in \Gamma$ where $q'$ is the exponent of the finite abelian group $H/H^0$ for $H:=G_k/(G_k\cap \Gamma)$.
\end{lemma}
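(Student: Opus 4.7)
The plan is to exploit the fact that $G_k$ is central in $G_1$, which follows immediately from the filtration axiom $[G_1,G_k]\subseteq G_{k+1}=\{\mathrm{id}_G\}$. Since $i\in[k]$ forces $G_i\subseteq G_1$, any $g\in G_i$ and any candidate $r\in G_k$ both lie in $G_1$, so all powers and products involving them can be computed treating $G_k$-elements as central. Note also that $G_k$ is itself abelian, since $[G_k,G_k]\subseteq G_{2k}\subseteq G_{k+1}=\{\mathrm{id}_G\}$, so $H:=G_k/(G_k\cap\Gamma)$ is a compact abelian Lie group by the filtered-nilmanifold hypothesis, with identity component $H^0$ a torus and $H/H^0$ finite.

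I would prove the ``moreover'' statement directly, as the first assertion will then be a free consequence. Given $g^q G_k\subseteq\Gamma G_k$, write $g^q=\gamma h$ with $\gamma\in\Gamma$ and $h\in G_k$; observe that $\gamma=g^qh^{-1}$ lies in $G_1$. For any $r\in G_k$, centrality of $G_k$ in $G_1$ gives
\[
(gr)^{qq'}\;=\;g^{qq'}\,r^{qq'}\;=\;(\gamma h)^{q'} r^{qq'}\;=\;\gamma^{q'}\,h^{q'} r^{qq'},
\]
so that $(gr)^{qq'}\in\Gamma$ is equivalent to $h^{q'}r^{qq'}\in G_k\cap\Gamma$, i.e.\ to a statement purely in $H$ via the quotient map $\pi:G_k\to H$.

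The remaining problem is to solve $\pi(r)^{qq'}=\pi(h)^{-q'}$ in $H$. By the definition of $q'$ as the exponent of $H/H^0$, the element $\pi(h)^{q'}$ lies in $H^0$, which is a torus and hence divisible; thus some $s\in H^0$ satisfies $s^{qq'}=\pi(h)^{-q'}$. Any $r\in G_k$ with $\pi(r)=s$ then meets the requirement, and $gr\in G_i$ because $r\in G_k\subseteq G_i$. For the first assertion, the very same $r$ witnesses $(gr)^{qq'}\in\Gamma$, so $gr$ is rational in $G$ with respect to $\Gamma$.

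I do not anticipate a substantive obstacle: the argument only uses the centrality of $G_k$ in $G_1$ (immediate from the degree-$k$ filtration) together with divisibility of the torus $H^0$. The single point requiring care is that after the central commutations, the desired rationality really does reduce to an equation solvable inside $H^0$, for which the exponent $q'$ of $H/H^0$ is precisely what is needed to land in the divisible piece.
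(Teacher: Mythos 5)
Your proof is correct and follows essentially the same route as the paper's: write $g^q=\gamma h$ with $\gamma\in\Gamma$, $h\in G_k$, pass to $H=G_k/(G_k\cap\Gamma)$, use that $\pi(h)^{q'}$ lands in the torus $H^0$ (by definition of the exponent $q'$), solve by divisibility, and push back up using centrality of $G_k$. Two minor differences worth noting: the paper handles the case $i=k$ separately with the trivial choice $r=g^{-1}$, whereas your argument treats all $i\in[k]$ uniformly; and you are slightly more careful in invoking only centrality of $G_k$ \emph{within $G_1$} (where the observation $\gamma=g^qh^{-1}\in G_1$ matters), while the paper simply speaks of the center of $G$, which implicitly uses $G_0=G_1$.
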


\begin{proof}
If $i=k$, then we simply let $r=g^{-1}$ and the element $gr=\id$ satisfies the claims.

For $i<k$, suppose that $g^qG_k\subset \Gamma G_k$, so that $g^q=h\gamma$ for some $h\in G_k$ and $\gamma\in \Gamma$. Note that $H:=G_k/(G_k\cap \Gamma)$ is a compact abelian Lie group and thus we can identify it $H=\mb{T}^\ell\times A$ for some $\ell\in \mb{Z}_{\ge0}$ and some finite abelian group $A$. Then $h^{q'}=(t,0)\mod G_k\cap\Gamma$ where $t\in \mb{T}^\ell$ and $q'$ is the exponent of $H/H^0\cong A$. As the torus is divisible, there exists $r\in G_k$ such that $r^{qq'}=h^{-q'}\mod G_k\cap\Gamma$, that is, there exists $\gamma'\in G_k\cap \Gamma$ such that $r^{qq'}=h^{-q'}\gamma'$. Therefore, using that the elements $h,r,\gamma'\in G_k$ are in the center of $G$, we have
\[
(gr)^{qq'}=g^{qq'}r^{qq'}=(g^q)^{q'}r^{qq'}=(h\gamma)^{q'}r^{qq'}=h^{q'}\gamma^{q'}h^{-q'}\gamma'\in\Gamma.\qedhere
\]
\end{proof}

\begin{proposition}\label{prop:perequivrat}
Let $(G/\Gamma,G_\bullet)$ be a filtered nilmanifold of degree $k$ and let $f:\mb{Z}^n\to G$ be a polynomial map with Taylor expansion $f(x)=\prod_{\mf{t}\in \mb{Z}_{\geq 0}^n} a_{\mf{t}}^{\binom{x}{\mf{t}}}$ and $f(0)=\id_G$. Then the following statements are equivalent.
\begin{enumerate}
\item Every coefficient $a_{\mf{t}}$ is rational with respect to $\Gamma$.
\item The map $\mb{Z}^n\to G/\Gamma$ defined as $x\mapsto f(x)\Gamma$ is periodic.
\end{enumerate}
Moreover, if $(i)$ holds with $a_{\mf{t}}^q\in \Gamma$ for every $\mf{t}$, then $(ii)$ holds with $M$-periodicity for $M=q^{1+k^2(k+1)^2/4}k!$, and if $(ii)$ holds with $M$-periodicity, then $(i)$ holds with $a_{\mf{t}}^q\in \Gamma$ for every $\mf{t}$, for $q=CM^C$ where $C=O_{k,G_\bullet,\Gamma}(1)$.\footnote{More precisely, the dependence of $C$ is as follows. For every $i\in[2,k]$ letting $H_i:=G_i/\big((\Gamma\cap G_i)G_{i+1})\big)$, the dependence of $C$ on $k$ and the exponents of the groups $H_i/H_i^0$ for $i\in[2,k]$. In particular, if $G$ is abelian with nilspace structure $\mc{D}_k(G)$ then the dependence is only on $k$.}
\end{proposition}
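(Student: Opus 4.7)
The plan is to prove both implications by induction on the degree $k$ of the filtration, exploiting that $G_k$ is central and abelian (from $[G,G_k]\subseteq G_{2k}\subseteq G_{k+1}=\{\id_G\}$), so that quotienting by $G_k$ reduces each inductive step to a filtered nilmanifold of degree $k-1$, with a residual piece that lands in the abelian Lie group $G_k/(G_k\cap\Gamma)$. The base case is the abelian one: for $G$ abelian carrying the $\mathcal{D}_k(G)$-structure, writing $f(x)=\sum_{\mf t}a_{\mf t}\binom{x}{\mf t}$ additively, the key fact is that $\binom{x+M}{t}-\binom{x}{t}$ is an integer polynomial in $x$ all of whose values are multiples of $M/t!$ whenever $t!\mid M$, while conversely iterated finite differences extract each $a_{\mf t}$ up to a factor of $\mf{t}!$. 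Thus in the abelian case $(i)\Rightarrow(ii)$ holds with $M=q\cdot k!$ and $(ii)\Rightarrow(i)$ with $q$ dividing $k!\cdot M^k$.

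For the inductive step of $(i)\Rightarrow(ii)$, I would pass to $\bar f\colon\mb{Z}^n\to G/G_k$, whose Taylor coefficients $a_{\mf t}G_k$ (for $|\mf t|<k$) remain rational with the same bound $q$; by the inductive hypothesis $\bar f$ is $M_1$-periodic modulo $\Gamma G_k/G_k$. The residual map $h_i(x):=f(x+M_1e_i)f(x)^{-1}$ then takes values in $G_k\Gamma$. Since $G_k$ is central and $G_k\cap\Gamma$ is normal in $G_k\Gamma$ with quotient $G_k/(G_k\cap\Gamma)$, the composition $x\mapsto h_i(x)\Gamma$ factors through a well-defined polynomial map to the abelian Lie group $G_k/(G_k\cap\Gamma)$. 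Expanding via a Hall--Petresco-type manipulation of the multivariable formula from \cite[Lemma A.1]{GTarit}, the Taylor coefficients of this residual are products of integer powers of the classes of the $a_{\mf t}$, so rationality of the $a_{\mf t}$ with bound $q$ yields rationality of the residual coefficients with a polynomially-larger bound. The abelian base case then supplies a further period $M_2$, and $M:=M_1M_2$ works.

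For $(ii)\Rightarrow(i)$, I again reduce modulo $G_k$. The top-order coefficients $a_{\mf t}$ with $|\mf t|=k$ lie in the central subgroup $G_k$ and are extracted by $k$-fold finite differences $D_{e_{i_1}}\cdots D_{e_{i_k}}f$ (where $D_{e_j}f(x):=f(x+e_j)f(x)^{-1}$); since such order-$k$ differences of a degree-$k$ polynomial map into a filtered group are $G_k$-valued and constant, the $M$-periodicity hypothesis forces $a_{\mf t}^{k!\cdot M^k}\in G_k\cap\Gamma$, i.e.\ rationality modulo $G_k\cap\Gamma$ of the top coefficients. Reducing modulo $G_k$ and applying the inductive hypothesis to $\bar f$ gives rationality of all remaining coefficients in $G/G_k$, which, via Lemma \ref{lem:ratcoefflift}, lifts to genuine rationality in $G$ with an additional multiplicative factor governed by the exponent $q'$ of $H_k/H_k^0$ (this is precisely the dependence flagged in the footnote).

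The principal obstacle I anticipate is the quantitative bookkeeping through the induction. Each reduction by $G_k$ costs a factor of $k!$ from the finite-difference normalization together with the blow-up incurred when applying the inductive bound at degree $k-1$ to the reduced polynomial; compounded across all $k$ filtration steps and combined with the Hall--Petresco expansion bounds for the residual map, these contributions need to produce the stated exponent $1+k^2(k+1)^2/4$, whose shape is consistent with an exponent that grows like $\sum_{i=1}^{k} i\cdot i = O(k^3)$ after squaring the cumulative inductive cost $\sum_{i=1}^k i = k(k+1)/2$. A secondary subtlety is that $G$ need not be connected, so lifting rationality modulo $G_k$ back to $G$ requires Lemma \ref{lem:ratcoefflift} at every filtration level; one must verify that the compounded ``connected-component exponents'' $q'$ arising at each level stay absorbed in the single factor $k!$ displayed in the statement (with the remaining dependence tracked into the constant $C$).
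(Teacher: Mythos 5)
Your proposal takes a genuinely different route from the paper for $(i)\Rightarrow(ii)$. The paper proves this implication directly (not by induction on the filtration degree) via Lemma~\ref{lem:multiGTlem}, a commutation argument that pushes the extra factors $g_{\mf t}^{rc/k!}$ rightward past the remaining Taylor terms using the adapted Leibman uniform-rationality lemma (Proposition~\ref{prop:adaLeib}). Your $G_k$-induction scheme for this direction is a different idea, and it has a real gap: obtaining $M_2$-periodicity of the residual $\bar g\colon\mb{Z}^n\to G_k/(G_k\cap\Gamma)$ is \emph{not} enough to conclude $M_1M_2$-periodicity of $x\mapsto f(x)\Gamma$. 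What you actually need is that the accumulated product over one full period,
\[
\prod_{j=0}^{M_2-1} h_i(x+jM_1e_i),
\]
lands in $\Gamma$, i.e.\ that the \emph{sum} of $\bar g$ over a full period vanishes in $G_k/(G_k\cap\Gamma)$ --- a strictly stronger requirement than periodicity (a nonzero constant residual is periodic with period $1$ yet its sum over any period can be nontrivial). This can be repaired by choosing $M_2$ sufficiently composed so the telescoping sums of the binomial exponents kill each Taylor coefficient of $\bar g$, but your write-up does not address it, and patching it changes the quantitative bookkeeping.

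On $(ii)\Rightarrow(i)$, your scheme (reduce mod $G_k$, induct, handle the residual in the abelian Lie group $G_k/(G_k\cap\Gamma)$, lift via Lemma~\ref{lem:ratcoefflift}) is close in spirit to the paper's. The difference: you propose to extract the top-degree coefficients directly via non-abelian $k$-fold finite differences and read off rationality from $M$-periodicity. This needs care --- $\Gamma$ is not normal in $G$, so $M$-periodicity of $f\Gamma$ does not immediately give periodicity of $D_{e_{i_1}}\cdots D_{e_{i_k}}f$ modulo $\Gamma$; there is commutator bookkeeping to do before you can conclude $a_{\mf t}^{k!M^k}\in G_k\cap\Gamma$. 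The paper avoids direct extraction of the top coefficients: it first applies induction to $f$ mod $G_k$, lifts the resulting rational coefficients to $G$ using Lemma~\ref{lem:ratcoefflift} to build an auxiliary $\wt f$ which is periodic by Lemma~\ref{lem:multiGTlem}, and then treats $f\cdot\wt f^{-1}$ (viewed additively) as an abelian residual, applying the abelian $(ii)\Rightarrow(i)$ at the end. This ordering sidesteps the non-normality issue cleanly and is where the statement's exponent $1+k^2(k+1)^2/4$ actually comes from (via Lemma~\ref{lem:multiGTlem}), not from an induction of the form you anticipate.

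So: the strategy of reducing modulo $G_k$ and handling an abelian residual is the right instinct and matches the paper for one direction, but you should (a) fill the "sum over a period" gap in $(i)\Rightarrow(ii)$ --- or switch to the paper's direct commutation argument for that direction --- and (b) either carry out the commutator bookkeeping that makes the finite-difference extraction safe modulo $\Gamma$, or adopt the paper's lift-then-compare route via $\wt f$.
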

\noindent We divide the proof into two parts. For the implication $(i)\Rightarrow (ii)$, we shall use the following adaptation of a result of Leibman.

\begin{proposition}[Uniform rationality; see Lemma 1.3 in \cite{LeibRat}]\label{prop:adaLeib}
Let $G/\Gamma$ be a degree-$k$ nilmanifold, let $q\in \mb{N}$, and let $a_1,\ldots,a_n\in G$ be such that $a_i^q\in \Gamma$ for every $i\in [n]$. Then, letting $H:=\langle a_1,\ldots,a_n\rangle$, we have that every element $b\in H$ satisfies $b^N\in \Gamma$ for $N=q^{k(k+1)/2}$. 
\end{proposition}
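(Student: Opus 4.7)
The plan is to argue by induction on the degree $k$ of the nilpotent filtration on $G$, using that the $k$-fold iterated commutator map becomes multilinear when its image lies in $G_k$ (because $G_{k+1}=\{\id\}$ makes $G_k$ central). This multilinearity lets us bootstrap the rationality hypothesis $a_i^q\in\Gamma$ through the last layer of the filtration at the cost of an extra factor of $q^k$, giving the desired exponent $q^{k(k-1)/2}\cdot q^k=q^{k(k+1)/2}$.

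For the base case $k=1$, the filtration has $G_2=\{\id\}$, so $G=G_1$ is abelian. Then every $b\in H$ has the form $b=\prod_{i=1}^n a_i^{m_i}$ for integers $m_i$, and commutativity yields $b^q=\prod_{i=1}^n a_i^{qm_i}\in\Gamma$, matching $N=q=q^{1\cdot 2/2}$.

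For the inductive step with $k\ge 2$, I would consider the quotient $\pi\colon G\to G/G_k$, which carries a degree-$(k-1)$ filtration with lattice $\pi(\Gamma)$. Since $\pi(a_i)^q\in\pi(\Gamma)$ for each $i$, the induction hypothesis applied to the subgroup $\pi(H)=\langle\pi(a_1),\ldots,\pi(a_n)\rangle$ yields, for every $b\in H$, an element $\gamma\in\Gamma$ and $g_k\in G_k$ with
\[
b^{N'}=\gamma\, g_k,\qquad N':=q^{k(k-1)/2}.
\]
Moreover $g_k\in H\cap G_k$, and a standard fact from commutator calculus in nilpotent groups says that $H\cap G_k$ is generated, as an abelian subgroup of $G_k$, by the $k$-fold iterated commutators of the generators $a_1,\ldots,a_n$ and their inverses. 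So write $g_k=\prod_{j=1}^r c_j^{m_j}$ where each $c_j=[a_{i_1^{(j)}},[a_{i_2^{(j)}},[\cdots,a_{i_k^{(j)}}]]]$.

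The key step will now be showing $g_k^{q^k}\in\Gamma$. Because $G_{k+1}=\{\id\}$ the group $G_k$ is central in $G$, so the $k$-fold commutator map $(g_1,\ldots,g_k)\mapsto[g_1,[g_2,[\cdots,g_k]]]$ is $\mb{Z}$-multilinear into $G_k$ (written additively). Consequently, for each generator $c_j$,
\[
c_j^{q^k}=\bigl[a_{i_1^{(j)}}^q,\bigl[a_{i_2^{(j)}}^q,\bigl[\cdots,a_{i_k^{(j)}}^q\bigr]\bigr]\bigr]\in\Gamma,
\]
since each $a_{i_\ell^{(j)}}^q\in\Gamma$ and $\Gamma$ is closed under commutators. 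Using again that $G_k$ is abelian, $g_k^{q^k}=\prod_j c_j^{m_j q^k}=\prod_j(c_j^{q^k})^{m_j}\in\Gamma$. Finally, centrality of $g_k$ gives $(\gamma g_k)^{q^k}=\gamma^{q^k}g_k^{q^k}\in\Gamma$, and therefore $b^{N'q^k}=(b^{N'})^{q^k}\in\Gamma$, yielding $N=N'\cdot q^k=q^{k(k+1)/2}$.

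The main obstacle in this plan is the clean handling of the commutator calculus: one must verify both that $H\cap G_k$ is really generated (modulo nothing, since $G_{k+1}$ is trivial) by the $k$-fold commutators of the chosen generators of $H$, and that the $q^k$ in the identity $[a_{i_1}^q,\ldots,a_{i_k}^q]=[a_{i_1},\ldots,a_{i_k}]^{q^k}$ comes out exactly, with no further correction terms. Both are standard consequences of the fact that $[G,G_k]=\{\id\}$, but they are the technical heart of the argument.
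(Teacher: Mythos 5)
Your inductive step has a genuine gap at the claim that $g_k \in H\cap G_k$. The induction hypothesis applied to the degree-$(k-1)$ nilmanifold $(G/G_k)/(\Gamma G_k/G_k)$ only gives $\pi(b)^{N'}\in\pi(\Gamma)$, i.e.\ $b^{N'}\in\Gamma G_k$. In a decomposition $b^{N'}=\gamma g_k$ with $\gamma\in\Gamma$, $g_k\in G_k$, the element $\gamma$ has no reason to lie in $H$, so $g_k=\gamma^{-1}b^{N'}$ need not lie in $H$ either. With no control on $g_k$ beyond membership in $G_k$, there is no reason for $g_k^{q^k}$ to land in $\Gamma$ — for instance $g_k$ could represent an irrational shift in a toral factor of $G_k/(\Gamma\cap G_k)$. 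The secondary ``standard fact'' you invoke is also off: commutator calculus gives that $\gamma_k(H)$, the $k$-th term of the lower central series of $H$, is generated by the $k$-fold iterated commutators of the $a_i$; the group $H\cap G_k$ can be strictly larger. (Take $G$ abelian with the constant degree-$k$ filtration $G_1=\cdots=G_k=G$: then $H\cap G_k=H$, while every commutator is trivial.)

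To make the bootstrapping legitimate you would need a \emph{stronger} inductive hypothesis ensuring that the decomposition stays inside $H$: namely that $b^{q^{(k-1)k/2}}$ factors as $h_k\ell$ with $h_k\in\gamma_k(H)$ and $\ell\in\Gamma\cap H$. That is exactly what the paper proves. It sets $L:=\Gamma\cap H$, lets $H_\bullet$ be the lower central series of $H$ (not the ambient filtration $G_\bullet$), and shows by an inner induction that $H_i^{q^i}\subseteq H_{i+1}L$, then aggregates to $H^{q^{i(i+1)/2}}\subseteq H_{i+1}L$; taking $i=k$ (so $H_{k+1}=\{\id\}$) gives the result. Working with the lower central series of $H$ rather than the quotient $G/G_k$ and the filtration $G_\bullet$ is precisely what keeps every intermediate element inside $H$, so that when you write a piece as a product of commutators of the $a_i$ the claim is actually true.
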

\begin{proof}
We adapt the proof of \cite[Lemma 1.3]{LeibRat}, giving full details here for completeness. Letting $L:=\Gamma\cap H$, we want to show that $H^{N}:=\{c^N:c\in H\}\subset L$. Let $H=H_1\ge H_2\ge\cdots\ge H_k\ge \{\id\}$ be the lower central series of $H$. We first prove, by induction on $i$, that 
\begin{equation}\label{eq:Leib1}
H_i^{q^i}\subset H_{i+1}L\text{ for every }i\in [k].
\end{equation}
For the case $i=1$ of the induction, note that $H_2L$ is a normal subgroup of $H$ and that $H/H_2L$ is a finite abelian group of exponent $q$ (since every generator $a_i$ of $H$ satisfies $a_i^q\in L$). Hence we have $H^q\subset H_2 L$ as required.

Now, suppose by induction that $H_i^{q^i} \subseteq H_{i+1}L$. Then $ H_i^{q^i} \subseteq H_{i+1}(L \cap H_i)$. It follows that
\begin{equation}\label{eq:inclusions}
H_{i+1}^{q^{i+1}} 
\subseteq [H^q, H_i^{q^{i}}] H_{i+2}\subseteq [H_2 L, H_{i+1}(L \cap H_i)] H_{i+2}\subseteq H_{i+2} L.
\end{equation}
Indeed, to see the first inclusion, recall the following standard fact:
\begin{equation}\label{eq:comfact}
\text{for all $a\in H_r$, $b\in H_s$, and $j,\ell\in \mb{N}$ we have $[a,b]^{j\ell}=[a^{j},b^{\ell}]\!\mod H_{r+s+1}$.}    
\end{equation}
(This follows for instance from \cite[Lemmas 1.12 and 1.13]{Clement&al} since $[H_r,H_s]$ is central in $H$ mod $H_{r+s+1}$). Then since $H_{i+1}=[H,H_i]$, given any element of $[H,H_i]$, i.e.\ any finite product of commutators $[x,y]$ with $x\in H$ and $y\in H_i$, we can first apply the H--P formula \eqref{eq:H-P} to view the $q^{i+1}$-th power of this product as the product of powers $[x,y]^{q^{i+1}}$ mod $H_{i+2}$, and then apply \eqref{eq:comfact} to each of these powers to deduce that it is indeed in $[H^q, H_i^{q^{i}}] H_{i+2}$. 

To prove the third inclusion in \eqref{eq:inclusions}, let $h_2\in H_2$, $g\in L$, $h_{i+1}\in H_{i+1}$, and $h_i\in L\cap H_i$. Then $[h_2g,h_{i+1}h_i]=g^{-1}h_2^{-1}h_i^{-1}h_{i+1}^{-1}h_2gh_{i+1}h_i = g^{-1}h_2^{-1}h_i^{-1}h_2gh_i\mod H_{i+2}$ (where this last equality follows since elements of $H_{i+1}$ commute with every element modulo $H_{i+2}$). Since we (similarly) have $h_2^{-1}h_i^{-1}=h_i^{-1}h_2^{-1}\mod H_{i+2}$, we conclude that $[h_2g,h_{i+1}h_i]=g^{-1}h_i^{-1}gh_i\mod H_{i+2}$ and using that $g^{-1}h_i^{-1}gh_i\in L$, we deduce \eqref{eq:Leib1}.

Now, to deduce the main claim, we prove by induction that
$H^{q^{i(i+1)/2}} \subseteq H_{i+1} L $ for every $i\in [k]$. We already have the case $i=1$. Assuming then that the inclusion holds for a given $i$, we have that $H^{q^{(i+1)(i+2)/2}}$ equals
\[
\left( H^{q^{i(i+1)/2}} \right)^{q^{i+1}} \subseteq (H_{i+1} L)^{q^{i+1}} \subseteq H_{i+1}^{q^{i+1}} L^{q^{i+1}} H_{i+2}\subseteq (H_{i+2}L) L^{q^{i+1}} H_{i+2}\subseteq H_{i+2} L,
\]
where the second inclusion here follows by the H--P formula \eqref{eq:H-P}, the third one follows from \eqref{eq:Leib1}, and the last one by normality of $H_{i+2}$. This completes the induction, and now the main claim follows since we have $H^{q^{k(k+1)/2}} \subseteq L$.
\end{proof}

\noindent We shall also use the following multivariable generalization of Lemma A.12 from \cite{GTorb}. In \cite{GTorb}, Green and Tao mention that the proof of their Lemma A.12 (stated for polynomial maps on $\mb{Z}$) is easily generalized to the multivariable setting (i.e.\ for maps on $\mb{Z}^n$). We give below an alternative proof with an explicit bound on the period and without using Mal'cev coordinates.

\begin{lemma}\label{lem:multiGTlem} 
Let $(G/\Gamma,G_\bullet)$ be a filtered (possibly disconnected) nilmanifold of degree $k$ and let $g:\mb{Z}^n\to G_\bullet$ be a polynomial map such that $g(0)=\id_G$ and every Taylor coefficient of $g$ has its $q$-th power in $\Gamma$. Then $g(x)\Gamma$ is $M$-periodic for $M=q^{1+k^2(k+1)^2/4}k!$.
\end{lemma}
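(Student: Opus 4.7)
The plan is to combine the uniform rationality from Proposition~\ref{prop:adaLeib} with a direct Hall--Petresco expansion of the shifted polynomial $g(x+Me_i)$.

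\emph{Step 1.} Let $H:=\langle a_{\mf{t}}: 0<|\mf{t}|\le k\rangle\le G$ be the subgroup generated by the Taylor coefficients of $g$. Applying Proposition~\ref{prop:adaLeib} with this $q$, every element of $H$ has its $N$-th power in $\Gamma$, where $N:=q^{k(k+1)/2}$. Hence it suffices to show that, for each $i\in[n]$ and every $x\in\mb{Z}^n$, the element $g(x)^{-1}g(x+Me_i)$ lies in $\Gamma$.

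\emph{Step 2.} Using Vandermonde's identity $\binom{x_i+M}{t_i}=\binom{x_i}{t_i}+\sum_{j=1}^{t_i}\binom{M}{j}\binom{x_i}{t_i-j}$ and writing $\binom{x}{\mf{t}}_{(i,j)}:=\binom{x_i}{t_i-j}\prod_{\ell\neq i}\binom{x_\ell}{t_\ell}$ for $1\le j\le t_i$, each Taylor factor in $g(x+Me_i)$ splits as
\[
a_{\mf{t}}^{\binom{x+Me_i}{\mf{t}}} \;=\; a_{\mf{t}}^{\binom{x}{\mf{t}}}\cdot\prod_{j=1}^{t_i}a_{\mf{t}}^{\binom{M}{j}\binom{x}{\mf{t}}_{(i,j)}}
\]
(no commutator correction is required at this stage, as these are commuting powers of the same element). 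Next, using the Hall--Petresco formula \eqref{eq:H-P} iteratively, one commutes the ``correction'' factors $a_{\mf{t}}^{\binom{M}{j}(\,\cdot\,)}$ past the ``main'' factors $a_{\mf{t}'}^{\binom{x}{\mf{t}'}}$ (and past commutators produced along the way), collecting the main factors on the left so that they reassemble into $g(x)$. This yields a factorization
\[
g(x)^{-1}g(x+Me_i) \;=\; \prod_{\ell} c_\ell^{\,e_\ell(x,M)},
\]
where each $c_\ell$ is an iterated commutator of the $a_{\mf{t}}$ (hence in $H$), and every exponent $e_\ell(x,M)$ is a polynomial expression in $x$ that contains at least one factor $\binom{M}{j}$ with $j\in[1,k]$ (since the whole term must encode the effect of the shift).

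\emph{Step 3.} We check the required divisibility: for every $j\in[1,k]$, since $M=q^{1+k^2(k+1)^2/4}k! = q\cdot N^{k(k+1)/2}\cdot k!$ and $j!\mid k!$, the quotient $M/j!$ is a positive integer and a multiple of $q\cdot N^{k(k+1)/2}$, hence of $N$. Therefore $\binom{M}{j}=(M/j!)(M-1)\cdots(M-j+1)$ is a multiple of $N$, and so is each $e_\ell(x,M)$. Combining with Step~1 (so $c_\ell^{N}\in\Gamma$ for each $\ell$), we conclude that $c_\ell^{e_\ell(x,M)}\in\Gamma$ for every $\ell$, whence $g(x)^{-1}g(x+Me_i)\in\Gamma$, as required.

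The main obstacle is the careful bookkeeping in Step~2: iterating \eqref{eq:H-P} produces commutators of depth up to $k$ whose exponents are products of several binomial coefficients $\binom{M}{j_1}\cdots\binom{M}{j_r}$ together with lower-order factors in $x$. One must verify that in every resulting term at least one factor $\binom{M}{j}$ with $j\in[1,k]$ indeed survives (so that the $N$-divisibility transfers to every $e_\ell$), and the very generous margin provided by the factor $N^{k(k+1)/2}$ inside $M$ is precisely what absorbs the accumulated commutators up through the full nilpotency depth.
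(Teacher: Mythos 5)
Your proof follows the same outline as the paper's (Taylor-expand, split each factor via Chu--Vandermonde, commute the correction factors past the tail, and use Proposition~\ref{prop:adaLeib} for the divisibility), but the crucial bookkeeping step is asserted rather than carried out, and the assertion as stated is not correct.

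The problem is in Step~2 and its cleanup in Step~3. You claim that after commuting the correction factors $a_{\mf{t}}^{\binom{M}{j}(\cdot)}$ past the tail, the difference $g(x)^{-1}g(x+Me_i)$ is a product $\prod_\ell c_\ell^{e_\ell(x,M)}$ in which each exponent $e_\ell$ literally contains a factor $\binom{M}{j}$, and then conclude from $N\mid\binom{M}{j}$ that $N\mid e_\ell$. But the Hall--Petresco expansion \eqref{eq:H-P} does not preserve such a factor: writing a single conjugation step as $(a_{\mf{t}}[a_{\mf{t}},B])^{e}$ with $e$ a multiple of $\binom{M}{j}$, the correction terms carry exponents of the form $\binom{e}{i}$ for $i\ge 2$, and $\binom{e}{i}$ need not be divisible by $\binom{M}{j}$ (or even by $N$) just because $e$ is. For instance $\binom{4}{2}=6$ is not a multiple of $4$, so divisibility by $\binom{M}{j}$ is lost immediately after one H--P step. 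Your closing paragraph essentially acknowledges that the claimed survival of the $\binom{M}{j}$-factor still ``must be verified'' and appeals heuristically to the ``generous margin'' $N^{k(k+1)/2}$, but this verification is precisely where the work is, and your Step~3 only ever uses that $N$ divides $\binom{M}{j}$, not the stronger divisibility by $N^{k(k+1)/2}$ that the margin is supposed to provide.

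The paper avoids this exponent-tracking entirely with a cleaner device, namely the auxiliary claim \eqref{eq:commuclaim}: if $h^m\in\Gamma$ and $\gamma\in\Gamma$, then $h^{-1}\gamma^{m'}h\in\Gamma$ whenever $m^{k(k+1)/2}\mid m'$. This follows by applying Proposition~\ref{prop:adaLeib} to the two-generator subgroup $\langle h,\gamma\rangle$ with parameter $m$, so that $(h^{-1}\gamma h)^{m^{k(k+1)/2}}\in\Gamma$. In the proof, $h$ is taken to be the entire tail $\prod_{\mf{t}'>\mf{t}}g_{\mf{t}'}^{\binom{x+re_i}{\mf{t}'}}$, which has $h^{q^{k(k+1)/2}}\in\Gamma$ by a first application of Proposition~\ref{prop:adaLeib}, and the correction factor is $\gamma^{m'}$ with $\gamma=g_{\mf{t}}^{qc}\in\Gamma$ and $m'=(q^{k(k+1)/2})^{k(k+1)/2}$. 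Thus the paper's two nested uses of Proposition~\ref{prop:adaLeib} replace all of the H--P exponent arithmetic and are what produce the exponent $1+k^2(k+1)^2/4$ in $M$. If you want to finish your argument along your own lines, you need to prove an analogue of \eqref{eq:commuclaim} rather than asserting that each $e_\ell$ keeps a clean $\binom{M}{j}$ factor.
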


\begin{proof}
We have $g(x)=\prod_{|\mf{t}|\le k} g_{\mf{t}}^{\binom{x}{\mf{t}}}$ where the order of the indices $\mf{t}$ in the product is any such that for $\mf{t},\mf{t}'$, if $t_i\le t_i'$ for all $i\in[n]$ then $\mf{t}\le \mf{t}'$.

For any $a,b,r\in \mb{Z}_{\ge 0}$, the Chu-Vandermonde formula yields $\binom{a+r}{b}-\binom{a}{b}=\sum_{j=1}^b\binom{r}{j}\binom{a}{b-j}$, so $\binom{a+r}{b}=\binom{a}{b}+r\tfrac{c}{b!}$ for some integer $c$. Hence $g_{\mf{t}}^{\binom{x+re_i}{\mf{t}}}=g_{\mf{t}}^{\binom{x}{\mf{t}}+r\tfrac{c}{t_i!}}$ where $c\in \mb{Z}$ may vary in each appearance, even within the same formula, and may depend on $x,r,\mf{t},i$.

We now write $g(x+re_i)\Gamma=\prod_{|\mf{t}|\le k} g_{\mf{t}}^{\binom{x}{\mf{t}}+r\tfrac{c}{k!}}\Gamma$ (note that in the exponent we should have $r\tfrac{c}{t_i!}$ but as $t_i!$ always divides $k!$, we may indeed write $k!$ by modifying the integer $c$) and check that this is equal to $g(x)\Gamma$ for the claimed value of $r$. To check this we gradually make each term of the form $g_{\mf{t}}^{r\tfrac{c}{k!}}$ commute rightward past each other term in the product, until we obtain that $g(x+re_i)\Gamma=g(x)\Gamma$. To this end we use the following fact.
\begin{equation}\label{eq:commuclaim}
\text{Let $h\in G$ with $h^m\in \Gamma$, and $\gamma\in \Gamma$. Then, if $m^{k(k+1)/2}\mid m'$, we have $h^{-1}\gamma^{m'} h\in \Gamma$.}
\end{equation}
Indeed $h^{-1}\gamma^{m'} h=(h^{-1}\gamma h)^{m'}$ and the claim then follows by Proposition \ref{prop:adaLeib}.

Now, to make the term $g_{\mf{t}}^{r\tfrac{c}{k!}}$ commute rightward past the term $\prod_{\mf{t}'>\mf{t}}g_{\mf{t}'}^{\binom{x+re_i}{\mf{t}'}}$, note that since each element $g_{\mf{t}'}^q$ is in $\Gamma$, so is any integer power of this element. Therefore, by Proposition \ref{prop:adaLeib} we have $\Big(\prod_{\mf{t}'>\mf{t}}g_{\mf{t}'}^{\binom{x+re_i}{\mf{t}'}}\Big)^{q^{k(k+1)/2}}\in \Gamma$ (note that this is independent of $x$, $n$, and $i$ because all that matters in Proposition \ref{prop:adaLeib} is that the coefficients $g_{\mf{t}'}$ have $q$-th power in $\Gamma$). 

Now let $m=q^{k(k+1)/2}$, let $r=m^{k(k+1)/2}k!q$, and let us apply \eqref{eq:commuclaim} with $h=\prod_{\mf{t}'>\mf{t}}g_{\mf{t}'}^{\binom{x+re_i}{\mf{t}'}}$ and $\gamma=g_{\mf{t}}^{(k!q)\tfrac{c}{k!}}\in \Gamma$. We deduce that $\gamma^{m(m+1)/2} h\Gamma =h\Gamma$, that is, that we have the desired commuting $g_{\mf{t}}^{r\tfrac{c}{k!}}\big(\prod_{\mf{t}'>\mf{t}}g_{\mf{t}'}^{\binom{x+re_i}{\mf{t}'}}\big)\Gamma= \big(\prod_{\mf{t}'>\mf{t}}g_{\mf{t}'}^{\binom{x+re_i}{\mf{t}'}}\big)\Gamma$. Note that this process holds for any $\mf{t}$ with $|\mf{t}|\le k$, and thus the result follows.
\end{proof}

\begin{proof}[Proof of Proposition \ref{prop:perequivrat}]
The implication $(i)\Rightarrow (ii)$ follows by Lemma \ref{lem:multiGTlem}.

We now prove the implication $(ii)\Rightarrow (i)$. We shall argue by induction on $k$, and for this we shall first prove the case where $G$ is abelian, where $G/\Gamma$ is thus a compact abelian Lie group.

Thus, given a compact abelian Lie group $\ab$ and an $M$-periodic polynomial map $m:\mb{Z}^n\to\ab$ of degree $k$, with $m(0)=0$ and with Taylor expansion of the form $m(x)=\sum_{\mf{t}\in\mb{Z}_{\geq 0}^n:|\mf{t}|\leq k}a_{\mf{t}} \binom{x}{\mf{t}}$ for unique coefficients $a_{\mf{t}}\in\ab$, we want to prove that for some $q=CM^C$ where $C=O_k(1)$, we have $q a_{\mf{t}}=0$ for all $\mf{t}$. We argue by induction on $k$. For $k=1$ the claim is clear, in fact each coefficient satisfies $Ma_{\mf{t}}=0$. For $k>1$, fix any $\mf{t}$ with $|{\mf{t}}|=k$ and assume without loss of generality that $t_1>0$. Then, taking $t_1-1$ derivatives with respect to $e_1=(1,0,\ldots,0)$ and $t_2,\ldots,t_n$ derivatives with respect to $e_2,\ldots e_n$ respectively, we obtain $\Delta^{t_1-1}_{e_1}\Delta_{e_2}^{t_2}\cdots \Delta_{e_n}^{t_n}m(x)=a_{\mf{t}}x_1+\text{linear terms independent of }x_1$. Clearly this map is still $M$-periodic, hence $Ma_{\mf{t}}=0$. We then easily see using the Chu-Vandermonde identity that the map $x\mapsto \sum_{|\mf{t}|=k}a_{\mf{t}}\binom{x}{\mf{t}}$ is $k!M$-periodic. Hence the map $m(x)-\sum_{|\mf{t}|=k}a_{\mf{t}}\binom{x}{\mf{t}}=\sum_{\mf{t}\in\mb{Z}_{\geq 0}^n:|\mf{t}|\leq k-1}a_{\mf{t}} \binom{x}{\mf{t}}$ is $k!M$-periodic, and has degree $k-1$, so by induction its Taylor coefficients satisfy the required claim with $q=C'(k!M)^{C'}=CM^C$.

Now, for the general case, let $f:\mb{Z}^n\to G$ be a polynomial map relative to $G_\bullet$ such that $f(x)\Gamma$ is $M$-periodic and $f(0)=\id_G$. To prove that every Taylor coefficient $a_{\mf{t}}$ of $f$ satisfies $a_{\mf{t}}^q\in \Gamma$, we proceed by induction on the degree $k$ of the filtration $G_\bullet$. For $k=1$, the group $G$ is abelian and $f$ is linear, so each Taylor coefficient $a_{\mf{t}}$ of $f$ in fact satisfies $Ma_{\mf{t}}=0$. Thus, assume that the result holds for degree up to $k-1$ and let $f(x)=\prod_{|\mf{t}|\le k} a_{\mf{t}}^{\binom{x}{\mf{t}}}$. Then clearly $\pi_{k-1}\co (f(x)\Gamma)$ is an $M$-periodic map into the degree-$(k-1)$ nilmanifold $(G/G_k)/(G_k\Gamma/G_k)$, and therefore, by induction, for some $q=CM^C$ we have that $a_{\mf{t}}^q G_k\subset \Gamma G_k$ for all $\mf{t}$. By Lemma \ref{lem:ratcoefflift}, we can find elements $r_{\mf{t}}\in G_k$ such that $\wt{f}(x):=\prod_{|\mf{t}|\le k} (a_{\mf{t}}r_{\mf{t}})^{\binom{x}{\mf{t}}}$ is a polynomial whose Taylor coefficients all have $qq'$-th power in $\Gamma$. By Lemma \ref{lem:multiGTlem}, the map $\wt{f}$ is $M'$-periodic for $M'=C_k(qq')^{C_k}$. Since $f(x)G_k=\wt{f}(x)G_k$ for all $x$, we can define the $G_k/(G_k\cap \Gamma)$-valued map $f-\wt{f}$, which is polynomial of degree $k$ and is $M''=\lcm(M,M')$-periodic. By the case proved in the previous paragraph, all Taylor coefficients of $f-\wt{f}$ are $C{M''}^C$-rational. Therefore $f=\wt{f}+(f-\wt{f})$ has all its Taylor coefficients having $C'M^{C'}$-th power in $\Gamma$ as required.
\end{proof}

\begin{proof}[Proof of Lemma \ref{lem:abcaseproj}]
The given $M$-periodic polynomial map $m$ has a Taylor expansion $m(x)=\sum_{{\mf{t}}\in\mb{Z}_{\geq 0}^n:|{\mf{t}}|\leq k}a_{\mf{t}} \binom{x}{{\mf{t}}}$, for unique coefficients $a_{\mf{t}}\in\ab$. By Proposition \ref{prop:perequivrat}, we have $qa_{\mf{t}}=0$ for $q=CM^C$, for every ${\mf{t}}\neq 0$. Let $\ab'_\bullet$ be the filtration of degree 2 on $\ab'$ with $\ab'_2= \ker(\eta)$. We now apply Lemma \ref{lem:ratcoefflift} with $(G,G_\bullet):=(\ab',\ab'_\bullet)$, $\Gamma=\{\id\}$, and $a_{\mf{t}}\in \ab\cong\ab'/\ker(\eta)$ for\footnote{Applying the first isomorphism theorem to identify $\ab$ with $\ab'/\ker(\eta)$.} $\mf{t}\not=0$. We obtain that there exists $b_{\mf{t}}\in \ab'$ such that $\eta(b_{\mf{t}})=a_{\mf{t}}$ and $qq' b_{\mf{t}}=0$ for $q'=O_{\eta}(1)$. Choosing now any element $b_0$ such that $\eta(b_0)=a_0$, we can define the polynomial map $m':\mb{Z}^n\to \ab'$ by the formula $m'(x)=\sum_{{\mf{t}}\in\mb{Z}_{\geq 0}^n:|{\mf{t}}|\leq k}b_{\mf{t}} \binom{x}{{\mf{t}}}$. Clearly, we have $\eta\co m'=m$, and since $qq' b_{\mf{t}}=0$ for every $\mf{t}\neq 0$, it follows from Proposition \ref{prop:perequivrat} that $m'$ is $CM^C$-periodic where $C=O_{\eta,k}(1)$.
\end{proof}

\noindent We now proceed to the proof of Proposition \ref{prop:nilmaniperiodlift} and thus complete the proof of Theorem \ref{thm:CFRpermorphlift}.
\begin{proof}[Proof of Proposition \ref{prop:nilmaniperiodlift}]
We begin with a polynomial map $f:\mb{Z}^n\to (G/G_k)/(\Gamma G_k/G_k)$ which is $M$-periodic, with Taylor expansion $f(x)=\prod_{\mf{t}\in \mb{Z}_{\geq 0}^n} (a_{\mf{t}}G_k)^{\binom{x}{\mf{t}}} \Gamma G_k$, and we want to lift this to a periodic polynomial map $\mb{Z}^n\to\nss$. By the implication $(ii)\Rightarrow (i)$ in Proposition \ref{prop:perequivrat}, for every $\mf{t}\neq 0$ we have $(a_{\mf{t}}G_k)^q\subset \Gamma G_k$ for some $q=CM^C$ for $C=O_{k,G_\bullet,\Gamma}(1)$. By Lemma \ref{lem:ratcoefflift}, for every $\mf{t}\neq 0$ there is $b_{\mf{t}}\in G_{|\mf{t}|}$ such that $b_{\mf{t}}^{qq'}\in\Gamma$ where $q'=O_{G_\bullet,\Gamma}(1)$ and $b_{\mf{t}}G_k=a_{\mf{t}}G_k$. Using these elements as Taylor coefficients (where the constant Taylor coefficient is also lifted arbitrarily), we obtain a new polynomial map $\wt{f}:\mb{Z}^n \to G/\Gamma$ such that $\pi_{k-1}\co\wt{f}=f$ and which, by Lemma \ref{lem:multiGTlem}, is $M'$-periodic for $M'=CM^C$ where $C=O_{k,G_\bullet,\Gamma}(1)$.
\end{proof}

\subsection{Proof of the inverse theorem}\label{subsec:PfInvThm}\hfill\\
We have now all the necessary ingredients to prove Theorem \ref{thm:projinv}, which we recall here.
\begin{theorem}\label{thm:projinv2}
For any $k\in\mb{N}$ and $\delta>0$, there exist $C>0$ and $\varepsilon>0$ such that the following holds. For any finite abelian group $\ab$ and any 1-bounded function $f:\ab\to \mb{C}$ with $\|f\|_{U^{k+1}}\ge \delta$, there exists a projected $k$-step nilsequence $\phi_{*\tau}$ on $\ab$ of complexity at most $C$ such that $|\mb{E}_{x\in\ab} f(x) \overline{\phi_{*\tau}(x)}|\ge \varepsilon$. Moreover $\phi_{*\tau}$ is rank-preserving and has torsion $C\exp_{\ab}^C$.
\end{theorem}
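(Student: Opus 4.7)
The plan is to chain together the three principal tools now at our disposal: the existing inverse theorem producing a nilspace polynomial, the main structural result Theorem \ref{thm:cfr-are-factor-of-nilmanifolds} expressing the target CFR nilspace as a factor of a filtered nilmanifold, and the multivariable periodic-lifting result Theorem \ref{thm:CFRpermorphlift}. Concretely, I will first apply \cite[Theorem 1.6]{CSinverse} to the 1-bounded function $f:\ab\to\mb{C}$ with $\|f\|_{U^{k+1}}\ge\delta$, obtaining constants $\varepsilon_0,C_0$ depending only on $k,\delta$, a $k$-step CFR nilspace $\ns$ of complexity at most $C_0$, a nilspace morphism $\varphi:\ab\to\ns$, and a 1-bounded continuous function $F:\ns\to\mb{C}$ with $|\mb{E}_{x\in\ab} f(x)\overline{F(\varphi(x))}|\ge\varepsilon_0$. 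Next, Theorem \ref{thm:cfr-are-factor-of-nilmanifolds} supplies a $k$-step universal CFR nilspace $\nss$ (in particular a filtered nilmanifold, by Proposition \ref{prop:nicerep=>coset}) and a fibration $\psi:\nss\to\ns$, whose complexity is controlled by $C_0$.

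Now I would realize $\ab$ as $\mb{Z}^n/\Lambda$ where $n:=\textrm{rk}(\ab)$ and $\Lambda\le\mb{Z}^n$ is a sublattice containing $M\mb{Z}^n$ for $M:=\exp_\ab$. Composing $\varphi$ with the quotient map $\mb{Z}^n\to\ab$ yields a nilspace morphism $\wt{\varphi}:\mc{D}_1(\mb{Z}^n)\to\ns$ which is $M$-periodic. Applying Theorem \ref{thm:CFRpermorphlift} to $\wt\varphi$ and the fibration $\psi$ gives an $M'$-periodic morphism $\wt{\varphi'}:\mc{D}_1(\mb{Z}^n)\to\nss$ with $\psi\co\wt{\varphi'}=\wt{\varphi}$ and $M'=CM^C$ for $C=O_{k,\psi}(1)$, hence $C$ depending only on $k,\delta$. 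By periodicity, $\wt{\varphi'}$ factors through $\ab':=\mb{Z}^n/(M'\mb{Z}^n)$, yielding a morphism $\varphi':\ab'\to\nss$, and the natural surjective homomorphism $\tau:\ab'\to\ab$ satisfies $\varphi\co\tau=\psi\co\varphi'$ by construction.

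Finally, I would set $\wt{F}:=F\co\psi:\nss\to\mb{C}$, which is 1-bounded and continuous since $\psi$ is continuous, and define the projected nilsequence
\[
\phi_{*\tau}(x):=\mb{E}_{y\in\tau^{-1}(x)}\wt{F}(\varphi'(y)).
\]
For each $y\in\tau^{-1}(x)$ we have $\wt{F}(\varphi'(y))=F(\psi\varphi'(y))=F(\varphi(\tau(y)))=F(\varphi(x))$, so the average is constant along each fiber and $\phi_{*\tau}(x)=F(\varphi(x))$; consequently the correlation $|\mb{E}_{x\in\ab} f(x)\overline{\phi_{*\tau}(x)}|\ge\varepsilon_0$ is inherited directly. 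By construction $\textrm{rk}(\ab')=n=\textrm{rk}(\ab)$, so $\phi_{*\tau}$ is rank-preserving, and $\exp_{\ab'}\le M'=C\exp_\ab^C$, giving the required torsion bound. The complexity bound on $\phi_{*\tau}$ follows because $\nss$, $\psi$, and $F$ all have complexity bounded in terms of $k,\delta$ only.

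The main obstacle, and the reason the argument is non-trivial, is step two of the above: the lifting of the $M$-periodic morphism through the fibration $\psi$ while controlling the new period $M'$ in terms of $M$. This is exactly what Theorem \ref{thm:CFRpermorphlift} supplies, and in turn rests on the rationality–periodicity dictionary for Taylor coefficients (Proposition \ref{prop:perequivrat}) and the abelian lifting lemma (Lemma \ref{lem:abcaseproj}); without the explicit polynomial bound $M'=CM^C$ we would not obtain the stated torsion control, and without the universality provided by Theorem \ref{thm:cfr-are-factor-of-nilmanifolds} we would not have a filtered nilmanifold $\nss$ to lift into. Everything else in the argument is essentially bookkeeping to transport the correlation across the commutative square.
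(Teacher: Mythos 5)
Your proposal is correct and follows essentially the same route as the paper: apply \cite[Theorem~1.6]{CSinverse}, extend the target nilspace to a universal (hence nilmanifold) extension via Theorem~\ref{thm:cfr-are-factor-of-nilmanifolds}, lift the resulting periodic multivariate morphism through the fibration using Theorem~\ref{thm:CFRpermorphlift}, and push the correlation across the commutative square. Your extra remark that the fiberwise average is actually constant (so $\phi_{*\tau}(x)=F'(\varphi(x))$) is a nice clarification not stated in the paper, and your citation of Theorem~\ref{thm:cfr-are-factor-of-nilmanifolds} rather than Theorem~\ref{thm:wscover} corrects what appears to be a slip in the paper's text.
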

\begin{proof}
By \cite[Theorem 1.6]{CSinverse} there exists a constant $c'=c'_{k,\delta}$, a $k$-step \textsc{cfr} nilspace $\ns$ of complexity at most $c'$, a morphism $\varphi:\mc{D}_1(\ab)\to \ns$, and a 1-bounded $c'$-Lipschitz function $F':\ns\to\mb{C}$ such that $|\mb{E}_{x\in \ab} f(x) \overline{F'(\varphi(x))}|>\delta^{2^{k+1}}/2$. By Theorem \ref{thm:wscover} there exists a nilmanifold $\nss$ and a fibration $\psi:\nss\to\ns$. Note that by construction the complexity of $\nss$ is bounded in terms of the complexity of $\ns$ and thus it is of complexity $C_1=O_{c'}(1)$.\footnote{Implicit in this bound $O_{c'}(1)$ is a function $R(c')$ which is determined as follows: having fixed a complexity notion for \textsc{cfr} nilspaces as explained in \cite[Definition 1.2]{CSinverse}, we have that for each \textsc{cfr} nilspace $\ns$, the extension $\nss$ given by Theorem \ref{thm:cfr-are-factor-of-nilmanifolds}, being also a \textsc{cfr} nilspace, has complexity at most $R(c')$ for some fixed function $R:\mb{N}\to\mb{N}$. Thus $R$ is determined by the original complexity notion.} 
Note also that by \cite[Proposition A.13]{CGSS-spec} we have that $\psi$ is Lipschitz with a constant bounded in terms of $c'$, and is thus $O_{c'}(1)$. Hence $F:=F'\co\psi$ is $C_2$-Lipschitz where $C_2=O_{c'}(1)$. Next, note that (by the classification of finite abelian groups) there exists a surjective homomorphism $\pi:\mb{Z}^n\to \ab$ where $n$ is the rank of $\ab$. Then $\varphi\co \pi\in \hom(\mc{D}_1(\mb{Z}^n),\ns)$ is $M$-periodic for $M=\exp_{\ab}$, so by Theorem \ref{thm:CFRpermorphlift} there is an $M'$-periodic morphism $f':\mb{Z}^n\to \nss$ such that $\psi\co f' = \varphi\co\pi$ and $M'=C_3M^{C_3}$ where $C_3=O_{c'}(1)$. By $M'$-periodicity, the morphism $f'$ factors through the finite abelian group $\ab'=\mb{Z}^n/(M'\mb{Z}^n)$, of rank $n$ and exponent $M'$, that is, there is a morphism $\varphi':\ab'\to\nss$ such that $f'(x)=\varphi'(x+M'\mb{Z}^n)$. Moreover, letting $\tau$ be the natural surjective homomorphism $\ab'\to\ab$,  we have $\varphi\co \tau=\psi\co \varphi'$. Therefore $\mb{E}_{x\in \ab} f(x) \overline{F'(\varphi(x))}=\mb{E}_{y\in \ab'} f\co\tau(y) \overline{F'\co \varphi(\tau(y))}=\mb{E}_{y\in \ab'} f(\tau(y)) \overline{F\co\varphi'(y)}$. Letting $C=\max_{i\in[3]}(C_i)$, the result follows.
\end{proof}

\subsection{Projected $k$-step nilsequences are obstructions to $U^{k+1}$-uniformity}\hfill\smallskip\\
We complete this section with the following result, establishing that $k$-step projected nilsequences are genuine obstructions to having small $U^{k+1}$-norm.

\begin{proposition}\label{prop:directthm}
For every $k\in \mb{N}$ and $\delta,C>0$, there exists $\varepsilon>0$ such that the following holds. Let $\ab$ be a finite abelian group and suppose that $f:\ab\to \mb{C}$ is a 1-bounded function satisfying $|\langle f,\phi_{*\tau}\rangle|\ge \delta$ for some projected $k$-step nilsequence $\phi_{*\tau}$ on $\ab$ of complexity at most $C$. Then $\|f\|_{U^{k+1}}\ge \varepsilon$.
\end{proposition}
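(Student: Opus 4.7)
The plan is to reduce Proposition \ref{prop:directthm} to the direct theorem for \emph{genuine} (non-projected) bounded-complexity nilsequences on $\ab'$, which is the converse companion to \cite[Theorem 1.6]{CSinverse}, and then to transport the resulting $U^{k+1}$-norm lower bound from $\ab'$ back to $\ab$ via the surjective homomorphism $\tau$.

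First, I would unfold the definition of the projected nilsequence. Writing $\phi_{*\tau}(x) = \mb{E}_{y\in\tau^{-1}(x)} F(g(y))$, and using that all fibers $\tau^{-1}(x)$ have the same size $|\ker\tau|$ (since $\tau$ is a surjective homomorphism), the tower property of expectation gives
\[
\langle f, \phi_{*\tau}\rangle_\ab \;=\; \mb{E}_{x\in\ab}\, f(x)\, \overline{\mb{E}_{y\in\tau^{-1}(x)} F(g(y))} \;=\; \mb{E}_{y\in\ab'}\, (f\co\tau)(y)\, \overline{F(g(y))} \;=\; \langle f\co\tau,\, F\co g\rangle_{\ab'}.
\]
Thus the correlation assumption transfers up to $\ab'$: we obtain $|\langle f\co\tau,\, F\co g\rangle_{\ab'}|\ge\delta$, where $f\co\tau$ is a $1$-bounded function on $\ab'$ and $F\co g$ is a genuine $k$-step nilsequence of complexity at most $C$.

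Next I would apply the direct theorem for bounded-complexity nilsequences on finite abelian groups, which asserts that if a $1$-bounded function $h:\ab'\to\mb{C}$ correlates at level at least $\delta$ with a $k$-step nilsequence of complexity at most $C$, then $\|h\|_{U^{k+1}(\ab')}\ge \varepsilon'$ for some $\varepsilon'=\varepsilon'(\delta,C,k)>0$. Applied to $h=f\co\tau$, this produces $\|f\co\tau\|_{U^{k+1}(\ab')}\ge\varepsilon'$. I would then conclude via the pull-back identity $\|f\co\tau\|_{U^{k+1}(\ab')} = \|f\|_{U^{k+1}(\ab)}$, valid for any surjective group homomorphism $\tau:\ab'\to\ab$. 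This is immediate from the change of variables $(y,h_1,\ldots,h_{k+1})\mapsto (\tau y, \tau h_1,\ldots,\tau h_{k+1})$ inside the Gowers parallelepiped average: every $(k+2)$-tuple in $\ab$ has exactly $|\ker\tau|^{k+2}$ preimages in $\ab'$, so the uniform measure on $\ab'^{k+2}$ pushes forward to the uniform measure on $\ab^{k+2}$, and the Gowers weight $\prod_{\omega\in\db{k+1}}\mathcal{C}^{|\omega|}$ is preserved since $\tau$ is a homomorphism. Combining, $\|f\|_{U^{k+1}(\ab)}\ge\varepsilon'$, so one may take $\varepsilon=\varepsilon'$.

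The only non-routine ingredient is the direct theorem for bounded-complexity nilsequences invoked in the second step. For cyclic groups it is classical, going back to the work of Green--Tao; for a general finite abelian group $\ab'$ the same strategy applies, using the polynomial structure of $g$ relative to the filtration $G_\bullet$ together with a Cauchy--Schwarz/van der Corput iteration to show that the $(k+1)$-th iterated discrete derivative of $F\co g$ is itself a polynomial expression whose top-order commutator terms vanish, so that $F\co g$ is a bounded ``dual function'' of level $k+1$ in the Gowers sense. A correlation bound against $f\co\tau$ then forces $\|f\co\tau\|_{U^{k+1}(\ab')}$ to be bounded below. This is the step I expect to require the most care to formalise in the present generality, but it is essentially standard and can be extracted from the nilmanifold machinery already set up in Section \ref{sec:prelim} together with the complexity framework of \cite[Definition 1.3]{CSinverse}.
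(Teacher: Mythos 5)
Your proposal takes the same architecture as the paper's proof: unfold the definition of the projected nilsequence to identify $\langle f,\phi_{*\tau}\rangle_\ab = \langle f\co\tau, F\co g\rangle_{\ab'}$, invoke a direct theorem for genuine nilsequences on $\ab'$, and transfer the $U^{k+1}$ lower bound back to $\ab$ via the identity $\|f\co\tau\|_{U^{k+1}(\ab')} = \|f\|_{U^{k+1}(\ab)}$ (the paper proves this last fact by observing that $\tau^{\db{k+1}}:\cu^{k+1}(\ab')\to\cu^{k+1}(\ab)$ is surjective, equivalent to your change-of-variables remark). So the overall reduction is right and is exactly what the paper does.

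However, your sketch of the remaining non-routine step is not accurate as stated. You propose to show that ``the $(k+1)$-th iterated discrete derivative of $F\co g$ is itself a polynomial expression whose top-order commutator terms vanish, so that $F\co g$ is a bounded dual function of level $k+1$.'' This is how one argues for \emph{polynomial phase} functions, where $F$ is a character and the $(k+1)$-fold multiplicative derivative of $e(P(x))$ literally trivializes. For a general Lipschitz test function $F:G/\Gamma\to\mb{C}$ that computation does not close: the iterated derivative of $F\co g$ is not a polynomial expression, and $F\co g$ is not a bounded dual function of level $k+1$ in general. The correct statement is weaker and is an approximation result: $F\co g$ can be approximated in $L^\infty$ (to within $\delta/2$) by a function $h$ whose dual Gowers norm $\|h\|^*_{U^{k+1}(\ab')}$ is bounded in terms of $\delta$, the complexity, and $k$. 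This is precisely \cite[Theorem 4.26]{CGSS-spec}, which the paper invokes; one then finishes with the triangle inequality
\[
\delta \le |\langle f\co\tau, F\co g\rangle_{\ab'}|\le \|f\co\tau\|_{L^1}\|F\co g - h\|_\infty + \|f\co\tau\|_{U^{k+1}(\ab')}\|h\|^*_{U^{k+1}(\ab')}\le \tfrac{\delta}{2}+N\|f\|_{U^{k+1}(\ab)},
\]
giving $\varepsilon=\delta/(2N)$. So the gap is not in your reduction (which is correct and matches the paper), but in the claimed mechanism for the direct theorem itself: replace the ``iterated derivative / dual function'' heuristic with the nilsequence--approximation result and the duality argument above, as in \cite[Theorem 4.26]{CGSS-spec} and \cite[Lemma 5.5]{CGSS-bndtor}.
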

The proof is very similar in spirit to that of \cite[Lemma 5.5]{CGSS-bndtor} combined with \cite[Theorem 4.26]{CGSS-spec}. As there are some adjustments to make, we give the details below.
\begin{proof}
Recall from Definition \ref{def:pronilseq} that $\phi_{*\tau}(x)=\mb{E}_{y\in \tau^{-1}(x)} F(g(y))$, for some surjective homomorphism $\tau:\ab'\to\ab$ (where $\ab'$ is another finite abelian group), some filtered nilmanifold $(G/\Gamma,G_\bullet)$ of degree $k$, some polynomial map $g:\ab'\to G/\Gamma$ relative to $G_\bullet$, and a 1-bounded Lipschitz function $F:G/\Gamma\to\mb{C}$. It then suffices to prove a lower bound $\|f\|_{U^{k+1}}\ge \varepsilon$ where $\varepsilon>0$ depends only on $\delta$, $G/\Gamma$, and $\|F\|_{\Lip}>0$.

Note that as $\tau$ is surjective, the homomorphism $\tau^{\db{k+1}}:\cu^{k+1}(\ab')\to\cu^{k+1}(\ab)$ sending a cube $\q$ to the cube $\tau\co\q$ is also surjective (indeed we have the expression $\q(v)=x+v_1 h_1+\cdots+v_{k+1}h_{k+1}$ for some elements $x,h_i\in\ab$ and then the claim follows by taking $\tau$-preimages of these elements in $\ab'$). It follows that $\|f\|_{U^{k+1}(\ab)}=\|f\co\tau\|_{U^{k+1}(\ab')}$. By \cite[Theorem 4.26]{CGSS-spec} applied to the map $F\co g$ with parameter $\delta$ (from the statement of \cite[Theorem 4.26]{CGSS-spec}) equal to $\delta/2$ here, we have that there exists $h:\ab'\to\mb{C}$ such that $\|h\|_{U^{k+1}(\ab')}^*\le N_{\delta/2,G/\Gamma,\|F\|_{\Lip}}$ (recall that we have assumed that $\|F\|_\infty\le 1$) and $\|F\co g-h\|_\infty\le \delta/2$. Hence $
\delta \le |\langle f,(F\co g)_{\tau*}\rangle_{\ab}| = |\langle f\co \tau, F\co g\rangle_{\ab'}| \le |\langle f\circ \tau, F\co g - h \rangle_{\ab'}| + |\langle f\co \tau, h\rangle_{\ab'}|\le \|f\|_{L^1(\ab)}\|F\co g - h\|_\infty+ \|f\co\tau\|_{U^{k+1}(\ab')} \|h\|_{U^{k+1}(\ab')}^*\le \delta/2 + N\|f\|_{U^{k+1}(\ab)}$. Therefore $\|f\|_{U^{k+1}}\ge \varepsilon>0$ where $\varepsilon:=\tfrac{\delta}{2N}$ and the result follows.
\end{proof}

\section{\texorpdfstring{On minimal $\mb{Z}^\omega$-systems of finite order}{On minimal Zw-systems of finite order}}\label{sec:ergapp}
\noindent In this section we prove Theorem \ref{thm:main-dynam}. We begin by recalling some background notions from dynamics. 

A \emph{topological dynamical system} (or $A$-system) $(\ns,A)$ consists of a  compact metric space $\ns$ and a countable discrete abelian group $A$ which acts on $\ns$ via homeomorphisms, that is, for each $t\in A$ we have a homeomorphism $T^t:\ns\to\ns$ and for every $t,t'\in A$ we have $T^t\co T^{t'}=T^{tt'}$ and $(T^{t})^{-1}=T^{-t}$. Abusing the notation, instead of writing $T^t(x)$ for $t\in A$ and $x\in\ns$, we often write $tx$. The system is \emph{distal} if, given any points $x,y\in\ns$, if there exists a sequence $t_i\in A$ such that $\operatorname{dist}(t_ix,t_iy)\to 0$ as $i\to\infty$ (where $\operatorname{dist}$ is a metric on $\ns$) then $x=y$. A system is \emph{minimal} if the $A$-orbit of any point $x\in \ns$, i.e.\ the set $\{tx:t\in A\}$, is dense in $\ns$. 

The following definition of a system of order $k$ for abelian group actions was introduced by Host, Kra, and Maass.

\begin{defn}[Definition 3.2 of \cite{HKM}]\label{def:reg-prox-rel} Let $(\ns, A)$ be a topological dynamical system with $A$ abelian and $k \in \mb{N}$. The points $x, y \in \ns$ are said to be \emph{regionally proximal of order $k$}, denoted $(x, y) \in \mathrm{RP}^{[k]}(\ns)$, if there are sequences of elements $t_i^{1}, t_i^{2}, \ldots, t_i^{k} \in A$ and $x_i, y_i \in \ns$ such that for all $\boldsymbol{\epsilon} = (\epsilon_1, \epsilon_2, \ldots, \epsilon_k) \in \db{k}  \setminus \{0^k\}$:
\[ \lim_{i \to \infty} x_i = x, \quad \lim_{i \to \infty} y_i = y, \quad \lim_{i \to \infty}  \operatorname{dist}\!\big( \big( \sum_{j=1}^{k} \epsilon_j t_i^{j} \big) x_i,\; \big( \sum_{j=1}^{k} \epsilon_j t_i^{j} \big) y_i \big) = 0.
\]
\noindent If $\mathrm{RP}^{[k+1]}(\ns)$ is trivial (i.e.\ if it is the identity relation), then we say that $(\ns,A)$ is a \emph{system of order $k$}.
\end{defn}

\noindent It turns out that such minimal distal systems of order $k$ are compact nilspaces with cube sets $\cu^n(\ns):=\overline{\{\q x^{\db{n}}:x\in\ns,\; \q\in \cu^n(\mc{D}_1(A))\}}$, see \cite[Theorem 7.14]{GGY}. Indeed, for any topological dynamical system $(\ns,A)$, when with the previous definition we have that the cube sets $\cu^n(\ns)$ define a $k$-step nilspace structure on $\ns$, we will say that this is a (topological) \emph{nilspace system of order $k$}.

We are now ready to introduce the main class of objects which appear in Theorem \ref{thm:main-dynam}.

\begin{defn}[Generalized polynomial orbit system]\label{def:gen-poly-orb}
Let $G/\Lambda$ be a (possibly disconnected) nilmanifold and let $A$ be a countable abelian group. Let $f\in \hom(\mc{D}_1(A),G/\Lambda)$. Let $\orb(f)$ be the orbit of $f$ under the shift action, i.e.\ $\orb(f):=\{f(\cdot+t)\in \hom(\mc{D}_1(A),G/\Lambda):t\in A\}\subset (G/\Lambda)^{A}$. Then the \emph{generalized polynomial orbit system} (or \emph{$A$-polynomial orbit system} if we want to highlight the acting group) associated with $f$ is $(\overline{\orb(f)},A)$ where the action is given by $(g,t)\mapsto g(\cdot+t)$ for any $t\in A$ and $g\in \overline{\orb(f)}$.
\end{defn}

\noindent As mentioned in the introduction, this definition is a generalization of the well-known concept of \emph{nilsystem of polynomial orbits}, see \cite[Ch.\ 14, \S 2.4]{HKbook}. Such systems are generated when we take a nilmanifold $G/\Lambda$, an element $f\in \hom(\mc{D}_1(\mb{Z}),G/\Lambda)$, and similarly we consider $\overline{\orb(f)}$ with the shift action $(g,t)\mapsto g(\cdot+t)$ for any $t\in \mb{Z}$ and $g\in \overline{\orb(f)}$. In this case, it can be proved that such a construction is isomorphic to a nilsystem (and the converse also holds); see the discussion before Theorem \ref{thm:main-dynam} involving \cite[Ch.\ 14, Propositions 13 and 14]{HKbook}. This motivates asking whether more general similar arguments could similarly describe $A$-polynomial orbit systems as translational systems.

\begin{question}\label{Q:polyorbnils}
Is every polynomial orbit system as per Definition \ref{def:gen-poly-orb} isomorphic to a translational system as defined in \cite[Definition A.4]{JST1}?
\end{question}

For our purposes, we will be mainly interested in the case $A=\mb{Z}^\omega:=\bigoplus_{n\in \mb{N}}\mb{Z}$. Note that, if $A'$ is any countable abelian group, then there exists a surjective homomorphism $\mb{Z}^\omega\to A'$ and thus any $A'$-system can be turned into a $\mb{Z}^\omega$-system via such group extension. It turns out that $\mb{Z}^\omega$-polynomial orbit systems enjoy several good properties which makes them suitable as general extension-systems of $\mb{Z}^\omega$ minimal systems of order $k$.

\begin{proposition}\label{prop:properties-z-omega-sys}
Let $G/\Lambda$ be a degree-$k$ nilmanifold, let $f\in\hom(\mc{D}_1(\mb{Z}^\omega),G/\Lambda)$, and let $(\overline{\orb(f)},\mb{Z}^\omega)$ be the corresponding $\mb{Z}^\omega$-polynomial orbit system. Then this system is minimal and of order $k$.\footnote{Recall that, by \cite[Remark 7.6]{GGY}, any such system is distal.}
\end{proposition}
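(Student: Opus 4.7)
The strategy is to equip $\overline{\orb(f)}$ with a nilspace structure inherited from the product $k$-step nilspace $(G/\Lambda)^{\mb{Z}^\omega}$, identify the shift action as a $\mb{Z}^\omega$-action by nilspace translations, and then deduce both conclusions from standard nilspace-theoretic results.

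First, I would observe that $\hom(\mc{D}_1(\mb{Z}^\omega),G/\Lambda)$ is a closed subset of the $k$-step compact nilspace $(G/\Lambda)^{\mb{Z}^\omega}$, since the cube sets of $G/\Lambda$ are closed and the morphism property is defined by closed conditions on coordinates indexed by the cubes of $\mc{D}_1(\mb{Z}^\omega)$. The orbit closure $\overline{\orb(f)}$ is then itself closed in this product, and by restricting the product cube sets it inherits a $k$-step compact nilspace structure. Each shift $S^t\colon g\mapsto g(\cdot+t)$ is a continuous cube-preserving self-map of $\overline{\orb(f)}$, hence a nilspace translation, so the shift gives a homomorphism $\mb{Z}^\omega\to\tran(\overline{\orb(f)})$ whose image has a dense orbit (namely the orbit of $f$, by construction of the space).

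For minimality, let $\Xi$ denote the closure of the image of $\mb{Z}^\omega$ inside $\tran(\overline{\orb(f)})$, a compact abelian group of translations. The $\Xi$-orbit of $f$ is the continuous image of the compact group $\Xi$, hence closed, and it contains the dense orbit of $f$ under $\mb{Z}^\omega$; therefore the $\Xi$-orbit of $f$ equals $\overline{\orb(f)}$, so $\Xi$ acts transitively. Since the $\mb{Z}^\omega$-image is dense in $\Xi$ and $\Xi$ acts continuously and transitively on $\overline{\orb(f)}$, the $\mb{Z}^\omega$-orbit of every point is dense as well, giving minimality.

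For the order-$k$ claim, once we know that $\overline{\orb(f)}$ is a $k$-step compact nilspace and that the $\mb{Z}^\omega$-action is by translations containing a dense orbit, we are in the framework of minimal topological nilspace systems of order $k$; the required triviality of $\mathrm{RP}^{[k+1]}(\overline{\orb(f)})$ then follows from the characterization established in \cite[Theorem 7.14]{GGY}.

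The main technical obstacle I anticipate is verifying that the nilspace structure on $\overline{\orb(f)}$ inherited from the product is indeed $k$-step and satisfies the corner completion axiom on this closed subspace (this needs some care because closed subsets of compact nilspaces need not be nilspaces in general, so one must use the specific structure arising from shift-invariance). If this direct inheritance argument does not work cleanly, a fallback is to invoke a version of Leibman's orbit-closure theorem for polynomial maps on nilmanifolds, adapted to the $\mb{Z}^\omega$ setting via the fact that every element of $\mb{Z}^\omega$ has finite support (so $f$ restricted to any finitely generated subgroup is a classical polynomial map into $G/\Lambda$), to identify $\overline{\orb(f)}$ directly as a sub-nilmanifold of a larger filtered nilmanifold, from which both claims follow.
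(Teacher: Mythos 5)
Your proposal takes a genuinely different route from the paper, but it contains several gaps that the paper's argument is specifically designed to avoid.

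The most serious issue is the minimality argument. You let $\Xi$ be the closure of the image of $\mb{Z}^\omega$ inside $\tran(\overline{\orb(f)})$ and assert this is a compact abelian group acting continuously. But the closure of a group of translations in the uniform (compact-open) topology is compact only when the family is equicontinuous, and distal systems — nilsystems in particular — are typically \emph{not} equicontinuous (equicontinuity would force pure point spectrum, which already fails for 2-step nilsystems). If instead you take the closure in $\overline{\orb(f)}^{\overline{\orb(f)}}$ with the product topology (the Ellis semigroup), you do get compactness, but the limit elements need not be continuous, and the transitivity-by-closed-image argument collapses. The correct route to minimality — the one the paper takes in Lemma \ref{lem:poly-orb-distal-minimal} — is to first prove \emph{distality} and then invoke the classical fact that orbit closures in compact distal systems are automatically minimal (the paper does this via \cite[Corollary 2.8]{CGSS-abramov}). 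Your proposal never proves distality at all, yet it is the load-bearing ingredient both for minimality and for applying \cite[Theorem 7.14]{GGY}; establishing it requires a real argument (in the paper, an induction on the degree of the nilmanifold using the Taylor expansion of Lemma \ref{lem:inf-taylor} and the abelian case from \cite[Lemma 2.7]{CGSS-abramov}).

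The second gap is one you yourself flag: inheriting a nilspace structure on $\overline{\orb(f)}$ from the product cube sets is not automatic, since corner completion can fail on a closed subset of a compact nilspace. There is also a circularity here — you invoke $\tran(\overline{\orb(f)})$ in the minimality argument, which already presupposes the nilspace structure you have not established. The paper sidesteps this entirely by working in the opposite order: it proves triviality of $\mathrm{RP}^{[k+1]}$ directly (Proposition \ref{prop:z-omega-are-nilspaces}), using only the facts that $f$ is a morphism into the $k$-step nilspace $G/\Lambda$ and that corner completion in $G/\Lambda$ — \emph{not} in $\overline{\orb(f)}$ — is unique and continuous; the nilspace structure on $\overline{\orb(f)}$ then comes out as a \emph{consequence} of \cite[Theorem 7.14]{GGY}, not as a precondition. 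Note also that even if you could inherit a nilspace structure directly, GGY's theorem concerns the specific dynamical cube structure $\cu^n(\ns)=\overline{\{\q x^{\db{n}}: x\in\ns,\ \q\in\cu^n(\mc{D}_1(A))\}}$, which would still have to be reconciled with the inherited one. Your Leibman-style fallback is a reasonable idea but would itself be a nontrivial adaptation to the $\mb{Z}^\omega$ setting; as written it is only a gesture.
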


The proof of this statement is divided into several results. First, let us prove that any element in $\hom(\mc{D}_1(\mb{Z}^\omega),G/\Lambda)$ has a Taylor expansion.

\begin{lemma}\label{lem:inf-taylor}
Let $(G,G_\bullet)$ be a nilpotent group of degree $k$ and let $\Lambda$ be a subgroup of $G$. Let $s:\mb{N}\to [0,k]^\omega$ be a bijective function with the property that given any two $w,w'\in [0,k]^\omega$, if $w_i\le w_i'$ for all $i\in \mb{N}$, then $s^{-1}(w)\le s^{-1}(w')$. Then any $f\in \hom(\mc{D}_1(\mb{Z}^\omega),G/\Lambda)$ has a factorization of the form
\[
f(\underline{x})=\prod_{i=1}^\infty  g_{i}^{\binom{\underline{x}}{s(i)}}\Lambda
\]
where $g_{i}\in G_{\sum_{j=1}^\infty s(i)_j}$, the binomial term $\binom{\underline{x}}{s(i)}:=\prod_{j=1}^\infty\binom{x_{j}}{s(i)_j}$, and $\underline{x}=(x_1,x_2,\ldots)\in \mb{Z}^\omega$.
\end{lemma}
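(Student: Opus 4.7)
The plan is to reduce the claim to the standard multivariable Taylor expansion of polynomial maps on $\mb{Z}^n$, by successively treating the restrictions of $f$ to the finite-rank subgroups $\mb{Z}^n\hookrightarrow \mb{Z}^\omega=\bigoplus_{j\ge 1}\mb{Z}$. The first observation is that for any fixed $\underline{x}\in\mb{Z}^\omega$, the infinite product on the right-hand side is really a finite product: writing $\mathrm{supp}(\underline{x})\subseteq[n]$, any $i$ with $s(i)_j>0$ for some $j>n$ contributes a factor $g_i^{0}=\id_G$ (since $\binom{0}{s(i)_j}=0$), and any $i$ with $|s(i)|>k$ also contributes trivially because $g_i\in G_{k+1}=\{\id_G\}$. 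Consequently the lemma reduces to choosing coefficients $(g_i)_{\mathrm{supp}(s(i))\subseteq[n],\,|s(i)|\le k}$ that give a Taylor factorization of each restriction $f_n:=f|_{\mc{D}_1(\mb{Z}^n)}:\mb{Z}^n\to G/\Lambda$ (a polynomial map of degree $\le k$ relative to $G_\bullet$, being a nilspace morphism), in a way that is consistent as $n$ grows.

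For each $n$ I would lift $f_n$ to a polynomial map $\tilde f_n:\mb{Z}^n\to G$, which is possible by the definition of polynomial maps into a coset space, and I would do so coherently by induction on $n$: given $\tilde f_n$ and any polynomial lift $\tilde f'_{n+1}$ of $f_{n+1}$, the discrepancy $h_n:=(\tilde f'_{n+1}|_{x_{n+1}=0})\cdot\tilde f_n^{-1}:\mb{Z}^n\to \Lambda$ is a polynomial map into $\Lambda$, which I can pull back to a polynomial $H:\mb{Z}^{n+1}\to\Lambda$ via the coordinate projection $\mb{Z}^{n+1}\to\mb{Z}^n$; the corrected lift $\tilde f_{n+1}:=H^{-1}\tilde f'_{n+1}$ then satisfies $\tilde f_{n+1}|_{x_{n+1}=0}=\tilde f_n$, preserving the inductive hypothesis.

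Now applying \cite[Lemma A.1]{GTarit} to each $\tilde f_n$ yields a unique Taylor factorization $\tilde f_n(\underline{x})=\prod_{\mf{t}\in[0,k]^n,\,|\mf{t}|\le k}(a_\mf{t}^{(n)})^{\binom{\underline{x}}{\mf{t}}}$ with $a_\mf{t}^{(n)}\in G_{|\mf{t}|}$, taken in any total order on multi-indices that extends the componentwise partial order; the monotonicity hypothesis on $s$ is precisely what ensures that the ordering inherited from $s^{-1}$ is such an extension, so it is a valid Taylor ordering at every stage. The coherence $\tilde f_{n+1}|_{x_{n+1}=0}=\tilde f_n$, combined with the uniqueness in Lemma A.1, forces $a_\mf{t}^{(n+1)}=a_\mf{t}^{(n)}$ whenever $t_{n+1}=0$, so the coefficients stabilize. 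Setting $g_i:=a_{s(i)}^{(n)}$ for any $n$ large enough that $\mathrm{supp}(s(i))\subseteq[n]$, and $g_i:=\id_G$ when $|s(i)|>k$, gives the desired factorization of $f$. The main technical difficulty is the coherent construction of lifts $(\tilde f_n)$; once that is in place, everything else follows directly from the standard finite-variable machinery.
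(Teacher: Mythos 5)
Your proposal follows the same basic strategy as the paper (reduce to lifting into $G$, then build Taylor coefficients inductively and invoke uniqueness of finite-dimensional Taylor expansions), but it differs in one useful way: the paper disposes of the lift $f'\in\hom(\mc{D}_1(\mb{Z}^\omega),G)$ in one stroke by citing \cite[Proposition 4.3]{CGSS-abramov}, whereas you construct a coherent family of finite-dimensional lifts $\tilde f_n:\mb{Z}^n\to G$ by hand, which only needs the finite-dimensional lifting fact. That is a genuine elementary alternative and your stabilization argument for the coefficients $a_{\mf{t}}^{(n)}$ via uniqueness in \cite[Lemma A.1]{GTarit} is fine, given that the monotone ordering on multi-indices is consistent across $n$.

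However, there is a real (though localized) error in the coherence step. You set $h_n:=(\tilde f'_{n+1}|_{x_{n+1}=0})\cdot\tilde f_n^{-1}$ and claim this lands in $\Lambda$. Since $G/\Lambda$ is the space of \emph{left} cosets, the identity $\tilde f_n(\underline{x})\Lambda=\tilde f'_{n+1}(\underline{x},0)\Lambda$ only gives $\tilde f_n(\underline{x})^{-1}\tilde f'_{n+1}(\underline{x},0)\in\Lambda$; the product in the order you wrote takes values in the conjugate $\tilde f'_{n+1}(\underline{x},0)\Lambda\tilde f'_{n+1}(\underline{x},0)^{-1}$, which need not be $\Lambda$ unless $\Lambda$ is normal in $G$. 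Moreover, your correction $\tilde f_{n+1}:=H^{-1}\tilde f'_{n+1}$ multiplies by an element of $\Lambda$ on the \emph{left}, which changes the left coset, so $\tilde f_{n+1}$ would no longer be a lift of $f_{n+1}$. The fix is to put everything on the correct side: define $h_n:=\tilde f_n^{-1}\cdot(\tilde f'_{n+1}|_{x_{n+1}=0})$, which takes values in $\Lambda$, pull it back to $H:\mb{Z}^{n+1}\to\Lambda$, and set $\tilde f_{n+1}:=\tilde f'_{n+1}\cdot H^{-1}$. Then $\tilde f_{n+1}(\underline{x})\Lambda=\tilde f'_{n+1}(\underline{x})\Lambda$ because $H(\underline{x})^{-1}\in\Lambda$, and restricting to $x_{n+1}=0$ gives $\tilde f_{n+1}(\underline{x},0)=\tilde f'_{n+1}(\underline{x},0)\cdot h_n(\underline{x})^{-1}=\tilde f_n(\underline{x})$, as required. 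With this repair the rest of your argument goes through.
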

\noindent Note that this formula is always well-defined. Indeed, the exponents $\binom{\underline{x}}{s(i)}=\prod_{j=1}^\infty\binom{x_{j}}{s(i)_j}$ are always finite products because $s(i)$ has at most a finite number of non-zero coordinates. Similarly, for any fixed $\underline{x}$ note that if $x_i=0$ for all $i\ge i_0$ then the only terms $g_{i,j}^{\binom{\underline{x}}{s(i)}}$ that may be non-zero are those corresponding to $s^{-1}([0,k]^{i_0})$, which is a finite set.

\begin{proof}
By \cite[Proposition 4.3]{CGSS-abramov} note that, as $G\to G/\Lambda$ is a fibration, it is enough to prove the result assuming that $\Lambda=\{\id\}$. Indeed, if the result holds for $G$, let $f'\in \hom(\mc{D}_1(\mb{Z}^\omega),G)$ be such that $f=f'\Lambda$ and the result is clear.

The proof proceeds by induction. That is, we are going to create a sequence $g_n\in  G_{\sum_{j=1}^\infty s(n)_j}$ for $n\in \mb{N}$ such that $f$ agrees with $f_n:=\prod_{i=1}^n  g_{i}^{\binom{\underline{x}}{s(i)}}$ on the set $\{s(1),\ldots,s(n)\}$. Note that $s(0)=0^{\infty}$ and we can simply set $g_{0}:=f(0^{\infty})$.

Assuming that the result holds up to $n\ge 1$, let us find the element $g_{n+1}$. Let $r\in \mb{N}$ be such that $s(n+1)\in [0,k]^r$. Then, note that both $f$ and $f_n$ restricted to $\mb{Z}^r$ are elements of $\hom(\mc{D}_1(\mb{Z}^r),G)$. By standard results, e.g. \cite[Theorem 2.2.14]{Cand:Notes1}, both of these elements are polynomial maps, and thus there is a unique Taylor expansion. In particular  $f(s(n+1))=f_n(s(n+1))g$ for some $g\in G_{\sum_{j=1}^\infty s(n)_j}$. Letting $g_n:=g$ it follows that our sequence $g_1,\ldots,g_n,g_{n+1}$ satisfy the required assumptions.

To complete the proof, we claim that $f_\infty:=\prod_{i=1}^\infty  g_{i}^{\binom{\underline{x}}{s(i)}}$ equals $f$. Indeed by construction it is equal to $f$ on $[0,k]^\omega$, and then, using the fact that $(G,G_\bullet)$ is of degree $k$, it is easy to see that $f_\infty$ and $f$ must agree on all of $\mb{Z}^\omega$.
\end{proof}

\begin{lemma}\label{lem:poly-orb-distal-minimal}
Let $G/\Lambda$ be a degree-$k$ nilmanifold. Then the $\mb{Z}^\omega$-system\footnote{Where the action is given by the shift $(f,z)\in \hom(\mc{D}_1(\mb{Z}^\omega),G/\Lambda)\times \mb{Z}^\omega\mapsto f(\cdot+z)$.} $(\hom(\mc{D}_1(\mb{Z}^\omega),G/\Lambda),\mb{Z}^\omega)$ is distal. In particular, any $\mb{Z}^\omega$-polynomial orbit system is minimal and distal.
\end{lemma}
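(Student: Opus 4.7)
I would begin by separating the two assertions of the lemma and reducing both to the single claim that the universal shift system $X:=(\hom(\mc{D}_1(\mb{Z}^\omega),G/\Lambda),\mb{Z}^\omega)$ is distal. Granting this, distality of any $\mb{Z}^\omega$-polynomial orbit system $(\overline{\orb(f)},\mb{Z}^\omega)$ is automatic, since each such system is a closed $\mb{Z}^\omega$-invariant subsystem of $X$ and distality is inherited by subsystems. For minimality, I would invoke the classical fact from topological dynamics that in a distal system the relation ``$y\in\overline{\orb(x)}$'' is an equivalence, so that any dense orbit forces minimality; here $\orb(f)$ is dense in $\overline{\orb(f)}$ by construction.

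To establish distality of $X$, my plan is to identify $X$ as a $k$-step compact nilspace on which the shift by $\mb{Z}^\omega$ acts by continuous nilspace automorphisms, and then invoke the general principle that such nilspace systems are distal. I would equip $X$ with the subspace topology from $(G/\Lambda)^{\mb{Z}^\omega}$ (a closed subset since being a polynomial map of degree $\le k$ is a pointwise-closed condition), and with the pointwise cube structure in which $\q\in\cu^n(X)$ if and only if the map $\underline{v}\mapsto \q(\underline{v})(\underline{x})$ lies in $\cu^n(G/\Lambda)$ for every $\underline{x}\in\mb{Z}^\omega$. With these structures, $X$ is the inverse limit, both as a topological space and as a nilspace, of the finite-variable nilmanifolds $X_n:=\hom(\mc{D}_1(\mb{Z}^n),G/\Lambda)$, each of which is a $k$-step \textsc{cfr} nilspace by \cite[Ch.\ 14, Proposition 13]{HKbook}. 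Each shift $T^t$ preserves this cube structure trivially, since cubes on $X$ are defined pointwise in the $\underline{x}$-argument while the shifts act only on that argument. The distality of $(X,\mb{Z}^\omega)$ then follows from the general fact that compact nilspace systems are distal (via \cite[Theorem 7.14]{GGY} and the nilspace-theoretic treatment of such systems in \cite[Ch.\ 2]{Cand:Notes2}).

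The main obstacle I foresee is the foundational verification that the pointwise cube structure really makes $X$ into a compact nilspace of step $k$ (checking composition, ergodicity, and especially corner-completion in this inverse-limit setting). I would resolve this by using the Taylor expansion of Lemma \ref{lem:inf-taylor} to reduce each axiom to its analogue for the truncations $X_n$, transferring the result from the classical finite-dimensional nilmanifolds via continuity of the restriction maps $X\to X_n$ together with compactness. Once this is in place, the shift action preserves cubes essentially by construction, and the desired distality follows formally, completing the proof.
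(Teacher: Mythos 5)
Your approach takes a genuinely different route from the paper, and it contains a serious gap at the step where you conclude distality. The paper proves distality directly: it lifts to $\hom(\mc{D}_1(\mb{Z}^\omega),G)$ (which is a group, so the problem reduces to showing that $f(\cdot+\underline{t_n})\Lambda\to\Lambda$ pointwise forces $f(\cdot)\subset\Lambda$), expands $f$ via the Taylor formula of Lemma~\ref{lem:inf-taylor}, and inducts on the nilpotency degree, using the one-step case \cite[Lemma 2.7]{CGSS-abramov} at the bottom and in the top fiber $G_k/(G_k\cap\Lambda)$. You instead propose to endow the full shift space $X=\hom(\mc{D}_1(\mb{Z}^\omega),G/\Lambda)$ with a pointwise cube structure making it a compact $k$-step nilspace on which the shifts act by cube-preserving homeomorphisms, and then invoke a ``general fact that compact nilspace systems are distal.'' That general fact, as stated, is false: a cube-preserving homeomorphism of a compact nilspace need not generate a distal system. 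For instance, take $\ns=\mc{D}_1(\mb{T}^2)$ and let $T$ be a hyperbolic toral automorphism (e.g.\ induced by $\begin{psmallmatrix}2&1\\1&1\end{psmallmatrix}$). Since $T$ is a group automorphism of $\mb{T}^2$, it preserves the degree-$1$ cube structure, so it is a nilspace automorphism of a compact $1$-step nilspace; yet $(\mb{T}^2,T)$ has positive entropy and is certainly not distal.

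What makes nilspace systems distal is that the acting group lands in the \emph{translation} group $\tran_1(\ns)$, a strictly stronger condition than being a cube-preserving automorphism. Your write-up only verifies that the shifts $T^t$ preserve cubes (``trivially'', you say, since the cube structure is pointwise in the $\mb{Z}^\omega$-argument), but you do not show that they lie in $\tran_1(X)$; nor do the references you point to (\cite[Theorem 7.14]{GGY} is a classification of \emph{minimal distal} systems of order $k$ in terms of nilspaces, i.e.\ the converse direction, and \cite[Ch.\ 2]{Cand:Notes2} concerns compact nilspaces, not distality of automorphism actions). To close this gap you would need to establish the translation property in the hom-space setting, and at that point some explicit analysis of the type the paper carries out (Taylor coefficients, reduction to the abelian fiber) seems unavoidable. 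A further, smaller issue: $X$ is \emph{not} minimal, so results from \cite{GGY} that are formulated for minimal systems cannot be applied directly to $X$; the paper sidesteps this by proving distality first and only then passing to orbit closures, whereas your plan needs distality of $X$ itself as the first step.
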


\begin{proof}
We need to prove that, given $f,g\in \hom(\mc{D}_1(\mb{Z}^\omega),G)$ and a sequence $\underline{t_n}\in \mb{Z}^\omega$ such that $\lim_{n\to\infty} f(\cdot+\underline{t_n})\Lambda=\lim_{n\to\infty} g(\cdot+\underline{t_n})\Lambda$, we have $f=g$. Since by \cite[Theorem 2.2.14]{Cand:Notes1} we have $\hom(\mc{D}_1(\mb{Z}^\omega),G)=\poly(\mc{D}_1(\mb{Z}^\omega),G)$, it follows from \cite[Corollary 2.2.15]{Cand:Notes1} that $\hom(\mc{D}_1(\mb{Z}^\omega),G)$ is a group. Hence, it suffices to assume that $g=\id$ and that $\lim_{n\to\infty} f(\underline{x}+\underline{t_n})\in \Lambda$ for all $\underline{x}\in\mb{Z}^\omega$.

We argue by induction on the degree $k$ of the nilmanifold. The case $k=1$ follows directly from \cite[Lemma 2.7]{CGSS-abramov}.

Assuming the result holds for degree up to $k-1$, note that if $\lim f(\underline{x}+\underline{t_n})\in \Lambda$ then, quotienting by $G_k$ we have that $\lim f(\underline{x}+\underline{t_n})\Lambda G_k/G_k\in \Lambda G_k/G_k$. Hence, by induction $f(\underline{x})\in \Lambda G_k/G_k$ for all $\underline{x}\in \mb{Z}^\omega$. By Lemma \ref{lem:inf-taylor}, we have that $f(\underline{x})=\prod_{i=1}^\infty  g_{i}^{\binom{\underline{x}}{s(i)}}$ and we have just proved that $g_i G_k\in \Lambda G_k/G_k$. In particular, we may write $g_i=g_i'\lambda_i$ where $g_i'\in G_k$ and $\lambda_i\in \Lambda\cap G_i$ for all $i\in\mb{N}$. Note that, as $G_k$ is in the center of $G$, we may write $f=f'\lambda'$ where $f'(\underline{x})=\prod_{i=1}^\infty  (g'_{i})^{\binom{\underline{x}}{s(i)}}$ and $\lambda'(\underline{x})=\prod_{i=1}^\infty  \lambda_{i}^{\binom{\underline{x}}{s(i)}}$. Hence, clearly we have that $\lim f(\cdot+\underline{t_n})\Lambda\to \Lambda$ is equivalent to $\lim f'(\cdot+\underline{t_n})\Lambda\to \Lambda$. In particular, note that $f'\Lambda\in G_k \Lambda$ and that when we quotient by $\Lambda$ we get that $f'\Lambda\to \Lambda$. As $G_k\Lambda/\Lambda\cong G_k/(G_k\cap \Lambda)$ and this is a compact abelian Lie group (recall that in order to have this we need $G_k$ to be a rational subgroup of the nilmanifold $G\Lambda/\Lambda$). Thus, we may apply \cite[Lemma 2.7]{CGSS-abramov} which shows that $f'(x)\in \Lambda$ for all $x$. Hence $f(x)\in\Lambda$ as required.

Finally, by \cite[Corollary 2.8]{CGSS-abramov} we see that the $\mb{Z}^\omega$-polynomial orbit system is minimal.\end{proof}

\begin{proposition}\label{prop:z-omega-are-nilspaces}
Let $G/\Lambda$ be a degree-$k$ nilmanifold and $f\in\hom(\mc{D}_1(\mb{Z}^\omega),G/\Lambda)$. Then the $\mb{Z}^\omega$-system $(\overline{\orb(f)},\mb{Z}^\omega)$ can be endowed with the structure of a $k$-step nilspace making the dynamical system a nilspace system of order $k$.
\end{proposition}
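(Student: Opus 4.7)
The plan is to reduce to \cite[Theorem 7.14]{GGY}, which asserts that any minimal distal topological dynamical system of order $k$ carries a $k$-step compact nilspace structure with precisely the cube sets $\cu^n(\ns):=\overline{\{\q\cdot x^{\db{n}}:x\in\ns,\; \q\in \cu^n(\mc{D}_1(\mb{Z}^\omega))\}}$ described in the introduction. Minimality and distality of $(\overline{\orb(f)},\mb{Z}^\omega)$ are already furnished by Lemma \ref{lem:poly-orb-distal-minimal}, so the heart of the proof is to verify that this system is of order $k$, i.e.\ that $\mathrm{RP}^{[k+1]}(\overline{\orb(f)})$ reduces to the diagonal.

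As a preliminary observation, every element of $\overline{\orb(f)}$ is itself a nilspace morphism from $\mc{D}_1(\mb{Z}^\omega)$ to the $k$-step coset nilspace $G/\Lambda$: the set $\hom(\mc{D}_1(\mb{Z}^\omega),G/\Lambda)$ is closed in $(G/\Lambda)^{\mb{Z}^\omega}$ under pointwise convergence (being a morphism is a cube-preservation condition, which is closed), and it contains $\orb(f)$, hence also $\overline{\orb(f)}$. Now suppose $(g_1,g_2)\in\mathrm{RP}^{[k+1]}(\overline{\orb(f)})$. Unpacking Definition \ref{def:reg-prox-rel} and using that the topology on $\overline{\orb(f)}$ is the subspace product topology inherited from $(G/\Lambda)^{\mb{Z}^\omega}$, we obtain sequences $t_i^1,\ldots,t_i^{k+1}\in\mb{Z}^\omega$ and $x_i\to g_1$, $y_i\to g_2$ in $\overline{\orb(f)}$ such that, for every $\underline{z}\in\mb{Z}^\omega$ and every $\boldsymbol{\epsilon}\in\db{k+1}\setminus\{0^{k+1}\}$,
\[
\operatorname{dist}_{G/\Lambda}\!\bigl(x_i(\underline{z}+\textstyle\sum_j\epsilon_j t_i^j),\;y_i(\underline{z}+\textstyle\sum_j\epsilon_j t_i^j)\bigr)\xrightarrow[i\to\infty]{}0.
\]

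For fixed $\underline{z}$ and $i$, the affine map $\boldsymbol{\epsilon}\in\db{k+1}\mapsto \underline{z}+\sum_j\epsilon_j t_i^j\in\mb{Z}^\omega$ is a $(k+1)$-cube on $\mc{D}_1(\mb{Z}^\omega)$, so its post-composition with the morphism $x_i$ is a $(k+1)$-cube $\q_i^x\in\cu^{k+1}(G/\Lambda)$; define $\q_i^y$ analogously with $y_i$. Since $G/\Lambda$ is a $k$-step compact nilspace, corners missing a single vertex of $\db{k+1}$ admit unique completion (the case of the $0^{k+1}$ vertex being equivalent to the standard $1^{k+1}$ case via the reflection $v\mapsto 1-v$, which is a discrete-cube automorphism); thus the restriction map from $\cu^{k+1}(G/\Lambda)$ to the corresponding corner space is a continuous bijection between compact Hausdorff spaces, hence a homeomorphism, and its inverse (completion) is uniformly continuous. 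A standard subsequence argument (passing to a convergent subsequence in $\cu^{k+1}(G/\Lambda)^2$, where the limits must agree on $\db{k+1}\setminus\{0^{k+1}\}$ and hence, by unique completion, also at $0^{k+1}$) therefore yields $\operatorname{dist}_{G/\Lambda}(x_i(\underline{z}),y_i(\underline{z}))\to 0$. Passing to the limit gives $g_1(\underline{z})=g_2(\underline{z})$; as $\underline{z}\in\mb{Z}^\omega$ was arbitrary, $g_1=g_2$.

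The main conceptual step is packaging the shifts $\sum_j\epsilon_j t_i^j$ as vertices of $(k+1)$-cubes in the target nilspace $G/\Lambda$, after which the triviality of $\mathrm{RP}^{[k+1]}$ reduces to the unique-completion axiom for $k$-step nilspaces. The only technicality requiring care is the translation between product-metric convergence on $\overline{\orb(f)}$ and pointwise convergence at each $\underline{z}\in\mb{Z}^\omega$, but this is routine since $\mb{Z}^\omega$ is countable and $G/\Lambda$ is metric.
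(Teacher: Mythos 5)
Your proof is correct and follows essentially the same strategy as the paper: invoke Lemma \ref{lem:poly-orb-distal-minimal} for minimality and distality, reduce via \cite[Theorem 7.14]{GGY} to showing triviality of $\mathrm{RP}^{[k+1]}$, and then package the shifts $\underline{z}+\sum_j\epsilon_j t_i^j$ into $(k+1)$-cubes of $G/\Lambda$ and apply unique (and continuous) corner completion in the $k$-step compact nilspace $G/\Lambda$. The paper's proof is terser in unpacking the regional-proximality condition but uses the same key ingredient, namely continuity of the corner-completion map (\cite[Lemma 2.1.12]{Cand:Notes2}).
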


\begin{proof}
A $\mb{Z}^\omega$-polynomial orbit system is minimal and distal by Lemma \ref{lem:poly-orb-distal-minimal}. By \cite[Theorem 7.14]{GGY}, it suffices to prove that $(\overline{\orb(f)},\mb{Z}^\omega)$ is a system of order $k$, i.e.\ that the regionally proximal relation of order $k+1$ (see Definition \ref{def:reg-prox-rel} and \cite{Song-Ye}) is trivial. Given any sequence $t^{(n)}:=(t_1^{(n)},\ldots,t_{k+1}^{(n)}) \in (\mb{Z}^\omega)^{k+1}$, if for every $v\in \db{k+1}\setminus\{1^{k+1}\}$ we have $f(x+v\cdot t^{(n)})\to y_x$ as $n\to \infty$ (where $v\cdot t^{(n)}=v_1t^{(n)}_1+\cdots+v_{k+1}t^{(n)}_{k+1}$), then $f(x+1^{k+1}\cdot t^{(n)})\to y_x$ as $n\to\infty$ (where $1^{k+1}\in \db{k+1}$ is the element with all coordinates equal to 1). 

This follows from the fact that, for any fixed $x\in \mb{Z}^\omega$, the map $v\in \db{k+1}\mapsto f(x+v\cdot t^{(n)})$ is in $\cu^{k+1}(G/\Lambda)$. Thus, by continuity of the corner-completion function (see \cite[Lemma 2.1.12]{Cand:Notes2}), we have that, as $f(x+v\cdot t^{(n)})\to y_x$ for all $v\in \db{k+1}\setminus\{1^{k+1}\}$ then $f(x+1^{k+1}\cdot t_n)\to y_x$.\end{proof}

\begin{proof}[Proof of Proposition \ref{prop:properties-z-omega-sys}] The result follows directly from \cite[Theorem 7.14]{GGY}, Lemma \ref{lem:poly-orb-distal-minimal}, and Proposition \ref{prop:z-omega-are-nilspaces}.
\end{proof}

To prove Theorem \ref{thm:main-dynam}, we need some additional results which we prove now, the first one being a direct consequence of our main result Theorem \ref{thm:cfr-are-factor-of-nilmanifolds}.

\begin{corollary}\label{cor:cov-by-inv-lim-coset}
Let $\ns$ be a compact $k$-step nilspace. Then there exists a compact $k$-step nilspace $\nss$ which is the inverse limit (in the nilspace category) of nilmanifolds $\nss=\varprojlim G_n/\Lambda_n$ and a fibration $\varphi:\nss\to\ns$.
\end{corollary}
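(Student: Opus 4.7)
The plan is to combine Theorem \ref{thm:cfr-are-factor-of-nilmanifolds} with the standard representation of a compact nilspace as an inverse limit of \textsc{cfr} nilspaces, and to build a compatible tower of nilmanifolds by repeated fiber-product constructions. First, I would invoke the known fact (see e.g.\ \cite[Theorem 2.7.3]{Cand:Notes2} and the discussion preceding it) that every compact $k$-step nilspace $\ns$ can be expressed as a strict inverse limit $\ns\cong\varprojlim \ns_n$ of $k$-step \textsc{cfr} nilspaces, with continuous fibrations $p_n:\ns\to\ns_n$ and $p_{n+1,n}:\ns_{n+1}\to\ns_n$ satisfying the obvious compatibilities.

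Next, I would construct inductively a tower of $k$-step nilmanifolds $M_n$ together with fibrations $\varphi_n:M_n\to\ns_n$ and $\psi_n:M_{n+1}\to M_n$ satisfying $p_{n+1,n}\co\varphi_{n+1}=\varphi_n\co\psi_n$. For $n=1$, Theorem \ref{thm:cfr-are-factor-of-nilmanifolds} applied to $\ns_1$ yields $M_1$ and $\varphi_1$. Given $(M_n,\varphi_n)$, form the fiber product $M_n\times_{\ns_n}\ns_{n+1}$, which is a $k$-step \textsc{cfr} nilspace by \cite[Proposition A.16]{CGSS-p-hom}. Applying Theorem \ref{thm:cfr-are-factor-of-nilmanifolds} to this fiber product yields a nilmanifold $M_{n+1}$ together with a fibration onto it; composing with the two canonical projections produces the required $\varphi_{n+1}$ and $\psi_{n+1}$ with the stated commutativity.

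Finally, set $\nss:=\varprojlim M_n$. This is a compact $k$-step nilspace, realized as an inverse limit of nilmanifolds as required, and the morphisms $(\varphi_n)$ assemble into a continuous morphism $\varphi:\nss\to\ns$. To verify that $\varphi$ is a fibration, take any cube $\q\in\cu^r(\ns)$, corresponding to a compatible family $(\q_n)_{n\ge 1}$ with $\q_n\in\cu^r(\ns_n)$, and build compatible lifts $\widetilde{\q}_n\in\cu^r(M_n)$ inductively: once $\widetilde{\q}_n$ with $\varphi_n\co\widetilde{\q}_n=\q_n$ is in hand, the pair $(\widetilde{\q}_n,\q_{n+1})$ is a cube in $M_n\times_{\ns_n}\ns_{n+1}$ because $\varphi_n\co\widetilde{\q}_n=\q_n=p_{n+1,n}\co\q_{n+1}$, and the fibration $M_{n+1}\to M_n\times_{\ns_n}\ns_{n+1}$ furnishes the lift $\widetilde{\q}_{n+1}$. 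The sequence $(\widetilde{\q}_n)$ then defines a cube in $\nss$ projecting to $\q$.

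The main obstacle I anticipate is not conceptual but rather bookkeeping: first, pinning down a precise reference (or short proof) for the inverse-limit representation $\ns=\varprojlim\ns_n$ by \textsc{cfr} nilspaces with fibrations as connecting maps; and second, verifying at each step that the fiber products remain $k$-step \textsc{cfr} nilspaces so that Theorem \ref{thm:cfr-are-factor-of-nilmanifolds} is applicable. Both points are handled by tools already in the cited literature, but they deserve to be stated cleanly before the inductive construction is run.
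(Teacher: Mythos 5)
Your proposal is correct and follows essentially the same route as the paper's own proof: represent $\ns$ as an inverse limit of \textsc{cfr} nilspaces via \cite[Theorem 2.7.3]{Cand:Notes2}, then build the tower of nilmanifolds inductively by alternating fiber products with Theorem \ref{thm:cfr-are-factor-of-nilmanifolds}. Your explicit cube-lifting argument at the end spells out a step the paper leaves implicit, but the construction is the same.
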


\begin{proof}
Recall that, by \cite[Theorem 2.7.3]{Cand:Notes2}, the nilspace $\ns$ is equal to an inverse limit of $k$-step \textsc{cfr} nilspaces, i.e.\ $\ns=\varprojlim \ns_i$. The idea now is to use Theorem \ref{thm:cfr-are-factor-of-nilmanifolds} to extend all these factors in a consistent way. Let us present the commutative diagram that summarizes the proof.

\[\begin{tikzcd}[sep=small]
	{\ns_1} & {\ns_2} & {\ns_3} & {\ns_4} & \cdots \\
	G_1/\Lambda_1 & {G_1/\Lambda_1\times_{\ns_1}\ns_2} & & \\
	 & {G_2/\Lambda_2} & {G_2/\Lambda_2\times_{\ns_2}\ns_3} & \\
	&  & G_3/\Lambda_3 & {G_3/\Lambda_3\times_{\ns_3}\ns_4} \\
	&&& G_4/\Lambda_4 & \cdots.
	\arrow[from=1-2, to=1-1]
	\arrow[from=1-3, to=1-2]
	\arrow[from=1-4, to=1-3]
	\arrow[from=2-1, to=1-1]
    \arrow[from=3-2, to=2-2]
	\arrow[from=2-2, to=1-2]
    \arrow[from=2-2, to=2-1]
	\arrow[from=3-3, to=3-2]
	\arrow[from=3-3, to=1-3]
	\arrow[from=4-3, to=3-3]
	\arrow[from=4-4, to=4-3]
	\arrow[from=5-4, to=4-4]
	\arrow[from=4-4, to=1-4]
    \arrow[from=1-5, to=1-4]
    \arrow[from=5-5, to=5-4]
\end{tikzcd}\]
We create this diagram as follows. Note that the maps $\ns_i\to\ns_{i-1}$ are the ones of the inverse limit $\ns=\varprojlim\ns_i$. Then, we apply Theorem \ref{thm:cfr-are-factor-of-nilmanifolds} to $\ns_1$ to create $G_1/\Lambda_1$ and the fibration $G_1/\Lambda_1\to \ns_1$. Then, we consider the fiber product $G_1/\Lambda_1\times_{\ns_1}\ns_2$ (see e.g. \cite[Lemma 2.18]{CGSS-doucos}), which is also a $k$-step \textsc{cfr} nilspace. We then apply Theorem \ref{thm:cfr-are-factor-of-nilmanifolds} to $G_1/\Lambda_1\times_{\ns_1}\ns_2$ and we obtain $G_2/\Lambda_2$. Continuing with this process it is clear that we obtain an inverse limit of coset nilspaces $\nss:=\varprojlim G_i/\Lambda_i$ and a fibration $\nss\to\ns$.\end{proof}

\begin{lemma}\label{lem:equality-orbits}
Let $A$ be a discrete group, let $X,Y$ be compact metric $A$-systems, and $\varphi:Y\to X$ be a continuous equivariant function. Then for any $y\in Y$ we have that $\varphi(\overline{\orb(y)})=\overline{\orb(\varphi(y))}$.
\end{lemma}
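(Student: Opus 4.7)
The statement is a standard fact about equivariant continuous maps between compact dynamical systems; the plan is essentially to verify the two inclusions separately, using equivariance to match orbits and using compactness plus continuity to match closures.

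First, I would observe that equivariance immediately gives $\varphi(\orb(y)) = \orb(\varphi(y))$, since $\varphi(t \cdot y) = t \cdot \varphi(y)$ for every $t \in A$. This reduces the problem to showing that $\varphi$ commutes with the closure operation on the specific set $\orb(y) \subset Y$, i.e.\ that $\varphi(\overline{\orb(y)}) = \overline{\varphi(\orb(y))}$.

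For the inclusion $\overline{\orb(\varphi(y))} \subseteq \varphi(\overline{\orb(y)})$, the key point is that $\overline{\orb(y)}$ is a closed subset of the compact metric space $Y$, hence itself compact; its continuous image $\varphi(\overline{\orb(y)})$ is then compact and in particular closed in $X$. Since this closed set contains $\varphi(\orb(y)) = \orb(\varphi(y))$, it must contain the closure $\overline{\orb(\varphi(y))}$.

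For the reverse inclusion $\varphi(\overline{\orb(y)}) \subseteq \overline{\orb(\varphi(y))}$, I would take any $w \in \overline{\orb(y)}$ and, using the fact that $Y$ is metric, pick a sequence $t_n \in A$ with $t_n \cdot y \to w$. By continuity of $\varphi$, $\varphi(w) = \lim_n \varphi(t_n \cdot y) = \lim_n t_n \cdot \varphi(y)$, and the latter limit is by definition an element of $\overline{\orb(\varphi(y))}$.

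There is no real obstacle here: the only ingredients needed are equivariance, continuity of $\varphi$, and the fact that closed subsets of compact metric spaces are sequentially compact. I would write the proof in a few lines without further fanfare.
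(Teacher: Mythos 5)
Your proof is correct and uses essentially the same two-step argument as the paper: compactness and continuity give one inclusion (closed image contains the orbit, hence its closure), and a sequential/limit argument using continuity and equivariance gives the other.
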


\begin{proof}
As $\varphi$ is continuous, the image of the compact set $\overline{\orb(y)}$ is compact, hence closed. As clearly $\gamma(\varphi(y))\in \varphi(\overline{\orb(y)})$ for all $\gamma\in\Gamma$ we have that $\varphi(\overline{\orb(y)})\supset\overline{\orb(\varphi(y))}$. For the converse inclusion, let $y'\in \overline{\orb(y)}$ and let $\gamma_n\in \Gamma$ be a sequence such that $y'=\lim \gamma_n(y)$. Then, by continuity of $\varphi$ and $\Gamma$ equivariance we have that $\varphi(y')=\lim \gamma_n(\varphi(y))\in \overline{\orb(\varphi(y))}$.
\end{proof}

\begin{proof}[Proof of Theorem \ref{thm:main-dynam}]
The proof is essentially the same as the one of \cite[Theorem 4.5]{CGSS-abramov} but we included it here for the convenience of the reader. By \cite[Theorem 7.14]{GGY}, a minimal $\mb{Z}^\omega$-system of order $k$ is isomorphic to a minimal nilspace system $(\ns,\mb{Z}^\omega)$ of order $k$. Hence,  any point $x_0\in \ns$ is transitive and thus $\overline{\orb(x_0)}=\overline{\{tx_0\in\ns: t\in \mb{Z}^\omega\}}=\ns$.

Let $\phi^*: \ns \to  \hom(\mc{D}_1(\mb{Z}^\omega),\ns)$ be the map $x \mapsto \phi^*(x)$ where $\phi^*(x)(t):=tx$ for any $t\in \mb{Z}^\omega$. This map is continuous and equivariant (see the proof of \cite[Theorem 4.5]{CGSS-abramov}). As $\ns$ and $\hom(\mc{D}_1(\mb{Z}^\omega),\ns)$ are compact metric spaces, we have that $\phi^*$ is a homeomorphism $\ns\to \phi^*(\ns)$ by \cite[Theorem 26.6]{Mu}. This together with the equivariance of $\phi^*$ implies that  $\ns$ and $\phi^*(\ns)$ are isomorphic as topological dynamical systems. We shall now prove that, as a \emph{topological} dynamical system, the system $\phi^*(\ns)$ is a factor of a $\mb{Z}^\omega$-nilspace system.

By minimality, fixing any  $x_0\in \ns$, we have that $\phi^*(\ns)$ is the closure of the orbit of $\phi^*(x_0)$. By Corollary \ref{cor:cov-by-inv-lim-coset}, there exists a nilspace $\nss=\varprojlim G_i/\Lambda_i$ and a fibration $\varphi:\nss\to \ns$. This induces the map $\varphi^*:\hom(\mc{D}_1(\mb{Z}^\omega),\nss) \to \hom(\mc{D}_1(\mb{Z}^\omega),\ns)$ defined as $\varphi^*(f) = \varphi\co f$. Thus, we have the following diagram:
\begin{center}
\begin{tikzpicture}
  \matrix (m) [matrix of math nodes,row sep=2em,column sep=4em,minimum width=2em]
  {
      & \hom(\mc{D}_1(\mb{Z}^\omega),\nss)  \\
     \ns & \hom(\mc{D}_1(\mb{Z}^\omega),\ns). \\};
  \path[-stealth]
    (m-1-2) edge node [right] {$\varphi^*$} (m-2-2)
    (m-2-1) edge node [above] {$\phi^*$} (m-2-2);
\end{tikzpicture}
\end{center}
Note that $\varphi^*$ is continuous and equivariant. Recall that $\phi^*(x_0)\in \hom(\mc{D}_1(\mb{Z}^\omega),\ns)$ and thus, by \cite[Proposition 4.3]{CGSS-abramov}, there exists $g\in \hom(\mc{D}_1(\mb{Z}^\omega),\nss)$ such that $\varphi^*(g)=\phi^*(x_0)$. Moreover, by Lemma \ref{lem:equality-orbits} we have that $\varphi^*(\overline{\orb(g)})=\overline{\orb(\phi^*(x_0))}\cong \ns$. As both $\phi^*$ and $\varphi^*$ are equivariant, we have proved that our original system $\ns$ is a factor (as a topological dynamical system) of $\overline{\orb(g)}$ for some $g\in \hom(\mc{D}_1(\mb{Z}^\omega),\nss)$.

To conclude the proof, it suffices to show that $\hom(\mc{D}_1(\mb{Z}^\omega),\nss)$ is isomorphic (as topological dynamical system) to $\varprojlim \hom(\mc{D}_1(\mb{Z}^\omega),G_i/\Lambda_i)$. Clearly we can define the continuous function $\iota:\hom(\mc{D}_1(\mb{Z}^\omega),\nss)\to \hom(\mc{D}_1(\mb{Z}^\omega),G_i/\Lambda_i)$ given by $f\mapsto \varphi_i\co f$ where $\varphi_i:\nss\to G_i/\Lambda_i$ are the projections of the inverse limit $\nss=\varprojlim G_i/\Lambda_i$. Moreover, this map is a bijection, because for any $(f_i)_{i=1}^\infty\in \varprojlim \hom(\mc{D}_1(\mb{Z}^\omega),G_i/\Lambda_i)$ there is a unique $f\in \hom(\mc{D}_1(\mb{Z}^\omega),\nss)$ such that $\iota(f)=(f_i)_{i=1}^\infty$ (note that the only possible definition is to set $f(t)=(f_i(t))_{i=1}^\infty$ for every $t\in \mb{Z}^\omega$). Hence $\iota$ is a continuous bijection between compact spaces, and therefore it is a homeomorphsism. In particular $\iota$ induces a homeomorphism between $\overline{\orb(g)}$ and $\varprojlim \overline{\orb(\varphi_i\co g)}$. This, combined with Proposition \ref{prop:properties-z-omega-sys}, finishes the proof. \end{proof}

\medskip

\noindent \textbf{Funding.} All authors used funding from project PID2024-156180NB-I00 funded by the MICIU/AEI and the European Union. The second-named author was funded by HORIZON-MSCA-2024-PF-01, AlgHOF 101202161 ``Funded by the European Union. Views and opinions expressed are those of the author(s) only and do not reflect those of the European Union or the European Commission. Neither the European Union nor the European Commission can be held responsible for them''. The second and third-named authors were also supported partially by the Hungarian Ministry of Innovation and Technology NRDI Office within the framework of the Artificial Intelligence National Laboratory Program (MILAB, RRF-2.3.1-21-2022-00004).

\end{document}